\newtheorem{theorem}{Theorem}[section]
\newtheorem{lemma}[theorem]{Lemma}
\newtheorem{definition}[theorem]{Definition}
\newtheorem{example}[theorem]{Example}
\newtheorem{proposition}[theorem]{Proposition}
\newtheorem{remark}[theorem]{Remark}
\newtheorem{corollary}[theorem]{Corollary}
\newtheorem{conjecture}[theorem]{Conjecture}
\providecommand{\customgenericname}{}
\newcommand{\newcustomtheorem}[2]{%
  \newenvironment{#1}[1]
  {%
   \renewcommand\customgenericname{#2}%
   \renewcommand\theinnercustomgeneric{##1}%
   \innercustomgeneric
  }
  {\endinnercustomgeneric}
}
\title{On the Kauffman bracket skein module of 
$\mathbf {(S^1 \times S^2) \ \# \ (S^1 \times S^2)}$}
\author{Rhea Palak Bakshi}
\address{Institute for Theoretical Studies, ETH Zürich, Switzerland and \newline \indent Department of Mathematics, University of California, Santa Barbara, USA}
\email{\rm rheapalak.bakshi@eth-its.ethz.ch | rheapalak@math.ucsb.edu}
\author{Seongjeong Kim} 
\address{Jilin University, Changchun, China}
\email{\rm kimseongjeong@jlu.edu.cn}
\author{Shangjun Shi} 
\address{East China Normal University, Shanghai, China}
\email{\rm 51255500022@stu.ecnu.edu.cn}
\author{Xiao Wang} 
\address{Jilin University, Changchun, China}
\email{\rm wangxiaotop@jlu.edu.cn}
\keywords{Knot, link invariant, $3$-manifold invariant, Kauffman bracket, skein module.}
\subjclass[2020]{Primary: 57K31. Secondary: 57K10}
\begin{document}

\begin{abstract}

Determining the structure of the Kauffman bracket skein module of all $3$-manifolds over the ring of Laurent polynomials $\mathbb Z[A^{\pm 1}]$ is a big open problem in skein theory. Very little is known about the skein module of non-prime manifolds over this ring. In this paper, we compute the Kauffman bracket skein module of the $3$-manifold $(S^1 \times S^2) \ \# \ (S^1 \times S^2)$ over the ring $\mathbb Z[A^{\pm 1}]$. We do this by analysing the submodule of handle sliding relations, for which we provide a suitable basis. Along the way we compute the Kauffman bracket skein module of $(S^1 \times S^2) \ \# \ (S^1 \times D^2)$. We also show that the skein module of $(S^1 \times S^2) \ \# \ (S^1 \times S^2)$ does not split into the sum of free and torsion submodules. Furthermore, we illustrate two families of torsion elements in this skein module. 

\end{abstract}

\maketitle
\tableofcontents

\section{Introduction}

Skein modules are $3$-manifold invariants that generalise the skein theory of polynomial link invariants in $S^3$ to any arbitrary $3$-manifold. They were introduced independently by Przytycki \cite{smof3} and Turaev \cite{turaevsolidtorus} in the late 1980's, and have since  become indispensable in bridging the fields of quantum topology, knot theory, algebraic geometry, hyperbolic geometry, and physics. The Kauffman bracket skein module, which serves as a generalisation of the Kauffman bracket polynomial \cite{sm&j} to arbitrary $3$-manifolds, is conceivably the best understood skein module of all. In this paper, we determine the Kauffman bracket skein module of $(S^1 \times S^2) \ \# \ (S^1 \times S^2)$. One motivation for our work comes from constructing traces, such as the Yang-Mills measure, on the Kauffman bracket skein module of a thickened surface. In \cite{bfkyangmills}, the Yang-Mills measure is defined away from roots of unity using the Kauffman bracket skein module, henceforth known simply as the skein module, of $\#_k (S^1 \times S^2)$ over the field $\mathbb C$. To construct other traces on the skein module of a surface at roots of unity, it is imperative to know the structure of the skein module of $\#_k (S^1 \times S^2)$ over $\mathbb Z[A^{\pm 1}]$. Knowing the traces on a skein module will aid us in the construction of the $3$-manifold invariants that may be defined as traces. For example, the Yang-Mills measure may be used to define the Turaev-Viro invariant at roots of unity. We expect that our computation of the skein module of $(S^1 \times S^2) \ \# \ (S^1 \times S^2)$ will prove to be effective in this regard. \\

From the perspective of algebraic geometry, it is known that, modulo the nilradical, the Kauffman bracket skein module of an oriented $3$-manifold over $\mathbb C[A^{\pm 1}]$ with $A = -1$ has an algebra structure that is isomorphic to the coordinate ring of the $SL(2,\mathbb C)$ character variety of the fundamental group of that manifold  \cite{sl2cbullock, saofsurfaces}. Furthermore, if the underlying algebraic set $X(M)$ of the $SL(2,\mathbb C)$ character variety of the fundamental group of a closed oriented $3$-manifold $M$ is infinite, then the skein module of $M$ is wild, that is, it is not tame \cite{dks}.\footnote{A $\mathbb Z[A^{\pm 1}]$-module is said to be tame if it is a direct sum of cyclic $\mathbb Z[A^{\pm 1}]$-modules and it does not contain $\mathbb Z[A^{\pm 1}]/(\phi_{2N})$ as a submodule, for at least one odd $N$, where $\phi_{2N}$ is the $2N$-th cyclotomic polynomial.} For example, the underlying algebraic set $X(M)$ of the $SL(2,\mathbb C)$ character variety of $\pi_1(S^1 \times S^2)$ is infinite, and hence, the Kauffman bracket skein module of $S^1 \times S^2$ is not tame as has been proved by Hoste and Przytycki in \cite{s1s2}. In fact,  until now, $S^1 \times S^2$ is the only closed $3$-manifold with infinite $X(M)$ whose skein module has been computed. The manifold $(S^1\times S^2) \ \# \ (S^1\times S^2)$ is the next example of the skein module of a closed $3$-manifold with this property (see \cite{gmsl2cfreegroup}).  \\

Furthermore, the resolution of Witten's finiteness conjecture for Kauffman bracket skein modules in \cite{wittenresolved} implies that over $\mathbb Q(A)$, the Kauffman bracket skein module of any closed oriented $3$-manifold is always finite dimensional. However, over $\mathbb Z[A^{\pm 1}]$, the structure of the skein module is more complicated. For example, the skein module of $S^1 \times S^2$ is infinitely generated over $\mathbb Z[A^{\pm 1}]$ \cite{s1s2}. Recently, the first author \cite{rheasolomarche} disproved a conjecture posited by Marché (see \cite{basissurfacetimess1}), which stated that the skein module of any closed oriented $3$-manifold can be decomposed into free and torsion modules. The counterexample to this conjecture was given by the skein module of the connected sum of two copies of the real projective space (see \cite{rp3rp3}). This emphasises the fact that save for a handful of manifolds, the structure of the skein module is not as well understood  over $\mathbb Z[A^{\pm 1}]$ as it is over $\mathbb Q(A)$. To better understand the structure of the skein module of oriented $3$-manifolds over $\mathbb Z[A^{\pm 1}]$, we study the skein module of the connected sums of $3$-manifolds. Thus, with these motivations, we compute the skein module of $(S^1 \times S^2) \ \# \ (S^1 \times S^2)$. \\ 

The paper is organised as follows. In Section \ref{basicss1s2s1s2}, we define absolute and relative Kauffman bracket skein modules and discuss some of their properties. We include a description of the module using generators and relations. 
We then compute the skein module of $(S^1 \times S^2) \ \# \ (S^1 \times S^2)$ in Section \ref{sectionkbsms1s2s1s2}. Our technique employs the relative Kauffman bracket skein module in determining the complete set of handle sliding relations. We include all our calculations towards this computation in the \hyperref[appendixs1s2s1s2]{Appendix}. In Section \ref{props1s2s1s2}, we show that the skein module of $(S^1 \times S^2) \ \# \ (S^1 \times S^2)$ does not split into the direct sum of free and torsion submodules. Furthermore, in Section \ref{section: torsionelements}, we present some families of torsion elements in the  skein module of $(S^1 \times S^2) \ \# \ (S^1 \times S^2)$. Finally, in Section \ref{fds1s2s1s2}, we discuss future directions. 

\subsection*{Acknowledgements}

The first author acknowledges the support of Dr. Max Rössler, the Walter Haefner Foundation, and the ETH Zürich Foundation. The second author is partially supported by the National Natural Science Foundation of China (Grant No. 12201239). The fourth author is partially supported by the National Natural Science Foundation of China (Grant No. 11901229) and by the National Natural Science Foundation of China (Grant No. 22341304). The second and fourth authors are also together partially supported by the National Natural Science Foundation of China (Grant No. 12371029). The authors thank Charles Frohman, Józef H. Przytycki, Adam Sikora, and Renaud Detcherry for helpful and stimulating discussions. The authors also thank the referee for helpful comments.

\section{Basic definitions and properties}\label{basicss1s2s1s2}

We begin with introducing the Kauffman bracket skein module and the relative Kauffman bracket skein module.

\begin{definition}\label{kbsmdef}

Let $M$ be an oriented $3$-manifold, $R$ a commutative ring with unity, and $A \in R$ a fixed invertible element. Consider  the set of ambient isotopy classes of unoriented framed links (including the empty link $\varnothing$) in $M$, which we denote by $\mathcal{L}^{\mathit{fr}}$,  and the free $R$-module with basis $\mathcal{L}^{\mathit{fr}}$, denoted by $R\mathcal{L}^{\mathit{fr}}$.
Let $S_{2, \infty}^{\mathit{sub}}$ be the submodule of $R\mathcal{L}^{\mathit{fr}}$ generated by the following expressions: 

\begin{enumerate}

    \item the Kauffman bracket skein expression: $L_+ - AL_0 - A^{-1}L_{\infty}$ and
    
    \item the trivial component expression: $L \sqcup {\pmb \bigcirc}  + (A^2 + A^{-2})L$,
    
\end{enumerate}
\noindent
where $\pmb{\bigcirc}$ denotes the trivial framed knot in $M$ and the skein triple $(L_+$, $L_0$, $L_{\infty})$ denotes three framed links in $M$, which are identical except in a small $3$-ball in $M$ where they differ as illustrated in Figure \ref{skeintriple}.

\begin{figure}[ht]
    \centering
\[  \begin{minipage}{1.3 in} \includegraphics[width=\textwidth]{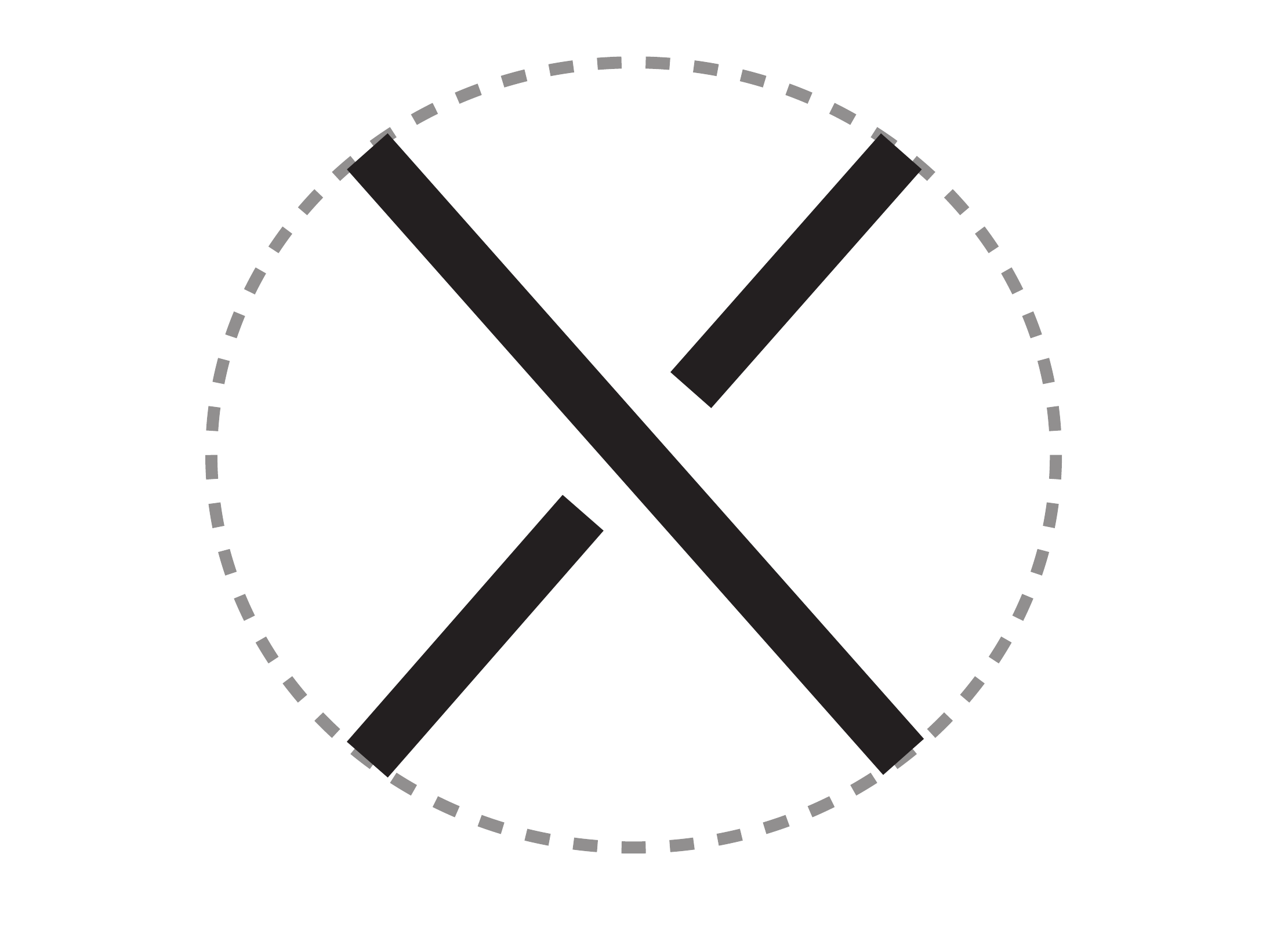} \vspace{-15pt} \[L_+\] \end{minipage} 
               \qquad
        \begin{minipage}{1.3 in}\includegraphics[width=\textwidth]{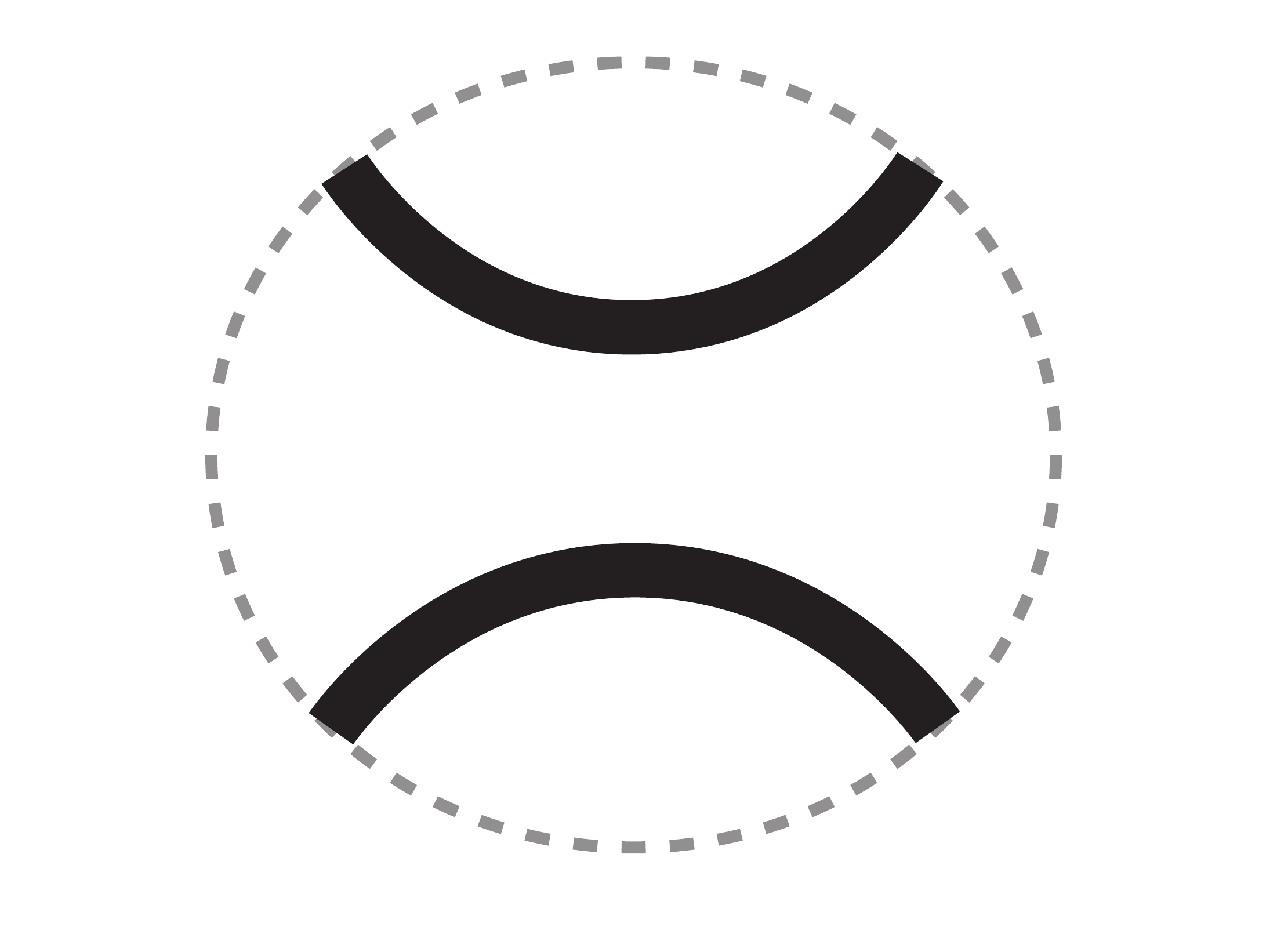} \vspace{-15pt} \[L_0\] \end{minipage}
         \qquad
        \begin{minipage}{1.3 in}\includegraphics[width=\textwidth]{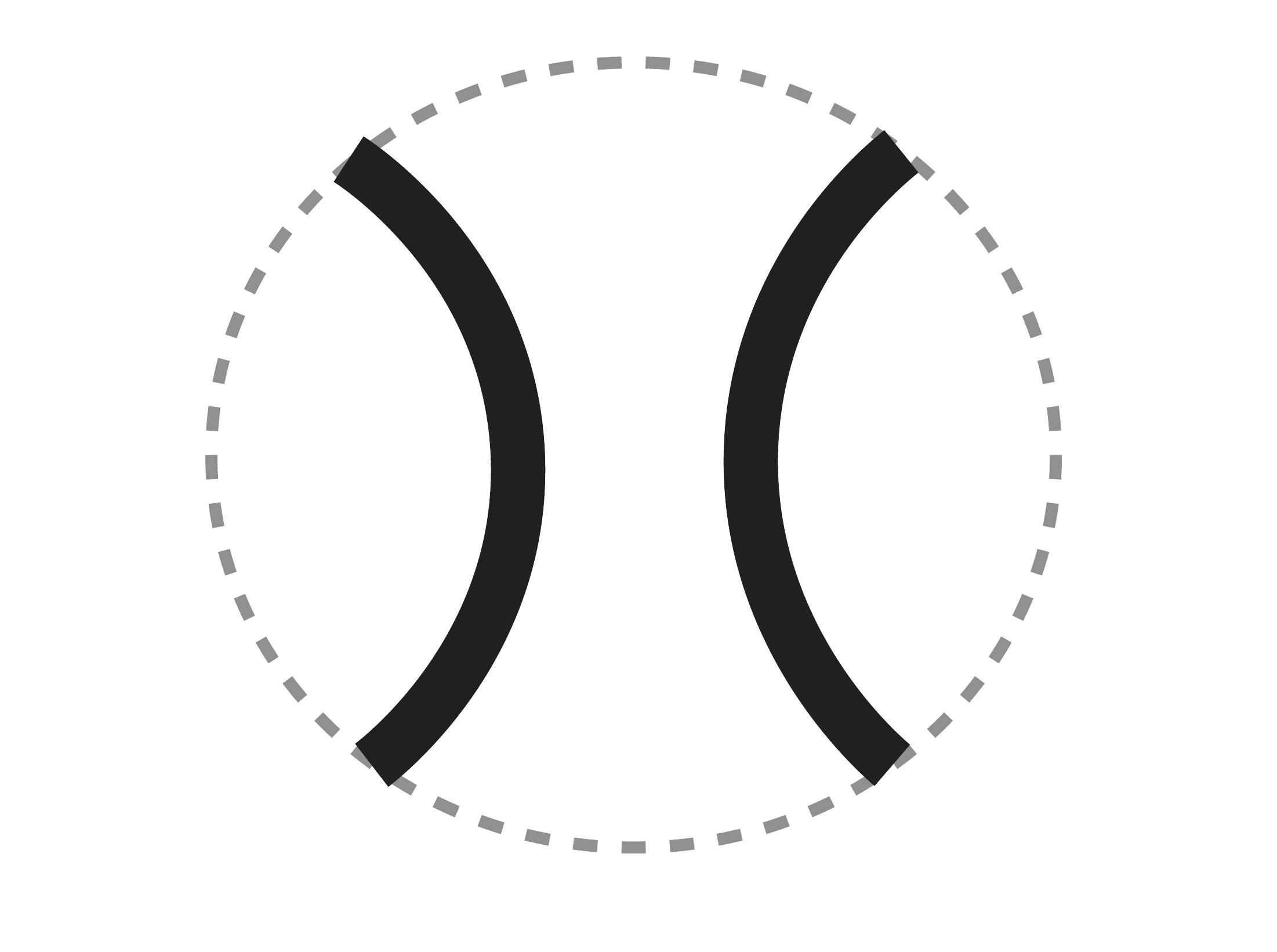} \vspace{-15pt} \[ L_\infty\]\end{minipage} 
        \]
\caption{Skein triple for the Kauffman bracket skein module.}
          \label{skeintriple}
        \end{figure}

The {\bf Kauffman bracket skein module} of $M$ is defined as the quotient: $$\mathcal{S}_{2,\infty}(M;R,A) = \frac{R\mathcal{L}^{\mathit{fr}}}{S_{2, \infty}^{sub}}.$$

\end{definition}

Computations of the Kauffman bracket skein module for various $3$-manifolds have been carried out over several rings, such as $\mathbb{Z}[A^{\pm 1}]$, $\mathbb Q(A)$, or a ring $R$ in which $A^k - 1$ is invertible for all $k$. In our paper, we work over $\mathbb{Z}[A^{\pm 1}]$ and use the notation $\mathcal{S}_{2,\infty}(M)$ in this case. The existence of the Kauffman bracket polynomial can be interpreted in the language of skein modules as follows. We can also define a relative version of the Kauffman bracket skein module for oriented $3$-manifolds that have framed (or marked) points on their boundaries (see \cite{smof3, fundamentals}).  

    

\begin{definition}

Let $(M,\partial M)$ be an oriented $3$-manifold, $\{x_i\}_{1}^{2n}$ be a set of $2n$ oriented framed\footnote{A framed point in $\partial M$ is an interval in $\partial M$. Thus, a relative framed link intersects
$\partial M$ at framed points.} points on $\partial M$, and $R$ be a commutative ring with unity with a fixed invertible element $A$. Let $\mathcal{L}^{\mathit{fr}}(2n)$ be the set of all relative framed links in $(M, \partial M)$ considered up to ambient isotopy keeping $\partial M$ fixed, such that $L \cap \partial M = \partial L = \{x_i\}_1^{2n}$. Consider the submodule $S_{2,\infty}^{\mathit{sub}}(2n)$ of the free $R$-module $R\mathcal{L}^{\mathit{fr}}(2n)$ generated by the Kauffman bracket skein expressions. Then the {\bf relative Kauffman bracket skein module}, henceforth known as the relative skein module, of $M$ is the quotient: $$\mathcal{S}_{2,\infty}(M, \{x_i\}_1^{2n}; R, A) = \frac{R\mathcal{L}^{\mathit{fr}}(2n)}{ S_{2,\infty}^{\mathit{sub}}(2n)}.$$ 

\end{definition}

We will use the notation $\mathcal{S}_{2,\infty}(M,\{x_i\}_1^{2n})$ when $R = \mathbb{Z}[A^{\pm 1}]$. The following results about the skein module and its relative version of the product of an oriented surface with the unit interval are pertinent to our work.

\begin{theorem}\label{ftimesi}\cite{smof3,fundamentals}
 
Let $\Sigma$ be an oriented surface in which each link is equipped with blackboard framing and let $I$ denote the unit interval $[0,1]$. Then $\mathcal{S}_{2,\infty}(\Sigma \times I;R,A)$ is a free $R$-module whose basis consists of the empty link $\varnothing$ and simple closed multicurves in $\Sigma$ that have no trivial components. This applies in particular to handlebodies, since $H_{n} = \Sigma_{0,n+1} \times I$, where $H_n$ is a handlebody of genus $n$ and $\Sigma_{g,b}$ denotes a genus $g$ surface with $b$ boundary components.
 
 \end{theorem}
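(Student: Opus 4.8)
The plan is to establish the two assertions hidden in the statement: that the set $\mathcal{B}$ consisting of $\varnothing$ together with the isotopy classes of simple closed multicurves in $\Sigma$ having no null-homotopic component \emph{spans} $\mathcal{S}_{2,\infty}(\Sigma\times I;R,A)$ over $R$, and that the image of $\mathcal{B}$ is $R$-\emph{linearly independent}; combining the two shows $\mathcal{B}$ is a free basis. Throughout I would use the standard fact that every framed link in $\Sigma\times I$ can be isotoped into a thin slab $\Sigma\times[\tfrac12-\epsilon,\tfrac12+\epsilon]$ and there presented by a link diagram on $\Sigma$ carrying the blackboard framing, and that two such diagrams represent isotopic framed links precisely when they are related by a finite sequence of Reidemeister II and III moves, each supported inside a disk in $\Sigma$, together with isotopy of the diagram within $\Sigma$ (Reidemeister I being disallowed by the framing convention).

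For the spanning statement, take a diagram $D$ on $\Sigma$ with finitely many crossings. Resolving the crossings one at a time by relation (1) rewrites $[D]$ in $\mathcal{S}_{2,\infty}(\Sigma\times I)$ as an $R$-linear combination of crossingless diagrams, that is, disjoint unions of embedded circles in $\Sigma$. Any null-homotopic such circle bounds an embedded disk in $\Sigma$, hence in $\Sigma\times I$, so relation (2) lets us delete it at the cost of a factor $-A^2-A^{-2}$; iterating removes all trivial components. What is left is an $R$-linear combination of elements of $\mathcal{B}$, so $\mathcal{B}$ spans.

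For linear independence, I would build an $R$-linear splitting of the natural surjection $R\mathcal{B}\to\mathcal{S}_{2,\infty}(\Sigma\times I)$ using the Kauffman bracket state sum. For a diagram $D$ on $\Sigma$ with $n$ crossings, set
\[
\varphi(D)\;=\;\sum_{s}\,A^{\,a(s)-b(s)}\,(-A^2-A^{-2})^{\,t(s)}\;e(s)\;\in\;R\mathcal{B},
\]
where $s$ ranges over the $2^n$ Kauffman states of $D$, $a(s)$ and $b(s)$ count the two kinds of smoothings, $e(s)\in\mathcal{B}$ is the essential part of the resulting crossingless diagram $D_s$, and $t(s)$ is its number of null-homotopic components. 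One then checks: (i) $\varphi$ is unchanged under Reidemeister II and III moves in a disk, which is exactly Kauffman's original invariance computation carried out locally (a bigon move can create or absorb a null-homotopic circle inside the supporting disk, and the factor $-A^2-A^{-2}$ is precisely what keeps the state sum invariant), so $\varphi$ descends to a function on framed links and hence extends to an $R$-linear map on $R\mathcal{L}^{\mathit{fr}}$; (ii) $\varphi$ annihilates $S_{2,\infty}^{\mathit{sub}}$, since splitting a crossing as $A\,(\text{$0$-smoothing})+A^{-1}(\text{$\infty$-smoothing})$ reproduces relation (1), and adjoining a disjoint null-homotopic circle raises every $t(s)$ by one, reproducing relation (2). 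Thus $\varphi$ induces $\bar\varphi\colon\mathcal{S}_{2,\infty}(\Sigma\times I)\to R\mathcal{B}$, and since a reduced multicurve has no crossings and no trivial components, $\bar\varphi$ restricts to the identity on $\mathcal{B}$. Hence $R\mathcal{B}\to\mathcal{S}_{2,\infty}(\Sigma\times I)$ is split injective, and $\mathcal{B}$ is a basis.

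The main obstacle is the well-definedness in step (i): one needs the Reidemeister theorem for diagrams on a surface, including the fact that every move may be taken inside a disk, and then must verify that forming the essential part and counting null-homotopic components interact correctly with those local moves so that Kauffman's invariance argument goes through verbatim. The claim about handlebodies is then immediate, since $H_n=\Sigma_{0,n+1}\times I$ with $\Sigma_{0,n+1}$ a planar surface, so $\mathcal{S}_{2,\infty}(H_n;R,A)$ is free on the isotopy classes of simple closed multicurves in a disk with $n$ holes that have no null-homotopic component.
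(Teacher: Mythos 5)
Your proof is correct and follows essentially the same route as the source of this statement: the paper itself quotes the theorem from \cite{smof3,fundamentals} without proof, and there the argument is exactly as you give it --- crossing resolution plus deletion of null-homotopic circles for spanning, and a Kauffman-bracket state sum (equivalently, confluence of the reduction procedure) that descends to a splitting $\mathcal{S}_{2,\infty}(\Sigma\times I)\to R\mathcal{B}$ restricting to the identity on multicurves, which gives freeness. The one delicate point, which you correctly isolate, is the diagrammatic calculus for \emph{framed} links in $\Sigma\times I$ (Reidemeister II and III supported in disks, with opposite kinks cancelling via the Whitney trick), and your treatment of it is adequate.
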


 The following example discusses the skein module of the thickened annulus.

 \begin{example}\label{solidtoruskbsm}

$\mathcal S_{2,\infty} (\Sigma_{0,2} \times I; R, A)$ is free and infinitely generated by the curves $\{x^i\}_{i \geq 0}$, where $x$ denotes the homotopically nontrivial simple closed curve on the annulus and $x^0$ denotes the empty link $\varnothing$. Note that, $\mathcal S_{2,\infty} (S^1 \times D^2; R, A) \cong \mathcal S_{2,\infty} (\Sigma_{0,2} \times I; R, A)$. 
     
 \end{example}

 A result similar to Theorem \ref{ftimesi} also holds for relative skein modules.

 \begin{theorem}\label{rkbsmsib}\cite{fundamentals}

Let $\Sigma$ be an oriented surface, where $\partial \Sigma \neq \emptyset$, and let $\{x_i\}_1^{2n}$ be $2n$ oriented framed points centred at $\partial \Sigma \times \{\frac{1}{2}\}$. 
Then $\mathcal S_{2, \infty}(\Sigma \times I, \{x_i\}_1^{2n}; R, A)$ is a free $R$-module whose basis is composed of relative links\footnote{Relative links in $\Sigma$ are families of properly embedded arcs and closed curves in $\Sigma \times \{\frac{1}{2}\}$.} in $\Sigma \times \{\frac{1}{2}\}$ without trivial components.

\end{theorem}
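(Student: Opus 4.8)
The plan is to establish the theorem in two steps, mirroring the proof of the absolute statement (Theorem~\ref{ftimesi}) with the arc components grafted on. Under the skein relations the properly embedded arcs terminating at the framed points $x_i$ behave exactly like the closed components of a link; the only difference is that, having fixed endpoints on $\partial\Sigma$, no arc can ever be discarded. Throughout, a \emph{trivial component} is a closed curve bounding a disk in $\Sigma \times I$, equivalently --- since $\Sigma \times I$ deformation retracts onto $\Sigma$ --- a closed curve bounding a disk in $\Sigma$.

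\emph{Step 1: the relative links without trivial components span.} Given a relative framed link $L \subset \Sigma \times I$ with $\partial L = \{x_i\}_1^{2n}$, I would isotope $L$ rel $\partial$ so that it is represented by a relative link diagram $D$ on $\Sigma$ carrying the blackboard framing, with finitely many transverse double points, all in the interior of $\Sigma$. Resolving one crossing of $D$ via the skein expression $L_+ - A L_0 - A^{-1} L_\infty$ expresses $[D]$ as an $R$-combination of classes of diagrams with strictly fewer crossings, so induction on the crossing number rewrites $[D]$ as an $R$-combination of crossingless relative diagrams, i.e.\ of properly embedded families of arcs and simple closed curves in $\Sigma \times \{\frac{1}{2}\}$. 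Each closed curve in such a family that bounds a disk in $\Sigma$ is then removed at the cost of the scalar $-(A^2 + A^{-2})$ via the trivial component relation; after finitely many steps one is left with an $R$-combination of relative links without trivial components. Hence these span $\mathcal{S}_{2,\infty}(\Sigma \times I, \{x_i\}_1^{2n}; R, A)$.

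\emph{Step 2: linear independence.} This is the heart of the matter. I would argue by induction on the complexity $c(\Sigma) := 1 - \chi(\Sigma) \ge 0$, carried out simultaneously for all numbers of framed points on $\partial\Sigma$, so that the inductive hypothesis is available for surfaces with \emph{more} marked points but \emph{strictly smaller} complexity. In the base case $c(\Sigma) = 0$ the surface is a disk, $\Sigma \times I = B^3$, no closed curve survives, and the relative links without trivial components are precisely the $C_n$ crossingless matchings of the $2n$ boundary points; their independence over $\mathbb{Z}[A^{\pm 1}]$ is the classical nondegeneracy of the Temperley--Lieb pairing $\langle m, m' \rangle = (-A^2 - A^{-2})^{\ell(m,m') - 1}$, where $\ell(m,m')$ counts the loops obtained by capping $m$ against $m'$ and the pairing is read off as a Kauffman bracket in $S^3$; its Gram determinant is a nonzero element of $\mathbb{Z}[A^{\pm 1}]$. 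For the inductive step, pick an essential properly embedded arc $\gamma \subset \Sigma$ such that the surface $\Sigma_0$ obtained by cutting $\Sigma$ along $\gamma$ has, on each of its components, complexity $c(\Sigma) - 1$. A relative multicurve in $\Sigma$ without trivial components, isotoped to meet $\gamma$ minimally, cuts open to a relative multicurve in $\Sigma_0 \times I$ without trivial components, carrying the original $2n$ points together with two new framed points for each point where it met $\gamma$. The goal is to show that this ``cut along $\gamma$'' operation underlies a well-defined $R$-linear map from $\mathcal{S}_{2,\infty}(\Sigma \times I, \{x_i\}_1^{2n})$ to the direct sum, over $k \ge 0$, of the relative skein modules of $\Sigma_0 \times I$ equipped with $2n + 2k$ framed boundary points, sending distinct basis multicurves of $\Sigma$ to $R$-linearly independent elements of the target; since the latter are independent by the inductive hypothesis, the basis multicurves of $\Sigma$ must be independent as well.

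The main obstacle is precisely the well-definedness and faithfulness of this cutting map. ``Cut along $\gamma$'' is not defined on an arbitrary relative link, because resolving a crossing near $\gamma$ can alter the number of intersections with $\gamma$, and minimal position is not preserved by the skein relations. The remedy is a refined reduction: first isotope every diagram so that it has no crossings inside a fixed collar $\gamma \times [-\varepsilon, \varepsilon]$, resolve the remaining crossings only outside that collar, and track the framing inside it, then verify that the resulting assignment respects the skein relations --- equivalently, that re-gluing $\gamma$ introduces no relation beyond the Kauffman bracket ones already present. This is in essence a confluence (Diamond Lemma) check for the rewriting system, and it must be done with care over $\mathbb{Z}[A^{\pm 1}]$ rather than over a ring in which the relevant cyclotomic polynomials are invertible; I expect it to be the technical core of the proof. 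An alternative route I would also keep in mind is to filter $\mathcal{S}_{2,\infty}(\Sigma \times I, \{x_i\}_1^{2n})$ by geometric intersection number with a fixed family of arcs cutting $\Sigma$ into a disk (Dehn--Thurston type coordinates that also record how the arc components run) and to show the associated graded is a polynomial-type $R$-algebra whose monomials are the basis multicurves; the obstacle there is the same, namely proving that no extra relations survive in the associated graded.
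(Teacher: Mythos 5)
The paper itself does not prove this statement: it is quoted from \cite{fundamentals}, where the argument is the classical state-sum one --- for a relative diagram $D$ on $\Sigma$ one defines its Kauffman bracket expansion into crossingless relative multicurves without trivial components, checks that this expansion is invariant under Reidemeister moves II and III (and behaves correctly under framing changes), and thereby obtains an $R$-linear map from $\mathcal S_{2,\infty}(\Sigma\times I,\{x_i\}_1^{2n};R,A)$ to the free module on the proposed basis which splits the surjection coming from your Step 1. Your Step 1 coincides with that part and is fine, and your base case for the disk (independence of crossingless matchings via the nonvanishing Temperley--Lieb Gram determinant, using the bracket in $S^3$) is a legitimate, if heavier, substitute there.

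The genuine gap is the inductive step, and it is not a deferred technicality but the entire content of the theorem. The assignment ``cut along $\gamma$'' with target $\bigoplus_{k\ge 0}\mathcal S_{2,\infty}\bigl(\Sigma_0\times I,\{x_i\}_1^{2n+2k}\bigr)$ cannot be well defined on isotopy classes, let alone on the skein module: a finger move pushing a strand across $\gamma$ is an ambient isotopy of the relative link but raises its geometric intersection number with $\gamma$ by two, so the two representatives are sent into \emph{different} direct summands, and the target contains no relation identifying the cut of the bigon diagram (a nonzero element of its summand --- just a relative multicurve with a turn-back arc) with the cut of the minimal-position diagram. Your proposed remedy --- clearing crossings out of a collar of $\gamma$ and running a confluence check --- only addresses crossings near $\gamma$, not this change of intersection number, which is precisely why ``minimal position is not preserved'' cannot be set aside. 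Any repair (imposing turn-back or capping relations in the target, passing to a quotient, or invoking injectivity of a gluing map) is a statement of the same strength as the theorem you are trying to prove; the same objection applies to your alternative filtration-by-intersection-number route, where ``no extra relations survive in the associated graded'' is again the theorem itself. Note finally that once you allow yourself the well-definedness of the bracket in $S^3$ (as your base case already does), the identical Reidemeister-invariance computation carried out for diagrams on $\Sigma$ directly proves the full statement with no induction on complexity and no cutting --- which is exactly the proof in \cite{fundamentals}.
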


The skein module of a surface times an interval may be equipped with an algebra structure for which the multiplication operation is defined as follows.

\begin{definition}

 Consider two framed links $L_1$ and $L_2$ in $\Sigma \times I$. Define their product $\cdot$ by placing $L_1$ over $L_2$ in $\Sigma \times I$, that is, $L_1 \cdot L_2 = L_1 \sqcup L_2$ such that $L_{1} \subset \Sigma\times (\frac{1}{2}, 1)$ and $L_2 \subset \Sigma\times(0,\frac{1}{2})$. The empty link $\varnothing$ serves as the multiplicative identity. This multiplication endows the skein module of a thickened surface $\Sigma \times I$ with a natural algebra structure. The Kauffman bracket skein module equipped with this algebra structure is called the Kauffman bracket skein algebra.
    
\end{definition}

We denote the Kauffman bracket skein algebra, henceforth known simply as the skein algebra, by $\mathcal{S}^{\mathit{alg}}(\Sigma; R, A)$. This new notation emphasises the fact that the skein algebra depends on the surface and its product structure. For brevity, we use the notation $\mathcal{S}^{\mathit{alg}}(\Sigma)$ when $R = \mathbb{Z}[A^{\pm 1}]$.  

\begin{remark}\label{relativetimesframed}

 $\mathcal S_{2, \infty}(\Sigma \times I, \{x_i\}_1^{2n}; R, A)$ is a bimodule over the algebra $\mathcal{S}^{\mathit{alg}}(\Sigma; R, A)$, which contains the ring $R$. Let $L_1$ be a relative framed link in $\Sigma \times I$ and $L_2$ be a framed link in $\Sigma \times I$. Then, $L_1 \cdot L_2$ is defined by placing $L_1$ above $L_2$, that is, $L_1 \subset \Sigma \times (\frac{1}{3},1)$ and $L_2 \subset \Sigma \times (0,\frac{1}{3})$. Similarly, $L_2 \cdot L_1$ is defined by placing $L_2$ over $L_1$, that is, $L_2 \subset \Sigma \times (\frac{2}{3},1)$ and $L_1 \subset \Sigma \times (0,\frac{2}{3})$. 

\end{remark}

We now state some properties of the skein module required for proving our main results. The following theorem determines how the Kauffman bracket skein module behaves under handle addition, thereby giving its presentation in terms of generators and relations. 









    \begin{theorem}\cite{fundamentals,kbsmlens}\label{propkbsm2}

    \begin{enumerate}

    \item  If $N$ is obtained from $M$ by adding a $3$-handle to $M$ and $i : M \hookrightarrow N$ is the associated embedding, then the induced homomorphism 
$i_* : \mathcal{S}_{2,\infty}(M;R,A) \longrightarrow \mathcal{S}_{2,\infty}(N;R,A)$ is an isomorphism.\label{3hand} \\

\item (Handle Sliding Lemma)\label{handleslidinglemma} Let $(M,\partial M)$ be a $3$-manifold with boundary and $\gamma$ be a
simple closed curve on $\partial M$. Additionally, let $N = M_{\gamma}$ be the $3$-manifold
obtained from $M$ by adding a $2$-handle along $\gamma$ and $i : M \hookrightarrow N$ be the associated embedding. Then the induced homomorphism
$i_* : \mathcal{S}_{2,\infty}(M;R,A) \longrightarrow \mathcal{S}_{2,\infty}(N;R,A)$ is an epimorphism.\label{2hand} Furthermore, the kernel of $i_*$ is generated by the relations yielded by $2$-handle slidings. In particular, if $\mathcal{L}^{\mathit{fr}}_{\mathit{gen}}$
is a set of framed links in $M$ that generates $\mathcal{S}_{2,\infty}(M;R,A)$,
then $\mathcal{S}_{2,\infty}(N;R,A) \cong \mathcal{S}_{2,\infty}(M;R,A)/\mathcal J$, where $\mathcal J$ is the submodule of
$\mathcal{S}_{2,\infty}(M;R,A)$ generated by the expressions $L - sl_{\gamma}(L)$.  Here $L \in \mathcal{L}^{\mathit{fr}}_{\mathit{gen}}$ and $sl_{\gamma}(L)$ is obtained from $L$ by sliding it along $\gamma$. 

\end{enumerate}

\end{theorem}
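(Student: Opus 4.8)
The plan is to prove both parts by exhibiting an explicit inverse — of $i_*$ in part (1), and of the epimorphism $\mathcal{S}_{2,\infty}(M;R,A)/\mathcal{J}\to\mathcal{S}_{2,\infty}(N;R,A)$ induced by $i_*$ in part (2) — using general position to push framed links, ambient isotopies, and skein triples from $N$ down into $M$.

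\emph{Part (1).} Write $N=M\cup_{S^2}B^3$, where the $3$-handle is glued along an $S^2$-component of $\partial M$. Since a framed link is $1$-dimensional, a generic framed link in $N$ misses a chosen interior point $p\in B^3$; the deformation retraction of $B^3\setminus\{p\}$ onto $\partial B^3=S^2\subset M$ then carries it into a collar of $S^2$ in $M$, so $i_*$ is onto. The same dimension count applied to the $2$-dimensional trace of an ambient isotopy, and to the small ball supporting a Kauffman bracket skein triple or a trivial-component relation, shows that every isotopy and every defining relation of $\mathcal{S}_{2,\infty}(N;R,A)$ can be pushed off $p$ and radially retracted into $M$. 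This produces a well-defined inverse $\mathcal{S}_{2,\infty}(N;R,A)\to\mathcal{S}_{2,\infty}(M;R,A)$, so $i_*$ is an isomorphism. Unlike a $2$-handle, a $3$-handle contributes no handle-sliding relations: its cocore is a single point, missed not only by generic links but also by the $2$-dimensional traces of generic isotopies, so ``isotopic in $N$'' already forces ``isotopic in $M$''.

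\emph{Part (2), the epimorphism.} Present $N=M\cup_h(D^2\times I)$, the $2$-handle being attached by a homeomorphism $h$ from $\partial D^2\times I$ onto an annular neighbourhood of $\gamma$ in $\partial M$; let $C=\{0\}\times I$ denote its cocore, a properly embedded arc. Because $\dim C+\dim L=1+1<3=\dim N$, a generic framed link $L\subset N$ is disjoint from $C$ and can therefore be isotoped out of the $2$-handle into $M$, so $i_*$ is onto. Moreover, for each generator $L\in\mathcal{L}^{\mathit{fr}}_{\mathit{gen}}$ the link $sl_\gamma(L)$ is obtained from $L$ by pushing a band across the core of the $2$-handle, hence is ambient isotopic to $L$ in $N$; thus $\mathcal{J}\subseteq\ker i_*$, and $i_*$ induces an epimorphism $\bar{i}_*:\mathcal{S}_{2,\infty}(M;R,A)/\mathcal{J}\to\mathcal{S}_{2,\infty}(N;R,A)$.

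\emph{Part (2), the kernel, and the main obstacle.} To conclude $\ker i_*=\mathcal{J}$ it suffices to invert $\bar{i}_*$: define $\theta$ on a framed link $L\subset N$ by isotoping $L$ off $C$ into $M$ and setting $\theta(L)=[L]\in\mathcal{S}_{2,\infty}(M;R,A)/\mathcal{J}$; if $L_0,L_1\subset M$ are ambient isotopic in $N$, put the trace of the isotopy in general position relative to $C$, so that it meets $C$ transversally in finitely many points, and argue that crossing one such point amounts, at the level of links, to exactly one handle slide over $\gamma$, whence $[L_0]=[L_1]$ modulo $\mathcal{J}$; similarly, isotoping the defining ball of a skein triple or trivial-component relation in $N$ into $M$ turns it into such a relation in $M$, so $\theta$ kills $S_{2,\infty}^{\mathit{sub}}(N)$ and descends to a two-sided inverse of $\bar{i}_*$. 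The substantive step is precisely this well-definedness of $\theta$: making the general-position analysis of the isotopy trace relative to the cocore rigorous and checking that each transverse intersection contributes exactly one handle-sliding relation. Two bookkeeping points need care — a handle slide can create a local curl, so the framing must be tracked and corrected via relation (2) of Definition \ref{kbsmdef}; and one must pass from slides of arbitrary links to slides of links in the fixed generating set $\mathcal{L}^{\mathit{fr}}_{\mathit{gen}}$, which uses that handle-sliding commutes with resolving crossings occurring away from the $2$-handle, so that $\mathcal{J}$, generated by $\{L-sl_\gamma(L):L\in\mathcal{L}^{\mathit{fr}}_{\mathit{gen}}\}$, already contains $L'-sl_\gamma(L')$ modulo skein relations for every framed link $L'$.
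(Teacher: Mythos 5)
Theorem \ref{propkbsm2} is quoted in the paper from \cite{fundamentals} and \cite{kbsmlens}; the paper gives no proof of it, so there is no internal argument to compare against. Your outline is essentially the standard proof from those references: push generic links, isotopy traces, and skein-relation supports off the cocore (a point for the $3$-handle, the arc $C$ for the $2$-handle), identify each transverse crossing of $C$ by the trace of an isotopy with a single slide along $\gamma$, and use locality of the skein relations to reduce from slides of arbitrary links to slides of members of $\mathcal{L}^{\mathit{fr}}_{\mathit{gen}}$. As a plan this is correct, but note that what you label the substantive step (well-definedness of $\theta$, i.e., the general-position analysis of the isotopy trace relative to $C$, which also covers independence of the chosen push-off of a link into $M$) is exactly where all the work of the cited proof lies, and you state it rather than carry it out. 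Two smaller points of care: in part (1) the dimension count does not let you push the $3$-ball of a skein triple off the point $p$ (the dimensions add to $3$); instead push the $1$-dimensional tangle data off $p$ and then shrink the supporting ball so that it misses $p$. And in the reduction to the generating set, $sl_{\gamma}(L)$ for $L \in \mathcal{L}^{\mathit{fr}}_{\mathit{gen}}$ must be allowed to range over all bands joining $L$ to $\gamma$, since resolving crossings of an arbitrary link drags the sliding band to an a priori arbitrary position on the generators; also, removing the curl created by a slide uses relation (1) together with relation (2) of Definition \ref{kbsmdef}, not relation (2) alone (for the statement itself the curl is simply part of the framed link $sl_{\gamma}(L)$).
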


The following result forms the basis of our computation. 

\begin{theorem}\cite{fundamentals}

     Let $M$ be a compact oriented $3$-manifold. Then $M$ is obtained from a genus $m$ handlebody $H_m$ by adding
$2$- and $3$-handles to it and the generators
of $\mathcal{S}_{2,\infty}(M;R,A)$ are generators of $\mathcal{S}_{2,\infty}(H_m;R,A)$, while the relations of $\mathcal{S}_{2,\infty}(M;R,A)$
are yielded by $2$-handle slidings. In particular, let $\beta$ and $\eta$ be disjoint simple closed curves in $\partial H_m$. Glue two $2$-handles to $H_m$, one each along the curves $\beta$ and $\eta$, and denote the resultant $3$-manifold by $M$. If $\mathcal J_1$ is the submodule of $\mathcal{S}_{2,\infty}(H_m;R,A)$ generated by handle slides along $\beta$ and $\mathcal J_2$ is the submodule of $\mathcal{S}_{2,\infty}(H_m;R,A)$ generated by handle slides along $\eta$, then $\mathcal{S}_{2,\infty}(M;R,A) \cong \mathcal{S}_{2,\infty}(H_m;R,A)/(\mathcal J_1 + \mathcal J_2)$.
\label{kbsmheegaard}

\end{theorem}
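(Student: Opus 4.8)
The plan is to derive the theorem from the existence of a suitable handle decomposition of $M$ together with the two parts of Theorem~\ref{propkbsm2}. First I would invoke the standard fact from Morse theory that every compact oriented $3$-manifold $M$ admits a handle decomposition whose handles are attached in non-decreasing order of index: starting from a $0$-handle, the $1$-handles are attached first and produce a handlebody $H_m$ of some genus $m$; then $2$-handles are attached along a collection of pairwise disjoint simple closed curves $\gamma_1,\dots,\gamma_k$ in $\partial H_m$; and finally some $3$-handles are attached (possibly none). Write $M_0 = H_m$ and let $M_j$ denote the result of attaching the first $j$ of the $2$-handles to $H_m$, so that $M$ is obtained from $M_k$ by adding only $3$-handles.

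By Theorem~\ref{propkbsm2}(\ref{3hand}), the inclusion $M_k \hookrightarrow M$ induces an isomorphism on skein modules, so it suffices to study the chain of inclusions $H_m = M_0 \hookrightarrow M_1 \hookrightarrow \cdots \hookrightarrow M_k$. I would prove by induction on $j$ that $\mathcal{S}_{2,\infty}(M_j;R,A) \cong \mathcal{S}_{2,\infty}(H_m;R,A)/(\mathcal{J}_1 + \cdots + \mathcal{J}_j)$, where $\mathcal{J}_i$ is the submodule of $\mathcal{S}_{2,\infty}(H_m;R,A)$ generated by the handle slides along $\gamma_i$; the base case $j=0$ is trivial, the case $j=k$ together with the previous paragraph gives the first assertion, and $k=2$ with $\gamma_1 = \beta$, $\gamma_2 = \eta$ gives the ``in particular'' part. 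For the inductive step, apply Theorem~\ref{propkbsm2}(\ref{2hand}) to $M_{j-1}\hookrightarrow M_j$: writing $q_j$ for the induced homomorphism, $q_j$ is surjective and $\ker q_j$ is generated by the elements $L - sl_{\gamma_j}(L)$ as $L$ ranges over a generating set of $\mathcal{S}_{2,\infty}(M_{j-1};R,A)$, for instance the images under the composite epimorphism $p_{j-1}\colon \mathcal{S}_{2,\infty}(H_m;R,A) \to \mathcal{S}_{2,\infty}(M_{j-1};R,A)$ of the generators of $\mathcal{S}_{2,\infty}(H_m;R,A)$ provided by Theorem~\ref{ftimesi}. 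Since $\ker p_{j-1} = \mathcal{J}_1 + \cdots + \mathcal{J}_{j-1}$ by the inductive hypothesis, the third isomorphism theorem gives $\mathcal{S}_{2,\infty}(M_j;R,A) \cong \mathcal{S}_{2,\infty}(H_m;R,A)/p_{j-1}^{-1}(\ker q_j)$, and it remains to identify $p_{j-1}^{-1}(\ker q_j)$ with $\mathcal{J}_1 + \cdots + \mathcal{J}_j$.

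The one point that requires care, and the step I expect to be the main obstacle, is precisely this last identification. One must check that a handle slide $sl_{\gamma_j}(L)$ of a lift $L \subset H_m$ of a generator --- a priori an operation performed over the newly attached $j$-th $2$-handle inside $M_{j-1}$, whose boundary has already been surgered along $\gamma_1,\dots,\gamma_{j-1}$ --- can be realised by an ambient isotopy in $H_m$ followed by a slide along the curve $\gamma_j \subset \partial H_m$, so that the relation $L - sl_{\gamma_j}(L)$ lifts to the corresponding generator of $\mathcal{J}_j \subset \mathcal{S}_{2,\infty}(H_m;R,A)$. This is exactly where the disjointness of the curves $\gamma_1,\dots,\gamma_k$ is used; granting it, one obtains $p_{j-1}^{-1}(\ker q_j) = \mathcal{J}_j + \ker p_{j-1} = \mathcal{J}_1 + \cdots + \mathcal{J}_j$, completing the induction. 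Everything else --- chaining the epimorphisms and the final bookkeeping --- is formal.
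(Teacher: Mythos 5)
Your proposal is correct and is essentially the standard argument from \cite{fundamentals}: the paper itself gives no proof of this theorem (it is quoted from Przytycki's work), and your route --- a handle decomposition attaching $1$-handles first, discarding $3$-handles via Theorem \ref{propkbsm2}(\ref{3hand}), inducting over the $2$-handles with the Handle Sliding Lemma, and using disjointness of the attaching curves to lift each sliding relation back to $\mathcal{S}_{2,\infty}(H_m;R,A)$ --- is exactly how the cited result is established. You also correctly isolate the only delicate point (that a slide over the $j$-th handle of a link pushed into $H_m$ is realised near $H_m\cup\gamma_j$, away from the previously attached handles, so it lies in $\mathcal{J}_j$), and your resolution of it is sound.
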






Hence, the problem of computing the skein module of any compact oriented $3$-manifold is reduced to that of determining all the $2$-handle sliding relations. In the next section we compute the Kauffman bracket skein module of $(S^1 \times S^2) \ \# \ (S^1 \ \times \ S^2)$.





    


    \section{The Kauffman bracket skein module of $(S^1 \times S^2) \ \# \ (S^1 \ \times \ S^2)$}\label{sectionkbsms1s2s1s2}

  Let $\beta$ and $\eta$ be two simple closed curves in the boundary of the genus two handlebody, $H_2$, as illustrated in Figure \ref{s1s2connsumh1fig}. Glue a $2$-handle along each of these curves and then cap off the holes with two $3$-handles. The resultant $3$-manifold is $(S^1 \times S^2) \ \# \ (S^1 \times S^2)$. 
  From Theorem \ref{propkbsm2}, it follows that the natural embedding $i: H_2 \hookrightarrow (S^1 \times S^2) \ \# \ (S^1 \times S^2)$ yields the epimorphism $i_*: \mathcal S_{2,\infty}(H_2) \longrightarrow \mathcal S_{2,\infty} ((S^1 \times S^2) \ \# \ (S^1 \times S^2))$ of skein modules. Let $\mathcal J_1$ and $\mathcal J_2$ be the submodules of $\mathcal S_{2,\infty}(H_2)$ generated by the handle sliding relations obtained from $2$-handle sliding along $\beta$ and $\eta$, respectively. From Theorem \ref{kbsmheegaard} we get that $\mathcal S_{2,\infty}((S^1 \times S^2) \ \# \ (S^1 \times S^2)) \cong \mathcal S_{2,\infty}(H_2)/ (\mathcal J_1 + \mathcal J_2)$. Thus, our main problem reduces to determining the submodules $\mathcal J_1$ and $\mathcal J_2$. \\

  

     \begin{figure}[ht]
    \centering
  $\vcenter{\hbox{\begin{overpic}[unit=1mm, scale = .2]{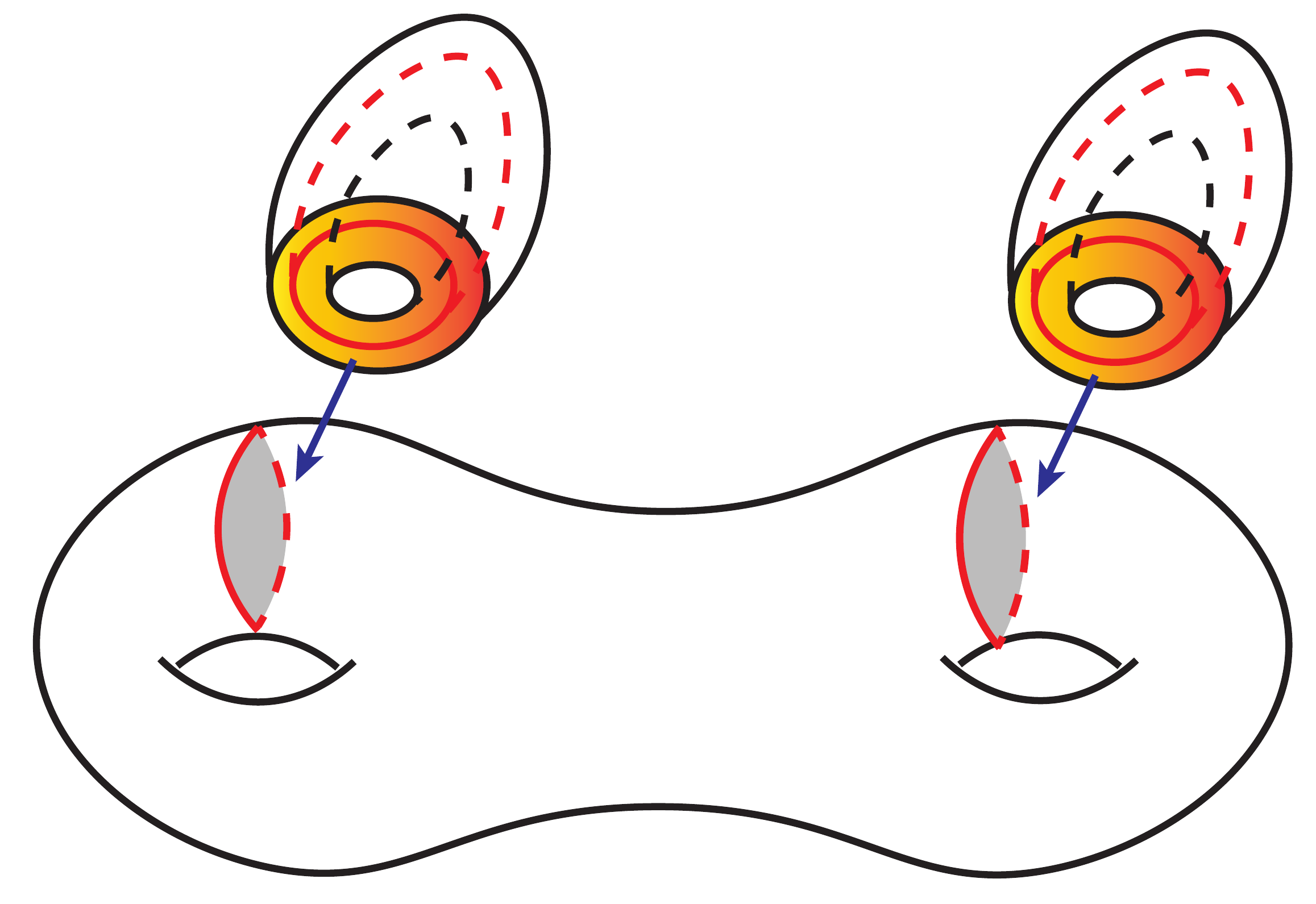}
 \put(10,22){\footnotesize{$\beta$}} 
  \put(22,29){\tiny{\text{glue}}}
  \put(34,43){\footnotesize{$2$\text{-handle}}}
  \put(-40,15){$(S^1 \times S^2) \ \# \ (S^1 \times S^2) \cong $} 
  \put(29,-1){$H_2 \cong \Sigma_{0,3} \times I$}
   \put(54.5,22){\footnotesize{$\eta$}} 
   \put(66,28.5){\tiny{\text{glue}}}
  \put(78,43){\footnotesize{$2$\text{-handle}}}
  \end{overpic} }}  $
    \caption{The 3-manifold $(S^1 \times S^2) \ \# \ (S^1 \times S^2) $ is obtained by gluing a $2$-handle each to $\partial H_2$ along the curves $\beta$ and $\eta$.}
    \label{s1s2connsumh1fig}
\end{figure}

Note that, $H_2 \cong \Sigma_{0,3} \times I$. 
Since all framed links in $(S^1 \times S^2) \ \# \ (S^1 \times S^2)$ have representatives in $H_2$, framed links in $(S^1 \times S^2) \ \# \ (S^1 \times S^2)$ may be presented by link diagrams on $\Sigma_{0,3}$; see Theorem \ref{ftimesi}. We illustrate the curves $\beta$ and $\eta$ as line segments in $\Sigma_{0,3}$; 
 see Figure \ref{s1s2connh1gamma}. Thus, we first describe the skein module and algebra of $\Sigma_{0,3} \times I$.

\subsection{The skein module of $\Sigma_{0,3} \times I$}  From Theorem \ref{ftimesi} we obtain the following result about the skein module of $\Sigma_{0,3} \times I$.

\begin{theorem}\cite{smof3, fundamentals}

$\mathcal S_{2,\infty}(\Sigma_{0,3} \times I)$ is a free and infinitely generated $\mathbb Z[A^{\pm 1}]$-module whose standard basis consists of monomials of the form $\{a_1^ia_2^ja_3^k\}_{i,j,k \geq 0}$, where $a_1, a_2$, and $a_3$ represent the homotopically nontrivial curves on $\Sigma_{0,3}$ as illustrated in Figure \ref{pantscurves}. The empty link is represented by $a_1^0a_2^0a_3^0$.
    
\end{theorem}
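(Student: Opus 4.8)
The plan is to deduce the statement directly from Theorem \ref{ftimesi}. Since $H_2 \cong \Sigma_{0,3}\times I$, that theorem already gives that $\mathcal S_{2,\infty}(\Sigma_{0,3}\times I)$ is a free $\mathbb Z[A^{\pm 1}]$-module whose basis is indexed by the isotopy classes of simple closed multicurves in $\Sigma_{0,3}$ having no trivial (disk-bounding) component. Hence all that remains is the purely combinatorial task of matching this basis with the set of monomials $\{a_1^ia_2^ja_3^k\}_{i,j,k\geq 0}$ in the skein algebra.

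First I would classify the essential simple closed curves on the pair of pants $\Sigma_{0,3}$. Cutting $\Sigma_{0,3}$ along such a curve and comparing Euler characteristics of the resulting pieces forces the curve to separate off an annulus, so up to isotopy it is parallel to one of the three boundary components; these are exactly the curves $a_1,a_2,a_3$ of Figure \ref{pantscurves}, and they are pairwise non-isotopic, being distinguished by the boundary component with which each cobounds an annulus. Consequently a multicurve without trivial components is, up to isotopy, a disjoint union of parallel copies of the $a_\ell$'s: any number of parallel copies of a single $a_\ell$ can be realised simultaneously disjointly, and curves parallel to distinct boundary components can be realised disjointly as well, so the isotopy classes of such multicurves are in bijection with triples $(i,j,k)\in\mathbb Z_{\geq 0}^3$ recording the number of components parallel to each boundary (this count being an isotopy invariant gives injectivity).

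Finally I would translate this into the skein algebra: pushing a multicurve of type $(i,j,k)$ into $\Sigma_{0,3}\times\{\tfrac12\}$ and using that the algebra product simply stacks parallel copies, its class equals $a_1^ia_2^ja_3^k$, with the empty multicurve corresponding to $a_1^0a_2^0a_3^0=\varnothing$. Since there are infinitely many triples $(i,j,k)$, the module is infinitely generated. The only genuine subtlety is the curve classification on $\Sigma_{0,3}$ together with the uniqueness of the multicurve decomposition; everything else is a direct rephrasing of Theorem \ref{ftimesi} in the language of the skein algebra.
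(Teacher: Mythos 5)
Your proposal is correct and is essentially the argument the paper relies on: the theorem is quoted from \cite{smof3, fundamentals} as the special case of Theorem \ref{ftimesi} for $H_2 \cong \Sigma_{0,3} \times I$, where the basis of multicurves without trivial components is identified with the monomials $a_1^i a_2^j a_3^k$ via the standard fact that every essential simple closed curve in a pair of pants is boundary-parallel. Your classification of curves and the bijection with triples $(i,j,k)$ is exactly the combinatorial step implicit in that citation, so there is nothing to add.
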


Note that the set $\{S_{i}(a_{1})S_{j}(a_{2})S_{k}(a_{3})\}_{i,j,k\geq 0}$ also forms a basis for $\mathcal S_{2,\infty}(\Sigma_{0,3} \times I)$. Here, $S_q$ denotes the Chebyshev polynomials of the second kind, which satisfy the recurrence relation $S_{q+1}(x) = xS_q(x) - S_{q-1}(x)$, with the initial conditions $S_{0}(x) = 1$ and $S_{1}(x) = x$. In our paper, it is necessary for us to extend the initial conditions to include negative indices, in which case $S_{-2}(x) = -1$ and $S_{-1}(x) = 0$. We also have the following result due to Bullock and Przytycki about the skein algebra of $\Sigma_{0,3}$. 

\begin{theorem}\cite{smquant}

$\mathcal{S}^{\mathit{alg}}(\Sigma_{0,3})$ is a commutative algebra and is isomorphic to $\mathbb{Z}[A^{\pm 1}][a_1, a_2, a_3]$.
    
\end{theorem}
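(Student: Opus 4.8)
The plan is to promote the module-level description of $\mathcal{S}_{2,\infty}(\Sigma_{0,3} \times I)$ supplied by Theorem \ref{ftimesi} to a statement about its algebra structure. By Theorem \ref{ftimesi}, a $\mathbb Z[A^{\pm 1}]$-basis of $\mathcal S_{2,\infty}(\Sigma_{0,3} \times I)$ is given by the isotopy classes of simple closed multicurves in $\Sigma_{0,3}$ with no trivial components. Every essential simple closed curve in a pair of pants is isotopic to one of the three boundary-parallel curves $a_1, a_2, a_3$ (an essential curve separates $\Sigma_{0,3}$ into an annulus and a smaller pair of pants), and these three curves admit pairwise disjoint annular neighborhoods, obtained by pushing $a_i$ towards the $i$-th boundary component. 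Hence such a multicurve is precisely a disjoint union of $i$ parallel copies of $a_1$, $j$ of $a_2$, and $k$ of $a_3$ for some $i,j,k \geq 0$, so the basis is $\{a_1^i a_2^j a_3^k\}_{i,j,k \geq 0}$ and there is a $\mathbb Z[A^{\pm 1}]$-module isomorphism $\phi \colon \mathbb Z[A^{\pm 1}][x_1, x_2, x_3] \to \mathcal S_{2,\infty}(\Sigma_{0,3} \times I)$ sending the monomial $x_1^i x_2^j x_3^k$ to the multicurve $a_1^i a_2^j a_3^k$. The task is to check that $\phi$ is a ring homomorphism, which, by bilinearity of both multiplications, reduces to checking it on pairs of monomials.

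The key geometric observation is that, because $a_1, a_2, a_3$ have pairwise disjoint annular neighborhoods, the framed links representing $a_1^i a_2^j a_3^k$ and $a_1^{i'} a_2^{j'} a_3^{k'}$ in $\Sigma_{0,3} \times I$ may be isotoped so that their union consists of $i+i'$, $j+j'$, and $k+k'$ pairwise disjoint parallel copies of $a_1$, $a_2$, $a_3$ lying in a single level $\Sigma_{0,3} \times \{\tfrac12\}$. The resulting diagram on $\Sigma_{0,3}$ has no crossings, and none of its components bounds a disk in $\Sigma_{0,3}$; therefore no Kauffman bracket skein relation and no trivial-component relation is triggered. Consequently $\phi(x_1^i x_2^j x_3^k) \cdot \phi(x_1^{i'} x_2^{j'} x_3^{k'})$ equals the basis element $a_1^{i+i'} a_2^{j+j'} a_3^{k+k'} = \phi(x_1^{i+i'} x_2^{j+j'} x_3^{k+k'})$, with coefficient $1$ and no lower-order corrections. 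Taking all but two of the exponents to be $0$ shows in particular that $a_p a_q = a_q a_p$ for all $p, q$, so $\mathcal S^{\mathit{alg}}(\Sigma_{0,3})$ is commutative and $\phi$ is a well-defined algebra map.

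Finally, $\phi$ is surjective, since the $a_i$ generate every basis multicurve and hence all of $\mathcal S^{\mathit{alg}}(\Sigma_{0,3})$, and injective, since it carries the monomial basis of $\mathbb Z[A^{\pm 1}][x_1, x_2, x_3]$ bijectively onto a $\mathbb Z[A^{\pm 1}]$-basis of $\mathcal S_{2,\infty}(\Sigma_{0,3} \times I)$, so its kernel is zero. Thus $\phi$ is an isomorphism of $\mathbb Z[A^{\pm 1}]$-algebras. I expect the only delicate point to be the verification in the second paragraph that stacking parallel copies of the $a_i$ produces neither crossings nor trivial components in the diagram on $\Sigma_{0,3}$, i.e. that the product of two basis monomials is again a basis monomial with unit coefficient; this is exactly what makes the evident module isomorphism multiplicative, and once it is in hand, commutativity, surjectivity, and injectivity are all immediate from the basis theorem.
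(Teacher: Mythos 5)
Your argument is correct: the paper itself does not prove this statement but simply cites Bullock--Przytycki \cite{smquant}, and your proof is essentially the standard one found there — every essential simple closed curve in $\Sigma_{0,3}$ is boundary-parallel, so the multicurve basis of Theorem \ref{ftimesi} is $\{a_1^ia_2^ja_3^k\}_{i,j,k\geq 0}$, and since these curves admit disjoint annular neighbourhoods, stacking two such multicurves and flattening to one level produces no crossings and no trivial components, so monomials multiply with coefficient $1$ and the evident module isomorphism with $\mathbb{Z}[A^{\pm 1}][a_1,a_2,a_3]$ is multiplicative and commutative. No gaps; the one point you flag (that the product of basis monomials is again a basis monomial with unit coefficient) is indeed the only thing to check, and your disjoint-neighbourhood argument settles it.
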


\begin{figure}[ht]
\centering
\begin{subfigure}{.45\textwidth}
\centering
\begin{overpic}[unit=1mm, scale = 0.12]{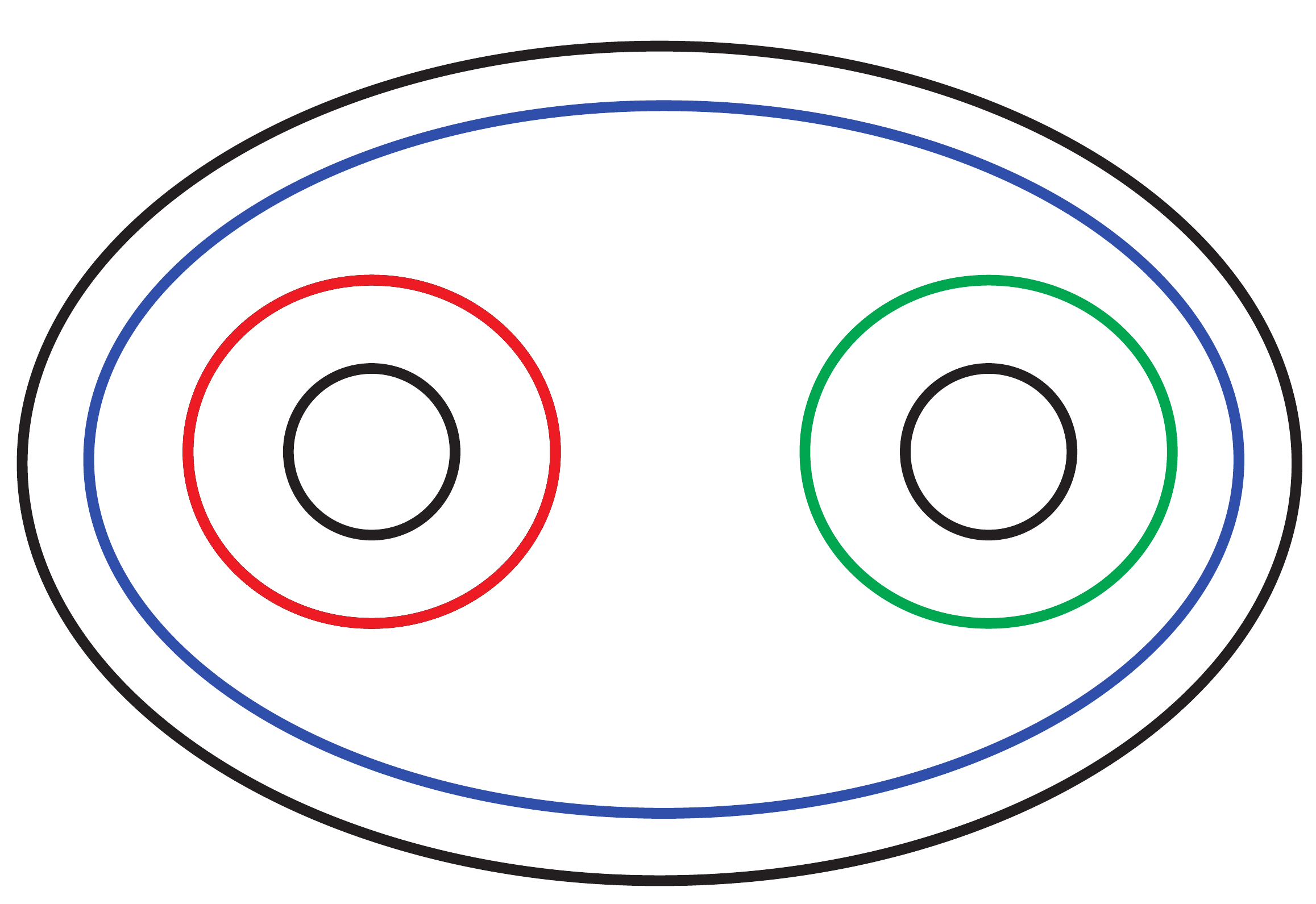}

\put(22,4.5){\footnotesize$a_3$}
\put(16,22.5){\footnotesize$a_1$}
\put(29,22.5){\footnotesize$a_2$}
\end{overpic}
\caption{The generators of $\mathcal S^{\mathit{alg}}\Sigma_{0,3}$.}
\label{pantscurves}
\end{subfigure} 
\centering
\begin{subfigure}{.45\textwidth}
\centering
\begin{overpic}[unit=1mm, scale = 0.12]{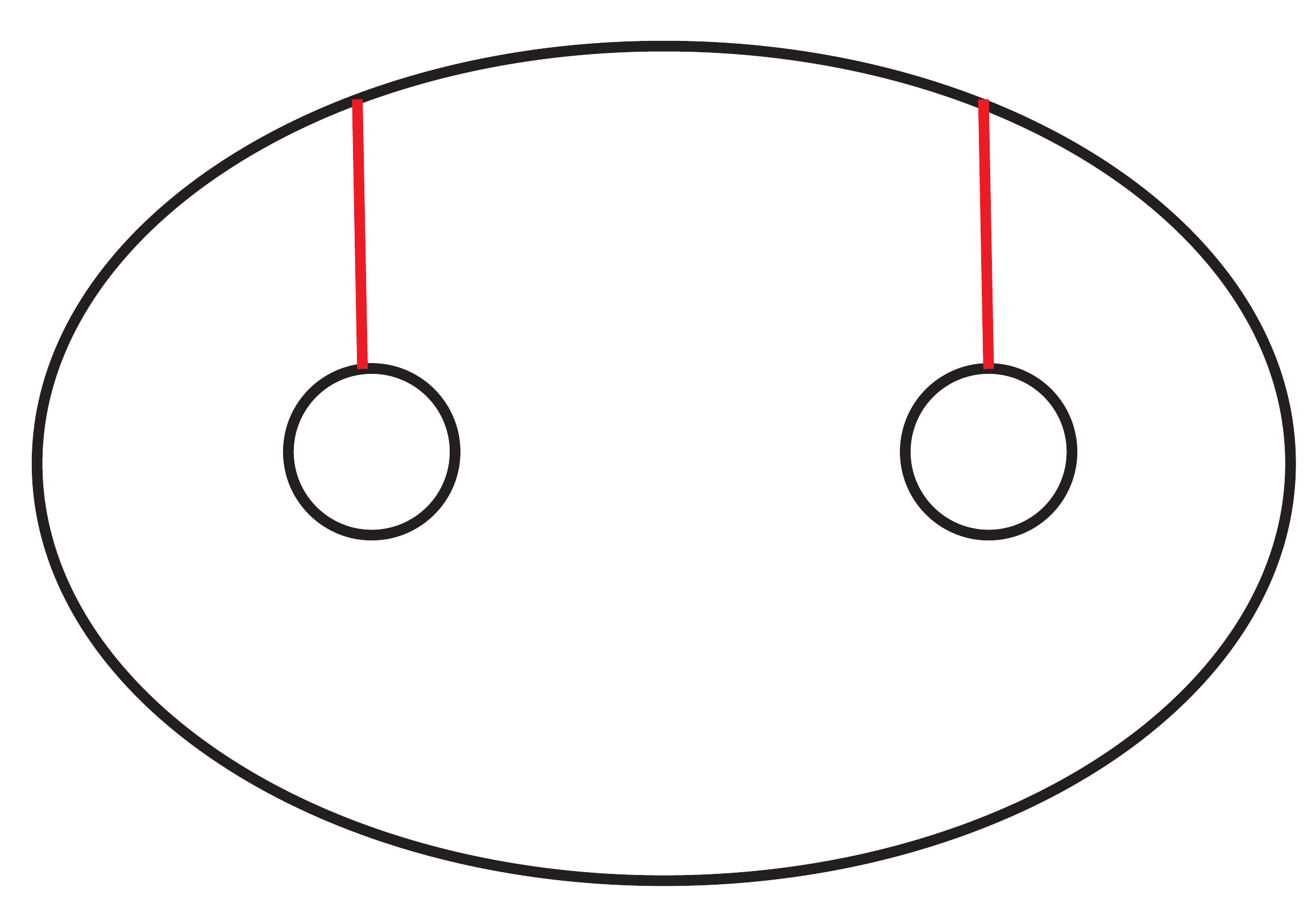}
\put(14,23){\footnotesize$\beta$}
\put(32.5,23){\footnotesize$\eta$}
\end{overpic}
\caption{Projection of $(S^1 \times S^2) \ \# \ (S^2 \times S^2)$ onto $\Sigma_{0,3}$.}
\label{s1s2connh1gamma}
\end{subfigure} 
\centering

\caption{{\color{white}.}}\label{pairofpants}
\end{figure}

Henceforth, we will use the notation $a_1$, $a_2$, and $a_3$ for the boundary parallel curves interchangeably with the boundary components they surround. We first compute the submodule $\mathcal J_1$ of $\mathcal S_{2,\infty}(H_2)$. The submodule $\mathcal J_2$ may be obtained symmetrically. 

\subsection{Handle sliding relations from relative skein modules}

We appeal to relative Kauffman bracket skein modules to compute the submodule $\mathcal J_1$ of handle sliding relations. Consider the two marked points $u$ and $v$, such that they lie on the simple closed curve $\beta$ in $\partial H_2$ and they divide the curve $\beta$ into two curves $\beta_1$ and $\beta_2$ (see Figure \ref{gamma12}). \\

\begin{figure}[ht]
    \centering
    \begin{overpic}[unit=1mm, scale = 0.17]{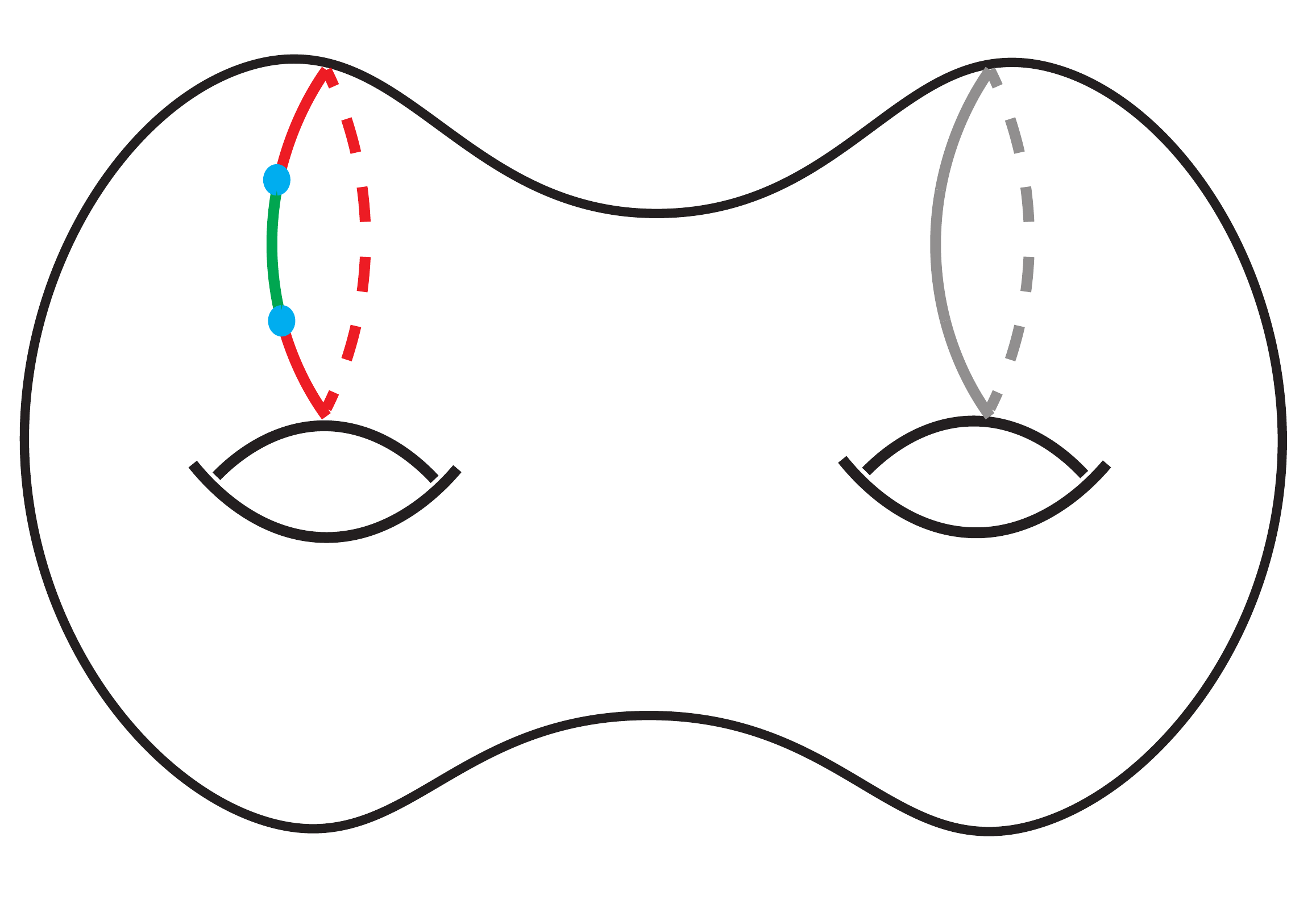}
\put(11.5,29){\tiny{$v$}}
\put(11.5,36){\tiny{$u$}}
\put(14.25,32.5){\tiny $\beta_2$}
\put(20,32.5){\tiny$\beta_1$}
\put(53,32.5){\tiny$\eta$}
\end{overpic}
\vspace{-5mm}
    \caption{Marked points $u$ and $v$ on the simple closed curve $\beta$ in $\partial H_2$ that divide it into curves $\beta_1$ and $\beta_2$.}
    \label{gamma12}
\end{figure}

Consider any relative curve $\alpha$ in $(H_2; u, v)$. Now, handle slidings in $(H_2)_{\beta}$ take place locally in the neighbourhood of the curve $\beta$. Consider fixed tangents at the points $u$ and $v$ and let the relative curve $\alpha$ approach these points along the tangents. For every relative curve $\alpha$, handle sliding in $(H_2)_{\beta}$ replaces the curve $\alpha \cup \beta_2$ with the curve $\alpha \cup \beta_1$. This gives the handle sliding relation, $\alpha \cup \beta_2 \equiv \alpha \cup \beta_1$. By introducing the $\mathbb Z[A^{\pm 1}]$-linear homomorphism $\omega: \mathcal S_{2,\infty}(H_2; u,v) \longrightarrow \mathcal S_{2,\infty}(H_2)$, defined by $\omega (\alpha) = \alpha \cup \beta_2 - \alpha \cup \beta_1$, we see that $\omega(\mathcal S_{2,\infty}(H_2; u,v)) = \mathcal J_1$. Hence, the image of any basis of $\mathcal S_{2,\infty}(H_2; u,v)$ generates $\mathcal J_1$. See Figure \ref{omegaalpha} for a visual explanation. We note that this method of describing $2$-handle sliding relations was pioneered by Bullock and Lo Faro in \cite{knotext}. See also \cite{blp}. \\ 

\begin{figure}[ht]
$\vcenter{\hbox{\begin{overpic}[scale=.13]{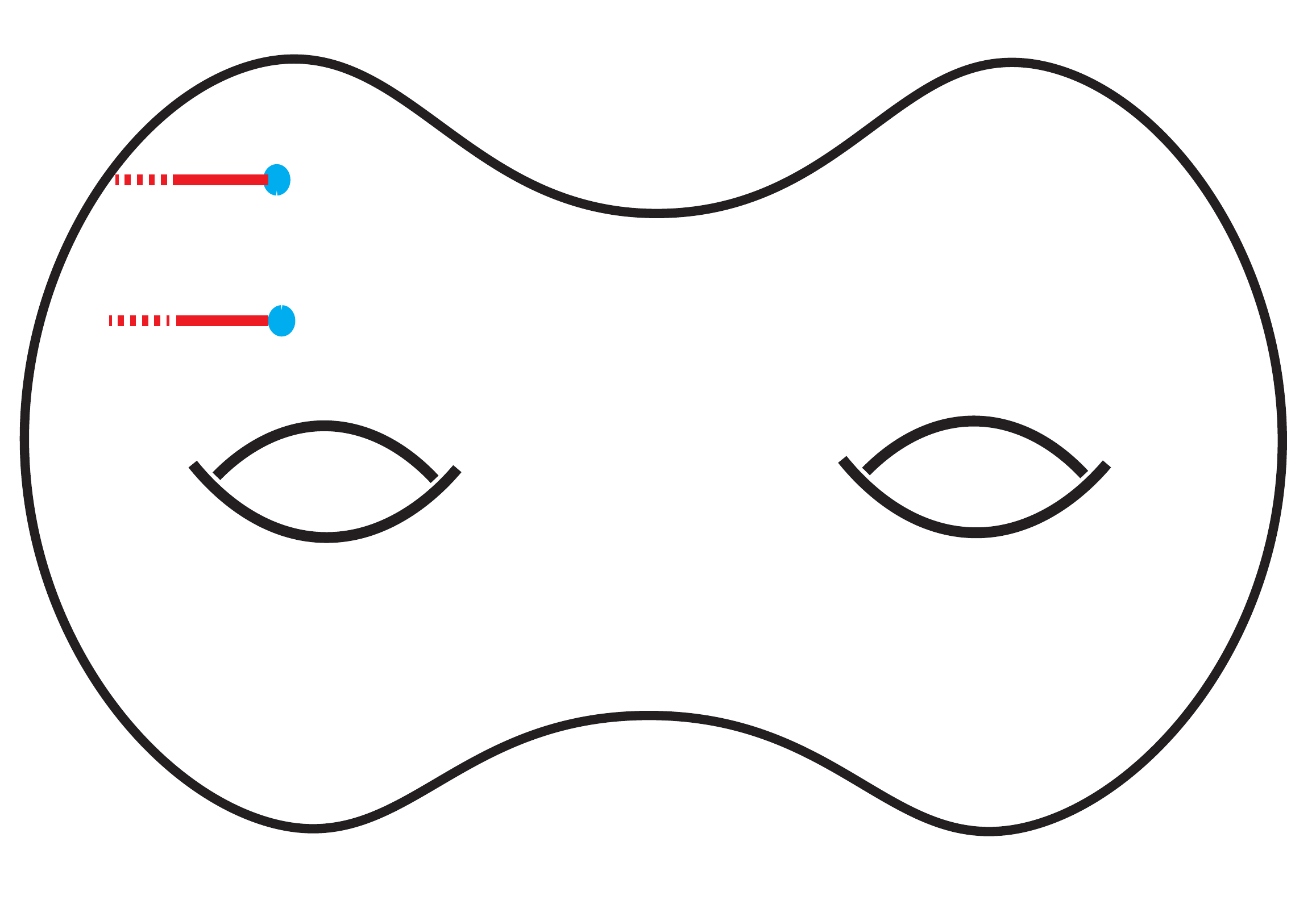}
\put(66, -5){$\alpha$}
\end{overpic}}}   \xrightarrow{\omega}
\vcenter{\hbox{\begin{overpic}[scale=.13]{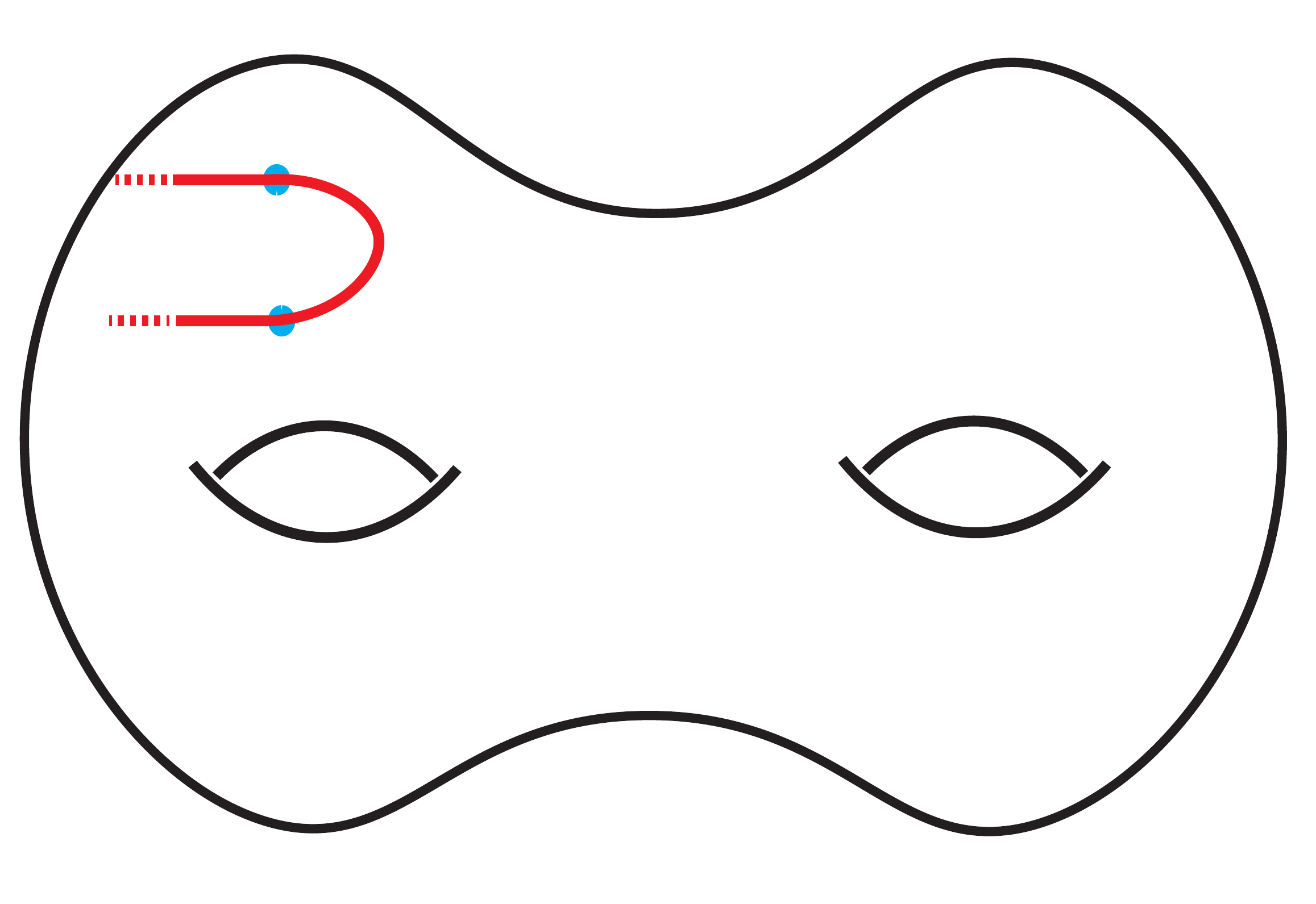}
\put(56, -5){$\alpha \cup \beta_2$}
\end{overpic}}} 
 -
\vcenter{\hbox{\begin{overpic}[scale=.13]{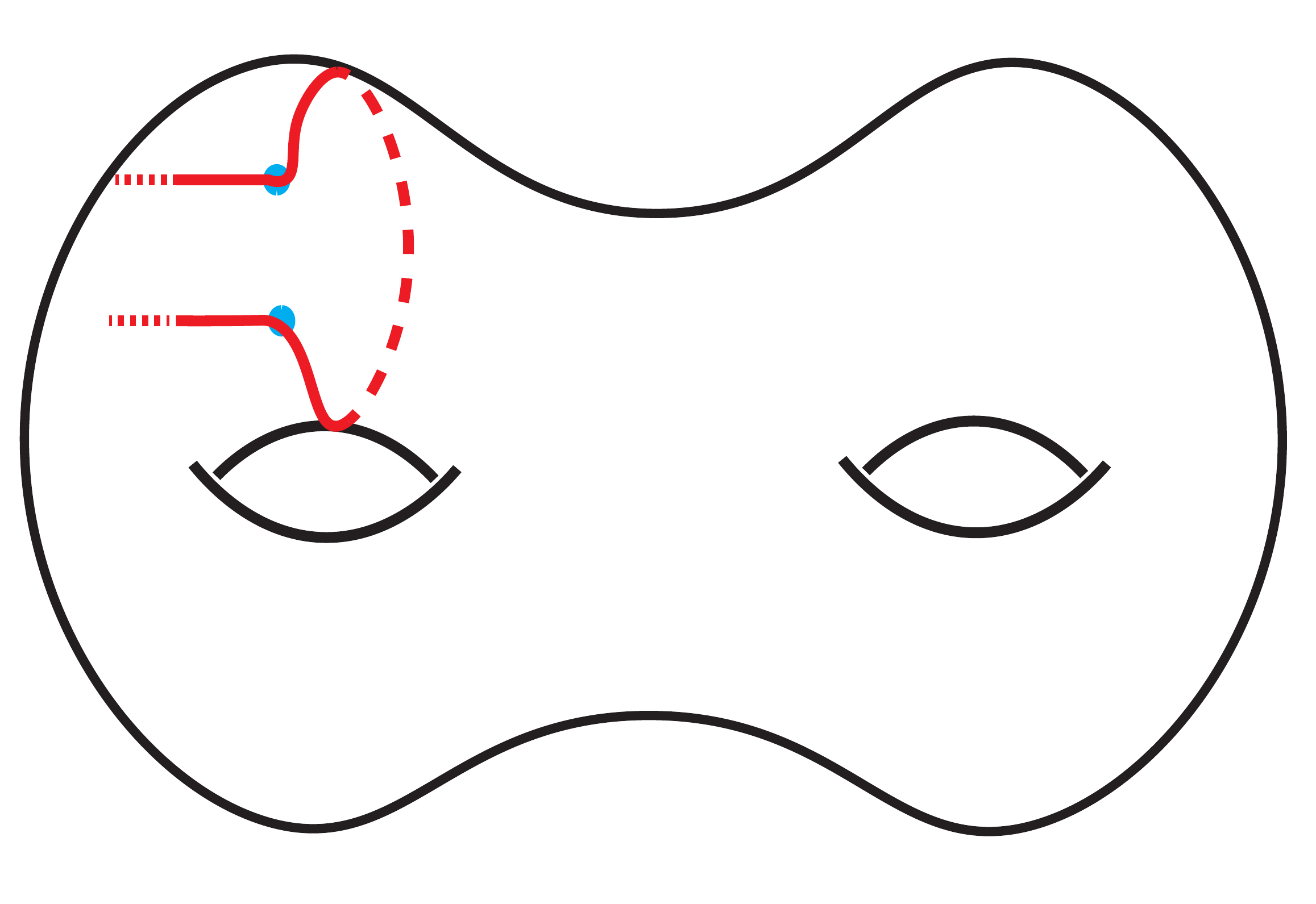}
\put(56, -5){$\alpha \cup \beta_1$}
\end{overpic}}}$
\vspace*{3mm}
\caption{Illustration of $\omega(\alpha)$.}
\label{omegaalpha}
\end{figure}

We emphasise that the submodule $\mathcal J_2$ of handle sliding relations that correspond to the $2$-handle glued to $\partial H_2$ along the curve $\eta$ may be obtained in a symmetric manner. We now discuss a basis of the relative skein module of $(H_2; u,v)$.

\subsection{Basis of the relative skein module of $(H_2; u,v)$}  Consider $\Sigma_{0,3}$ with marked points $u$ and $v$ on its boundary as illustrated in Figure \ref{c00}. 

\begin{figure}[ht]
    \centering
\begin{overpic}[unit=1mm, scale = 0.1]{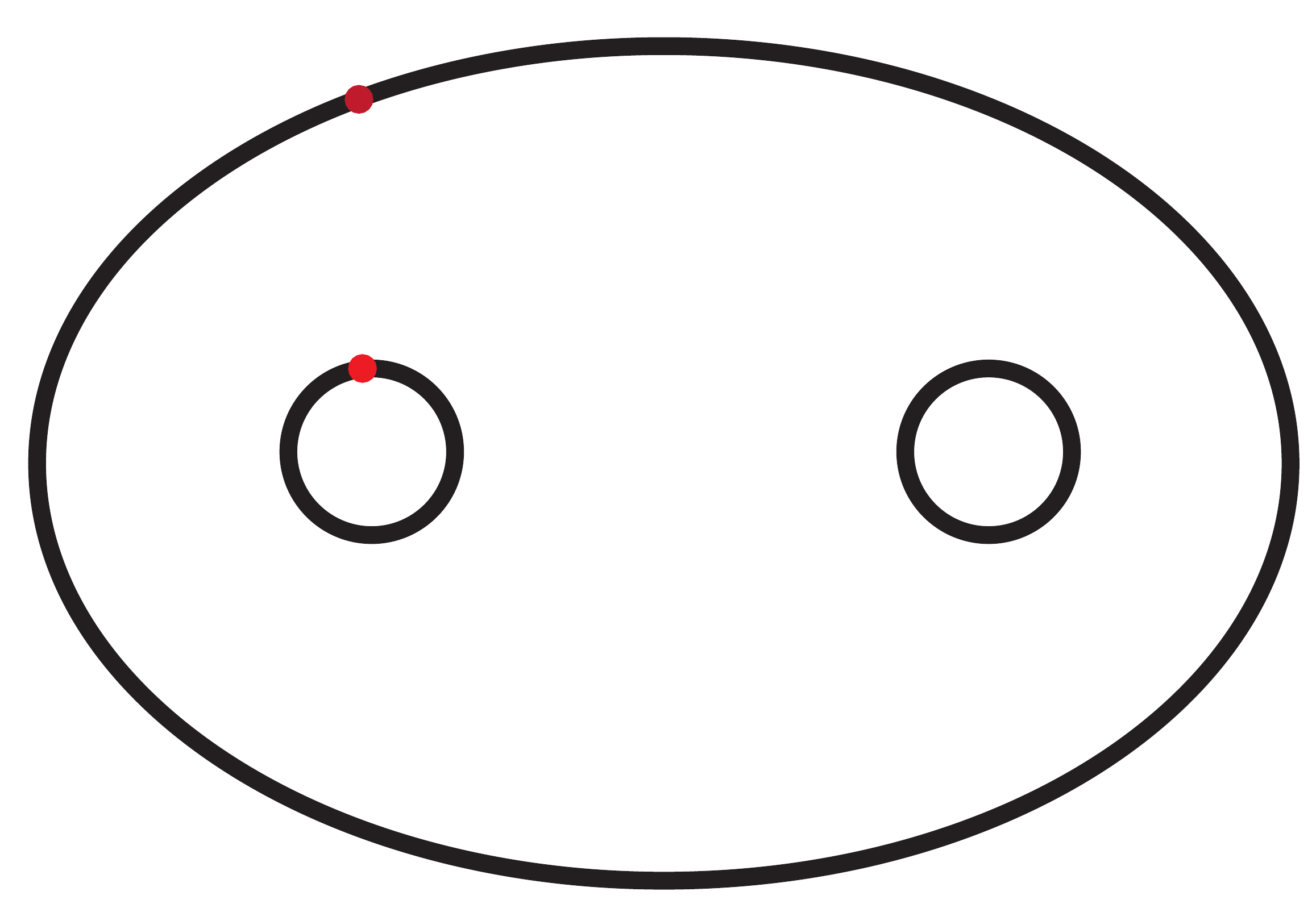}

\put(10,13.5){\footnotesize$v$}
\put(10,25){\footnotesize$u$}
\end{overpic}
    \caption{Marked points $u$ and $v$ on $\partial \Sigma_{0,3}$, where $u$ lies on the boundary component $a_3$ and $v$ lies on the boundary component $a_1$.}
    \label{c00}
\end{figure}

\begin{definition}

Let $c_{0,0}$ be the relative link represented by the line segment connecting $u$ and $v$, as illustrated in Figure \ref{rkbsmbasisf03uv}. Then we define the relative curves $c_{k,m} =
\tau^k_{a_1} \tau^m_{a_3}(c_{0,0})$, $k, m \in \mathbb Z$, where $\tau_c$ is the Dehn twist along $c$ for $c$ a simple closed curve.  The curves $c_{k,m}$ for small $k$ and $m$ are also illustrated in Figure \ref{rkbsmbasisf03uv}.
    
\end{definition}

\begin{figure}[ht]
\centering
$\hdots \vcenter{\hbox{\begin{overpic}[scale=.075]{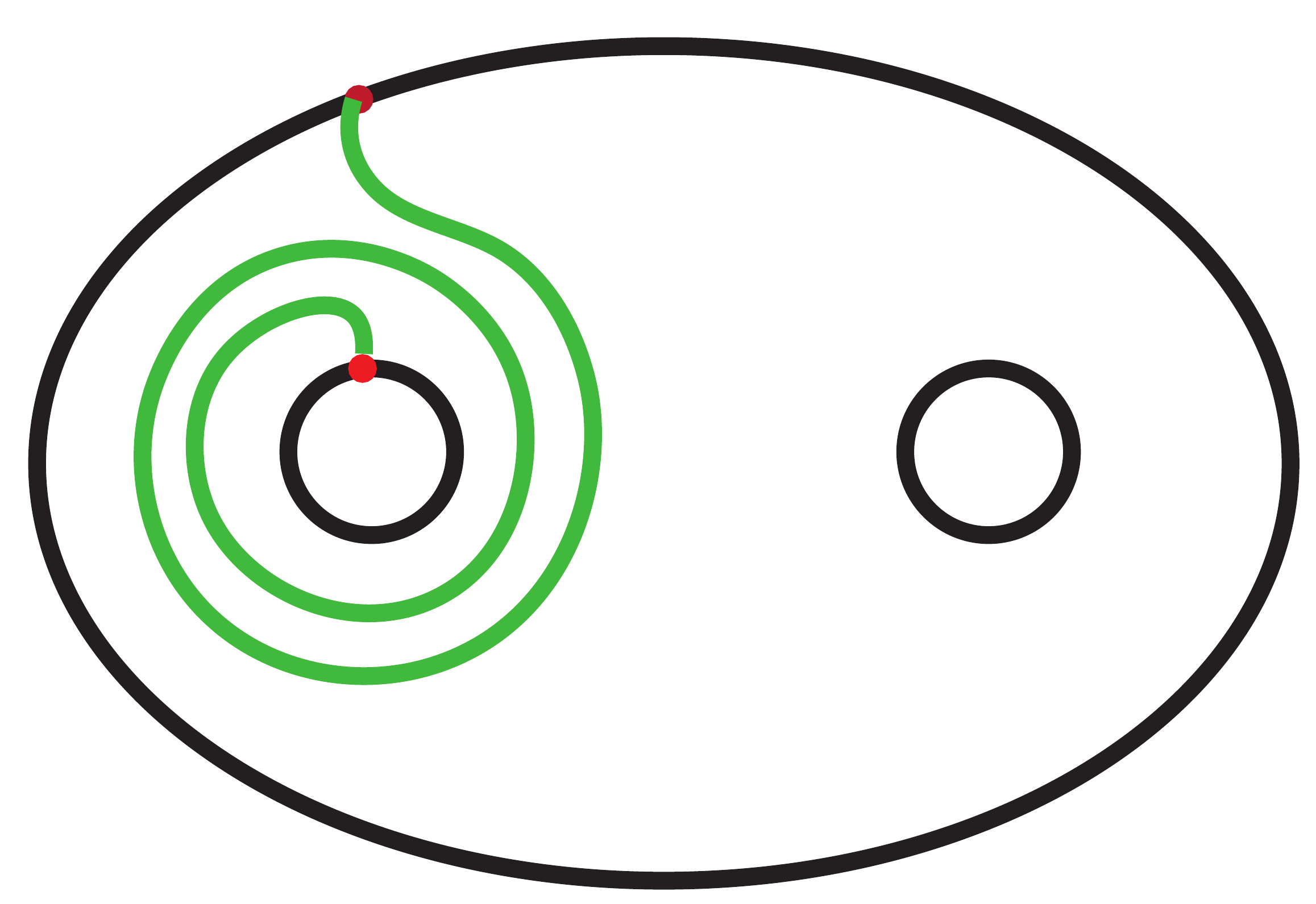}
\put(32, -7){$c_{-2,0}$}
\end{overpic}}}  \hspace{2mm}
\vcenter{\hbox{\begin{overpic}[scale=.075]{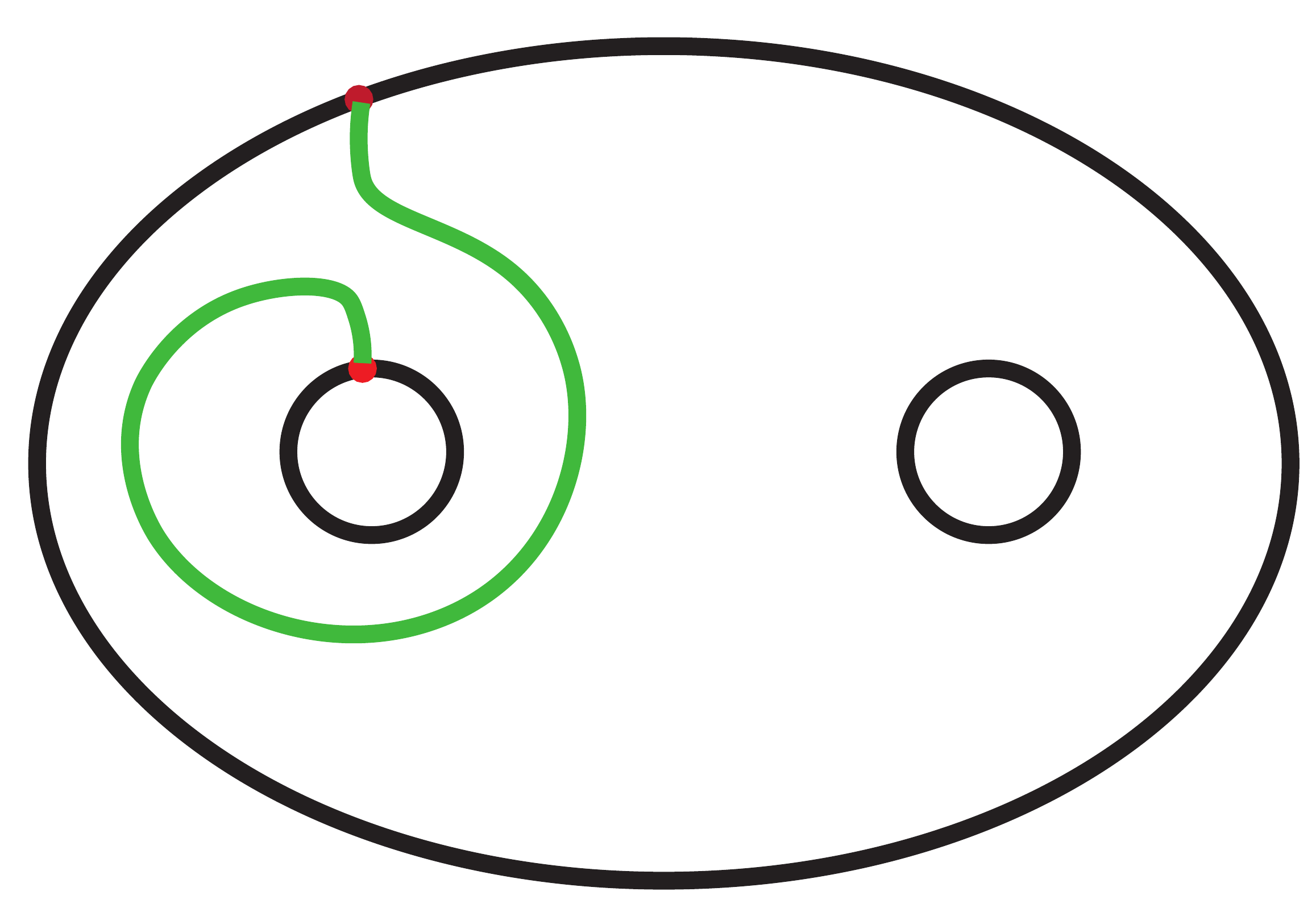}
\put(34, -7){$c_{-1,0}$}
\end{overpic}}} \hspace{2mm}
\vcenter{\hbox{\begin{overpic}[scale=.075]{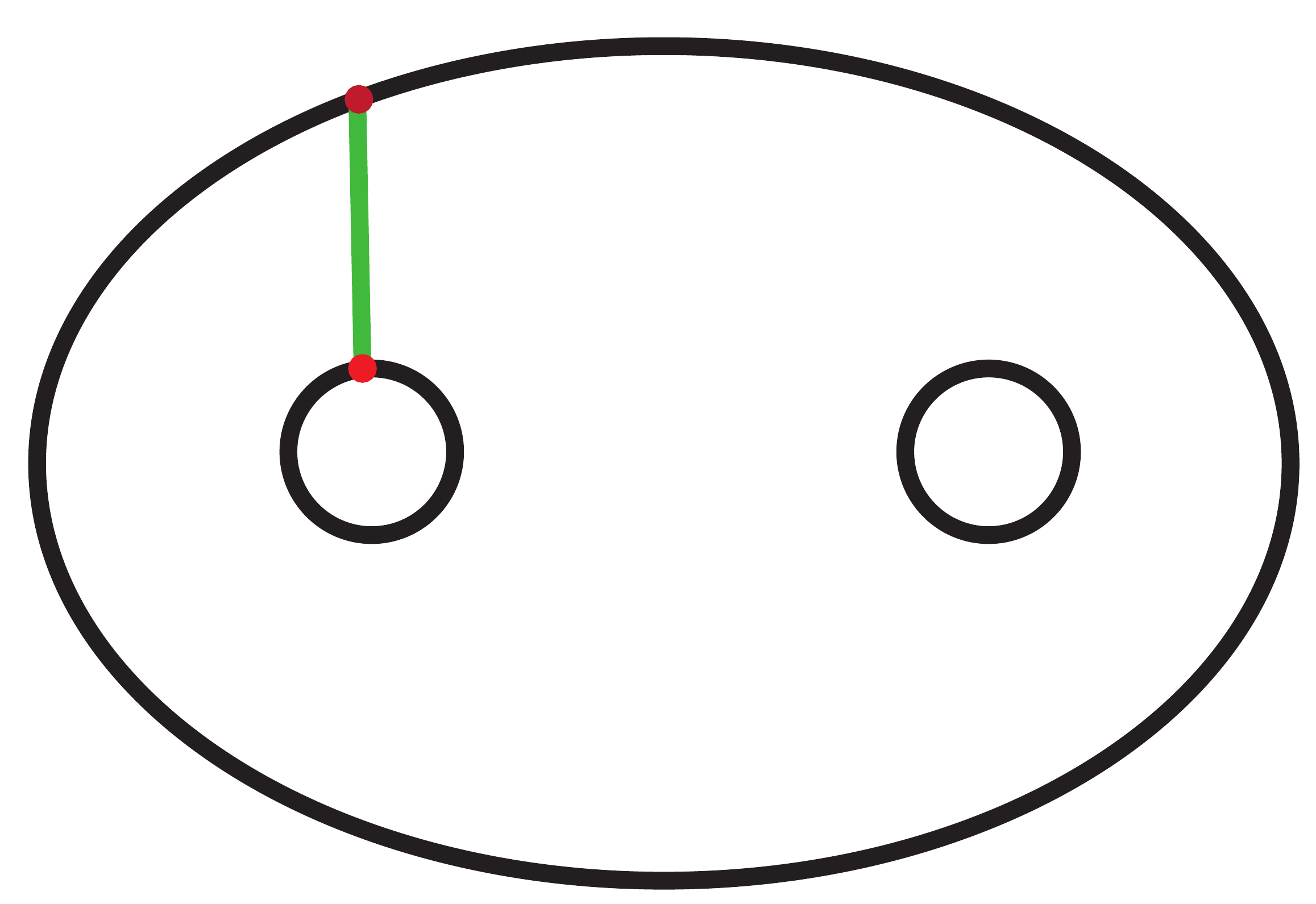}
\put(37, -7){$c_{0,0}$}
\put(20.5, 53.5){\footnotesize{$u$}}
\put(21, 28){\footnotesize{$v$}}
\end{overpic}}} \hspace{2mm}
\vcenter{\hbox{\begin{overpic}[scale=.075]{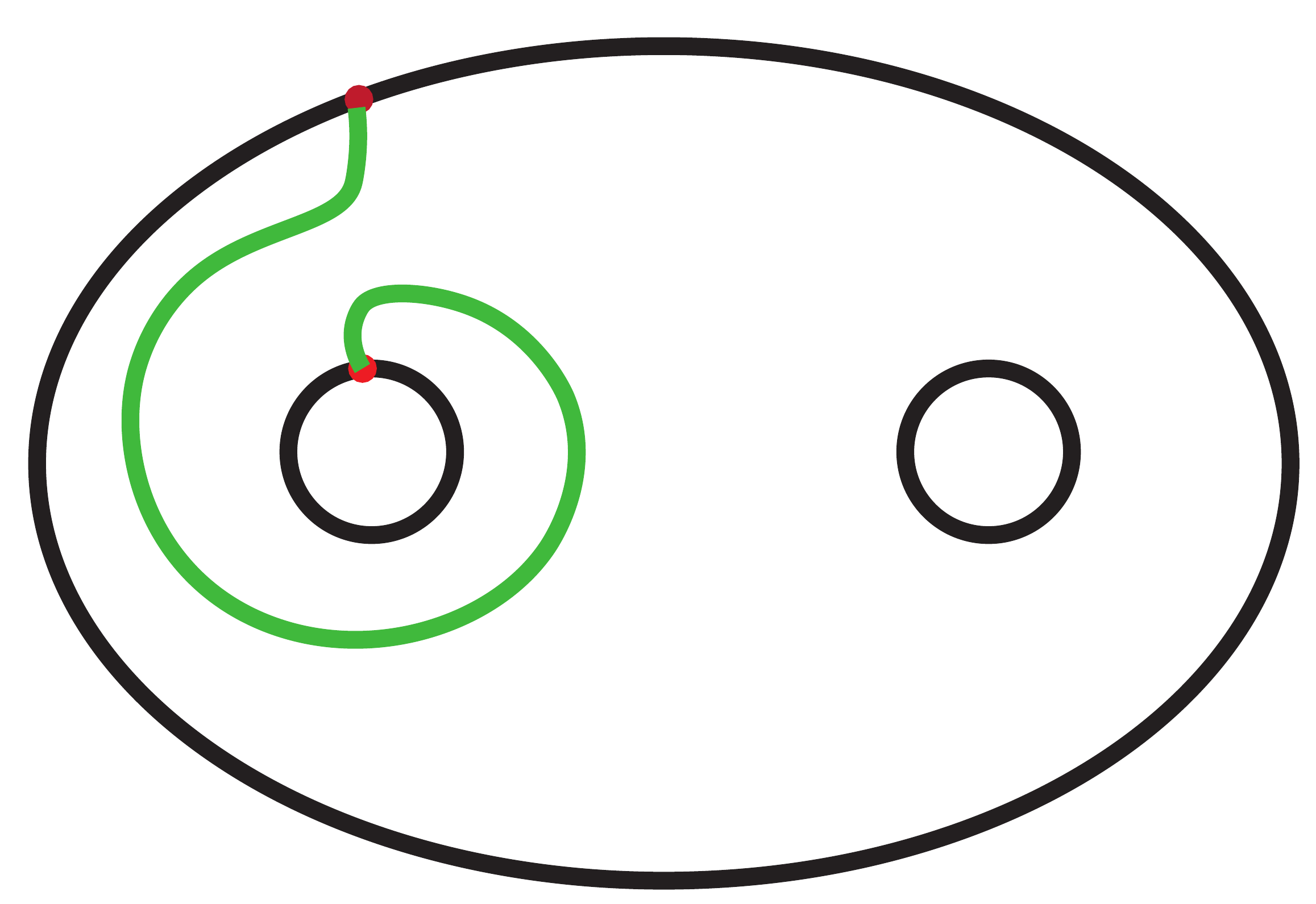}
\put(37, -7){$c_{1,0}$}
\end{overpic}}}  \hspace{2mm}
\vcenter{\hbox{\begin{overpic}[scale=.075]{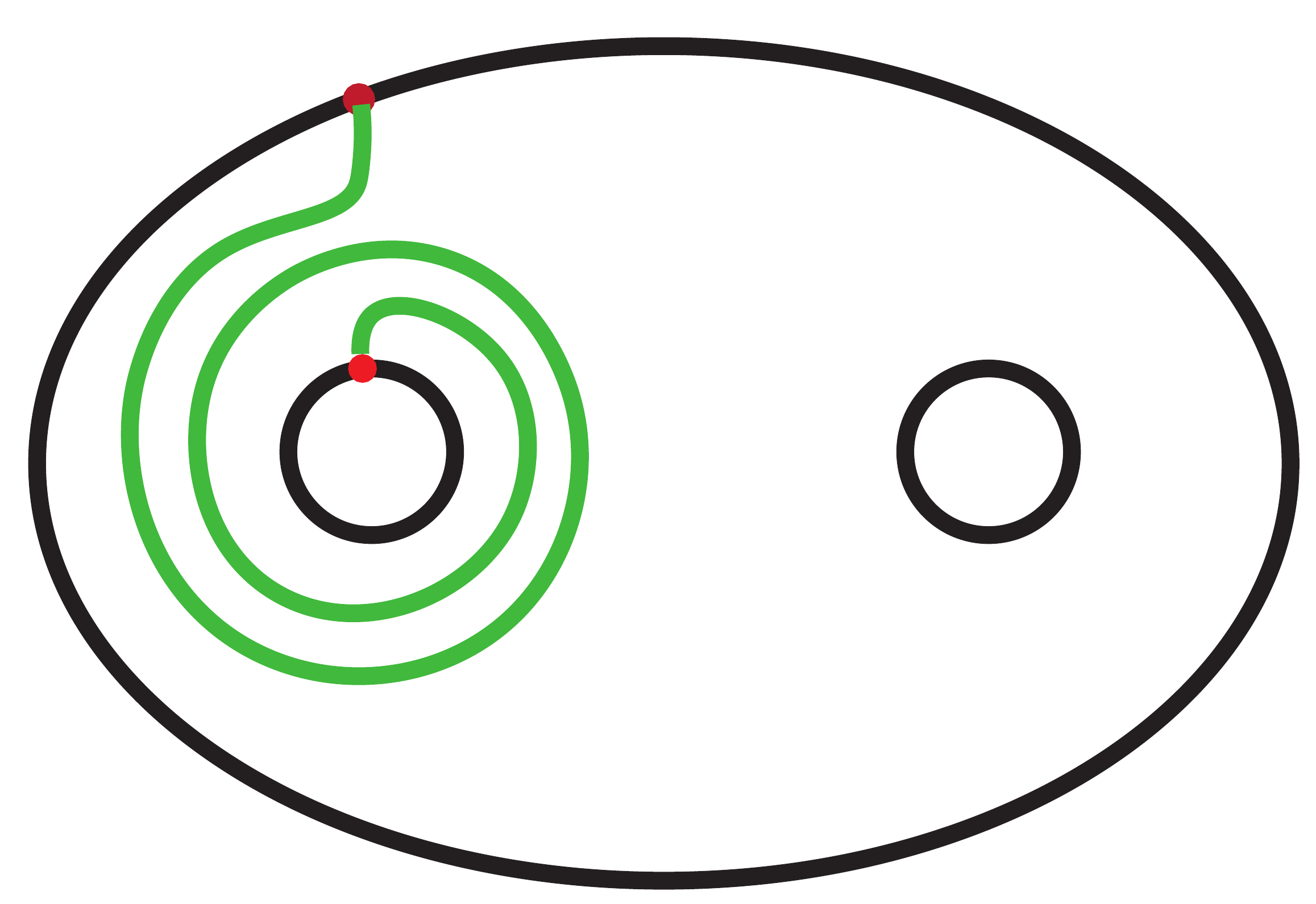}
\put(38, -7){$c_{2,0}$}
\end{overpic}}}  \hdots $ \\ \vspace*{5mm}
$\hdots \vcenter{\hbox{\begin{overpic}[scale=.075]{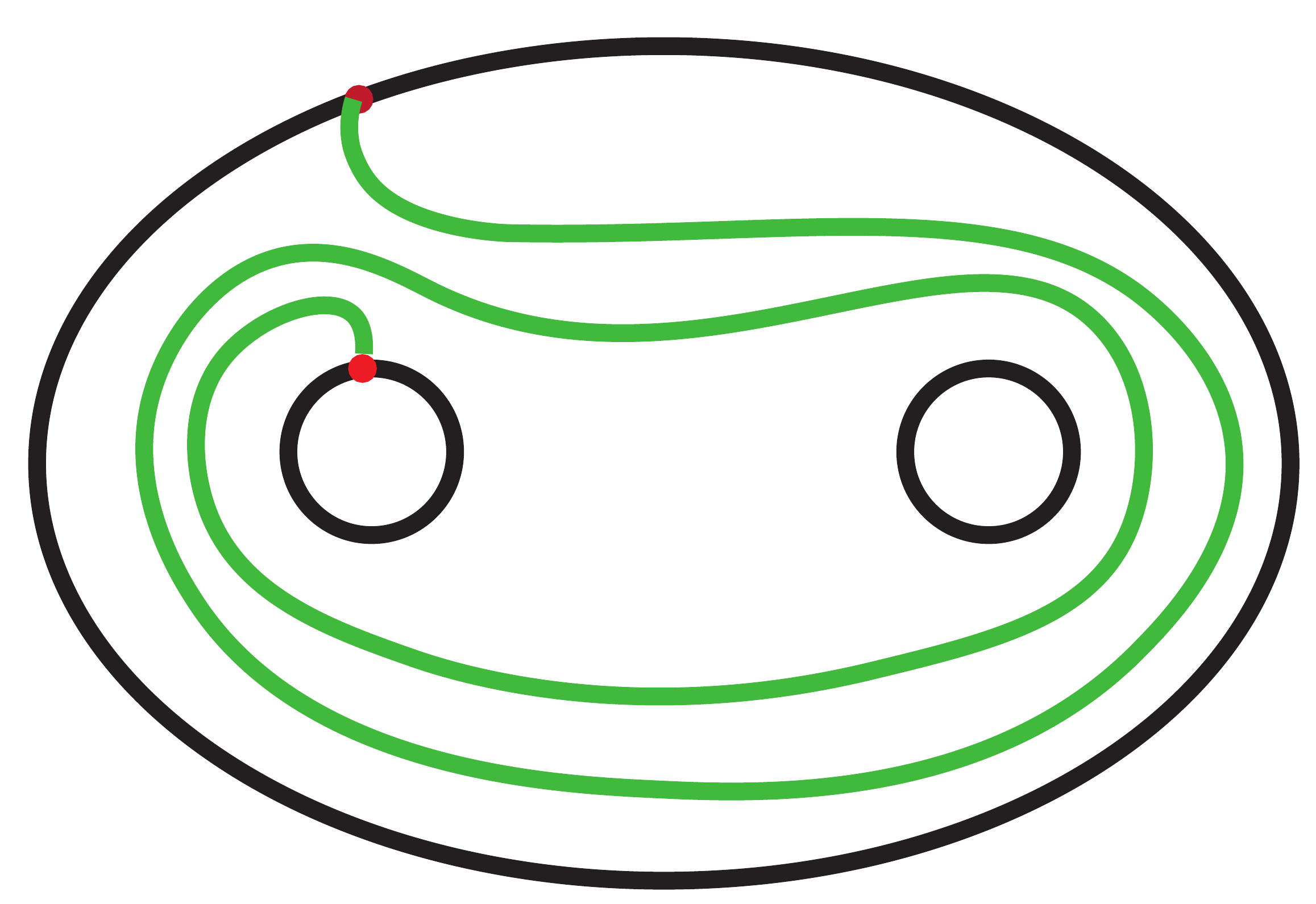}
\put(32, -7){$c_{0,-2}$}
\end{overpic}}}  \hspace{2mm}
\vcenter{\hbox{\begin{overpic}[scale=.075]{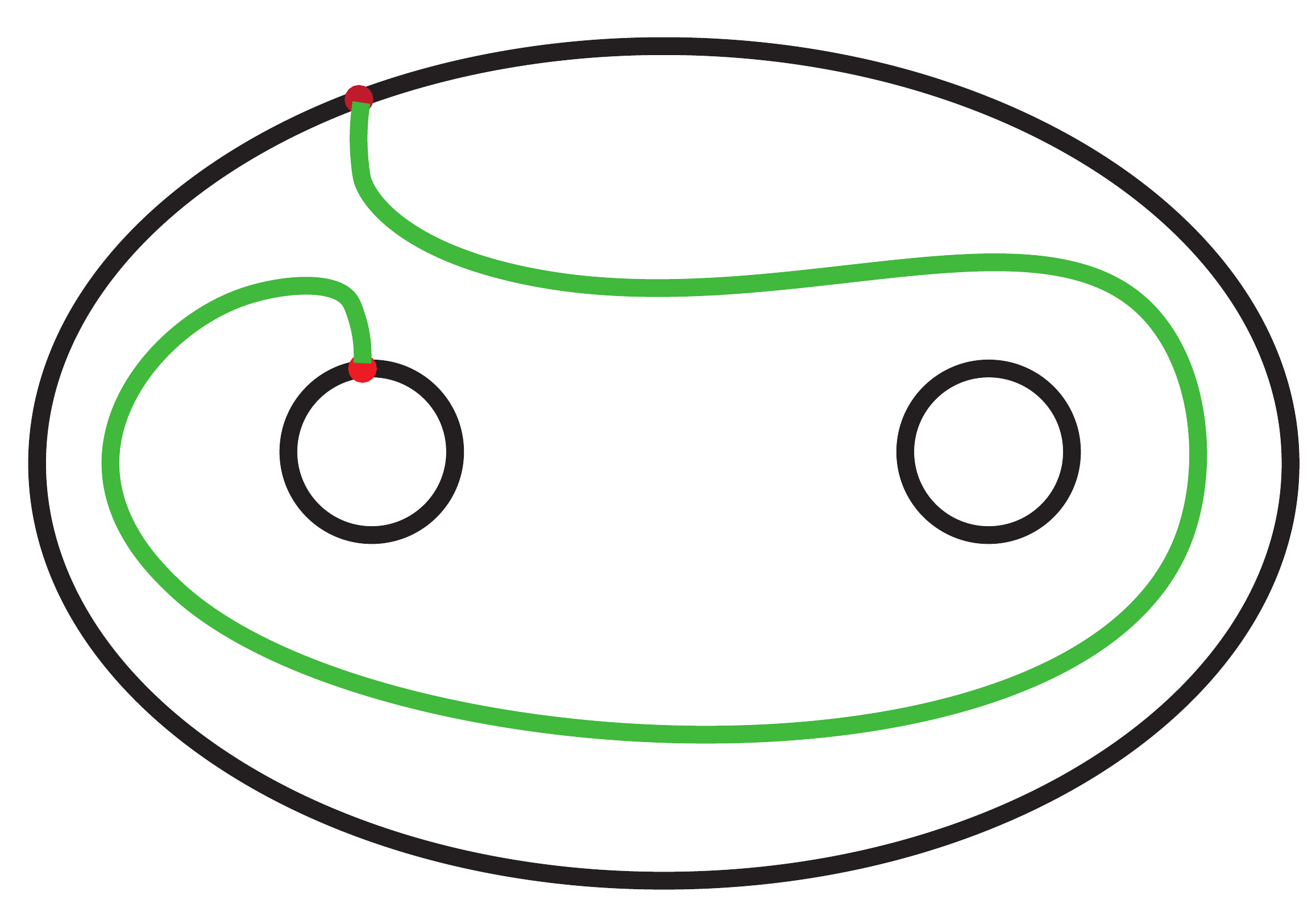}
\put(34, -7){$c_{0,-1}$}
\end{overpic}}} \hspace{2mm}
\vcenter{\hbox{\begin{overpic}[scale=.075]{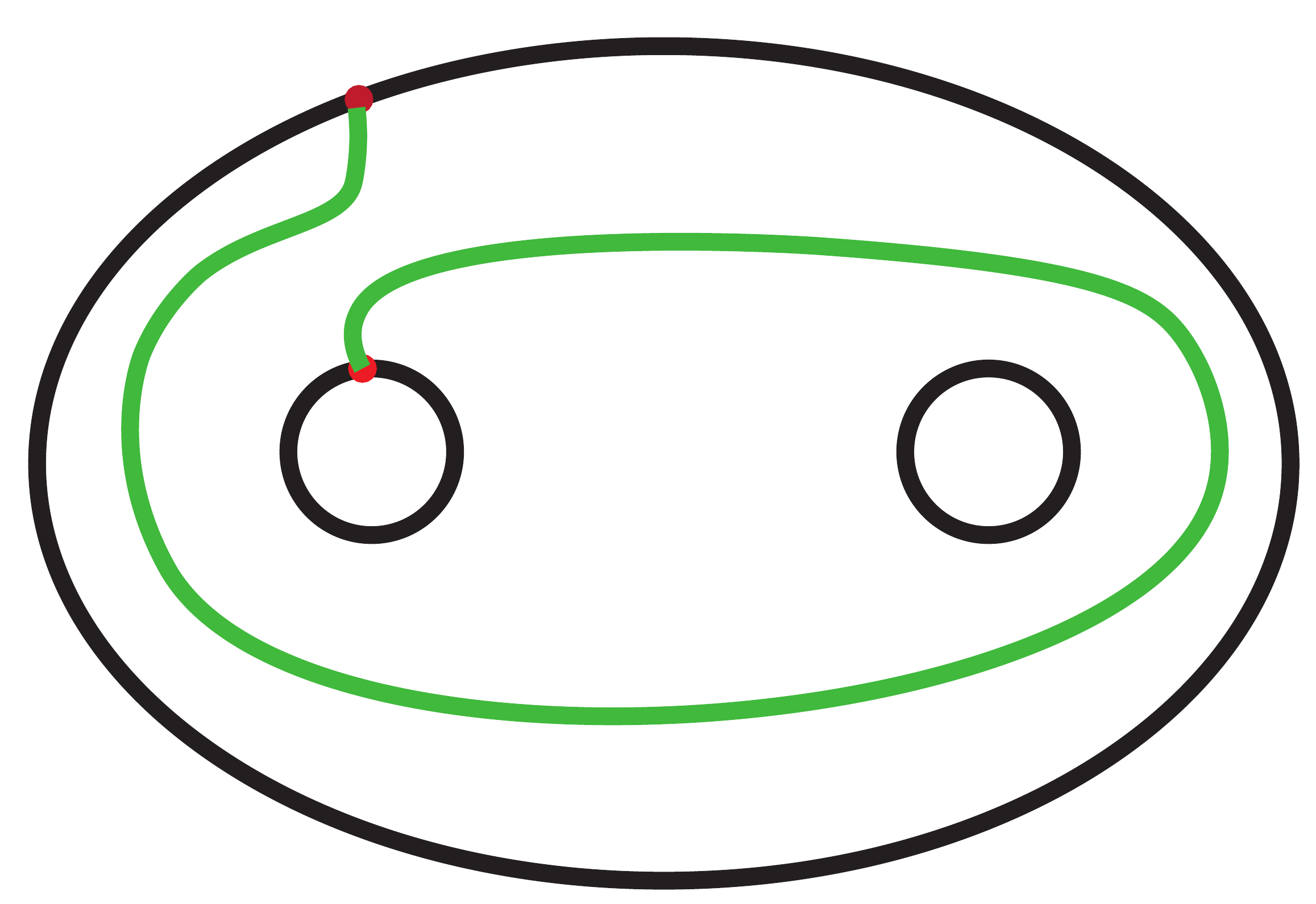}
\put(37, -7){$c_{0,1}$}
\end{overpic}}} \hspace{2mm}
\vcenter{\hbox{\begin{overpic}[scale=.075]{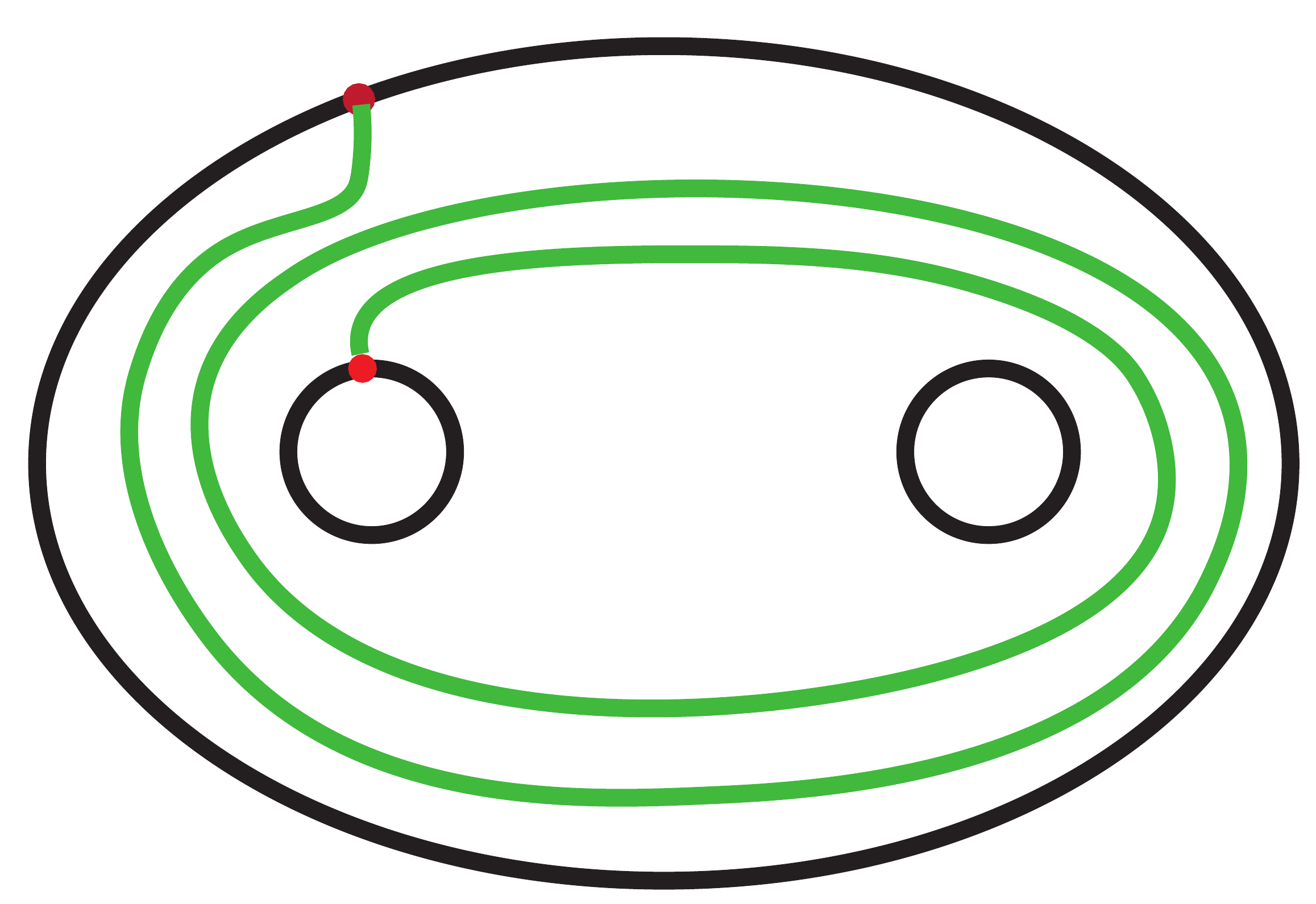}
\put(37, -7){$c_{0,2}$} 
\end{overpic}}} \hdots $ \\
\vspace*{5mm}
$\hdots \vcenter{\hbox{\begin{overpic}[scale=.075]{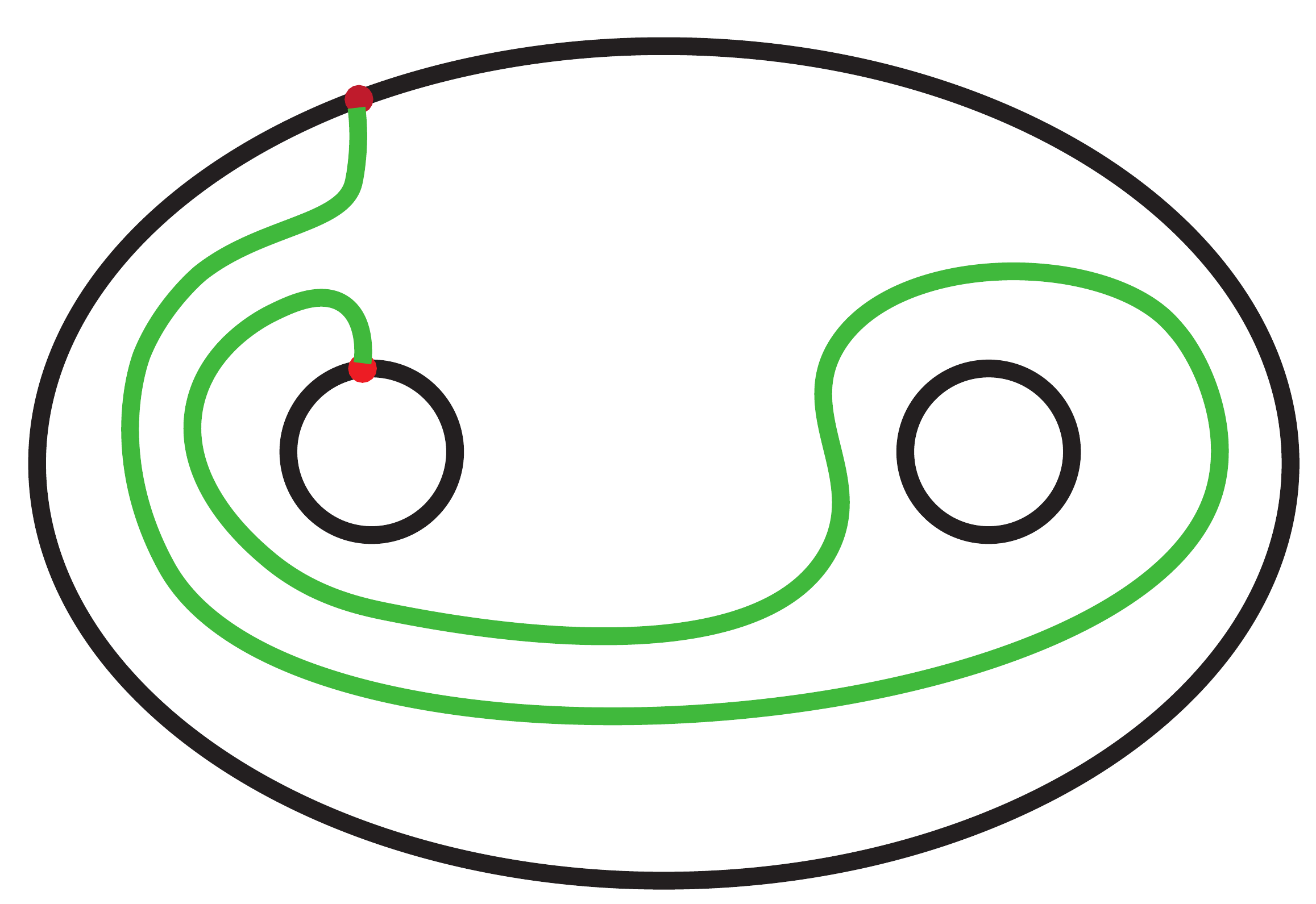}
\put(32, -7){$c_{-1,1}$}
\end{overpic}}}  \hspace{2mm}
\vcenter{\hbox{\begin{overpic}[scale=.075]{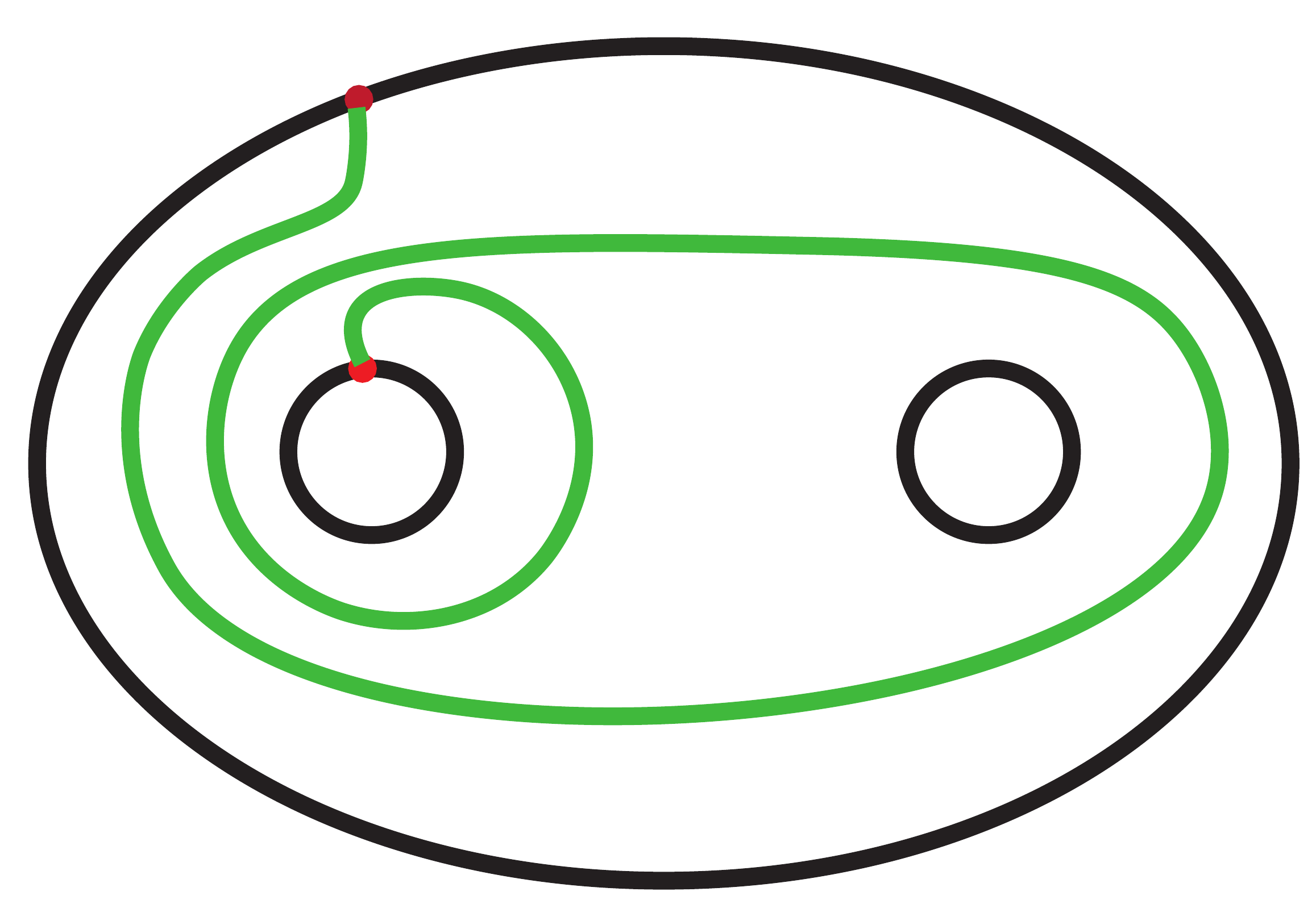}
\put(34, -7){$c_{1,1}$}
\end{overpic}}} \hspace{2mm}
\vcenter{\hbox{\begin{overpic}[scale=.075]{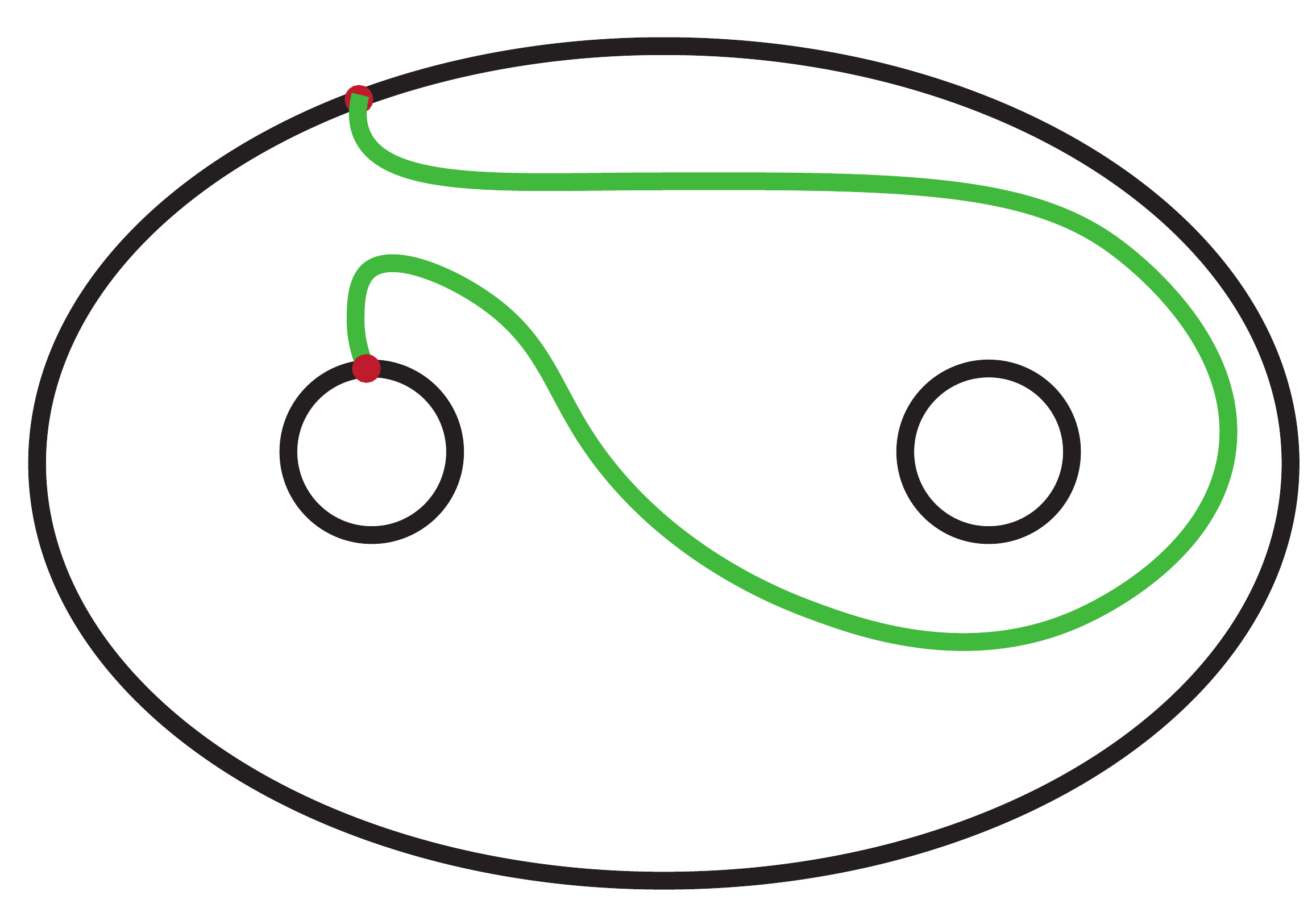}
\put(37, -7){$c_{1,-1}$}
\end{overpic}}} \hdots $
\vspace*{5mm}
\caption{Relative curves in $(\Sigma_{0,3} \times I; u,v)$.}
\label{rkbsmbasisf03uv}
\end{figure}

\begin{proposition}\label{relativecurvesf03}

Any relative curve connecting the points $u$ and $v$ is of the form $c_{k,m}$, $k, m \in \mathbb Z$.

\end{proposition}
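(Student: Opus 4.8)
The plan is to show that any relative curve $\alpha$ connecting $u$ and $v$ is ambient isotopic (rel $\partial$) to some $c_{k,m}$, by a standard arc-surgery argument on the pair of pants $\Sigma_{0,3}$. First I would observe that $\Sigma_{0,3}$ is planar, so a relative curve $\alpha$ — a properly embedded arc with $\partial\alpha=\{u,v\}$, having no closed components (a closed component in $\Sigma_{0,3}\times I$ would either be trivial, hence removable in the relative skein module, or parallel to some $a_i$; but here I only need the combinatorial classification of embedded arcs, so I would first reduce to the case where $\alpha$ is a single arc with no closed components and record that closed components contribute the $a_i$-parallel curves separately). The key point is that an essential properly embedded arc in a pair of pants, with prescribed endpoints on two distinct boundary components, is determined up to isotopy by the Dehn twists about the two non-adjacent boundary curves — equivalently by the mapping class group action, since $\mathrm{MCG}(\Sigma_{0,3}, \partial)$ fixing the boundary pointwise is generated by $\tau_{a_1}$ and $\tau_{a_3}$ (the twist about $a_2$ being expressible via the others, or boundary-parallel so acting trivially on an arc from $a_1$ to $a_3$ once we fix tangents).

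The main steps are as follows. (1) Isotope $\alpha$ so that it is transverse to a fixed essential arc $\delta$ dividing $\Sigma_{0,3}$ into a disk (e.g. the arc from $a_1$ to $a_2$ disjoint from $c_{0,0}$), and use an innermost-disk / bigon argument to remove all inessential intersections of $\alpha$ with $\delta$. (2) Show that the geometric intersection number $|\alpha\cap\delta|$ together with the combinatorics of how $\alpha$ wraps around the two punctures determines $\alpha$: cutting along $\delta$ turns $\Sigma_{0,3}$ into a disk in which $\alpha$ becomes a disjoint union of boundary-parallel arcs, and reassembling forces $\alpha$ to be $\tau_{a_1}^k\tau_{a_3}^m(c_{0,0})$ for the unique $(k,m)$ recording the signed wrapping about $a_1$ and $a_3$. (3) Conclude that $\alpha = c_{k,m}$ in the relative skein module. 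I would present (1) and (2) either via this direct cut-and-paste picture (which matches the figures already in the paper) or, more slickly, by invoking the change-of-coordinates principle: the complement of $c_{0,0}$ in $\Sigma_{0,3}$ is a disk, any other essential arc with the same endpoints also has disk complement, so some element of $\mathrm{MCG}(\Sigma_{0,3},\partial)$ carries one to the other, and that group is the free abelian group $\langle\tau_{a_1},\tau_{a_3}\rangle$ (here one uses that fixing the tangent directions at $u,v$ kills the boundary Dehn twists along $a_1,a_3$ as symmetries of the arc but they still act nontrivially on the arc's isotopy class as maps of the surface — I would state this carefully).

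The hard part will be the bookkeeping in step (2): making precise exactly which pairs $(k,m)$ are realised and that the map $(k,m)\mapsto c_{k,m}$ is injective, i.e. that distinct twist-powers give non-isotopic arcs. This is where I expect to spend the most care — one clean way is to lift to the universal cover (or to use the explicit intersection numbers $|c_{k,m}\cap a_i|$ which grow with $|k|,|m|$) to separate the classes. The orientation/framing data on the marked points needs a brief remark to confirm it does not introduce extra classes beyond the $c_{k,m}$, since the arc has its two endpoints on two different boundary components and the framings are fixed. Once the isotopy classification of essential arcs is in hand, the proposition follows immediately; the statement as phrased ("any relative curve connecting $u$ and $v$") is really a statement about arcs, and I would make explicit at the outset that closed components are handled by Theorem \ref{rkbsmsib} and the pair-of-pants curve classification, so they do not affect the enumeration of the connecting arc.
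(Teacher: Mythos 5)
Your second (``slicker'') route is essentially the paper's own proof: the paper argues that $Mod^+(\Sigma_{0,3})\cong \mathbb Z^3$ is generated by the boundary twists $\tau_{a_1},\tau_{a_2},\tau_{a_3}$, that the relative arcs joining $u$ and $v$ form a single orbit under this action, and that $\tau_{a_2}$ acts trivially on $c_{0,0}$ --- which is exactly your change-of-coordinates argument. Two small slips that do not affect the argument: cutting $\Sigma_{0,3}$ along such an arc yields an annulus (with $a_2$ as one boundary circle), not a disk, and $\tau_{a_2}$ is an independent generator of the mapping class group rather than a product of the other two; the correct point, which you also state as the alternative, is that $\tau_{a_2}$ acts trivially on the isotopy class of the arc.
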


\begin{proof}

The mapping class group $Mod^+(\Sigma_{0,3}) \cong PB_2 \times \mathbb Z \times \mathbb Z$, where $PB_2$ is the pure braid group on two strands, which is isomorphic to $\mathbb Z$. Hence, $Mod^+(\Sigma_{0,3}) \cong \mathbb Z \times \mathbb Z \times \mathbb Z$ and is generated by the $\tau_{a_i}$’s. Moreover, the set of relative arcs forms a single orbit under $Mod^+(\Sigma_{0,3})$-action and $\tau_{a_2}$ acts trivially on $c_{0,0}$.
    
\end{proof}

Note that the boundary curve $a_2$ multiplicatively commutes with $c_{k,m}$, for all $k$ and $m$, while the boundary curves $a_1$ and $a_3$ do not commute with any $c_{k,m}$. Thus, from Theorem \ref{rkbsmsib}, Remark \ref{relativetimesframed}, and Proposition \ref{relativecurvesf03} we get the following result. 


    

\begin{corollary}\label{rkbsmf03basis}

The elements $c_{k,m}S_q(a_2)$, $i \in \mathbb Z^+ \cup \{0\}$ form a basis for $\mathcal S_{2,\infty}(H_2; u,v)$. 
    
\end{corollary}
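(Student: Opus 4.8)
The plan is to combine Theorem \ref{rkbsmsib} with a direct topological classification of the relative multicurves in $\Sigma_{0,3}$ compatible with the marked points $u$ and $v$, and then to pass from powers of $a_2$ to Chebyshev polynomials by a triangular change of basis. First, Theorem \ref{rkbsmsib} identifies $\mathcal S_{2,\infty}(H_2;u,v) = \mathcal S_{2,\infty}(\Sigma_{0,3}\times I, \{u,v\})$ with the free $\mathbb Z[A^{\pm1}]$-module on the set of isotopy classes of relative multicurves in $\Sigma_{0,3}$ having no trivial components. Since there are exactly two marked points, every such multicurve consists of a single embedded arc $\alpha$ joining $u$ to $v$ (automatically non-trivial, as $u$ and $v$ lie on different boundary components), together with a finite, possibly empty, disjoint collection of essential simple closed curves, all disjoint from $\alpha$. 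By Proposition \ref{relativecurvesf03}, $\alpha$ is isotopic to $c_{k,m}$ for some $k,m\in\mathbb Z$, and distinct pairs $(k,m)$ yield non-isotopic arcs because the stabiliser of $c_{0,0}$ under the $Mod^+(\Sigma_{0,3}) \cong \mathbb Z^3$-action is exactly $\langle \tau_{a_2}\rangle$ (which can be checked by comparing geometric intersection numbers with $a_1$ and $a_3$).

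The core step is to classify the closed-curve part. Cutting $\Sigma_{0,3}$ along $\alpha$ (isotoped to $c_{k,m}$) produces a connected surface with $\chi = 0$ and exactly two boundary components, one parallel to $a_2$ and the other being the concatenation of $\partial_1$, $\alpha$, and $\partial_3$; hence the cut surface is an annulus. Any simple closed curve in $\Sigma_{0,3}$ disjoint from $\alpha$ lies in this annulus, and the only ones that remain essential in $\Sigma_{0,3}$ are those isotopic to its core, which is sent back to a curve parallel to $a_2$. Therefore every relative multicurve without trivial components is isotopic to $c_{k,m}$ together with $j$ parallel copies of $a_2$, for a unique triple $k,m\in\mathbb Z$, $j\geq 0$; and since $a_2$ is disjoint from $c_{k,m}$, this multicurve coincides with the product $c_{k,m}a_2^{j}$ in the $\mathcal S^{\mathit{alg}}(\Sigma_{0,3})$-bimodule structure of Remark \ref{relativetimesframed}. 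Consequently $\{c_{k,m}a_2^{j} : k,m\in\mathbb Z,\ j\geq 0\}$ is a basis of $\mathcal S_{2,\infty}(H_2;u,v)$.

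It remains to replace $a_2^{j}$ by $S_q(a_2)$. The Chebyshev polynomials $S_q$ are monic of degree $q$, so the infinite matrix expressing $\{S_q(x)\}_{q\geq 0}$ in the basis $\{x^{j}\}_{j\geq 0}$ of $\mathbb Z[x]$ is unitriangular over $\mathbb Z$, hence invertible over $\mathbb Z[A^{\pm1}]$. Because $a_2$ commutes with every $c_{k,m}$, the elements $c_{k,m}S_q(a_2)$ are well defined and depend $\mathbb Z[A^{\pm1}]$-linearly on the coefficients of $S_q$; applying the above invertible change of coordinates separately within each family indexed by $(k,m)$ turns the basis $\{c_{k,m}a_2^{j}\}$ into $\{c_{k,m}S_q(a_2)\}_{k,m\in\mathbb Z,\ q\geq 0}$, which is therefore again a basis.

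The step I expect to be the main obstacle is the topological bookkeeping in the classification: verifying carefully that after isotoping the arc component to the standard position $c_{k,m}$ the remaining closed curves can be simultaneously isotoped, rel $\alpha$, to standard parallel copies of $a_2$ with no other essential curve possible, and confirming that distinct triples $(k,m,j)$ genuinely give distinct basis elements, which amounts to the freeness of the $\langle \tau_{a_1},\tau_{a_3}\rangle$-action on the arc $c_{0,0}$.
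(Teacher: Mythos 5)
Your proposal is correct and follows essentially the same route as the paper, which simply cites Theorem \ref{rkbsmsib}, Remark \ref{relativetimesframed}, and Proposition \ref{relativecurvesf03} together with the observation that only $a_2$ commutes with the arcs $c_{k,m}$; you have merely filled in the details the paper leaves implicit (the annulus obtained by cutting along the arc, which forces the closed components to be parallel copies of $a_2$, and the unitriangular change of basis from $\{a_2^j\}$ to $\{S_q(a_2)\}$).
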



    

\subsection{Generators of the submodule $\mathcal J_1$} 

From Corollary~\ref{rkbsmf03basis} it follows that $\mathcal J_1$ is generated by $\omega(c_{m,n}a_{2}^k) =\omega(c_{m,n})a_{2}^k$. To find the generators of $\mathcal J_1$, let us consider $C(m,n) := \omega(c_{m,n})$ for all $m,n \in \mathbb{Z}$. We have the following three cases:
\begin{enumerate}
    \item $m,n \geq 0$,
    \item $m \geq 1$, $n \leq -1$,
    \item remaining cases.
\end{enumerate}


    
\noindent
{\bf Case I:} $C(m,n)$ for $m,n \geq 0$. \\
 For $m,n \geq 0$, $C(m,n) = \omega(c_{m,n}) = c_{m,n}\cup \beta_{2} - c_{m,n}\cup \beta_{1}.$ As an example, $C(2,1)$ is illustrated in Figure \ref{fig:C(2,1)}. 
\begin{figure}[ht]
    \centering
    \begin{eqnarray*}
    C(2,1) = \omega(c_{2,1}) & = &
\vcenter{\hbox{\begin{overpic}[scale=.1]{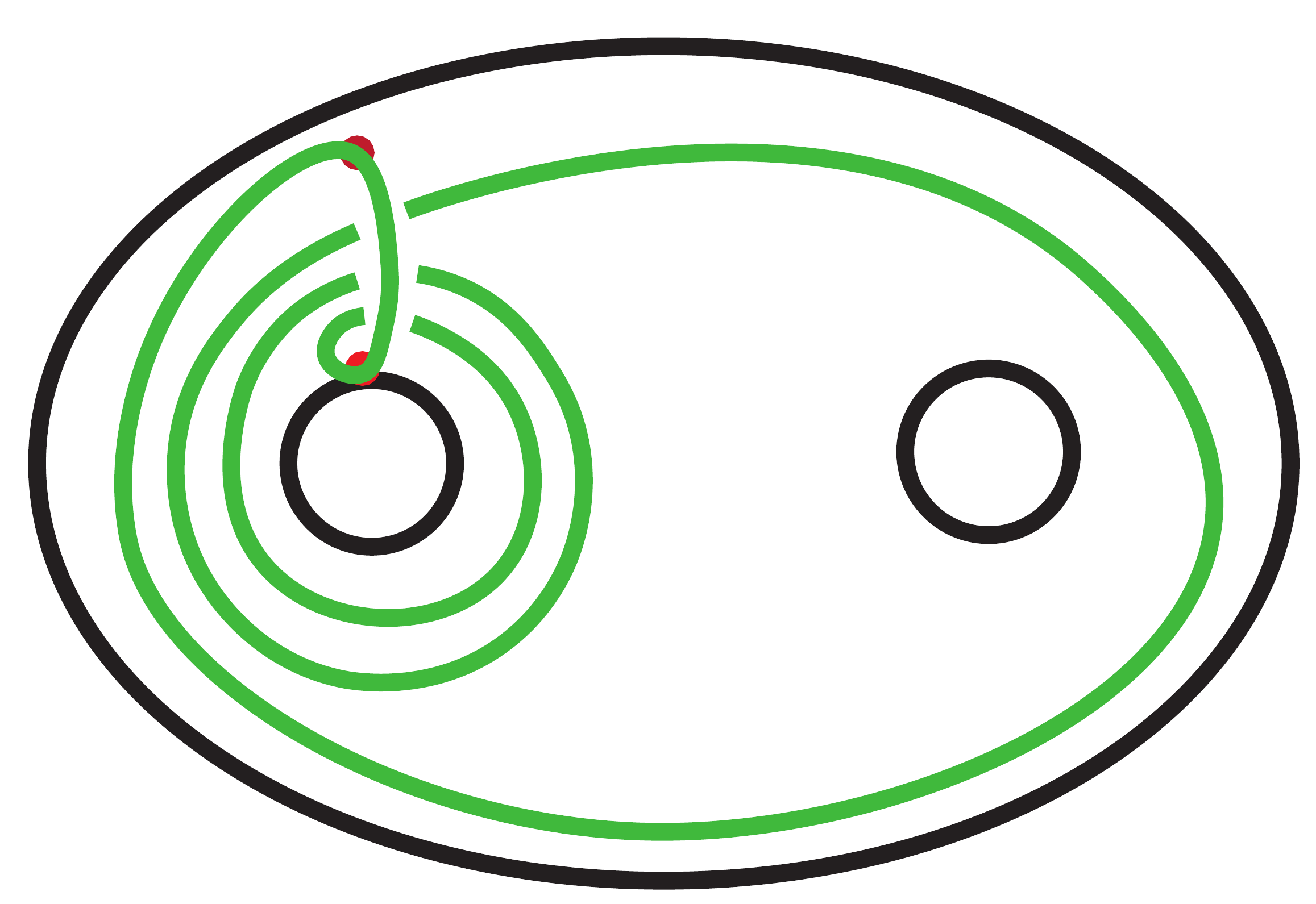}
\put(130, 35){$-$} 
\end{overpic}}} - 
\vcenter{\hbox{\begin{overpic}[scale=.1]{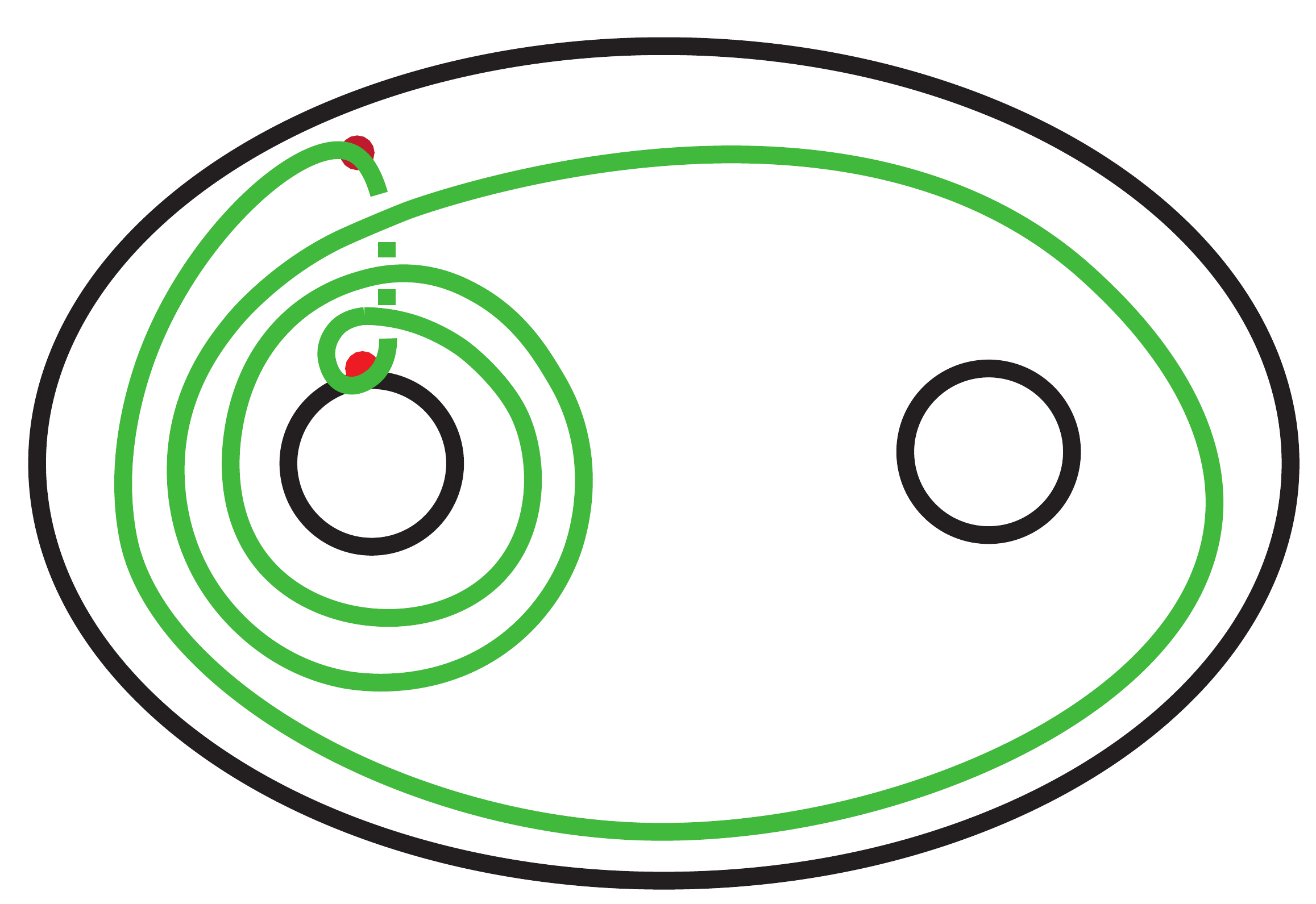}
\end{overpic}}} \\ 
& = & -A^3 \vcenter{\hbox{\begin{overpic}[scale=.1]{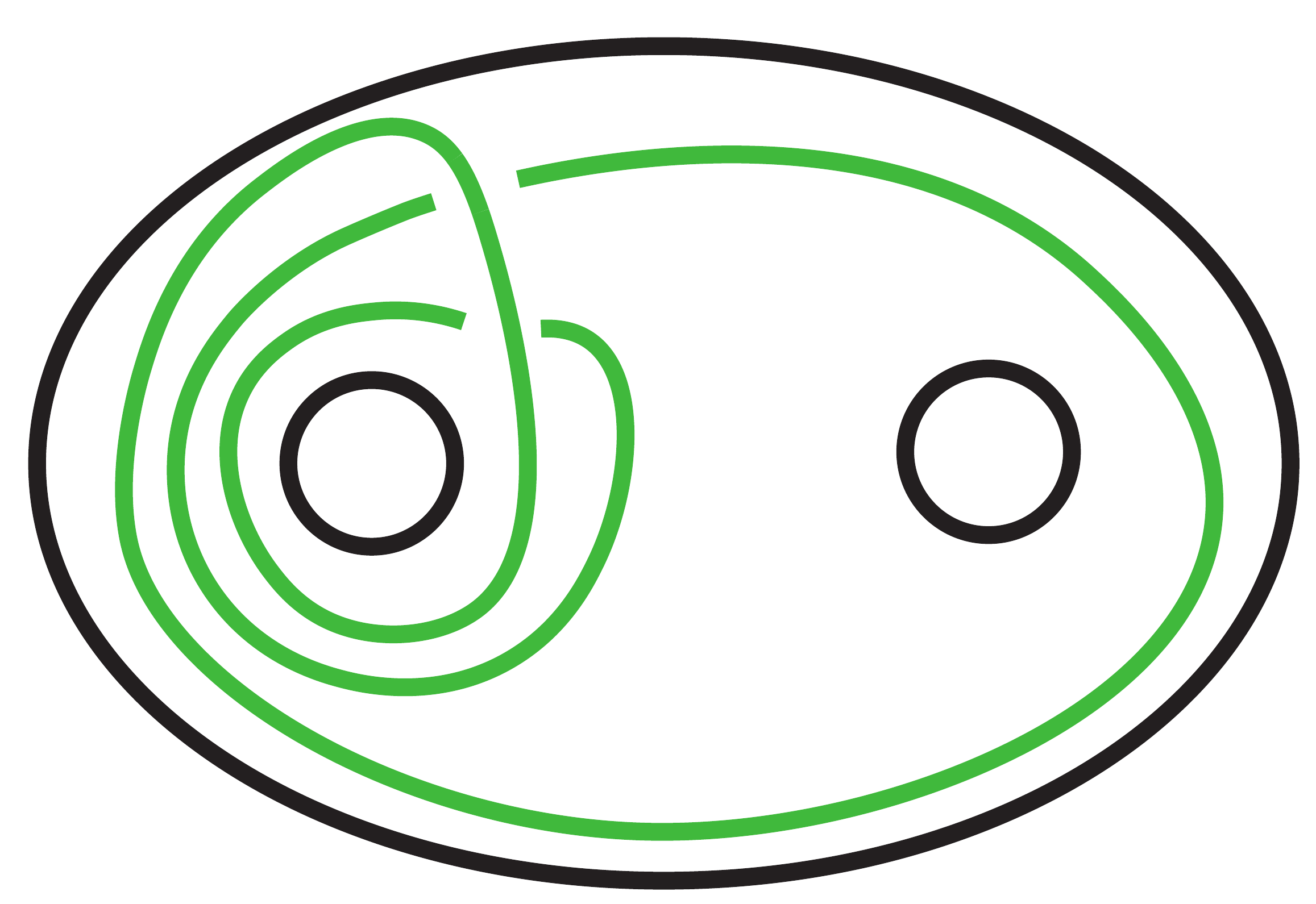}
\put(39, -10){$P(2,1)$}
\end{overpic}}} + A^{-3}
\vcenter{\hbox{\begin{overpic}[scale=.1]{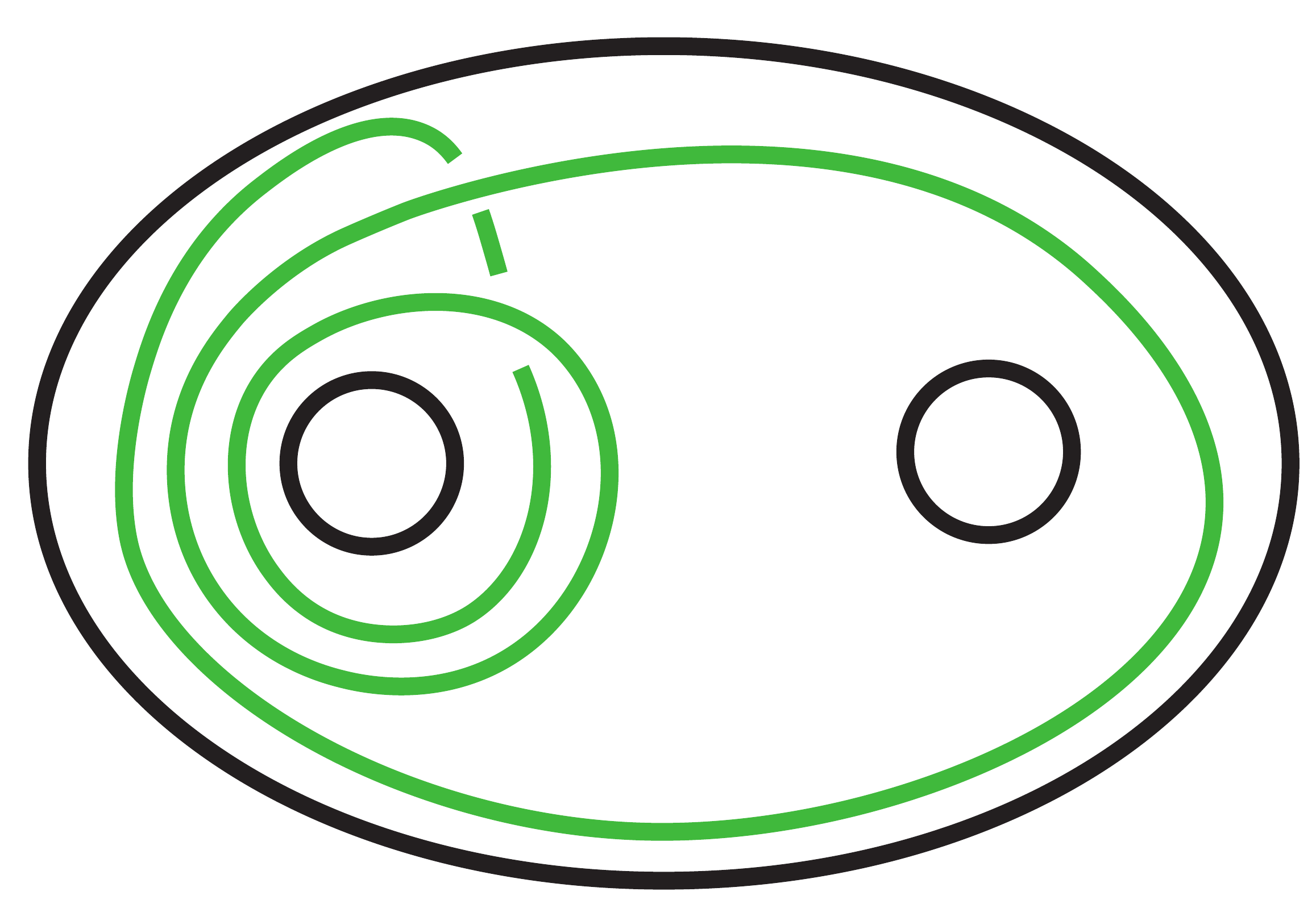}
\put(41, -10){$N(2,1)$} 
\end{overpic}}}
\vspace{7mm}
\end{eqnarray*}
    \caption{Illustration of $C(2,1)$.}
    \label{fig:C(2,1)}
\end{figure}
Since $c_{2,1}\cup \beta_{1}$ has a negative kink and $c_{2,1}\cup \beta_{2}$ has a positive kink, we obtain two diagrams without kinks with coefficients $-A^{-3}$ and $-A^{3}$, respectively. Let us denote the resultant diagrams without kinks by $N(2,1)$ and $P(2,1)$, respectively. In general, when $m,n \geq 0$ and $(m,n)\neq (0,0)$, we define $N(m,n)$ and $P(m,n)$ from $c_{m,n}\cup \beta_{1}$ and $c_{m,n}\cup \beta_{2}$, respectively, by removing kinks as described in Figure \ref{fig:C(m,n)}.
When $(m,n)=(0,0)$, by definition, $C(0,0)$=0, so we define $P(0,0)=-A^{-3}(-A^{2}-A^{-2})$, $N(0,0)=-A^{3}(-A^{2}-A^{-2})$. Now we have
$$C(m,n)= -A^{3}P(m,n)+A^{-3}N(m,n), m,n \geq 0.$$
\begin{figure}[ht]
    \centering
    \begin{eqnarray*}
    C(m,n) & = &
\omega\left(\vcenter{\hbox{\begin{overpic}[scale=.1]{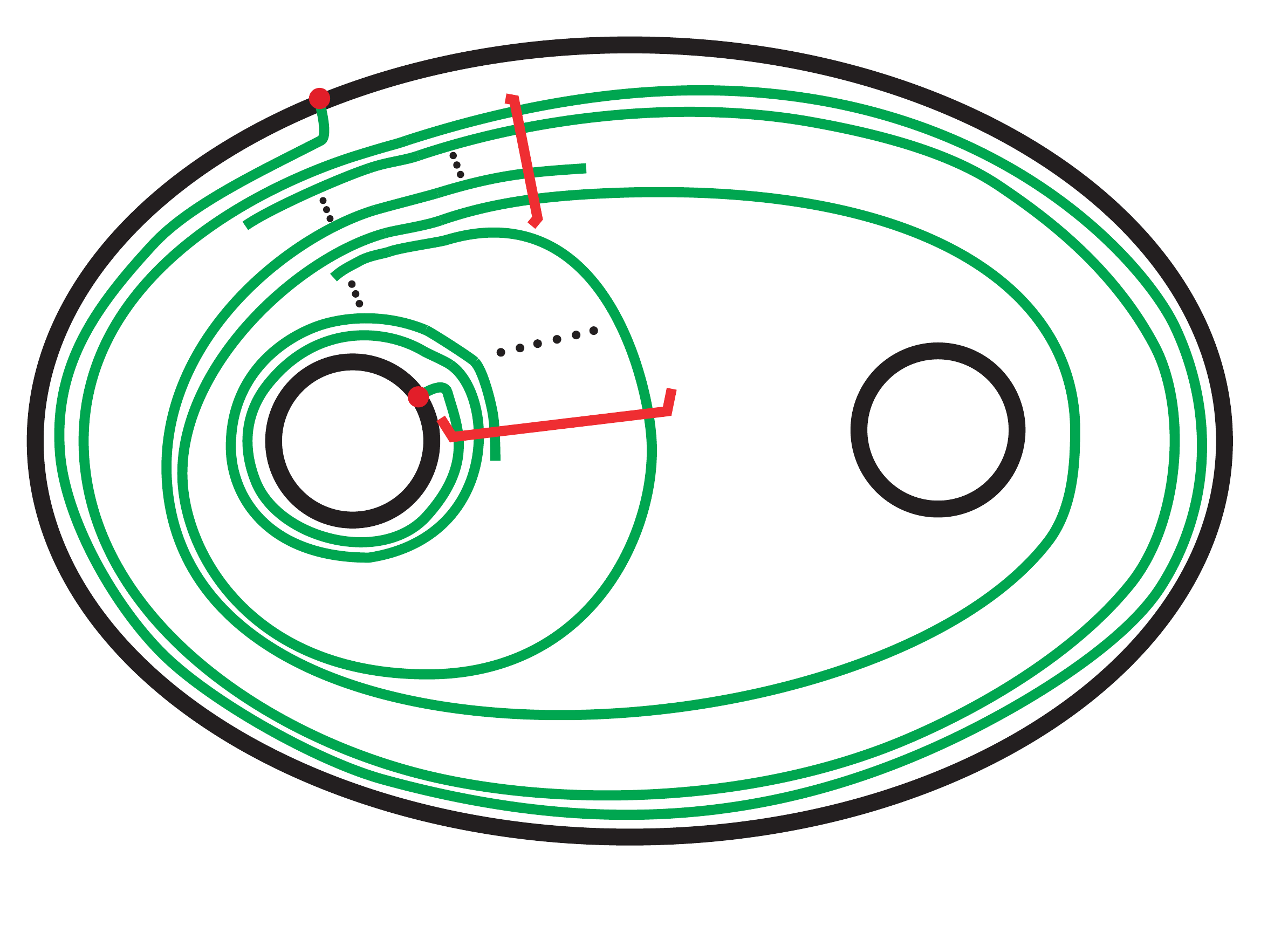} 
\put(48,42){\tiny{${m}$}}
\put(50,70){\tiny{${n}$}}
\end{overpic}}}\right) \\ 
& = & -A^3 \vcenter{\hbox{\begin{overpic}[scale=.1]{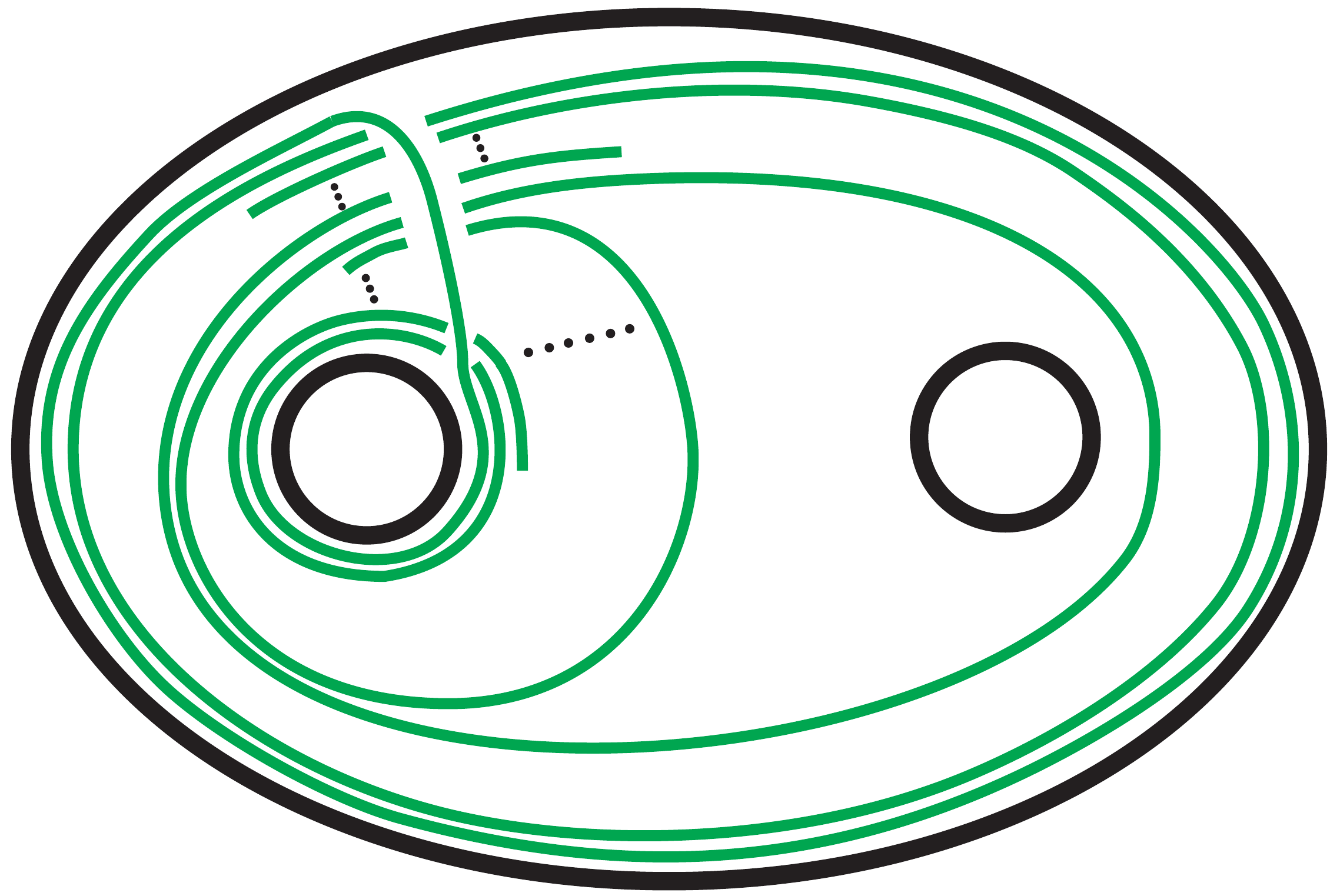}
\put(40,-10){$P(m,n)$}
\end{overpic}}} + A^{-3}
\vcenter{\hbox{\begin{overpic}[scale=.1]{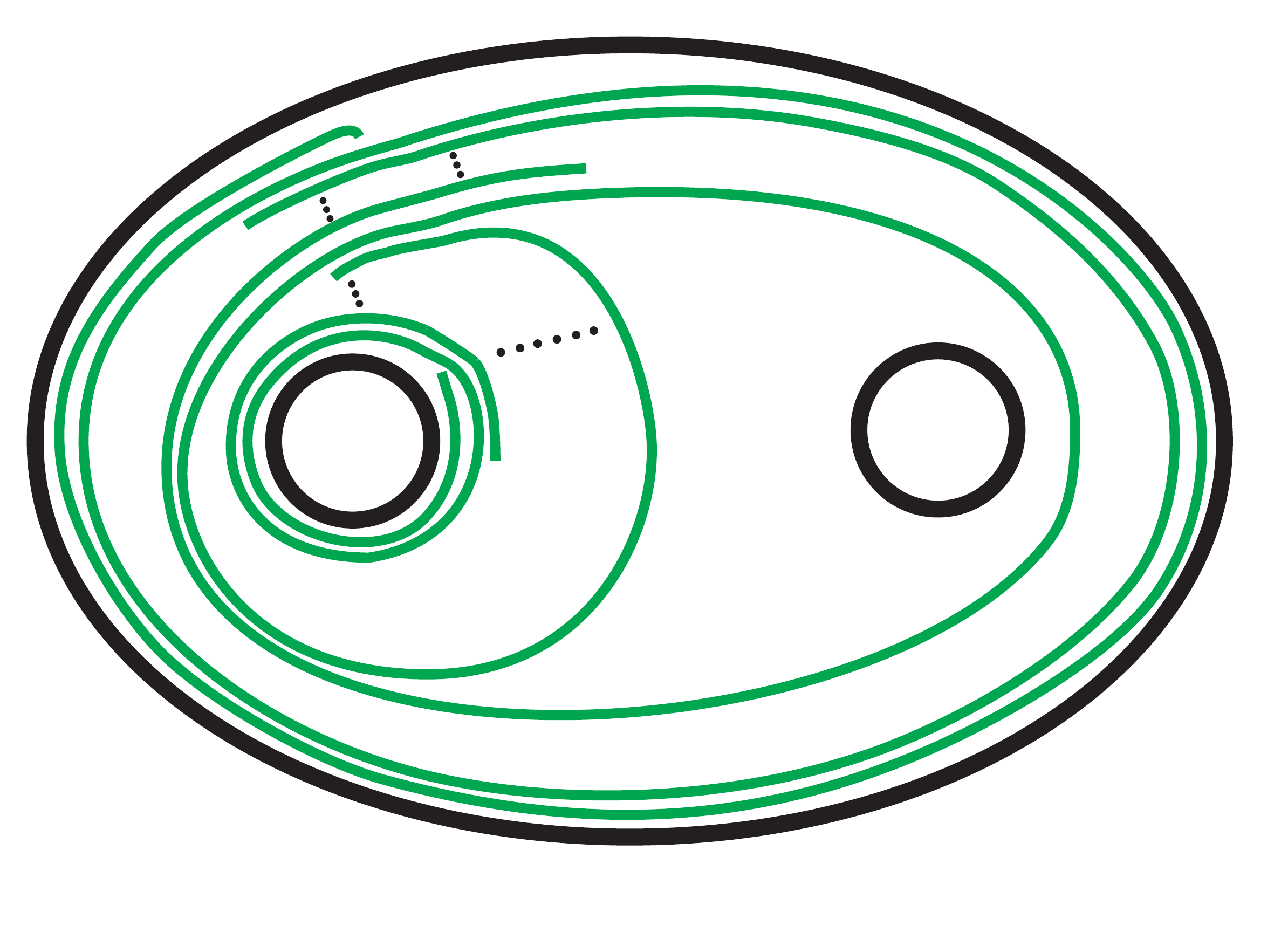}
\put(40,-2){$N(m,n)$}
\end{overpic}}}
\vspace{4mm}
\end{eqnarray*}
    \caption{Illustration of $C(m,n)$.}
    \label{fig:C(m,n)}
\end{figure}

Our goal is to find closed formulae for $P(m,n)$ and $N(m,n)$, which we achieve through the following series of lemmas and corollaries. Their proofs will be provided in the \hyperref[appendixs1s2s1s2]{Appendix}. Since the diagram for $N(m,n)$ is the mirror image of the diagram for
$P(m,n)$, it is sufficient to find the formula for $P(m,n)$. We obtain the following lemma for $P(m,n)$.

\begin{lemma}\label{lem:PP(m,n)}
    There exists a sequence $PP(m,n)$, $m,n \geq 0$ such that $$P(m,n) = A^{m+n-1}PP(m,n)-A^{m+n-5}PP(m-2,n),$$
    satisfying the following relations:
    \begin{eqnarray*}
PP(0,0)&=&1,~ PP(1,0) ~=~ a_{1},~ PP(0,1)~=~a_{3}, ~PP(1,1)~=~a_{1}a_{3},\\
PP(m,0) &=& PP(m-1,0)a_{1}-PP(m-2,0),~ m\geq 2, \\
PP(m,1) &=& PP(m,0)a_{3}+A^{-2}PP(m-1,0)a_{2}+A^{-4}PP(m-2,1),~ m\geq 2, \text{and}\\
PP(m,n) &=& PP(m,n-1)a_{3}-PP(m,n-2), m\geq 0, n\geq 2.
\end{eqnarray*}
\end{lemma}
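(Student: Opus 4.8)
The plan is to read $P(m,n)$ off its defining diagram, extract three recursions by applying the Kauffman bracket skein relation at one well-chosen crossing in each case, take these three recursions together with the four displayed base values as the \emph{definition} of the sequence $PP(m,n)$, and finally confirm the identity $P(m,n)=A^{m+n-1}PP(m,n)-A^{m+n-5}PP(m-2,n)$ by induction on $m+n$. Recall that $P(m,n)$ is the kink-free diagram on $\Sigma_{0,3}$ obtained from the knot $c_{m,n}\cup\beta_2$ with $c_{m,n}=\tau_{a_1}^{m}\tau_{a_3}^{n}(c_{0,0})$: concretely, a curve that spirals $n$ times around the hole bounded by $a_3$ and $m$ times around the hole bounded by $a_1$ before closing up through the arc $\beta_2$ (see Figure~\ref{fig:C(m,n)}). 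Since $A$ is a unit in $\mathbb{Z}[A^{\pm 1}]$, passing between ``$PP$ defined by the recursions'' and ``$PP$ defined by the displayed identity'' is harmless; the real work is the diagrammatic derivation of the recursions and the bookkeeping of the powers of $A$ released when kinks are removed.

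I would first treat the two Chebyshev-type recursions, which are the benign ones. For $n\ge 2$, the crossings of the outermost $\tau_{a_3}$-twist of $c_{m,n}\cup\beta_2$ lie in a disc of $\Sigma_{0,3}$ that meets neither $a_1$ nor the arc $\beta_2$; resolving one of them yields the level-$(n-1)$ diagram carrying one extra parallel copy of $a_3$, together with the level-$(n-2)$ diagram with a single positive kink. Straightening that kink (which releases a factor $-A^{3}$) and recording the $A^{\pm 1}$ from the skein relation, and comparing against the normalising powers $A^{m+n-1}$ and $A^{m+n-5}$, collapses the relation to $P(m,n)=Aa_{3}\,P(m,n-1)-A^{2}P(m,n-2)$, that is, $PP(m,n)=PP(m,n-1)a_{3}-PP(m,n-2)$. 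The same argument applied to the outermost $\tau_{a_1}$-twist in the row $n=0$, where the closure through $\beta_2$ interacts with nothing but the $a_1$-strands, gives $PP(m,0)=PP(m-1,0)a_{1}-PP(m-2,0)$, so that $PP(m,0)=S_{m}(a_{1})$. The base values $PP(0,0)=1$, $PP(1,0)=a_{1}$, $PP(0,1)=a_{3}$, $PP(1,1)=a_{1}a_{3}$ then fall out of the pictures of $c_{0,0},c_{1,0},c_{0,1},c_{1,1}$ closed through $\beta_2$ after straightening kinks; these also pin down the base of the induction, e.g.\ $P(0,0)=-A^{-3}(-A^{2}-A^{-2})=A^{-1}+A^{-5}=A^{-1}S_{0}(a_{1})-A^{-5}S_{-2}(a_{1})$.

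The crux, and the step I expect to be the main obstacle, is the recursion along the row $n=1$. Here $c_{m,1}$ wraps once around the hole bounded by $a_3$, and this strand runs past the marked point $u$, which itself sits on the boundary component $a_3$ (see Figure~\ref{rkbsmbasisf03uv}); consequently, when I resolve the outermost $\tau_{a_1}$-crossing of $c_{m,1}\cup\beta_2$, the reconnecting smoothing does not close off a trivial circle but instead produces a copy of the boundary curve $a_2$, multiplied against a diagram belonging to the already-understood row $n=0$ with $m$ decreased by one. The delicate point is to account for every power of $A$ that enters---the $\pm A^{\pm 3}$ from removing the kink of $c_{m,1}\cup\beta_2$, the $A^{\pm 1}$ from the resolved crossing, and the $A^{\pm 2}$ that is absorbed when an arc is isotoped past a hole---so that, after dividing out the normalising factor $A^{m}$, one lands exactly on $PP(m,1)=PP(m,0)a_{3}+A^{-2}PP(m-1,0)a_{2}+A^{-4}PP(m-2,1)$ for $m\ge 2$. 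Getting the signs and framings of this case completely right is essentially the whole difficulty of the lemma.

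Finally, with $PP(m,n)$ defined by the four base values and the three recursions just derived (using the conventions $S_{-1}=0$, $S_{-2}=-1$ to interpret $PP$ at negative first index when $m\in\{0,1\}$), I would verify $P(m,n)=A^{m+n-1}PP(m,n)-A^{m+n-5}PP(m-2,n)$ by induction on $m+n$: the cases $(m,n)\in\{(0,0),(1,0),(0,1),(1,1)\}$ are the direct computations above, and the inductive step is precisely the translation of the three diagrammatic recursions for $P(m,n)$ into the three algebraic recursions for $PP(m,n)$, the kink-removal factors appearing in this translation being exactly what forces the normalising exponents to be $m+n-1$ and $m+n-5$. Since the diagram of $N(m,n)$ is the mirror image of that of $P(m,n)$, the analogous closed form for $N(m,n)$ follows with no further work, so it indeed suffices to treat $P(m,n)$.
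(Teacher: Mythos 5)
Your proposal is correct and follows essentially the same route as the paper: the substance in both cases is the diagrammatic derivation, by resolving one crossing at a time and removing kinks, of the three recurrences $P(m,0)=AP(m-1,0)a_{1}-A^{2}P(m-2,0)$, $P(m,1)=AP(m,0)a_{3}+P(m-1,0)a_{2}+A^{-2}P(m-2,1)$, and $P(m,n)=AP(m,n-1)a_{3}-A^{2}P(m,n-2)$, followed by algebraic bookkeeping of the powers of $A$. The only difference is organizational: you define $PP(m,n)$ by the stated recursions and verify $P(m,n)=A^{m+n-1}PP(m,n)-A^{m+n-5}PP(m-2,n)$ by a direct induction on $m+n$ (with the $S_{-1},S_{-2}$ conventions for small $m$), whereas the paper introduces an auxiliary sequence $Q(m,n)$ satisfying the same recurrences, sets $PP(m,n)=A^{-m-n+1}(P(m,n)+Q(m,n))$, and proves $Q(m,n)=A^{m+n-5}PP(m-2,n)$ inductively — the two inductions carry identical content.
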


Since $N(m,n)$ is the mirror image of $P(m,n)$, the equality $N(m,n)(A)= P(m,n)(A^{-1})$ follows and we obtain the following lemma.
\begin{lemma}\label{lem:NN(m,n)}
There exists a sequence $NN(m,n)$, $m,n \geq 0$ such that
$$N(m,n) = A^{-m-n+1}NN(m,n)-A^{-m-n+5}NN(m-2,n),$$
satisfying the following relations:
    \begin{eqnarray*}
NN(0,0)&=&1,~ NN(1,0) ~=~ a_{1},~ NN(0,1)~=~a_{3},~ NN(1,1)~=~a_{1}a_{3},\\
NN(m,0) &=& NN(m-1,0)a_{1}-NN(m-2,0), m\geq 2,\\
NN(m,1) &=& NN(m,0)a_{3}+A^{2}NN(m-1,0)a_{2}+A^{4}NN(m-2,1), m\geq 2, \text{and}\\
NN(m,n) &=& NN(m,n-1)a_{3}-NN(m,n-2), m\geq 0, n\geq 2.
\end{eqnarray*}
Therefore, we obtain 
$$C(m,n) = A^{m+n+2}PP(m,n)-A^{m+n-2}PP(m-2,n)-A^{-m-n-2}NN(m,n)+A^{-m-n+2}NN(m-2,n).$$
\end{lemma}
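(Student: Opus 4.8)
The plan is to deduce Lemma~\ref{lem:NN(m,n)} from Lemma~\ref{lem:PP(m,n)} by exploiting the mirror symmetry relating $N(m,n)$ and $P(m,n)$, and then to obtain the closed formula for $C(m,n)$ by substituting the two resulting expressions into $C(m,n) = -A^{3}P(m,n) + A^{-3}N(m,n)$.

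First I would make the mirror correspondence precise. Reflecting the thickened pair of pants $\Sigma_{0,3}\times I$ in the middle level $\Sigma_{0,3}\times\{\tfrac{1}{2}\}$ is an orientation-reversing homeomorphism that fixes $\partial(\Sigma_{0,3}\times I)$ pointwise, hence fixes the marked points $u,v$ and each boundary-parallel curve $a_1,a_2,a_3$, while it interchanges over- and under-crossings. Under this reflection the diagram defining $P(m,n)$, coming from $c_{m,n}\cup\beta_2$, is carried to the diagram defining $N(m,n)$, coming from $c_{m,n}\cup\beta_1$, so on the level of skein elements $N(m,n)(A) = P(m,n)(A^{-1})$, where the ring automorphism $A\mapsto A^{-1}$ acts only on the powers of $A$ and leaves $a_1,a_2,a_3$ fixed. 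This is exactly the relation recorded just before the lemma statement.

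Next I would define $NN(m,n)(A) := PP(m,n)(A^{-1})$ for $m,n\geq 0$. Applying the automorphism $A\mapsto A^{-1}$ to the identity of Lemma~\ref{lem:PP(m,n)} turns $P(m,n) = A^{m+n-1}PP(m,n) - A^{m+n-5}PP(m-2,n)$ into $N(m,n) = A^{-m-n+1}NN(m,n) - A^{-m-n+5}NN(m-2,n)$, which is the first assertion. For the recurrences: the initial data $PP(0,0)=1$, $PP(1,0)=a_1$, $PP(0,1)=a_3$, $PP(1,1)=a_1a_3$ are $A$-independent and so are inherited verbatim by $NN$; the relations $PP(m,0)=PP(m-1,0)a_1-PP(m-2,0)$ and $PP(m,n)=PP(m,n-1)a_3-PP(m,n-2)$ involve no powers of $A$ and are likewise preserved; and applying $A\mapsto A^{-1}$ to $PP(m,1)=PP(m,0)a_3+A^{-2}PP(m-1,0)a_2+A^{-4}PP(m-2,1)$ yields precisely $NN(m,1)=NN(m,0)a_3+A^{2}NN(m-1,0)a_2+A^{4}NN(m-2,1)$. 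Thus $NN(m,n)$ satisfies all the stated relations, and its existence and consistency are automatic, being inherited from Lemma~\ref{lem:PP(m,n)}.

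Finally, substituting $P(m,n) = A^{m+n-1}PP(m,n) - A^{m+n-5}PP(m-2,n)$ and $N(m,n) = A^{-m-n+1}NN(m,n) - A^{-m-n+5}NN(m-2,n)$ into $C(m,n) = -A^{3}P(m,n) + A^{-3}N(m,n)$ and collecting powers of $A$ gives the displayed closed expression for $C(m,n)$. This last step is a one-line computation; the only point that genuinely needs care is verifying that the reflection fixes the $a_i$ and the marked points, so that the transport of the formula and all the recurrences via $A\mapsto A^{-1}$ is legitimate term by term. Everything else is routine bookkeeping.
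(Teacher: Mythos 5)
Your proposal is correct and follows essentially the same route as the paper: the paper likewise obtains Lemma~\ref{lem:NN(m,n)} from Lemma~\ref{lem:PP(m,n)} by observing that the $N(m,n)$ diagram is the mirror image of the $P(m,n)$ diagram, so $N(m,n)(A)=P(m,n)(A^{-1})$, and then transports the recurrences through the automorphism $A\mapsto A^{-1}$ (which fixes $a_1,a_2,a_3$), defining $NN(m,n)(A)=PP(m,n)(A^{-1})$. One small caveat: substituting into $C(m,n)=-A^{3}P(m,n)+A^{-3}N(m,n)$ actually yields $C(m,n)=-A^{m+n+2}PP(m,n)+A^{m+n-2}PP(m-2,n)+A^{-m-n-2}NN(m,n)-A^{-m-n+2}NN(m-2,n)$, i.e.\ the signs as in the Appendix identity~(\ref{eqn:equality-C(m,n)}); the concluding display in the lemma statement has all signs reversed, which is a typo in the statement rather than a flaw in your argument.
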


Notice that $PP(m,n)$ and $NN(m,n)$ satisfy the Chebyshev recurrence relation in the variable $a_{3}$ with respect to the variable $n$. From this observation we get the following lemma.
\begin{lemma}\label{lem:(m,n)-formula}
The sequence $PP(m,n)$ in Lemma~\ref{lem:PP(m,n)} satisfies
\begin{eqnarray*}
    PP(m,n) = PP(m,1)S_{n-1}(a_{3})-PP(m,0)S_{n-2}(a_{3}).
\end{eqnarray*}
    Analogously, the sequence $NN(m,n)$ in Lemma \ref{lem:NN(m,n)} satisfies
\begin{eqnarray*}
    NN(m,n) = NN(m,1)S_{n-1}(a_{3})-NN(m,0)S_{n-2}(a_{3}).
\end{eqnarray*}
\end{lemma}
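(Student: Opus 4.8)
\textbf{Proof proposal for Lemma~\ref{lem:(m,n)-formula}.}

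The plan is to derive the stated closed formula directly from the Chebyshev-type recurrence for $PP(m,n)$ in the variable $n$ that was already recorded in Lemma~\ref{lem:PP(m,n)}, namely $PP(m,n) = PP(m,n-1)a_3 - PP(m,n-2)$ for $m \geq 0$, $n \geq 2$. The key observation is that, for a fixed $m$, the sequence $\{PP(m,n)\}_{n \geq 0}$ satisfies exactly the recurrence $x_{n} = a_3\, x_{n-1} - x_{n-2}$ that defines the Chebyshev polynomials $S_q(a_3)$, so any solution is a linear combination (with coefficients in $\mathcal{S}^{\mathit{alg}}(\Sigma_{0,3})$, which is commutative by the Bullock--Przytycki theorem quoted above) of two independent solutions, and one convenient basis of solutions is $\{S_{n-1}(a_3), S_{n-2}(a_3)\}$ using the extended initial conditions $S_{-1} = 0$, $S_{-2} = -1$.

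First I would verify the base cases. Setting $n = 0$, the right-hand side is $PP(m,1)S_{-1}(a_3) - PP(m,0)S_{-2}(a_3) = PP(m,1)\cdot 0 - PP(m,0)\cdot(-1) = PP(m,0)$, as required. Setting $n = 1$ gives $PP(m,1)S_0(a_3) - PP(m,0)S_{-1}(a_3) = PP(m,1)\cdot 1 - PP(m,0)\cdot 0 = PP(m,1)$, again as required. Then I would proceed by induction on $n$: assuming the formula holds for $n-1$ and $n-2$ (both at least $0$), compute
\begin{align*}
PP(m,n) &= PP(m,n-1)a_3 - PP(m,n-2) \\
&= \bigl(PP(m,1)S_{n-2}(a_3) - PP(m,0)S_{n-3}(a_3)\bigr)a_3 - \bigl(PP(m,1)S_{n-3}(a_3) - PP(m,0)S_{n-4}(a_3)\bigr) \\
&= PP(m,1)\bigl(a_3 S_{n-2}(a_3) - S_{n-3}(a_3)\bigr) - PP(m,0)\bigl(a_3 S_{n-3}(a_3) - S_{n-4}(a_3)\bigr) \\
&= PP(m,1)S_{n-1}(a_3) - PP(m,0)S_{n-2}(a_3),
\end{align*}
where the last equality is the defining Chebyshev recurrence $S_{q+1}(a_3) = a_3 S_q(a_3) - S_{q-1}(a_3)$ applied with $q = n-2$ and $q = n-3$ respectively. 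This closes the induction for $n \geq 2$. The argument for $NN(m,n)$ is verbatim the same, since Lemma~\ref{lem:NN(m,n)} records the identical recurrence $NN(m,n) = NN(m,n-1)a_3 - NN(m,n-2)$ and the same base cases structure.

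There is essentially no hard step here: the only point requiring care is the bookkeeping with the extended Chebyshev initial conditions $S_{-1}(a_3) = 0$ and $S_{-2}(a_3) = -1$, which must be in force for the $n = 0$ base case and for the first inductive step (where $S_{n-3}$ and $S_{n-4}$ can carry negative indices when $n = 2,3$); the defining recurrence $S_{q+1} = xS_q - S_{q-1}$ is consistent with these conventions, so the induction goes through uniformly for all $n \geq 2$ without special-casing. One should also note that all coefficients lie in the commutative ring $\mathcal{S}^{\mathit{alg}}(\Sigma_{0,3}) \cong \mathbb{Z}[A^{\pm 1}][a_1,a_2,a_3]$, so rearranging the polynomial-in-$a_3$ combinations as above is legitimate.
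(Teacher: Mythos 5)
Your proof is correct and follows essentially the same route as the paper's: verification of the base cases $n=0,1$ via the extended conventions $S_{-2}=-1$, $S_{-1}=0$, followed by a two-step induction on $n$ combining the recurrence $PP(m,n)=PP(m,n-1)a_3-PP(m,n-2)$ with the Chebyshev recurrence, and the identical argument for $NN(m,n)$. No gaps to report.
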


Since $NN(m,0)$ and $PP(m,0)$ have the same recurrence relation and initial conditions, we obtain $PP(m,0)=NN(m,0)=S_{m}(a_{1})$ for $m \geq 0$, where $S_{m}(a_{1})$ is the Chebyshev polynomial of the second kind in the variable $a_{1}$. From the equalities 
\begin{eqnarray*}
&&PP(m,1)=PP(m,0)a_{3}+A^{-2}PP(m-1,0)a_{2}+A^{-4}PP(m-2,1),\\
\Leftrightarrow&& PP(m,1)-A^{-4}PP(m-2,1)=PP(m,0)a_{3}+A^{-2}PP(m-1,0)a_{2}, m\geq 2
\end{eqnarray*}
and 
\begin{eqnarray*}
&&NN(m,1) = NN(m,0)a_{3}+A^{2}NN(m-1,0)a_{2}+A^{4}NN(m-2,1),\\
\Leftrightarrow&& NN(m,1)-A^{4}NN(m-2,1) = NN(m,0)a_{3}+A^{2}NN(m-1,0)a_{2}, m\geq 2,
\end{eqnarray*}
we can prove the following statement.
\begin{lemma}\label{lem:C(m,n)-formula}
For $m,n \geq 0$,
    \begin{eqnarray*}
        C(m,n) &=& (-A^{m+n+2}+A^{-m-n-2})S_{m}(a_{1})S_{n}(a_{3})+\\
                &&+(-A^{m+n}+A^{-m-n})S_{m-1}(a_{1})S_{n-1}(a_{3})a_{2}+\\
                &&+(-A^{m+n-2}+A^{-m-n+2})S_{m-2}(a_{1})S_{n-2}(a_{3}).
    \end{eqnarray*}
\end{lemma}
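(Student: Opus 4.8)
The plan is to combine Lemma~\ref{lem:NN(m,n)} with the closed formula in Lemma~\ref{lem:(m,n)-formula} and the identifications $PP(m,0) = NN(m,0) = S_m(a_1)$. By Lemma~\ref{lem:NN(m,n)},
\begin{eqnarray*}
C(m,n) &=& A^{m+n+2}PP(m,n) - A^{m+n-2}PP(m-2,n)\\
       && -A^{-m-n-2}NN(m,n) + A^{-m-n+2}NN(m-2,n),
\end{eqnarray*}
so it suffices to expand each of the four terms using Lemma~\ref{lem:(m,n)-formula}, collect the coefficients of the three basis monomials $S_m(a_1)S_n(a_3)$, $S_{m-1}(a_1)S_{n-1}(a_3)a_2$, and $S_{m-2}(a_1)S_{n-2}(a_3)$, and check that the result matches the claimed expression.

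First I would substitute $PP(m,n) = PP(m,1)S_{n-1}(a_3) - PP(m,0)S_{n-2}(a_3)$ and the analogous identity for $NN$, and then rewrite $PP(m,1)$ and $NN(m,1)$ in terms of $a_1,a_2,a_3$. The key computation is to pin down $PP(m,1)$: the recurrence $PP(m,1) - A^{-4}PP(m-2,1) = S_m(a_1)a_3 + A^{-2}S_{m-1}(a_1)a_2$ (using $PP(m,0)=S_m(a_1)$) should be solved to give
\[
PP(m,1) = S_m(a_1)\,a_3 \cdot \bigl(\text{telescoping in }A^{-4}\bigr) + A^{-2}\,a_2\cdot\bigl(\text{telescoping in }A^{-4}\bigr),
\]
but in fact the cleaner route is to guess $PP(m,1) = S_m(a_1)S_1(a_3) - (\text{correction}) $ — more precisely, one checks directly that $PP(m,n) = S_m(a_1)S_n(a_3) + A^{-2}S_{m-1}(a_1)S_{n-1}(a_3)a_2 + A^{-4}S_{m-2}(a_1)S_{n-2}(a_3)$ satisfies all the recurrences and initial conditions in Lemma~\ref{lem:PP(m,n)}. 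Verifying this is a matter of plugging into the four defining relations and using the Chebyshev recurrence for $S_m(a_1)$ and $S_n(a_3)$; the base cases $PP(0,0), PP(1,0), PP(0,1), PP(1,1)$ are immediate with the convention $S_{-1}=0$, $S_{-2}=-1$. Symmetrically, $NN(m,n) = S_m(a_1)S_n(a_3) + A^{2}S_{m-1}(a_1)S_{n-1}(a_3)a_2 + A^{4}S_{m-2}(a_1)S_{n-2}(a_3)$.

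Granting those two closed forms, the computation of $C(m,n)$ becomes purely bookkeeping. Write $X = S_m(a_1)S_n(a_3)$, $Y = S_{m-1}(a_1)S_{n-1}(a_3)a_2$, $Z = S_{m-2}(a_1)S_{n-2}(a_3)$, and denote by $X', Y', Z'$ the corresponding monomials with $m$ replaced by $m-2$ (so $X' = S_{m-2}(a_1)S_n(a_3)$, etc.). Then
\begin{eqnarray*}
A^{m+n+2}PP(m,n) &=& A^{m+n+2}X + A^{m+n}Y + A^{m+n-2}Z,\\
-A^{m+n-2}PP(m-2,n) &=& -A^{m+n-2}X' - A^{m+n-4}Y' - A^{m+n-6}Z',\\
-A^{-m-n-2}NN(m,n) &=& -A^{-m-n-2}X - A^{-m-n}Y - A^{-m-n+2}Z,\\
A^{-m-n+2}NN(m-2,n) &=& A^{-m-n+2}X' + A^{-m-n+4}Y' + A^{-m-n+6}Z'.
\end{eqnarray*}
The terms involving $X'$, $Y'$, $Z'$ must be re-expressed via the Chebyshev recurrence $S_m(a_1) = a_1 S_{m-1}(a_1) - S_{m-2}(a_1)$, i.e. $S_{m-2}(a_1) = a_1 S_{m-1}(a_1) - S_m(a_1)$; this is where the primed monomials get folded back into $X, Y, Z$ together with extra $a_1$-weighted pieces that, after collecting, cancel in pairs because the $A$-exponents line up. The one genuine obstacle is organising this cancellation cleanly: one has to be careful that the $a_1 S_{m-1}(a_1)$-type leftovers from the $X'$ and $Z'$ expansions annihilate each other across the $A^{\pm(m+n)}$ and $A^{\pm(m+n\mp2)}$ bands, and similarly for the $a_1 a_2$ terms coming from $Y'$. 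Once that is verified, grouping the surviving coefficients of $X$, $Y$, $Z$ yields exactly $(-A^{m+n+2}+A^{-m-n-2})$, $(-A^{m+n}+A^{-m-n})$, and $(-A^{m+n-2}+A^{-m-n+2})$ respectively, completing the proof. A low-risk alternative that sidesteps the recurrence juggling is to observe that the claimed formula for $C(m,n)$ itself satisfies all the recurrences that $C(m,n)$ inherits from Lemmas~\ref{lem:PP(m,n)}--\ref{lem:(m,n)-formula} (linearity in each Chebyshev index and the same base cases), and to conclude by induction on $m+n$; I would present whichever of these is shorter in the Appendix.
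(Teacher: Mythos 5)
The central step of your argument is false: the three‑term closed forms you propose for $PP(m,n)$ and $NN(m,n)$ do not hold. The expression $S_m(a_1)S_n(a_3)+A^{-2}S_{m-1}(a_1)S_{n-1}(a_3)a_2+A^{-4}S_{m-2}(a_1)S_{n-2}(a_3)$ does satisfy the three recurrences of Lemma \ref{lem:PP(m,n)}, but it fails two of the four initial conditions: at $(0,0)$ it gives $1+A^{-4}$ (because $S_{-2}(a_1)S_{-2}(a_3)=1$) instead of $PP(0,0)=1$, and at $(1,1)$ it gives $a_1a_3+A^{-2}a_2$ instead of $PP(1,1)=a_1a_3$. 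Hence it differs from $PP$ everywhere above these seeds: the recurrences give $PP(m,0)=S_m(a_1)$ and $PP(2,1)=S_2(a_1)a_3+A^{-2}a_1a_2+A^{-4}a_3$, while your formula gives $S_m(a_1)-A^{-4}S_{m-2}(a_1)$ and drops the $A^{-4}a_3$ term; in fact $PP(m,1)$ has no three‑term closed form, since the $A^{-4}PP(m-2,1)$ term telescopes into a sum of length roughly $m/2$. What you have actually written down is the closed form of $PP(m,n)-A^{-4}PP(m-2,n)=A^{-(m+n)+1}P(m,n)$ (for $m\geq 2$), not of $PP(m,n)$. Substituting it for $PP$ (and its mirror for $NN$) in the four‑term identity therefore double‑counts the $PP(m-2,n)$ and $NN(m-2,n)$ corrections, and the primed leftovers do not cancel under Chebyshev folding: at $(m,n)=(3,0)$ your four displayed lines sum to $(A^{5}-A^{-5})S_3(a_1)-2(A-A^{-1})a_1$, whereas the lemma asserts $C(3,0)=(-A^{5}+A^{-5})S_3(a_1)+(A-A^{-1})a_1$. (A separate, smaller point: the four‑term decomposition you quote carries the signs of the concluding display of Lemma \ref{lem:NN(m,n)}, which is inconsistent with $C(m,n)=-A^{3}P(m,n)+A^{-3}N(m,n)$; the version actually used in the Appendix, equation (\ref{eqn:equality-C(m,n)}), has all four signs reversed, so even a correct collection with your signs would at best return $-C(m,n)$.)

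For comparison, the paper's proof never expands $PP(m,n)$ or $NN(m,n)$ individually: it substitutes $PP(m,n)=PP(m,1)S_{n-1}(a_3)-PP(m,0)S_{n-2}(a_3)$ from Lemma \ref{lem:(m,n)-formula} and then eliminates $PP(m,1)$ only through the combination $PP(m,1)-A^{-4}PP(m-2,1)=PP(m,0)a_3+A^{-2}PP(m-1,0)a_2$, together with $PP(m,0)=NN(m,0)=S_m(a_1)$, the mirror identities for $NN$, and one final Chebyshev contraction. Your closed form could be salvaged by attaching it to the right object: since $P(m,n)=A^{m+n-1}\left(PP(m,n)-A^{-4}PP(m-2,n)\right)$, your expression is exactly $A^{-(m+n)+1}P(m,n)$ and its mirror is $A^{m+n-1}N(m,n)$; proving those two identities directly (by induction on the recurrences for $P$ and $N$) and then using $C(m,n)=-A^{3}P(m,n)+A^{-3}N(m,n)$ would give the lemma in one line and would be a genuinely shorter route — but that is not what your write‑up does. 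Finally, your fallback (induction on $m+n$ because "the claimed formula satisfies all the recurrences that $C(m,n)$ inherits") does not work as stated: $P$ and $N$ satisfy recurrences with different powers of $A$, namely $P(m,n)=AP(m,n-1)a_3-A^{2}P(m,n-2)$ versus $N(m,n)=A^{-1}N(m,n-1)a_3-A^{-2}N(m,n-2)$, so $C=-A^{3}P+A^{-3}N$ satisfies no recurrence in the $C$'s alone; any induction must carry the two halves separately, which is precisely the bookkeeping the paper performs.
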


\noindent
{\bf Case II:} $C(m,-n)$ for $m,n \geq 1$. \\
For $m,n \geq 1$, $C(m,-n) = \omega(c_{m,-n}) = c_{m,-n}\cup \beta_{2} - c_{m,-n}\cup \beta_{1}.$
Since $c_{m,-n}\cup \beta_{1}$ and $c_{m,-n}\cup \beta_{2}$ each have both a negative and a positive kink, we obtain two diagrams without any kinks and coefficients. We denote them by $N(m,-n)$ and
$P(m,-n)$, respectively. Hence, we obtain
$$C(m,-n)= P(m,-n)-N(m,-n), m, n \geq 1,$$
as illustrated in Figure ~\ref{fig:C(m,-n)}.
\begin{figure}[ht]
    \centering
    \begin{eqnarray*}
    C(m,-n) & = &
\omega\left(\vcenter{\hbox{\begin{overpic}[scale=.1]{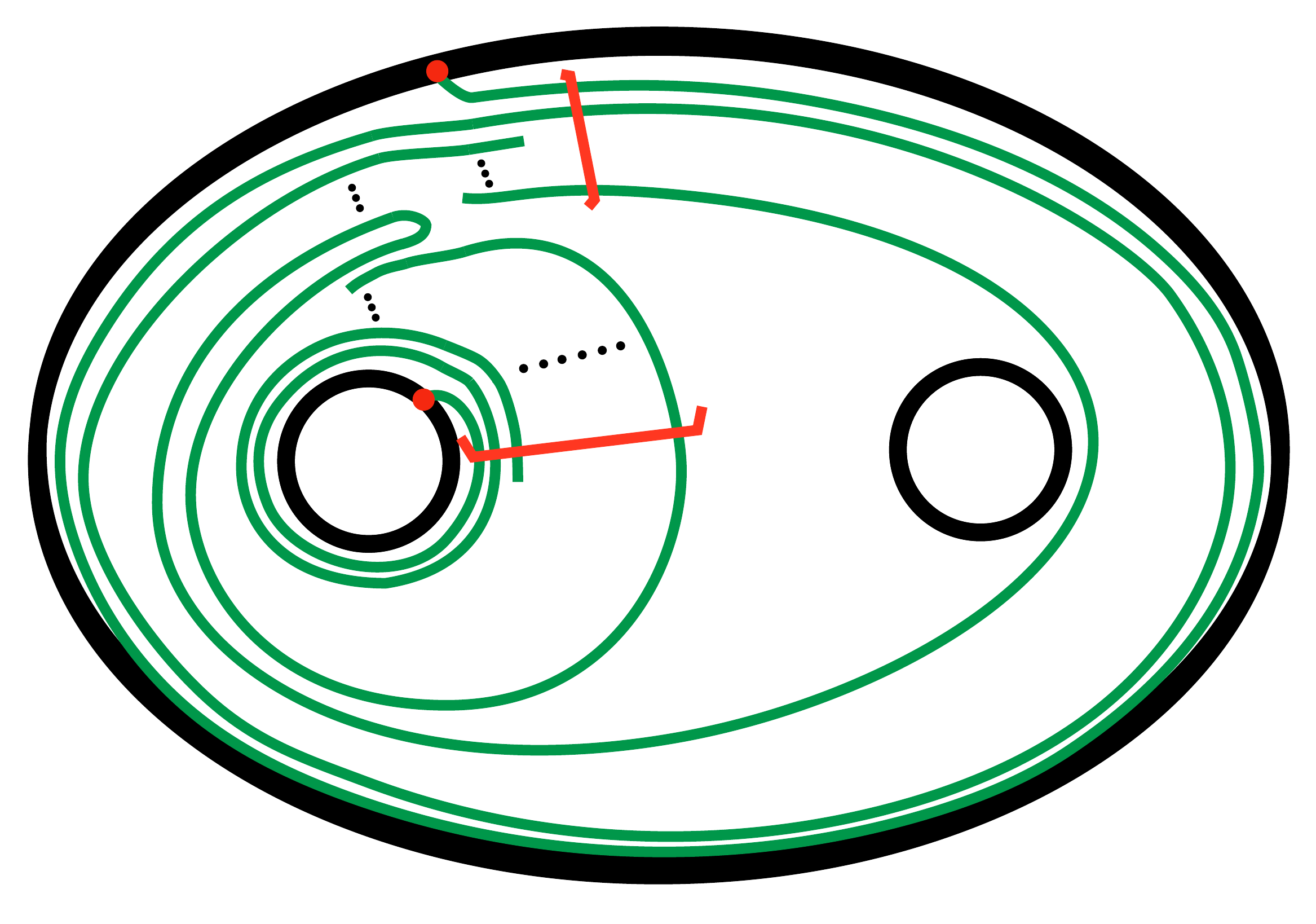} 
\put(48,36){\tiny{${m}$}}
\put(52,65){\tiny{${n}$}}
\end{overpic}}}\right) \\ 
& = & \vcenter{\hbox{\begin{overpic}[scale=.1]{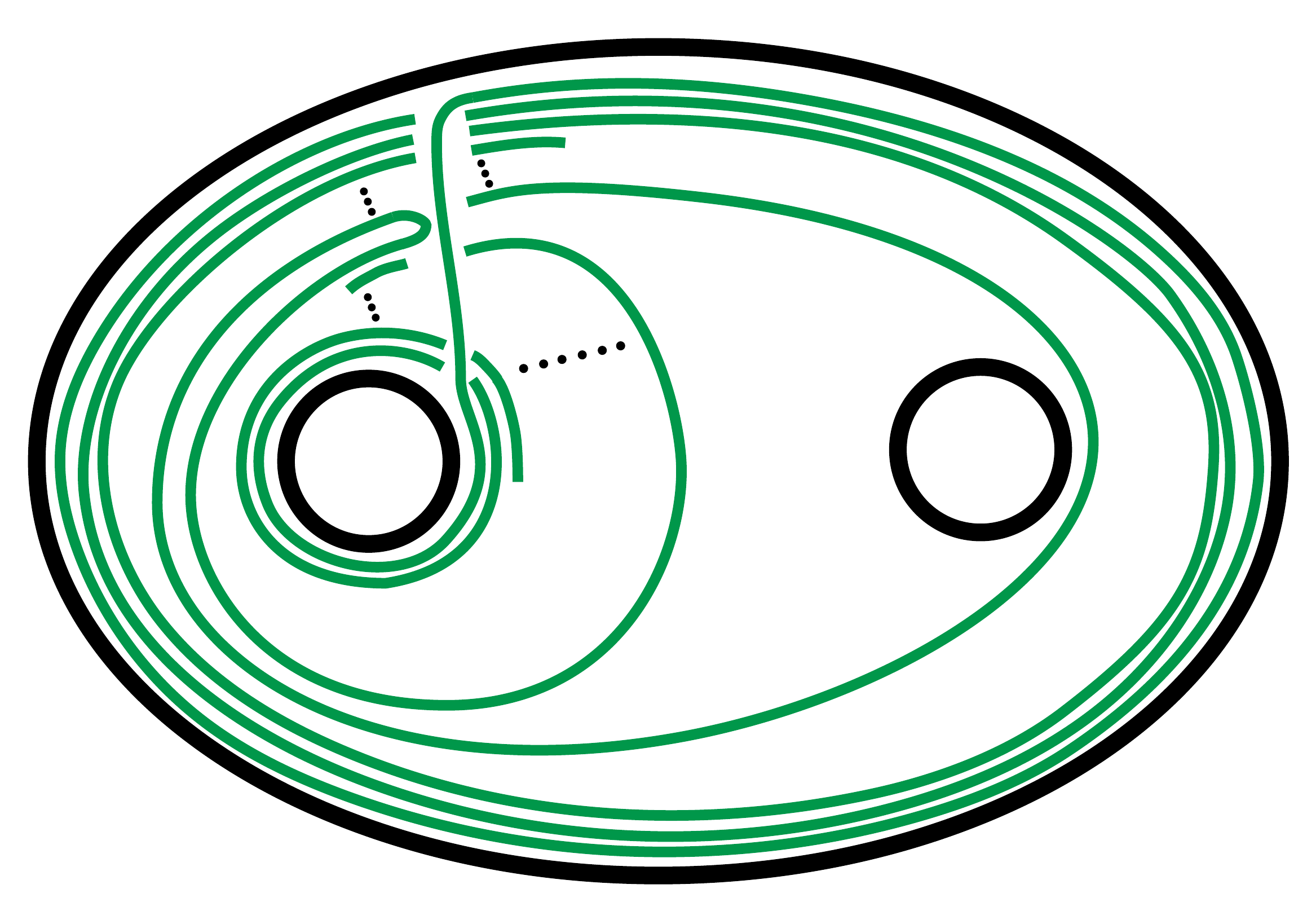}
\end{overpic}}} -
\vcenter{\hbox{\begin{overpic}[scale=.1]{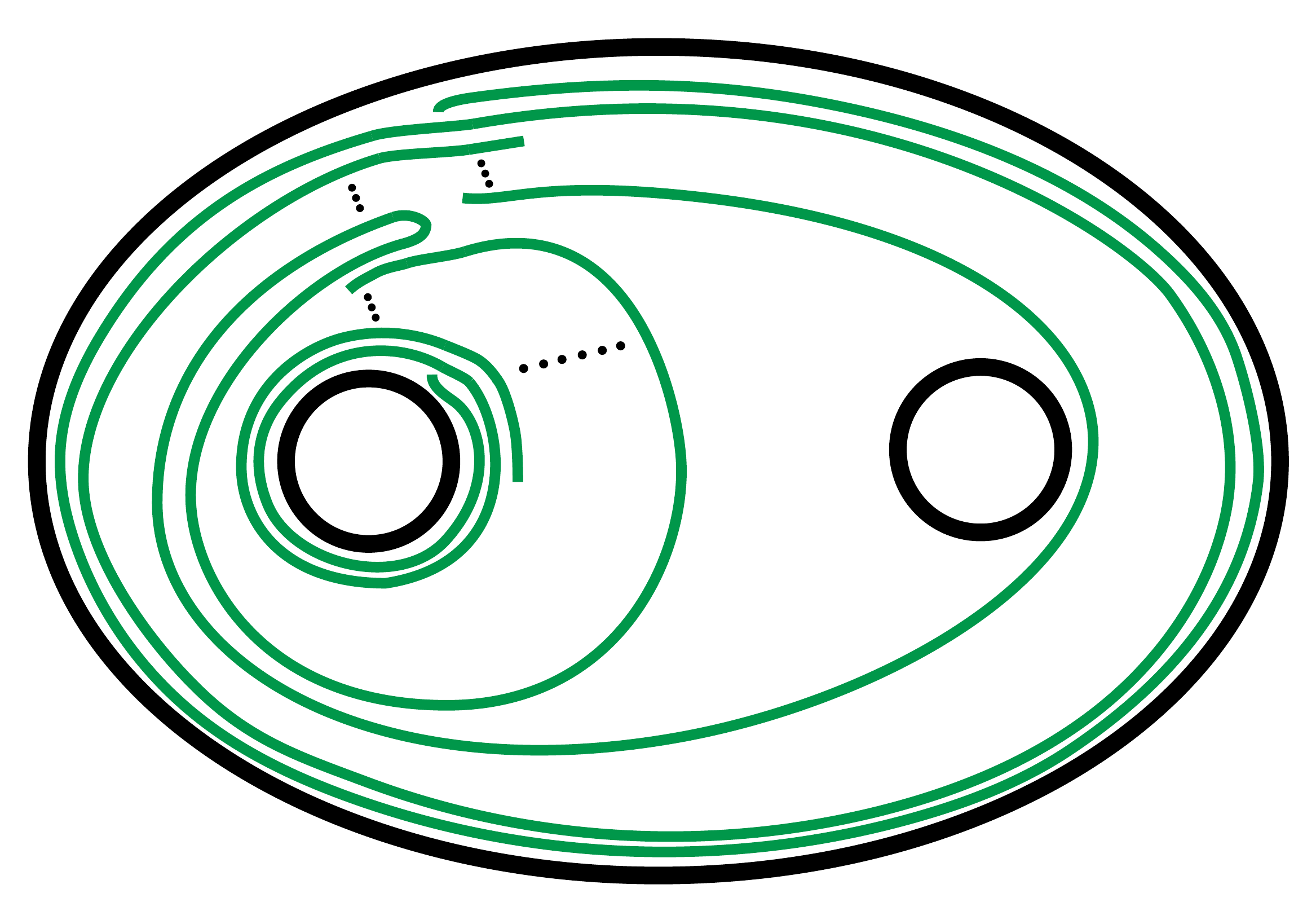}
\end{overpic}}}
\vspace{4mm}
\end{eqnarray*}
    \caption{Illustration of $C(m,-n)$.}
    \label{fig:C(m,-n)}
\end{figure}

Analogous to the previous case, we obtain the following series of lemmas.

\begin{lemma}\label{lem:PP(m,-n)}
There exists a sequence $\{PP(m,-n)\}_{m,n \geq 1}$ such that     
$$P(m,-n)=A^{m-n-1}PP(m,-n)-A^{m-n-5}PP(m-2,-n),$$

satisfying
\begin{eqnarray*}
PP(1,-1)&=&0,\\
PP(m,-1) &=& A^{3}PP(m,1)-A^{3}PP(m,0)a_{3}, m\geq 2,\\
PP(m,-2) &=& PP(m,-1)a_{3} +A^{3}PP(m,0), m \geq 1,  \text{and}\\ 
PP(m,-n)&=& PP(m,-n+1)a_{3}-PP(m,-n+2), m \geq 1,n\geq 3.
\end{eqnarray*}
\end{lemma}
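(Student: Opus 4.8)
The plan is to run the same two-parameter argument that settled Case~I, decoupling the $a_1$-twisting (governed by $m$) from the $a_3$-twisting (governed by $n$). Write $c_{m,-n}=\tau_{a_1}^{m}\tau_{a_3}^{-n}(c_{0,0})$ and realise $c_{m,-n}\cup\beta_2$ as a diagram on $\Sigma_{0,3}$ with an ``$a_1$-twist box'' of $m$ twists near the endpoint $v$ and an ``$a_3$-twist box'' of $n$ negative twists near the endpoint $u$, the two boxes joined by the band $\beta_2$; the diagram defining $P(m,-n)$ is what survives after cancelling the two Reidemeister~I kinks the band forces — one positive and one negative — which is exactly why the overall coefficient is $1$ and $C(m,-n)=P(m,-n)-N(m,-n)$, in contrast with Case~I. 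The first step is to resolve, via the Kauffman bracket relation, the outermost crossing of the $a_1$-twist box. This is a purely local move that is blind to the $a_3$-box, so it is the same computation that produced the decomposition in Lemma~\ref{lem:PP(m,n)}, now with $n$ replaced throughout by $-n$: it yields the asserted identity $P(m,-n)=A^{m-n-1}PP(m,-n)-A^{m-n-5}PP(m-2,-n)$, and one reads this as the \emph{definition} of the reduced sequence $PP(m,-n)$. It then remains to determine how $PP(m,-n)$ recurs in the variable $n$.

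For $n\geq 3$ I would argue locally again, applying the skein relation to the outermost crossing of the $a_3$-twist box: the two smoothings lower the number of negative twists and, after the resulting kink is absorbed and the loop value $-A^{2}-A^{-2}$ is accounted for, express $PP(m,-n)$ through $PP(m,-n+1)$ stacked with a boundary-parallel copy of $a_3$ and through $PP(m,-n+2)$. Reversing the handedness of a twist region interchanges the roles of the two smoothings but preserves the shape of the recursion, so this is the very computation behind $PP(m,n)=PP(m,n-1)a_3-PP(m,n-2)$ in Lemma~\ref{lem:PP(m,n)}, and it delivers
\[ PP(m,-n)=PP(m,-n+1)a_3-PP(m,-n+2),\qquad m\geq 1,\ n\geq 3. \]
Put differently, for each fixed $m$ the sequence $n\mapsto PP(m,-n)$ satisfies a single second-order Chebyshev recurrence in the variable $a_3$, valid once one is at least two twists past the turnaround — which is the origin of the hypothesis $n\geq 3$.

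The genuinely new input is the turnaround at $n=1,2$, where the $a_3$-box changes handedness relative to the band. For $PP(m,-2)$ I would draw $c_{m,-2}\cup\beta_2$ explicitly and resolve the crossing that distinguishes it from $c_{m,-1}\cup\beta_2$: one smoothing produces $PP(m,-1)$ stacked with $a_3$, while the other produces a closed $a_1$-parallel component which, once its kink is removed, equals $A^{3}PP(m,0)$, giving $PP(m,-2)=PP(m,-1)a_3+A^{3}PP(m,0)$. For $PP(m,-1)$ with $m\geq 2$ I would compare $c_{m,-1}\cup\beta_i$ directly against the Case~I diagrams $c_{m,1}\cup\beta_i$ and $c_{m,0}\cup\beta_i$: an isotopy dragging the single negative $a_3$-twist across the band turns it into a single positive twist at the cost of a new kink pair, and resolving the remaining crossing rewrites $c_{m,-1}\cup\beta_2$ through $P(m,1)$ and $P(m,0)\,a_3$. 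The factor $A^{3}$ in $PP(m,-1)=A^{3}PP(m,1)-A^{3}PP(m,0)a_3$ is precisely the discrepancy between the $-A^{3}$ multiplying $P(m,n)$ in Case~I and the coefficient $1$ multiplying $P(m,-n)$ here, while the term $-PP(m,0)a_3$ is the correction that cancels the spurious $a_3$-parallel contribution so the answer lies in the standard basis of $\mathcal S_{2,\infty}(\Sigma_{0,3}\times I)$. Finally $PP(1,-1)=0$ is the one base value not generated by the others; it is verified by direct inspection of the diagram $c_{1,-1}\cup\beta_2$.

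The step I expect to be the main obstacle is exactly this turnaround analysis for $PP(m,-1)$: one must carry the framing corrections faithfully through an isotopy that reverses the handedness of a twist region and then check, after resolving the leftover crossing, that every resulting term is either one of the Case~I quantities $PP(m,1)$, $PP(m,0)$ or a boundary-parallel or trivial component that can be absorbed using $L\sqcup\bigcirc=-(A^{2}+A^{-2})L$ together with $PP(m,0)=S_m(a_1)$. Once the base cases and the $n\geq 3$ recursion are in place, everything downstream — the closed form of $PP(m,-n)$ in terms of $S_n(a_3)$, $PP(m,1)$ and $PP(m,0)$ (the analogue of Lemma~\ref{lem:(m,n)-formula}), the mirror sequence $NN(m,-n)$ obtained via $NN(m,-n)(A)=PP(m,-n)(A^{-1})$, and ultimately the closed formula for $C(m,-n)$ (the analogue of Lemma~\ref{lem:C(m,n)-formula}) — follows by the same Chebyshev manipulations as in Case~I, with the conventions $S_{-1}=0$ and $S_{-2}=-1$ absorbing the initial indices uniformly.
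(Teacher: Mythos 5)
Your overall plan follows the paper's route — first establish diagrammatic recurrences for the kink-free diagrams in Case~II (the turnaround relations at $n=1,2$ and a three-term relation for $n\geq 3$), then renormalise to a sequence $PP(m,-n)$ — but there is a genuine gap where you pass between the two levels. The local skein resolutions you invoke act on the \emph{diagrams} $P(m,-n)$, and what they produce are relations with nontrivial powers of $A$, e.g.\ $P(m,-n)=A^{-1}P(m,-n+1)a_{3}-A^{-2}P(m,-n+2)$ for $n\geq 3$, $P(m,-2)=AP(m,0)+A^{-1}P(m,-1)a_{3}$, and $P(m,-1)=AP(m,1)-A^{2}P(m,0)a_{3}$, with $P(1,-1)=a_{2}$. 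They do not act on $PP(m,-n)$, which is an algebraically defined auxiliary quantity, so statements such as ``one smoothing produces $PP(m,-1)$ stacked with $a_{3}$'' or that the $n\geq 3$ resolution ``delivers'' $PP(m,-n)=PP(m,-n+1)a_{3}-PP(m,-n+2)$ conflate the two layers. Likewise, resolving the outermost crossing of the $a_{1}$-twist box only yields a three-term recursion in $m$ for $P(m,-n)$; it does not ``yield the asserted identity'' $P(m,-n)=A^{m-n-1}PP(m,-n)-A^{m-n-5}PP(m-2,-n)$. That identity, taken as a recursive definition of $PP(m,-n)$ in $m$, needs explicit base cases (your only stated base value, $PP(1,-1)=0$, cannot be ``verified by inspection of the diagram'': the diagram gives $P(1,-1)=a_{2}\neq 0$, and the vanishing of $PP(1,-1)$ is a feature of the chosen normalisation, not of the picture), and the clean $PP$-relations must then be \emph{proved} compatible with the $P$-relations. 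That compatibility is exactly the content the lemma carries: in the paper it is done by introducing the auxiliary sequence $Q(m,-n)$ (with $Q(1,-1)=-a_{2}$, satisfying the same recurrences as $P$), setting $PP(m,-n)=A^{-m+n+1}\bigl(P(m,-n)+Q(m,-n)\bigr)$, and proving by induction that $Q(m,-n)=A^{m-n-5}PP(m-2,-n)$, which simultaneously gives the decomposition and the stated recurrences.

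Your strategy can be completed, but only by supplying that missing induction: define $PP(m,-n)$ recursively in $m$ through the decomposition with explicit base cases, then show by induction on $m$ that the diagrammatic $P$-recurrences above, combined with the normalising factors $A^{m-n-1}$ and $A^{m-n-5}$ and the induction hypothesis on $PP(m-2,-\cdot)$, force the relations $PP(1,-1)=0$, $PP(m,-1)=A^{3}PP(m,1)-A^{3}PP(m,0)a_{3}$, $PP(m,-2)=PP(m,-1)a_{3}+A^{3}PP(m,0)$, and $PP(m,-n)=PP(m,-n+1)a_{3}-PP(m,-n+2)$ for $n\geq 3$. In particular, the factor $A^{3}$ in the turnaround relations is not just ``the discrepancy between the coefficients $-A^{3}$ and $1$'' in the two cases; it comes out of this bookkeeping, which also ties the Case~II sequence to the Case~I quantities $PP(m,0)=S_{m}(a_{1})$ and $PP(m,1)$. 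As written, the proposal asserts the conclusions of that bookkeeping rather than performing it, so the proof is incomplete at precisely the step the lemma is about.
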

Since $N(m,-n)$ is the mirror image of $P(m,-n)$ we obtain the following lemma.
\begin{lemma}\label{lem:NN(m,-n)}
There exists a sequence $\{NN(m,-n)\}_{m,n \geq 1}$ such that     
$$NN(m,-n)=A^{-m+n+1}NN(m,-n)-A^{-m+n+5}NN(m-2,-n),$$

satisfying
\begin{eqnarray*}
NN(1,-1)&=&0,\\
NN(m,-1) &=& A^{-3}NN(m,1)-A^{-3}NN(m,0)a_{3}, m\geq 2,\\
NN(m,-2) &=& NN(m,-1)a_{3} +A^{-3}NN(m,0), m \geq 1, \text{and}\\ 
NN(m,-n)&=& NN(m,-n+1)a_{3}-NN(m,-n+2), m \geq 1, n\geq 3.
\end{eqnarray*}
\end{lemma}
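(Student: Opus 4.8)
The plan is to deduce the statement from Lemma~\ref{lem:PP(m,-n)} via the mirror symmetry between the diagrams defining $P(m,-n)$ and $N(m,-n)$, rather than redoing the inductive analysis of Case~II from scratch.

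First I would make the mirror-image principle precise. Reflecting a link diagram on $\Sigma_{0,3}$ across the plane of projection interchanges positive and negative crossings, which in the Kauffman bracket skein relation $L_+ - AL_0 - A^{-1}L_\infty$ corresponds to the substitution $A \leftrightarrow A^{-1}$; moreover the underlying simple closed curves $a_1, a_2, a_3$ are sent to themselves, up to isotopy, by the corresponding orientation-reversing self-homeomorphism of $\Sigma_{0,3}$. Hence, expressing any element of $\mathcal S_{2,\infty}(\Sigma_{0,3}\times I)$ in the basis of monomials in $a_1, a_2, a_3$, the mirror operation acts by $A \mapsto A^{-1}$ on the Laurent-polynomial coefficients. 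Applied to the diagrams of Figure~\ref{fig:C(m,-n)}, this gives $N(m,-n)(A) = P(m,-n)(A^{-1})$, the relation asserted just before the lemma.

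Next I would define $NN(m,-n) := PP(m,-n)(A^{-1})$, the polynomial obtained from the sequence $PP(m,-n)$ of Lemma~\ref{lem:PP(m,-n)} by inverting $A$ (the $a_i$'s being unchanged). Substituting $A\mapsto A^{-1}$ into $P(m,-n)=A^{m-n-1}PP(m,-n)-A^{m-n-5}PP(m-2,-n)$ yields $N(m,-n)=A^{-m+n+1}NN(m,-n)-A^{-m+n+5}NN(m-2,-n)$, which is the displayed formula. Applying the same substitution to the four defining relations of Lemma~\ref{lem:PP(m,-n)} gives, term by term, exactly the four relations claimed for $NN(m,-n)$: $PP(1,-1)=0$ becomes $NN(1,-1)=0$; $PP(m,-1)=A^{3}PP(m,1)-A^{3}PP(m,0)a_{3}$ becomes $NN(m,-1)=A^{-3}NN(m,1)-A^{-3}NN(m,0)a_{3}$; $PP(m,-2)=PP(m,-1)a_3+A^{3}PP(m,0)$ becomes $NN(m,-2)=NN(m,-1)a_3+A^{-3}NN(m,0)$; and the Chebyshev-type recurrence in $a_3$ is invariant under $A\mapsto A^{-1}$.

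The only point that needs care --- and the one I expect to be the main, if minor, obstacle --- is the consistency of notation. I must check that the symbols $NN(m,1)$ and $NN(m,0)$ produced here by mirroring $PP(m,1)$ and $PP(m,0)$ coincide with the sequences $NN(m,1)$, $NN(m,0)$ introduced in Lemma~\ref{lem:NN(m,n)} (which were themselves defined as mirror images of $PP(m,1)$, $PP(m,0)$), so that there is no clash; and that the boundary case $m=1$, where the term $NN(m-2,-n)=NN(-1,-n)$ occurs, is absorbed by the convention $S_{-1}=0$, making these terms vanish. Once these bookkeeping verifications are in place, the lemma follows from Lemma~\ref{lem:PP(m,-n)} with no additional computation; a direct inductive proof mirroring that of Lemma~\ref{lem:PP(m,-n)} is available as an alternative but is unnecessary.
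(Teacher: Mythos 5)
Your proposal is correct and follows essentially the same route as the paper, which also obtains this lemma from Lemma~\ref{lem:PP(m,-n)} by the mirror-image observation $N(m,-n)(A)=P(m,-n)(A^{-1})$, i.e.\ by the substitution $A\mapsto A^{-1}$ (and your reading of the displayed identity with $N(m,-n)$ on the left matches the intended statement).
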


\begin{lemma}\label{lem:(m,-n)-formula}
The sequence $PP(m,-n)$ in Lemma~\ref{lem:PP(m,-n)} satisfies 
\begin{eqnarray*}
    PP(m,-n) &=& S_{n-2}(a_{3})PP(m,-2) - S_{n-3}(a_3)PP(m,-1)\\
    &=& A^{3}PP(m,1)S_{n-1}(a_{3})-A^{3}PP(m,0)S_{n}(a_{3}).
\end{eqnarray*}
    Analogously, the sequence $NN(m,-n)$ in Lemma~\ref{lem:NN(m,-n)} satisfies
\begin{eqnarray*}
    NN(m,n) &=& S_{n-2}(a_{3})NN(m,-2) - S_{n-3}(a_3)NN(m,-1)\\
    &=& A^{-3}NN(m,1)S_{n-1}(a_{3})-A^{-3}NN(m,0)S_{n}(a_{3}).
\end{eqnarray*}
\end{lemma}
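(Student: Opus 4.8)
The plan is to prove Lemma~\ref{lem:(m,-n)-formula} by the same Chebyshev-recurrence argument that established Lemma~\ref{lem:(m,n)-formula}. Observe that for fixed $m\geq 1$, the relation $PP(m,-n)=PP(m,-n+1)a_3-PP(m,-n+2)$ for $n\geq 3$ is exactly the Chebyshev recurrence $S_{q+1}(x)=xS_q(x)-S_{q-1}(x)$ in the variable $x=a_3$, read in the direction of increasing $n$. First I would fix $m$ and set $u_n:=PP(m,-n)$, so that $u_{n}=a_3 u_{n-1}-u_{n-2}$ for $n\geq 3$; the general solution of this second-order linear recurrence is $u_n=\lambda S_{n-2}(a_3)+\mu S_{n-3}(a_3)$ for coefficients $\lambda,\mu$ (polynomials in $a_1,a_2,a_3,A^{\pm1}$) determined by the two ``initial'' values $u_2=PP(m,-2)$ and $u_1=PP(m,-1)$, using $S_{-1}=0$, $S_0=1$, $S_{-2}=-1$ (this is precisely why the paper extended the Chebyshev indices to $-2$ and $-1$). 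Matching at $n=2$ gives $\lambda=u_2$ and at $n=1$ gives $\mu=-u_1$, which yields the first displayed equality
\[
PP(m,-n)=S_{n-2}(a_3)\,PP(m,-2)-S_{n-3}(a_3)\,PP(m,-1).
\]

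For the second displayed equality I would substitute the explicit values from Lemma~\ref{lem:PP(m,-n)}, namely $PP(m,-2)=PP(m,-1)a_3+A^{3}PP(m,0)$ and $PP(m,-1)=A^{3}PP(m,1)-A^{3}PP(m,0)a_3$, into the first formula and simplify. After substitution the terms involving $PP(m,-1)$ combine as $PP(m,-1)\bigl(a_3 S_{n-2}(a_3)-S_{n-3}(a_3)\bigr)=PP(m,-1)S_{n-1}(a_3)$ by the Chebyshev recurrence, leaving
\[
PP(m,-n)=PP(m,-1)S_{n-1}(a_3)+A^{3}PP(m,0)S_{n-2}(a_3).
\]
Then replacing $PP(m,-1)=A^{3}\bigl(PP(m,1)-PP(m,0)a_3\bigr)$ and using once more $a_3S_{n-1}(a_3)-S_{n-2}(a_3)=S_n(a_3)$ collapses the $PP(m,0)$ terms into $-A^{3}PP(m,0)S_n(a_3)$, giving exactly $PP(m,-n)=A^{3}PP(m,1)S_{n-1}(a_3)-A^{3}PP(m,0)S_n(a_3)$. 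One subtlety is that the recurrence $u_n=a_3u_{n-1}-u_{n-2}$ is only asserted for $n\geq 3$, so I must check the cases $n=1$ and $n=2$ of both claimed formulas directly against $PP(m,-1)$, $PP(m,-2)$ and the base value $PP(1,-1)=0$; these are short verifications using the extended initial conditions $S_{-1}=0$, $S_{-2}=-1$.

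The statement for $NN(m,-n)$ follows by the identical computation with $A$ replaced by $A^{-1}$ throughout, since $NN(m,-n)$ obeys the same recurrences as $PP(m,-n)$ with $A\mapsto A^{-1}$ and $N(m,-n)$ is the mirror image of $P(m,-n)$; alternatively one invokes the relation $NN(m,-n)(A)=PP(m,-n)(A^{-1})$ already used in the excerpt to derive Lemma~\ref{lem:NN(m,-n)} from Lemma~\ref{lem:PP(m,-n)}. I do not expect a genuine obstacle here: the only thing requiring care is bookkeeping of the boundary cases ($n=1,2$ and $m=1$) and keeping the powers of $A$ consistent when folding $PP(m,-1)$ and $PP(m,-2)$ back in, since the prefactors $A^{m-n-1}$ and $A^{m-n-5}$ in the definition of $P(m,-n)$ differ from the $A^{m+n-1}$, $A^{m+n-5}$ of Case~I. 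The real content is simply that a sequence satisfying a Chebyshev three-term recurrence is a Chebyshev-linear combination of its first two terms.
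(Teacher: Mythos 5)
Your proposal is correct and follows essentially the same route as the paper: the first equality is the standard fact that a sequence obeying the Chebyshev three-term recurrence (valid here for $n\geq 3$, with the extended conventions $S_{-1}=0$, $S_{-2}=-1$ handling $n=1,2$) is the corresponding $S$-combination of its first two terms, exactly as in the proof of Lemma~\ref{lem:(m,n)-formula}; the second equality is obtained, as in the paper, by substituting $PP(m,-2)=PP(m,-1)a_{3}+A^{3}PP(m,0)$ and $PP(m,-1)=A^{3}PP(m,1)-A^{3}PP(m,0)a_{3}$ and collapsing with $a_{3}S_{n-1}-S_{n-2}=S_{n}$. The treatment of the $NN$ case via $A\mapsto A^{-1}$ also matches the paper.
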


\begin{lemma}\label{lem:C(m,-n)-formula}
    For all $m, n \geq 1$,
    \begin{eqnarray*}
        C(m,-n) &=& -(-A^{m-n+2}+A^{-m+n-2})S_{m}(a_{1})S_{n-2}(a_{3})\\
                &&-(-A^{m-n}+A^{-m+n})S_{m-1}(a_{1})S_{n-1}(a_{3})a_{2}\\
                &&-(-A^{m-n-2}+A^{-m+n+2})S_{m-2}(a_{1})S_{n}(a_{3}).
    \end{eqnarray*}
\end{lemma}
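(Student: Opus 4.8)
The plan is to combine Lemma~\ref{lem:PP(m,-n)}, Lemma~\ref{lem:NN(m,-n)}, and Lemma~\ref{lem:(m,-n)-formula} with the identity $C(m,-n) = P(m,-n) - N(m,-n)$ established in Case II, exactly mirroring the derivation of Lemma~\ref{lem:C(m,n)-formula} in Case I. First I would substitute the defining relation $P(m,-n) = A^{m-n-1}PP(m,-n) - A^{m-n-5}PP(m-2,-n)$ from Lemma~\ref{lem:PP(m,-n)}, and the analogous relation for $N(m,-n)$ from Lemma~\ref{lem:NN(m,-n)}, so that $C(m,-n)$ is expressed purely in terms of the sequences $PP(\cdot,-n)$ and $NN(\cdot,-n)$. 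Then I would apply the closed form of Lemma~\ref{lem:(m,-n)-formula}, namely $PP(m,-n) = A^{3}PP(m,1)S_{n-1}(a_3) - A^{3}PP(m,0)S_{n}(a_3)$ and its $NN$-counterpart, to reduce everything to $PP(m,0), PP(m,1)$ and $NN(m,0), NN(m,1)$.

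At that point I would invoke the facts already recorded after Lemma~\ref{lem:C(m,n)-formula}: that $PP(m,0) = NN(m,0) = S_m(a_1)$, and that the differences $PP(m,1) - A^{-4}PP(m-2,1) = S_m(a_1)a_3 + A^{-2}S_{m-1}(a_1)a_2$ and $NN(m,1) - A^{4}NN(m-2,1) = S_m(a_1)a_3 + A^{2}S_{m-1}(a_1)a_2$ hold. The key manoeuvre is that in forming $C(m,-n)$ the terms $PP(m,1)$ and $PP(m-2,1)$ (resp.\ $NN$) appear precisely in the combination $PP(m,-n)$-minus-$A^{-4}$-shift dictated by the $A^{m-n-1}$ versus $A^{m-n-5}$ prefactors, so these awkward single-index sequences collapse into the clean closed forms above. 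After this substitution every term carries a factor $S_{n-1}(a_3)$, $S_n(a_3)$, $S_{n-2}(a_3)$, or $S_{n-3}(a_3)$ times a polynomial in $a_1, a_2$; I would then use the Chebyshev recurrence $S_n(a_3) = a_3 S_{n-1}(a_3) - S_{n-2}(a_3)$ to rewrite the $S_{n-1}, S_{n-3}$ contributions (which arise multiplied by $a_3$) in terms of $S_{n-2}$ and collect coefficients.

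Collecting the coefficients of $S_m(a_1)S_{n-2}(a_3)$, of $S_{m-1}(a_1)S_{n-1}(a_3)a_2$, and of $S_{m-2}(a_1)S_n(a_3)$ should then yield exactly the three stated Laurent-polynomial coefficients $-(-A^{m-n+2}+A^{-m+n-2})$, $-(-A^{m-n}+A^{-m+n})$, and $-(-A^{m-n-2}+A^{-m+n+2})$; the overall sign is the $P - N$ in contrast to the $-A^3 P + A^{-3} N$ of Case~I, which also explains why the exponents here are symmetric about $A^{m-n}$ rather than $A^{m+n}$. I expect the main obstacle to be purely bookkeeping: tracking the four separate $A$-power prefactors ($A^{m-n-1}, A^{m-n-5}, A^{-m+n+1}, A^{-m+n+5}$) through the substitutions without sign or exponent errors, and making sure the base cases of the recursions (small $m$, small $n$, and the degenerate conventions $S_{-1}=0$, $S_{-2}=-1$) are consistent with the formula; once the algebra is organised so that the $PP(m,1)$ and $NN(m,1)$ terms appear only in their "difference" combinations, the cancellations are forced and the computation is routine. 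I would relegate the detailed manipulation to the Appendix, as the paper indicates.
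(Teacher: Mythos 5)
Your proposal is correct and takes essentially the same route as the paper's own proof in the Appendix: both substitute the decompositions of $P(m,-n)$ and $N(m,-n)$ into $C(m,-n)=P(m,-n)-N(m,-n)$, apply the closed forms of Lemma \ref{lem:(m,-n)-formula}, collapse the combinations $PP(m,1)-A^{-4}PP(m-2,1)$ and $NN(m,1)-A^{4}NN(m-2,1)$ via the recurrence relations, use $PP(m,0)=NN(m,0)=S_{m}(a_{1})$, and finish with the Chebyshev relation $S_{n-1}(a_{3})a_{3}-S_{n}(a_{3})=S_{n-2}(a_{3})$. The bookkeeping you defer is exactly the computation the Appendix carries out.
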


From Lemmas \ref{lem:C(m,n)-formula} and \ref{lem:C(m,-n)-formula} we obtain the following theorem.
\begin{theorem}\label{thm:C(m,n)-formula}
For $m,q \in \mathbb{N} \cup \{0\}$ and $n \in \mathbb{Z}$,
      \begin{eqnarray*}
        C(m,n)S_{q}(a_{2}) &=& (-A^{m+n+2}+A^{-m-n-2})S_{m}(a_{1})S_{n}(a_{3})S_{q}(a_{2})\\
                &+&(-A^{m+n}+A^{-m-n})S_{m-1}(a_{1})S_{n-1}(a_{3})S_{q+1}(a_{2})\\
                &+&(-A^{m+n}+A^{-m-n})S_{m-1}(a_{1})S_{n-1}(a_{3})S_{q-1}(a_{2})\\
                &+&(-A^{m+n-2}+A^{-m-n+2})S_{m-2}(a_{1})S_{n-2}(a_{3})S_{q}(a_{2}).
    \end{eqnarray*}
    where $S_{n}(a_{3})$ is the Chebyshev polynomial of the second kind extended to negative indices satisfying $S_{n}(a_{3}) = -S_{-n-2}(a_3)$ for $n\leq -2$.
\end{theorem}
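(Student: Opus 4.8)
The plan is to assemble the formula for $C(m,n)S_q(a_2)$ out of the already-established closed formulae for $C(m,n)$ in the two cases (Lemma~\ref{lem:C(m,n)-formula} for $n \geq 0$ and Lemma~\ref{lem:C(m,-n)-formula} for $n \leq -1$), and then to handle the multiplication by $S_q(a_2)$ separately. First I would observe that, since $\mathcal{S}^{\mathit{alg}}(\Sigma_{0,3}) \cong \mathbb{Z}[A^{\pm 1}][a_1,a_2,a_3]$ is commutative (Theorem of Bullock--Przytycki quoted above), multiplying $C(m,n)$ by $S_q(a_2)$ means multiplying each of the three summands of $C(m,n)$ by $S_q(a_2)$. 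The terms involving $S_m(a_1)S_n(a_3)$ and $S_{m-2}(a_1)S_{n-2}(a_3)$ simply acquire a factor $S_q(a_2)$, contributing the first and fourth lines of the claimed expression verbatim. The middle term of $C(m,n)$ carries a single factor of $a_2$, so upon multiplication by $S_q(a_2)$ one uses the product rule for Chebyshev polynomials, $a_2 \, S_q(a_2) = S_{q+1}(a_2) + S_{q-1}(a_2)$ (which is just the defining recurrence $S_{q+1}(x) = x S_q(x) - S_{q-1}(x)$ rearranged), to split this term into the two middle lines of the stated formula. This accounts for all four lines.

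The remaining issue is that Lemmas~\ref{lem:C(m,n)-formula} and~\ref{lem:C(m,-n)-formula} are stated with apparently different coefficient patterns and, in the second case, with $S_{n-2}(a_3)$, $S_{n-1}(a_3)$, $S_n(a_3)$ appearing in place of $S_n(a_3)$, $S_{n-1}(a_3)$, $S_{n-2}(a_3)$. So the key step is to reconcile the two cases into the single uniform statement of Theorem~\ref{thm:C(m,n)-formula}. For this I would invoke the convention $S_n(a_3) = -S_{-n-2}(a_3)$ for $n \leq -2$ recorded in the theorem statement (with $S_{-1} = 0$). Substituting $n \mapsto -n$ with $n \geq 1$ into the target formula and applying this identity, one finds $S_{-n}(a_3) = -S_{n-2}(a_3)$, $S_{-n-1}(a_3) = -S_{n-1}(a_3)$, $S_{-n-2}(a_3) = -S_n(a_3)$, and the $A$-exponents $m+(-n)$ become $m-n$; a short check shows each line of the target formula then matches, sign for sign, the corresponding line of Lemma~\ref{lem:C(m,-n)-formula} (the overall minus sign in that lemma being absorbed by the three sign flips coming from $S_{-n-j} = -S_{n+j-2}$). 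The boundary indices $n = 0$ and the edge cases of small $m$ are covered by the extended initial conditions $S_{-1} = 0$, $S_{-2} = -1$ already adopted in the paper, so no separate argument is needed there.

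The main obstacle I anticipate is purely bookkeeping: making sure the negative-index Chebyshev convention is applied consistently across all three summands and that the $A$-power shifts in Lemma~\ref{lem:C(m,-n)-formula} (namely $A^{m-n\pm 2}$, $A^{m-n}$) line up exactly with what the uniform formula predicts after the substitution $n \mapsto -n$. There is no deep content beyond Lemmas~\ref{lem:C(m,n)-formula} and~\ref{lem:C(m,-n)-formula} and the commutativity of the skein algebra; the theorem is essentially a repackaging of those two lemmas, together with the $a_2$-multiplication rule, into a form convenient for the subsequent computation of $\mathcal{J}_1$. Accordingly I would present the proof as: (i) reduce to multiplying each summand of $C(m,n)$ by $S_q(a_2)$ using commutativity; (ii) apply the Chebyshev product rule to the middle summand; (iii) verify that Lemma~\ref{lem:C(m,-n)-formula} is the $n < 0$ instance of the same formula via the negative-index convention; (iv) conclude.
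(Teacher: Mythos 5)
Your proposal is correct and takes essentially the same route as the paper, which likewise obtains Theorem \ref{thm:C(m,n)-formula} by merging Lemma \ref{lem:C(m,n)-formula} with Lemma \ref{lem:C(m,-n)-formula} via the negative-index convention $S_{n}(a_{3}) = -S_{-n-2}(a_{3})$ and then distributing $S_{q}(a_{2})$ using $a_{2}S_{q}(a_{2}) = S_{q+1}(a_{2}) + S_{q-1}(a_{2})$ in the commutative algebra $\mathcal{S}^{\mathit{alg}}(\Sigma_{0,3})$. The only point you gloss over (as does the paper at this stage) is the case $m=0$, $n\leq -1$, which lies outside the hypotheses of Lemma \ref{lem:C(m,-n)-formula} and is settled by the mirror-image identity $C(0,-n) = -C(0,n)$ of Lemma \ref{lem:negativeCmn}.
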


\noindent
{\bf Case III:} $C(m,n)$ in general. \\
In the previous cases we calculated $C(m,n)$ for $m,n \in \mathbb{N} \cup \{0\}$ and $C(m,-n)$ for $m,n \in \mathbb{N}$. We now consider $C(-m,-n)=-A^{-3}P(-m,-n)+A^{3}N(-m,-n) $ for $m,n > 0$, illustrated in Figure~\ref{fig:C(-m,-n)}.
\begin{figure}[ht]
    \centering
    \begin{eqnarray*}
    C(-m,-n) & = &
\omega\left(\vcenter{\hbox{\begin{overpic}[scale=.1]{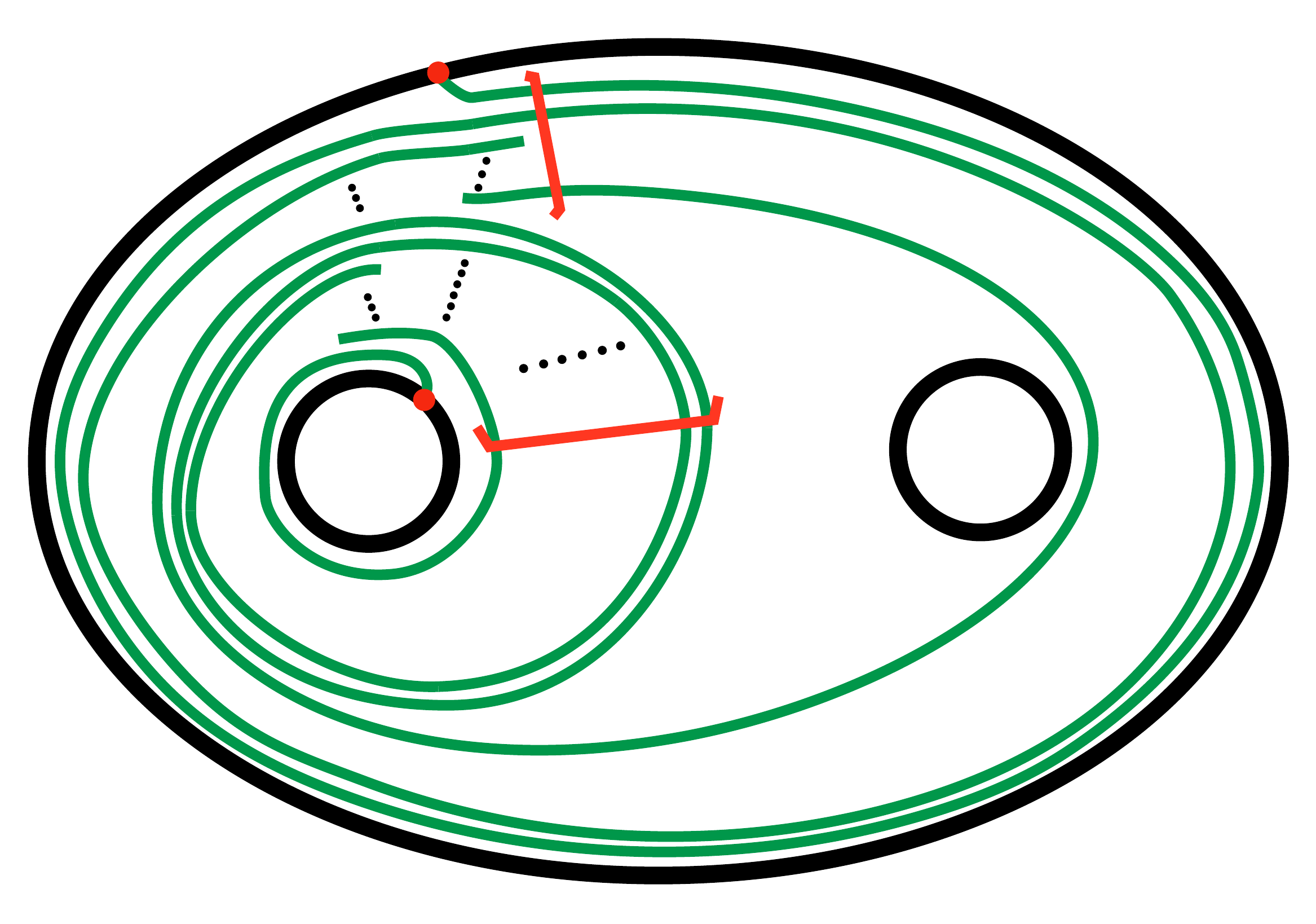} 
\put(48,36){\tiny{${m}$}}
\put(50,65){\tiny{${n}$}}
\end{overpic}}}\right)  \\ 
& = &  \vcenter{\hbox{\begin{overpic}[scale=.1]{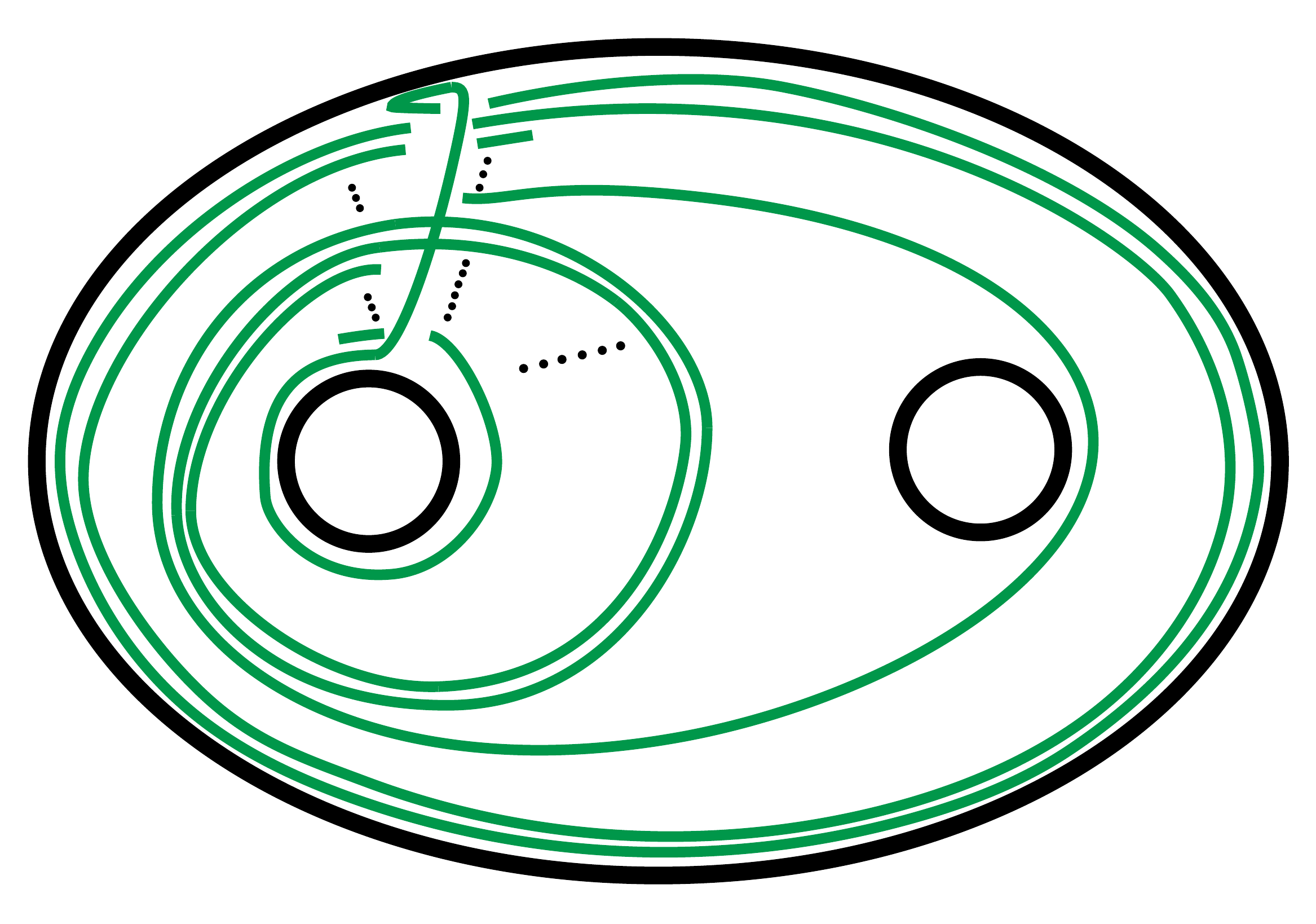}
\end{overpic}}}-
\vcenter{\hbox{\begin{overpic}[scale=.1]{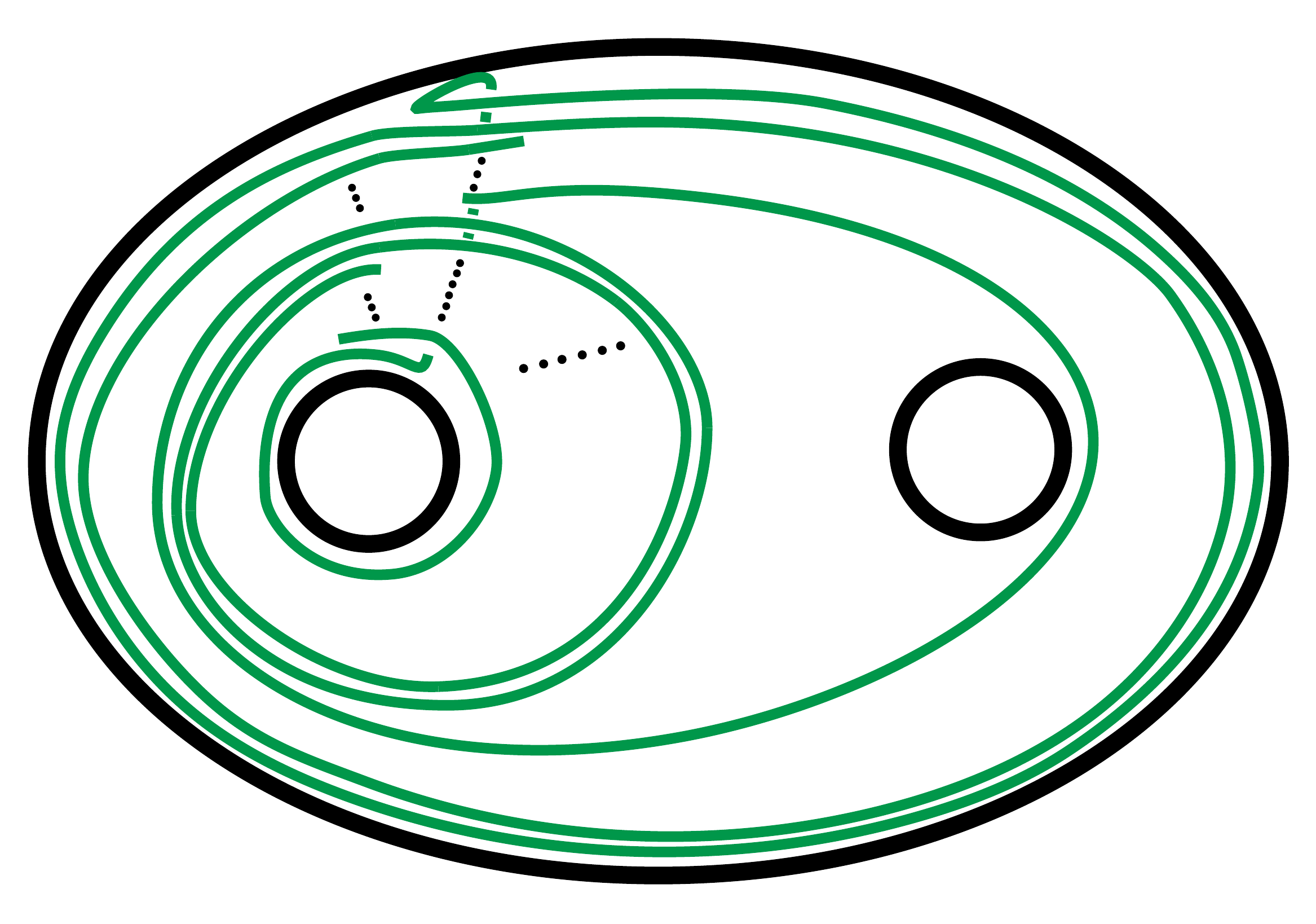}
\end{overpic}}}\\ 
& = & -A^{-3} \vcenter{\hbox{\begin{overpic}[scale=.1]{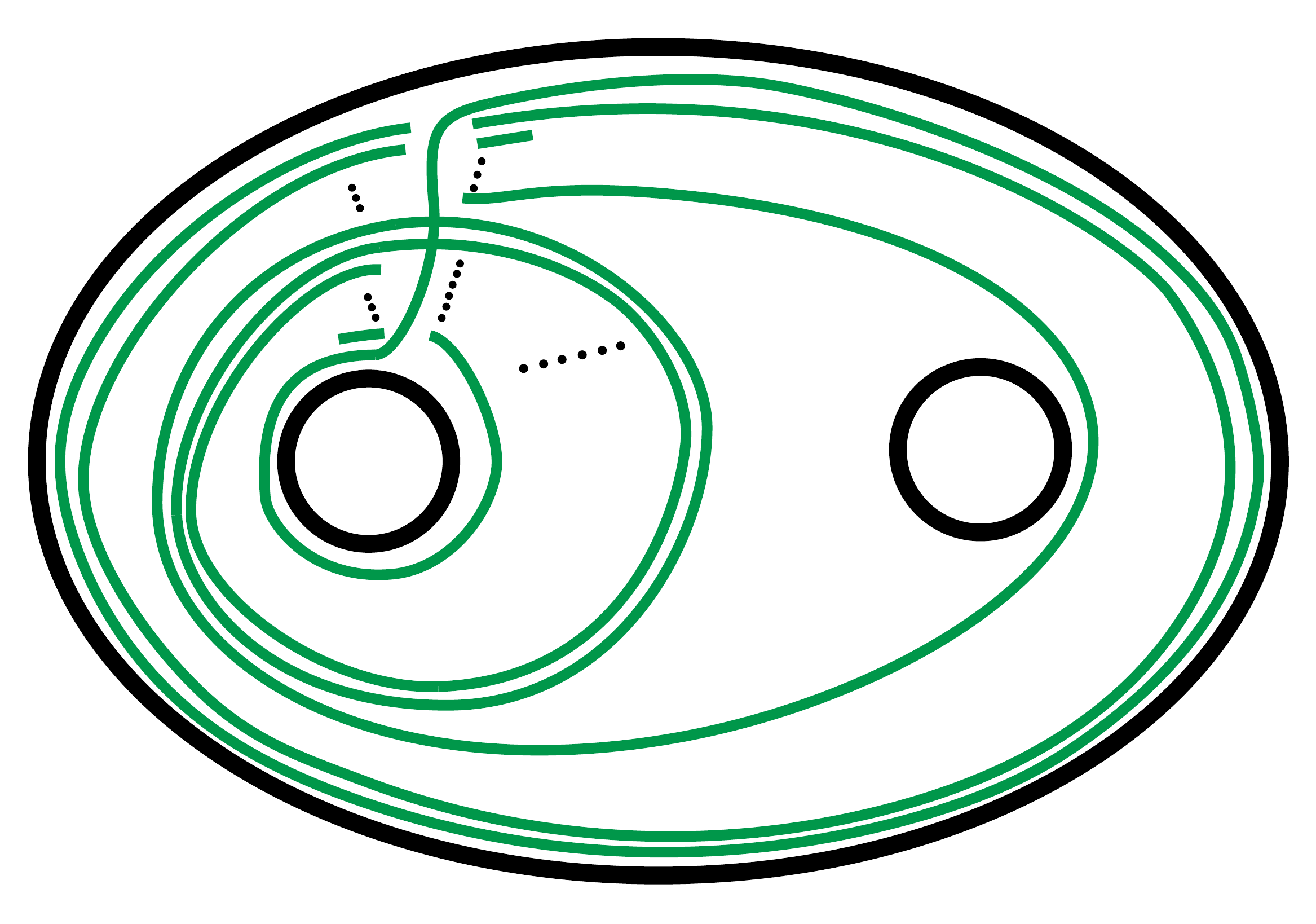}
\end{overpic}}} + A^{3}
\vcenter{\hbox{\begin{overpic}[scale=.1]{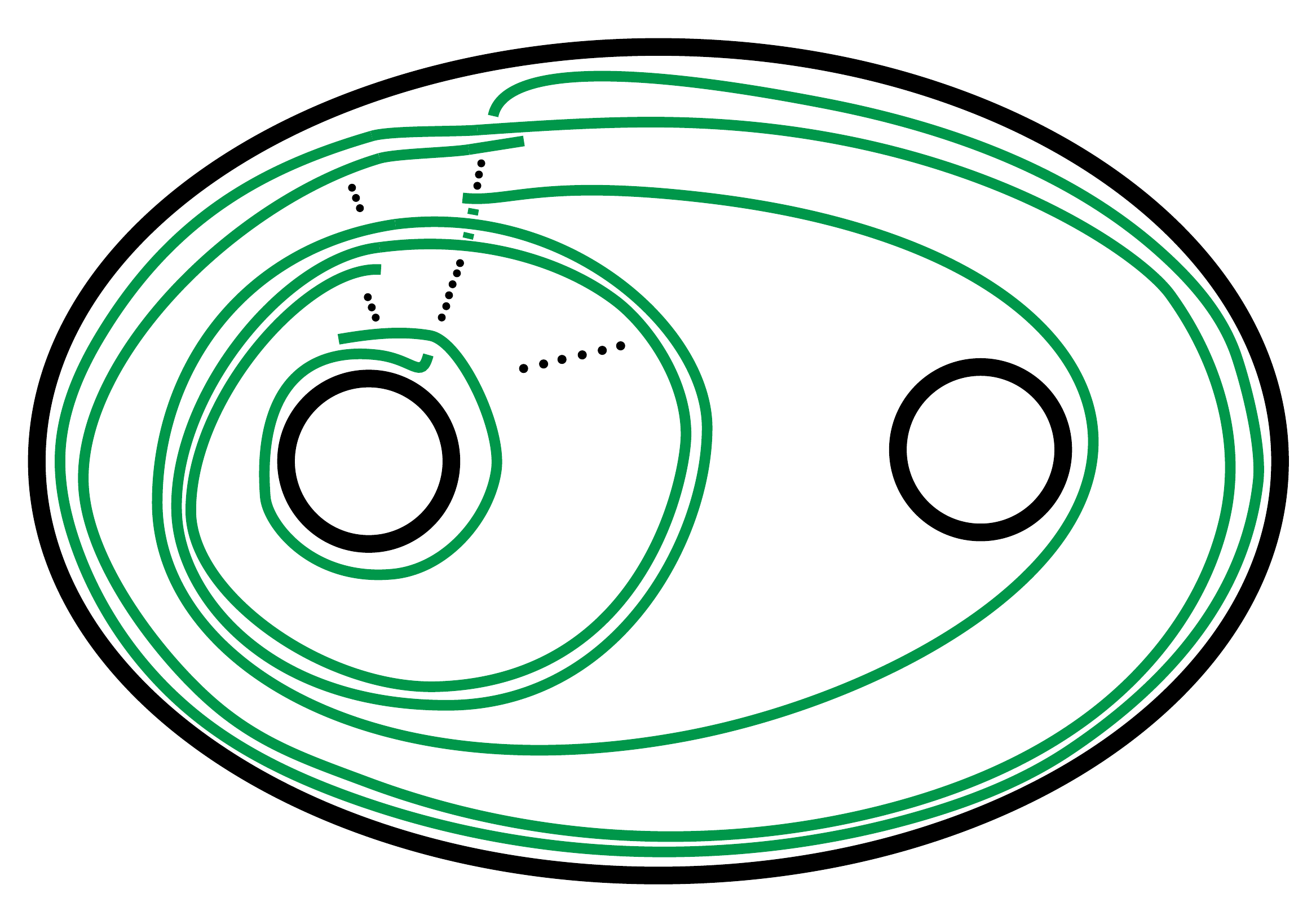}
\end{overpic}}}\\
& = & -A^{-3}P(-m,-n)+A^{3}N(-m,-n)
\vspace{4mm}
\end{eqnarray*}
    \caption{Illustration of $C(-m,-n)$.}
    \label{fig:C(-m,-n)}
\end{figure}
Note that $P(-m,-n)$ is a diagram of a link $L$ in $\Sigma_{0,3}\times [0,1]$ obtained by the projection of $L$ onto $\Sigma_{0,3} \times \{0\}$. We see that the projection of $L$ onto $\Sigma_{0,3} \times \{1\}$ is the mirror image of $P(-m,-n)$, which is $N(m,n)$. This is illustrated in Figure ~\ref{fig:P(-m,-n)=N(m,n)}.
\begin{figure}[ht]
\begin{tikzcd}[row sep=2em, column sep=2em]
\vcenter{\hbox{\begin{overpic}[scale=.1]{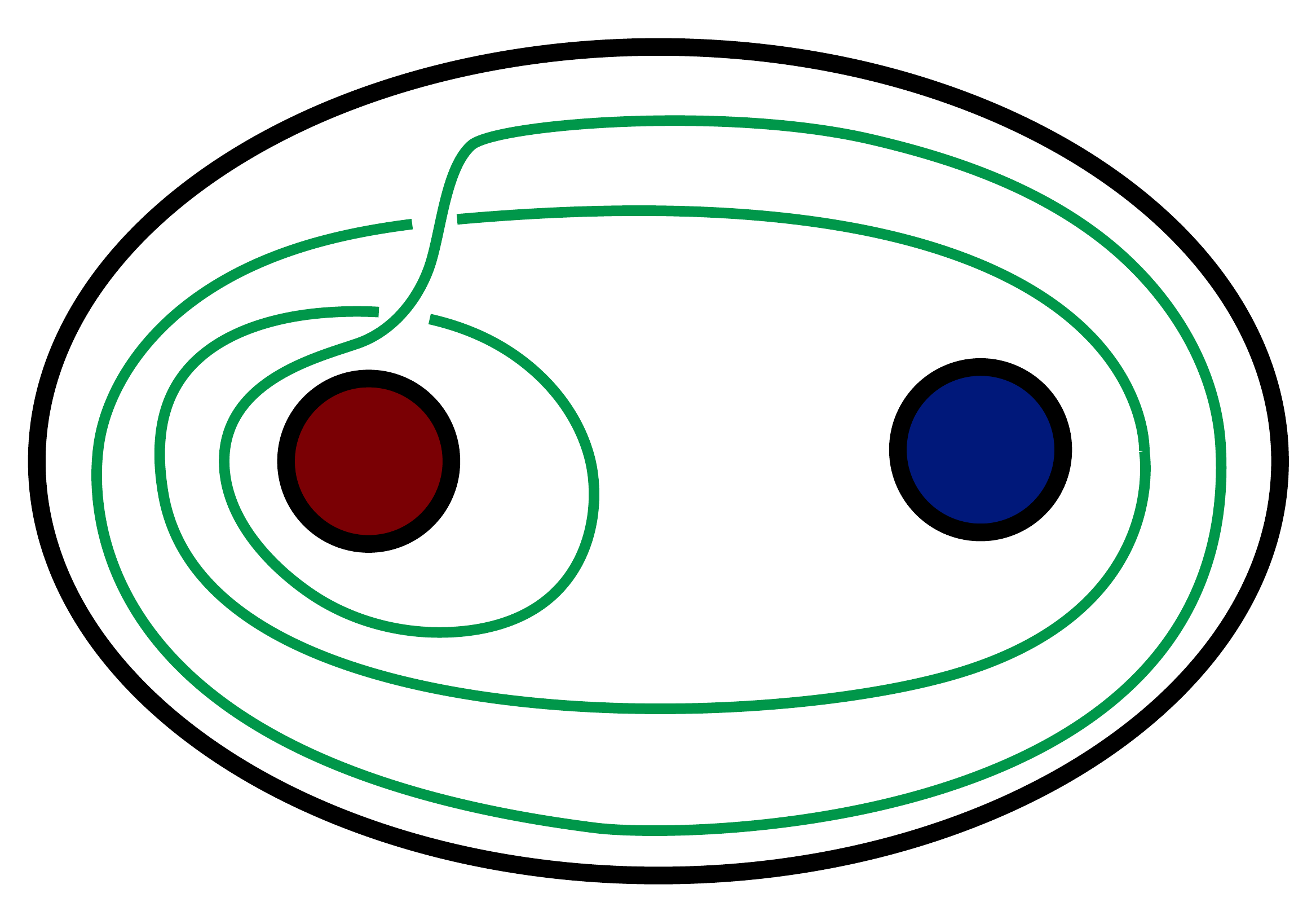}
\end{overpic}}}\rar & \vcenter{\hbox{\begin{overpic}[scale=.1]{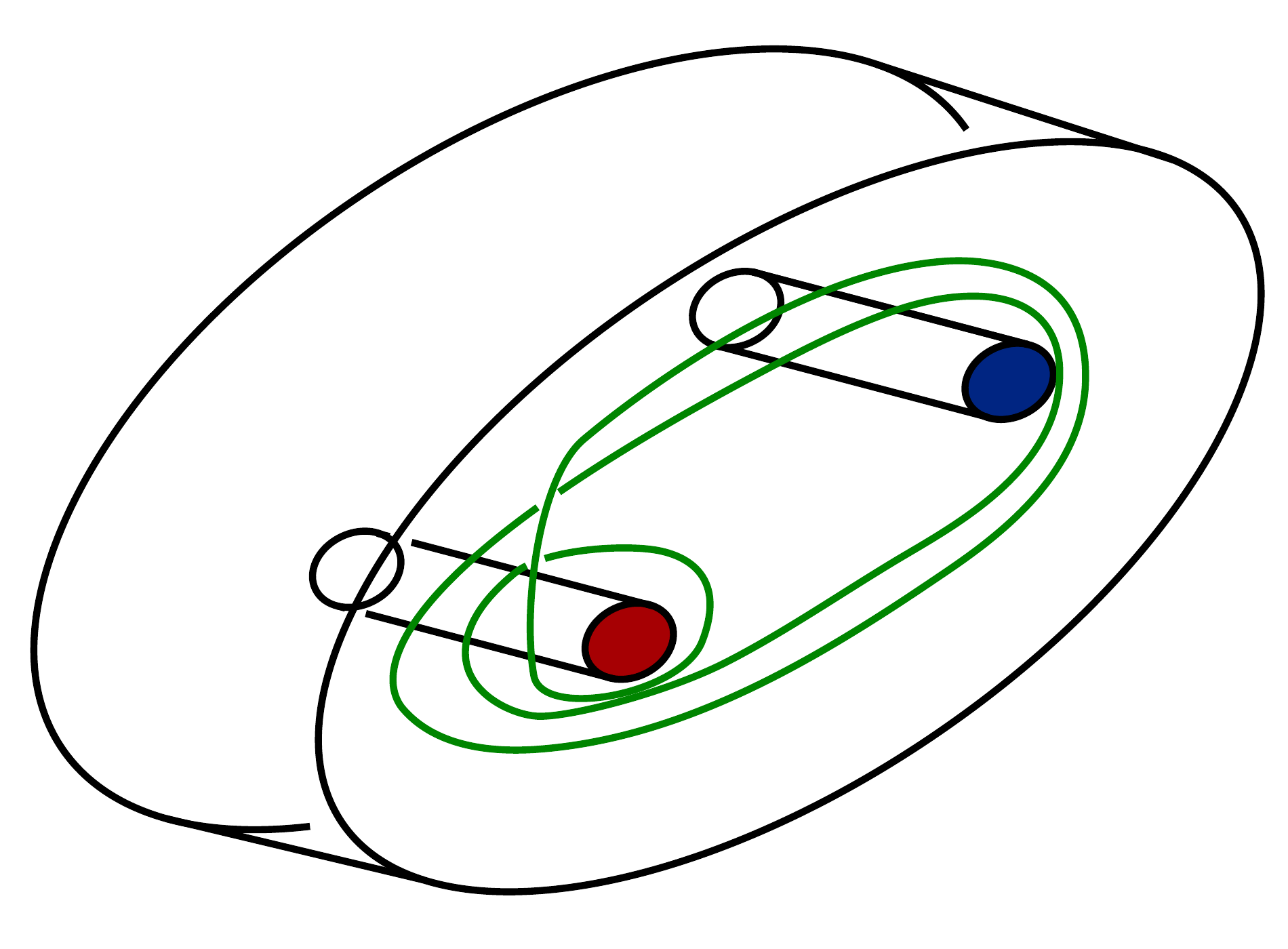}
\end{overpic}}} \rar[] & \vcenter{\hbox{\begin{overpic}[scale=.1]{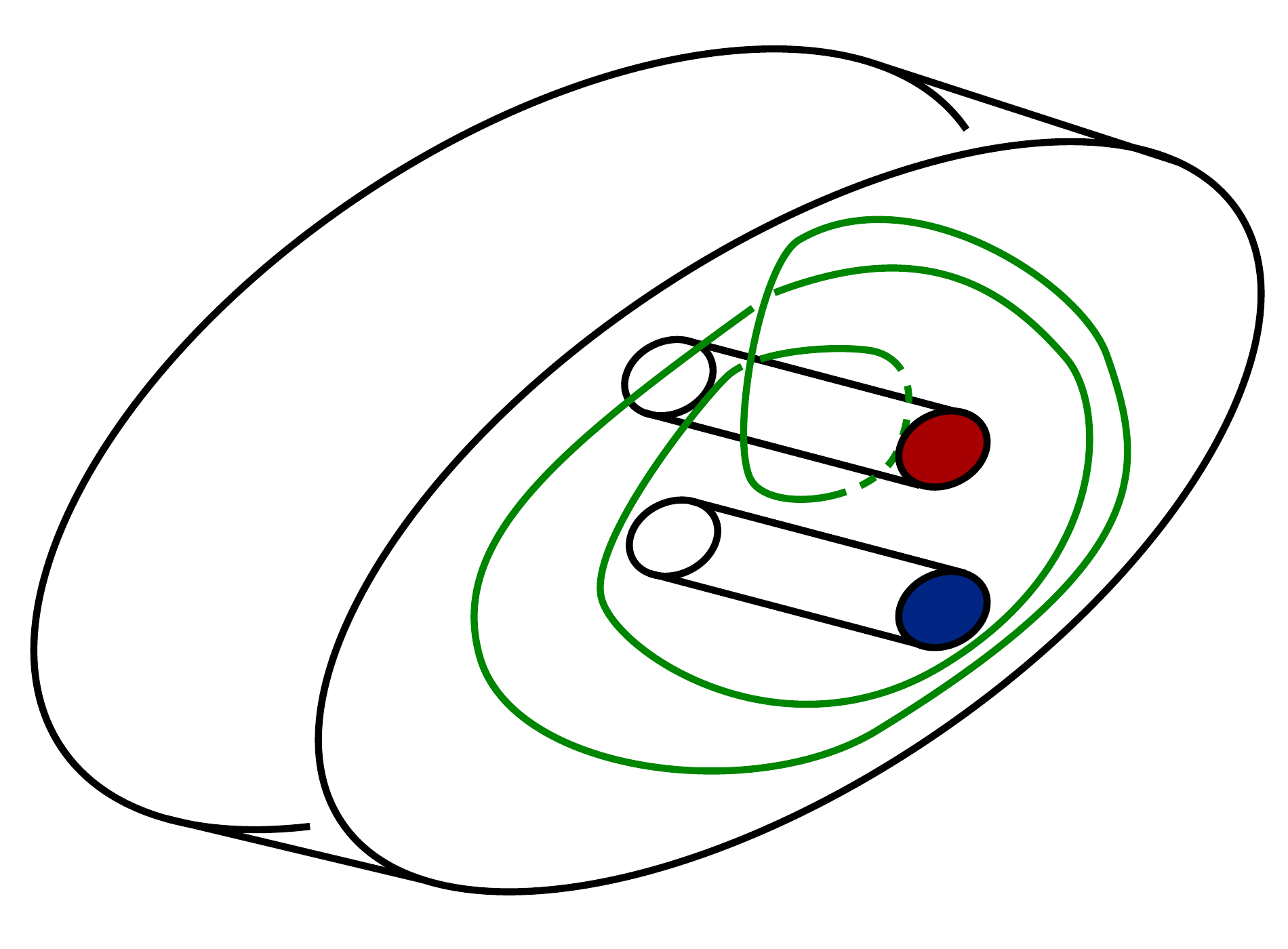}
\end{overpic}}} \dar[]  \\
&\vcenter{\hbox{\begin{overpic}[scale=.1]{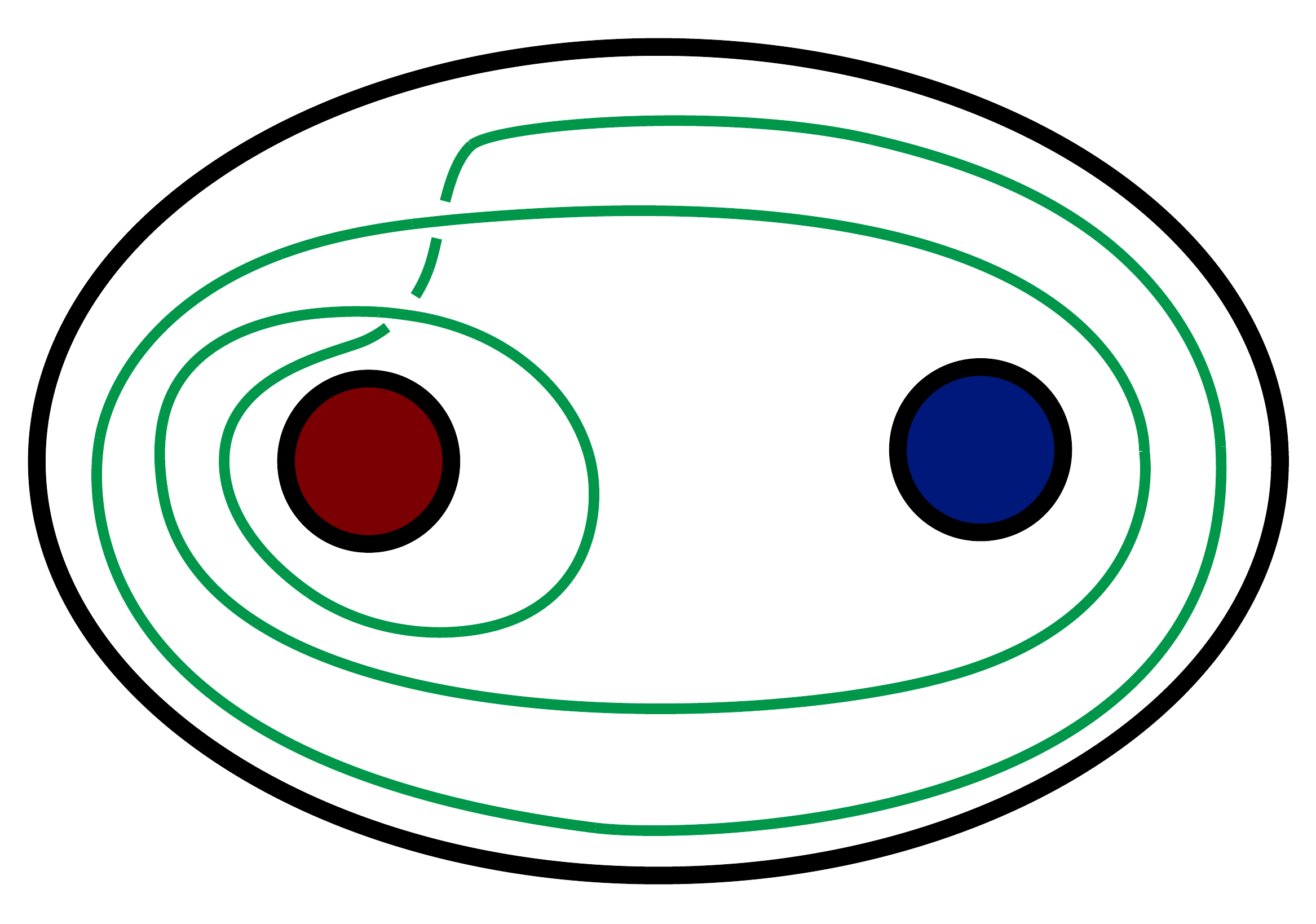}
\end{overpic}}} & \lar[]  \vcenter{\hbox{\begin{overpic}[scale=.1]{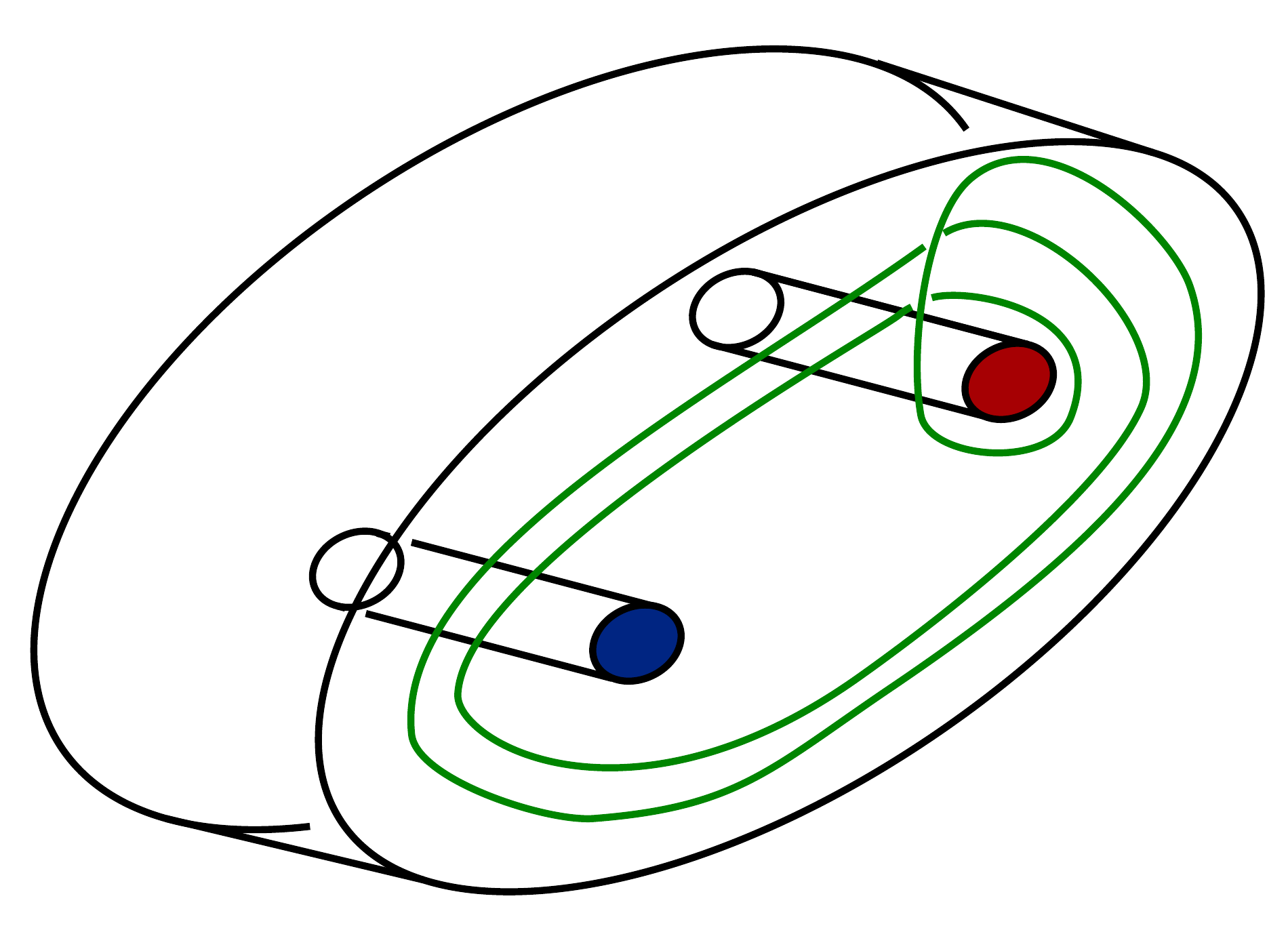}
\end{overpic}}}
\end{tikzcd}
   \caption{$P(-1,-2)$ and $N(1,2)$ are diagrams of isotopic links in $\Sigma_{0,3} \times [0,1]$.}
    \label{fig:P(-m,-n)=N(m,n)}  
\end{figure}
Similarly, the diagrams of $N(-m,-n)$ and $P(m,n)$ are isotopic in $\Sigma_{0,3} \times [0,1]$. Therefore, we obtain $$C(-m,-n) = -A^{-3}P(-m,-n)+A^{3}N(-m,-n) = -A^{-3}N(m,n)+A^{3}P(m,n) = -C(m,n).$$
Analogously, one can show that $C(-m,n) = -C(m,-n),$ for $m, n \in \mathbb{N} \cup \{0\}$. We now obtain the following lemma.
\begin{lemma}\label{lem:negativeCmn}
For $m,n \in \mathbb{Z}$,
    $$C(m,n) = -C(-m,-n).$$
\end{lemma}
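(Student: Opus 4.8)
The plan is to establish the identity $C(m,n) = -C(-m,-n)$ by the same geometric argument that was just carried out for the special case $C(-m,-n) = -C(m,n)$ with $m,n > 0$, and then to extend it uniformly to all of $\mathbb{Z} \times \mathbb{Z}$ by checking the remaining sign configurations one at a time. First I would record what is already in hand: for $m,n > 0$ the text has shown $C(-m,-n) = -C(m,n)$, and it has also asserted (``analogously'') that $C(-m,n) = -C(m,-n)$ for $m,n \in \mathbb{N} \cup \{0\}$; the case $m=n=0$ is trivial since $C(0,0)=0$. So the real content is to package these into the single statement valid for all $m,n \in \mathbb{Z}$ and to handle the mixed cases, e.g. $C(m,-n)$ versus $C(-m,n)$ with $m,n > 0$, which is literally the second asserted identity read backwards.

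The key geometric input, which I would state once and reuse, is the reflection principle illustrated in Figure~\ref{fig:P(-m,-n)=N(m,n)}: the diagram $P(-m,-n)$, viewed as the projection of a link $L \subset \Sigma_{0,3} \times [0,1]$ onto $\Sigma_{0,3} \times \{0\}$, has the property that its projection onto $\Sigma_{0,3} \times \{1\}$ is the mirror image of the original diagram, and that mirror image is exactly the diagram $N(m,n)$; symmetrically $N(-m,-n)$ and $P(m,n)$ are diagrams of isotopic links. Combining this with the expansions $C(-m,-n) = -A^{-3}P(-m,-n) + A^{3}N(-m,-n)$ and $C(m,n) = -A^{3}P(m,n) + A^{-3}N(m,n)$ (the latter from Case I), the coefficients match up after the swap $P \leftrightarrow N$ together with $A^{3} \leftrightarrow A^{-3}$, yielding $C(-m,-n) = -C(m,n)$. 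I would then observe that the very same reflection argument, applied to the Case~II diagrams, gives $C(-m,n)$ from $C(m,-n)$: here $C(m,-n) = P(m,-n) - N(m,-n)$ has coefficients $+1$ and $-1$, so the mirror swap produces $C(-m,n) = N(m,-n)\,' - P(m,-n)\,' = -(P - N) = -C(m,-n)$, with the primes denoting the reflected diagrams. Since $C(-m,n) = -C(m,-n)$ is symmetric in the roles of the two indices up to sign, it also reads $C(m,-n) = -C(-m,n)$.

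With these two families in place, the proof is a finite case check on the signs of $m$ and $n$. Writing $m, n \in \mathbb{Z}$ arbitrarily, if both are $\geq 0$ we are in Case I and $-m,-n \leq 0$ puts $C(-m,-n)$ in the regime covered by the reflection identity above, giving $C(m,n) = -C(-m,-n)$. If $m \geq 0$ and $n \leq 0$, write $n = -n'$ with $n' \geq 0$; then $C(m,n) = C(m,-n')$ is Case~II and $C(-m,-n) = C(-m,n')$, and the identity $C(-m,n') = -C(m,-n')$ is exactly the mixed-case relation just derived. The cases $m \leq 0, n \geq 0$ and $m \leq 0, n \leq 0$ follow by relabelling $(m,n) \mapsto (-m,-n)$ and invoking the already-proved instances, since the desired statement is visibly invariant under that substitution. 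The boundary overlaps (some index equal to $0$) are consistent because the formulas of Lemmas~\ref{lem:C(m,n)-formula} and~\ref{lem:C(m,-n)-formula} agree there, so no ambiguity arises.

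I expect the main obstacle to be purely expository rather than mathematical: making the ``mirror image'' correspondence between the $P$- and $N$-diagrams precise enough that the bookkeeping of which diagram equals which—$P(-m,-n) \leftrightarrow N(m,n)$ in Case~I but the analogous statement with the Case~II coefficients—is transparent, and confirming that the coefficient substitution induced by reflection is exactly $A \mapsto A^{-1}$ (equivalently $A^{\pm 3} \mapsto A^{\mp 3}$ on the kink factors) so that the signs track correctly. Once the reflection lemma is isolated as a standalone observation, the rest is a routine enumeration of four sign cases, two of which reduce to the other two by the symmetry $(m,n)\mapsto(-m,-n)$ that the claim itself respects.
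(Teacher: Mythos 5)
Your argument is the same as the paper's: the lemma is justified there by the Case~III reflection argument (Figure~\ref{fig:P(-m,-n)=N(m,n)}), which shows $P(-m,-n)$ and $N(m,n)$, respectively $N(-m,-n)$ and $P(m,n)$, are diagrams of isotopic links, giving $C(-m,-n)=-C(m,n)$, with the mixed-sign identity $C(-m,n)=-C(m,-n)$ obtained analogously and the remaining sign patterns following from the symmetry of the statement under $(m,n)\mapsto(-m,-n)$. Your write-up just makes the final case enumeration explicit, and is correct.
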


The results of this section lead to the following description of $\mathcal J_1$.

\begin{corollary}\label{j1mainresult}

The relations described in Theorem \ref{thm:C(m,n)-formula} generate the submodule $\mathcal J_1$ of handle sliding relations of $\mathcal S_{2,\infty}(H_2)$.
    
\end{corollary}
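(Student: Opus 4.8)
The plan is to assemble the computations carried out in the preceding lemmas into the single statement of Theorem~\ref{thm:C(m,n)-formula} and then invoke the description of $\mathcal J_1$ via the map $\omega$. Recall that, by construction, $\mathcal J_1 = \omega(\mathcal S_{2,\infty}(H_2; u,v))$, so the image under $\omega$ of any generating set of $\mathcal S_{2,\infty}(H_2; u,v)$ generates $\mathcal J_1$. By Corollary~\ref{rkbsmf03basis} the elements $c_{m,n}S_q(a_2)$, $m,n \in \mathbb Z$, $q \geq 0$, form a basis. Since the boundary curve $a_2$ is disjoint from $\beta$ and hence commutes multiplicatively with $c_{m,n}$, $\beta_1$, and $\beta_2$, we get $\omega(c_{m,n}S_q(a_2)) = \omega(c_{m,n})S_q(a_2) = C(m,n)S_q(a_2)$. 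Thus it suffices to express $C(m,n)S_q(a_2)$ in the standard basis $\{S_i(a_1)S_j(a_2)S_k(a_3)\}$ for all $m,n \in \mathbb Z$ and $q \geq 0$.

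The second step is to reduce to the two quadrants already handled. Lemma~\ref{lem:negativeCmn} gives $C(m,n) = -C(-m,-n)$ and, together with the identity $C(-m,n) = -C(m,-n)$ established at the end of Case~III, this rewrites every $C(m,n)$ with $m < 0$ in terms of a $C(m',n')$ with $m' \geq 0$. Hence explicit formulas are needed only for $m,n \geq 0$ (Case~I) and for $m \geq 1$, $n \leq -1$ (Case~II), and these are precisely Lemmas~\ref{lem:C(m,n)-formula} and~\ref{lem:C(m,-n)-formula}.

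The remaining point is to verify that these two formulas are instances of the uniform formula in Theorem~\ref{thm:C(m,n)-formula}: substituting a negative value of $n$ into the Case~I expression and applying the extended Chebyshev relation $S_n(a_3) = -S_{-n-2}(a_3)$ for $n \leq -2$, along with $S_{-1}(x)=0$ and $S_{-2}(x)=-1$, should reproduce exactly the Case~II expression, with the corresponding sign changes on the exponents of $A$. Once this identification is confirmed, multiplying through by $S_q(a_2)$ and using $a_2 S_q(a_2) = S_{q+1}(a_2) + S_{q-1}(a_2)$ yields the four-term form displayed in Theorem~\ref{thm:C(m,n)-formula}. As these elements are exactly the $\omega$-images of the basis of $\mathcal S_{2,\infty}(H_2; u,v)$, they generate $\mathcal J_1$, which is the assertion of the corollary.

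I expect the main obstacle to be purely bookkeeping in the last step: matching the $A$-powers and Chebyshev indices across the Case~I/Case~II boundary and checking the degenerate values $m = 0$, $n = 0$, $q = 0$, where Chebyshev polynomials of index $-1$ and $-2$ enter through the extended initial conditions. All the genuinely skein-theoretic content — the closed forms for $P(m,n)$ and $N(m,n)$ — has already been isolated in the lemmas, so no new diagrammatic argument is required here.
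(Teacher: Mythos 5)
Your proposal is correct and follows the same route as the paper, which treats the corollary as an immediate consequence of the preceding results: $\mathcal J_1 = \omega(\mathcal S_{2,\infty}(H_2;u,v))$ is generated by $\omega(c_{m,n}S_q(a_2)) = C(m,n)S_q(a_2)$ by Corollary \ref{rkbsmf03basis}, these are given by Theorem \ref{thm:C(m,n)-formula} for $m \geq 0$, and Lemma \ref{lem:negativeCmn} disposes of $m<0$ up to sign. The bookkeeping you flag (matching Case~I and Case~II via $S_n(a_3)=-S_{-n-2}(a_3)$ and expanding $a_2S_q(a_2)=S_{q+1}(a_2)+S_{q-1}(a_2)$) is exactly what the paper's statement of Theorem \ref{thm:C(m,n)-formula} already encodes, so no further argument is needed.
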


We also remark that $\mathcal S_{2,\infty}(H_2)/\mathcal J_1$ is isomorphic to $\mathcal S_{2,\infty}((S^1 \times S^2) \ \# \ H_1)$.


 \subsection{Generators of the submodule $\mathcal J_2$}   
Analogous to the case of $\mathcal J_1$, we find the exact description of the generators of the submodule $\mathcal J_2$ by using the relative Kauffman bracket skein module of  $\mathcal S_{2,\infty}(H_2; u',v')$. The manifold $\Sigma_{0,3}$ with marked points $u'$ and $v'$ on its boundary is illustrated in Figure~\ref{bar-c00}.

 \begin{figure}[ht]
    \centering
\begin{overpic}[unit=1mm, scale = 0.1]{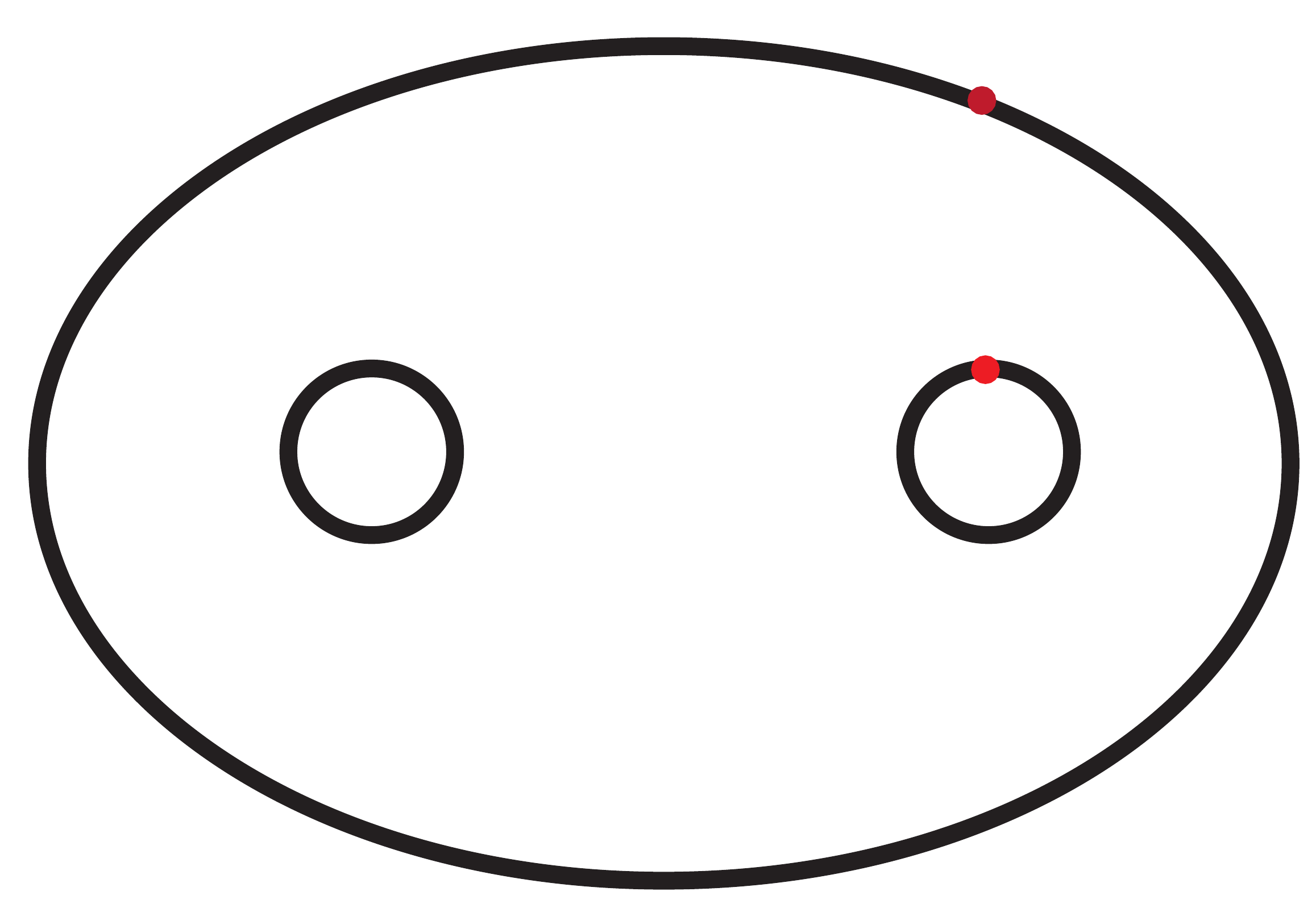}

\put(28.5,12.5){\footnotesize$v'$}
\put(29,25){\footnotesize$u'$}
\end{overpic}
    \caption{Marked points $u'$ and $v'$ on $\partial \Sigma_{0,3}$, where $u'$ lies on the boundary component $a_3$ and $v'$ lies on the boundary component $a_2$.}
    \label{bar-c00}
\end{figure}
 Let $\overline c_{q,n}$ be a relative curve connecting the points $u'$ and $v'$ such that $\overline c_{q,n}$ rotates $q$-times along $a_{2}$ and $n$-times along $a_{3}$ in either the counterclockwise or clockwise directions depending on the signs of $q$ and $n$, respectively. See Figure \ref{rkbsmbasisf03uvprime} for an illustration. \\

 \begin{figure}[ht]
\centering
$\hdots \vcenter{\hbox{\begin{overpic}[scale=.075]{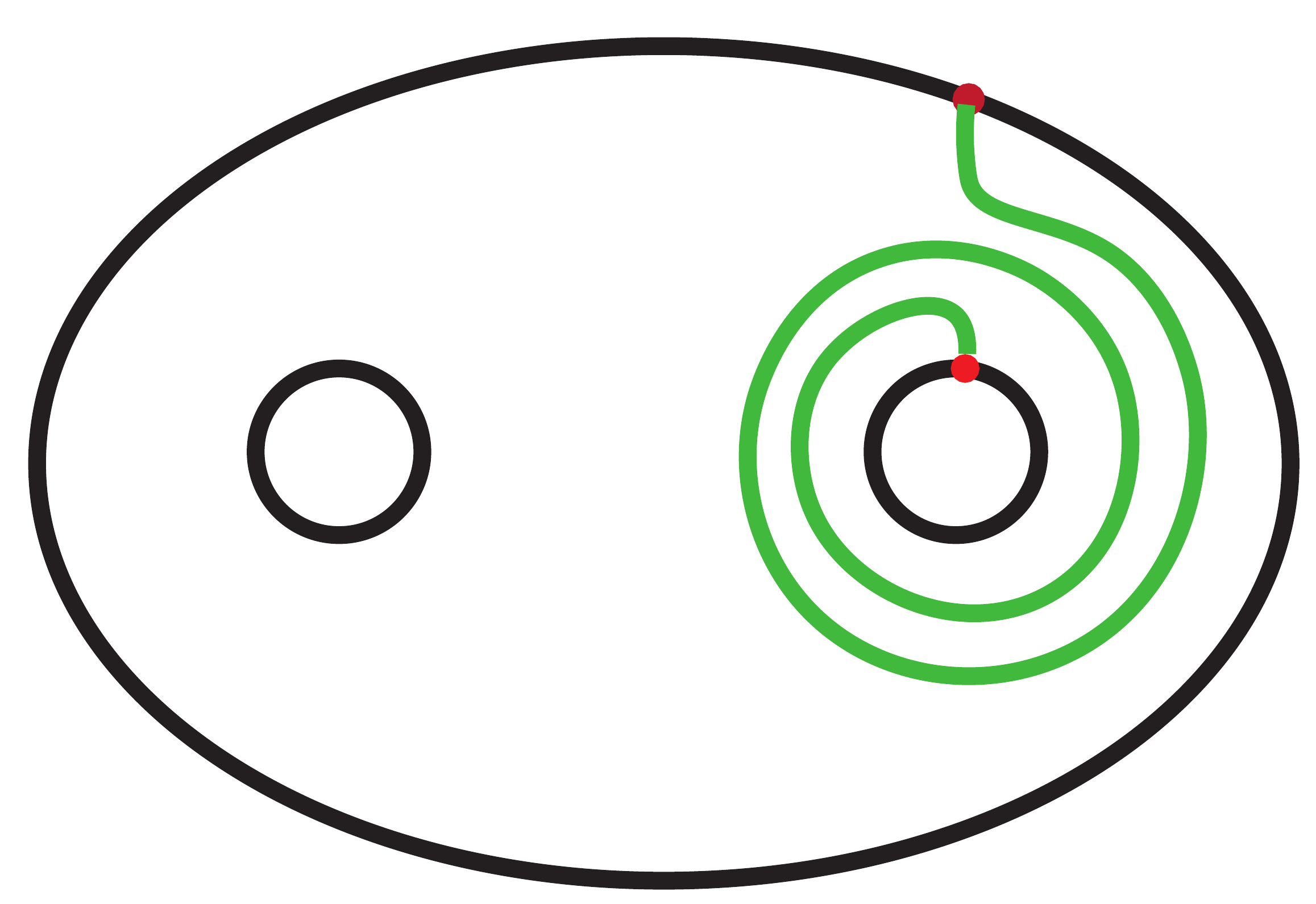}
\put(32, -8){$\overline c_{-2,0}$}
\end{overpic}}}  \hspace{2mm}
\vcenter{\hbox{\begin{overpic}[scale=.075]{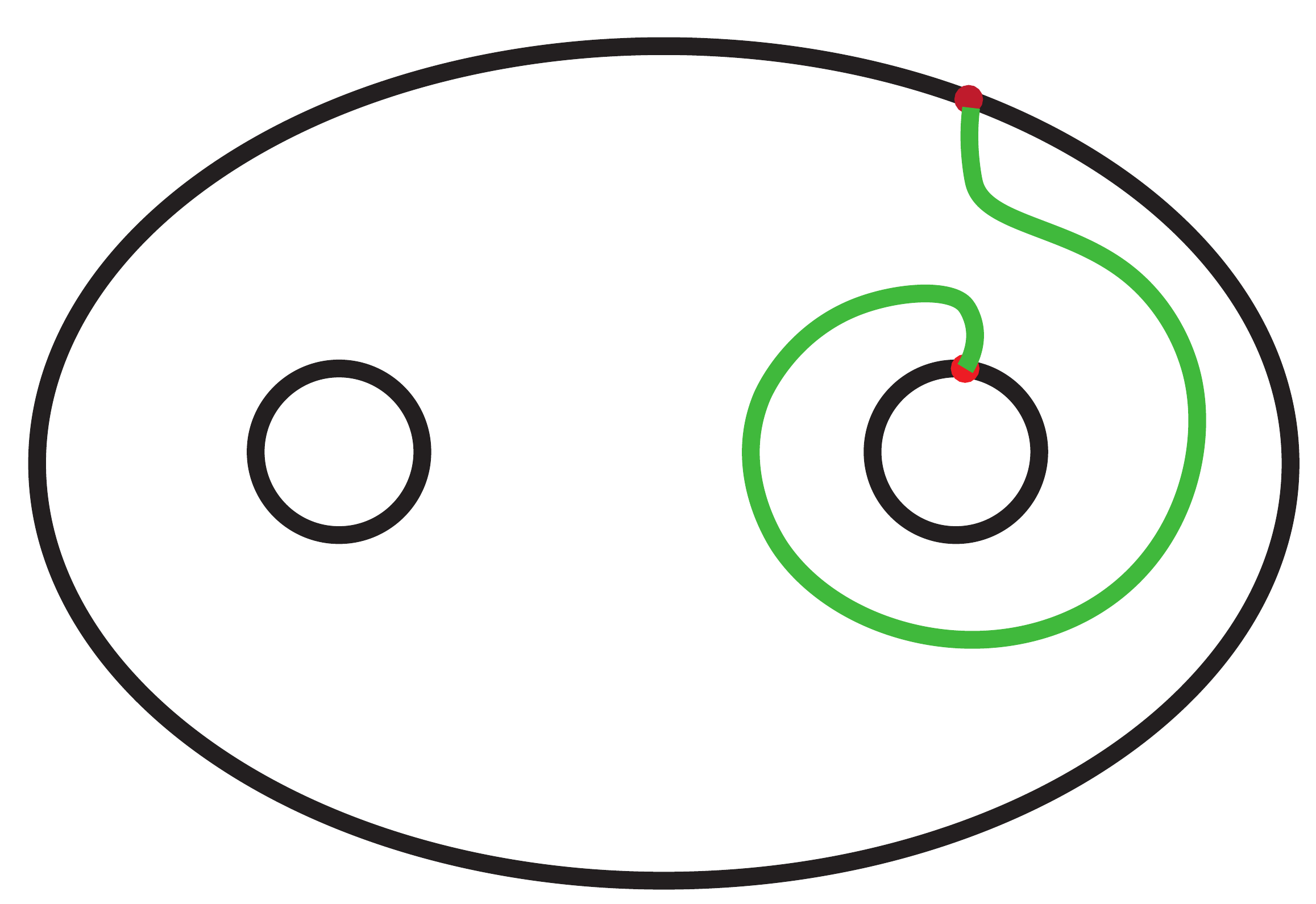}
\put(34, -8){$\overline c_{-1,0}$}
\end{overpic}}} \hspace{2mm}
\vcenter{\hbox{\begin{overpic}[scale=.075]{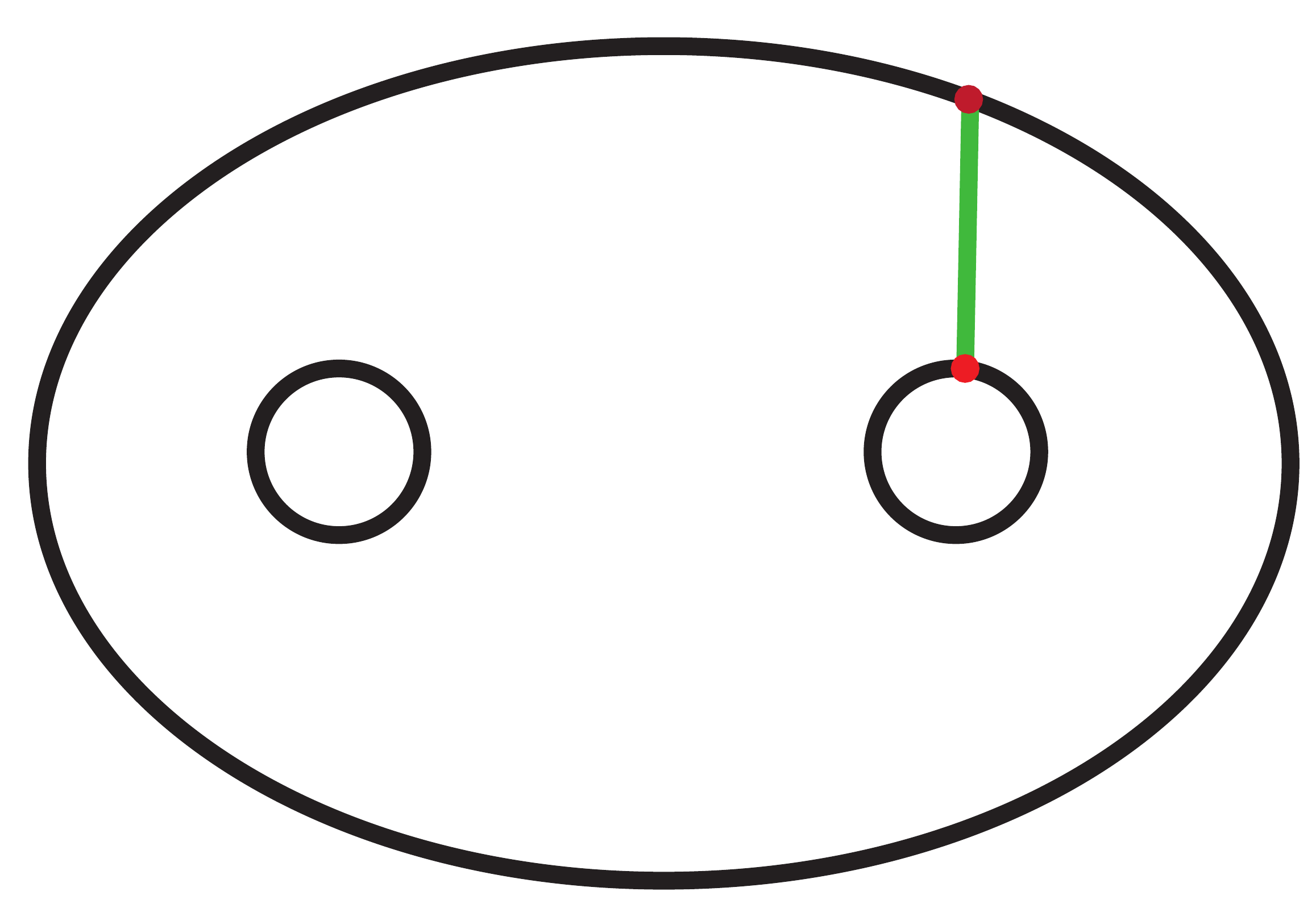}
\put(37, -8){$\overline c_{0,0}$}
\put(60.5, 53.5){\footnotesize{$u'$}}
\put(63, 34){\footnotesize{$v'$}}
\end{overpic}}} \hspace{2mm}
\vcenter{\hbox{\begin{overpic}[scale=.075]{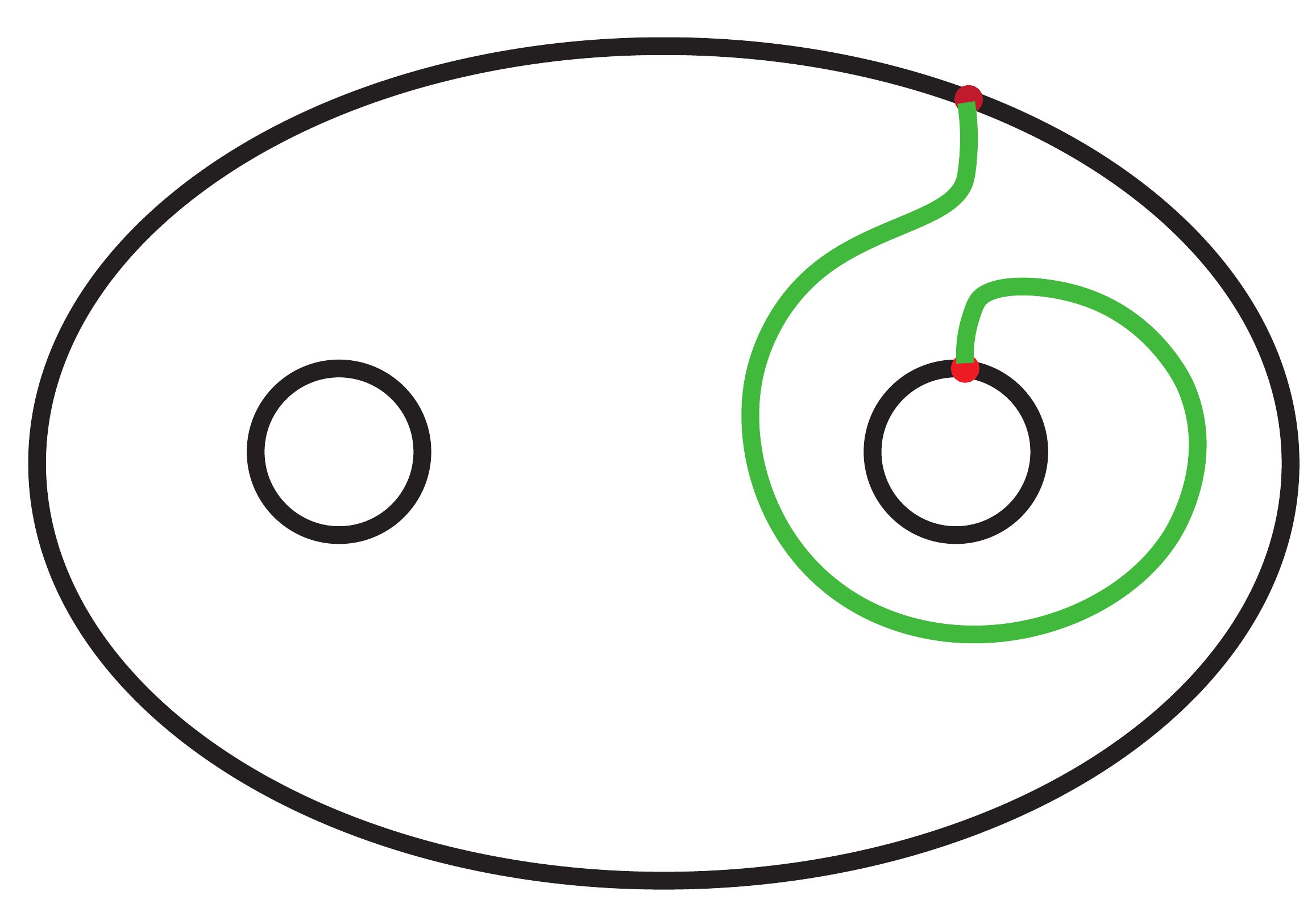}
\put(37, -8){$\overline c_{1,0}$}
\end{overpic}}}  \hspace{2mm}
\vcenter{\hbox{\begin{overpic}[scale=.075]{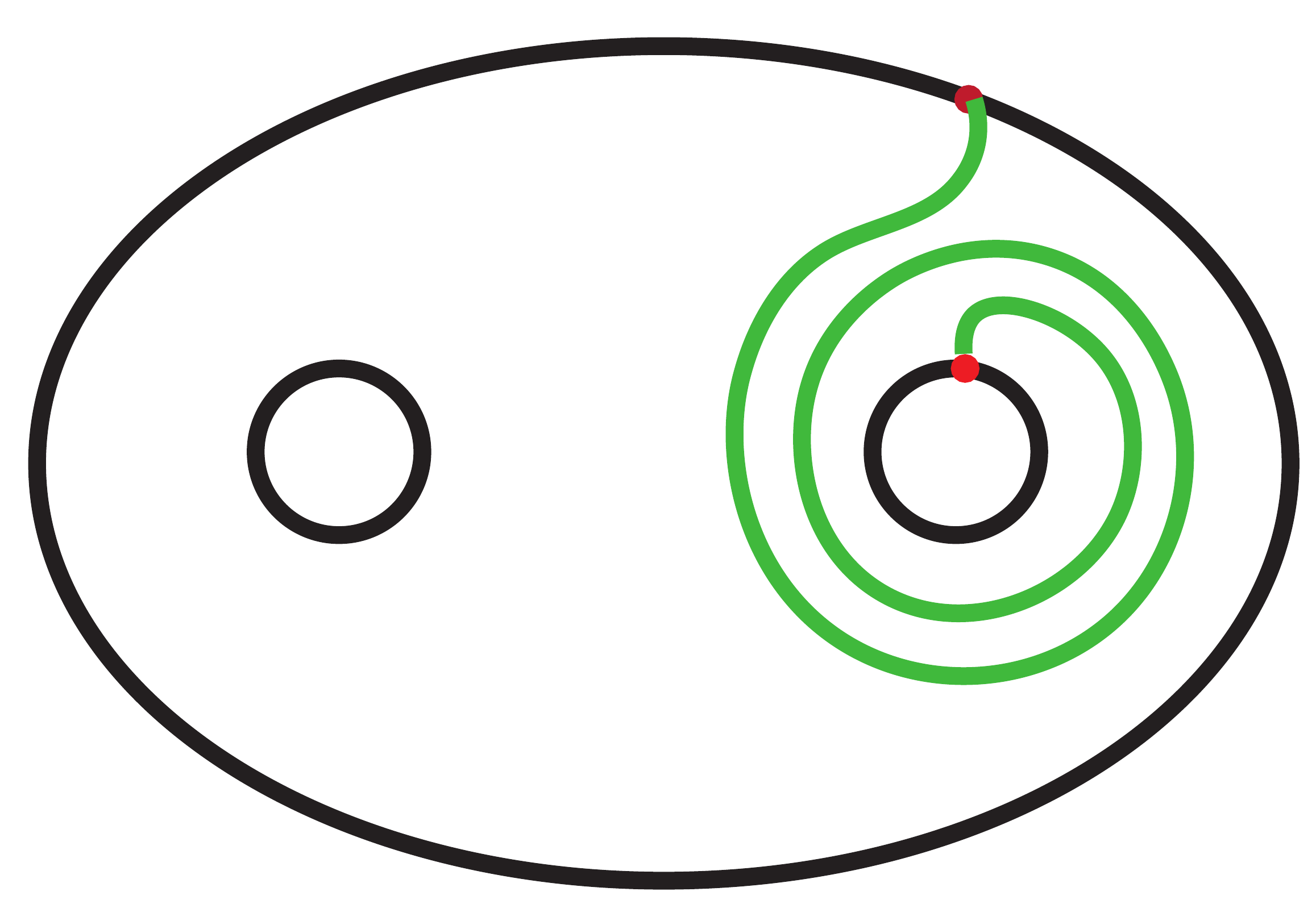}
\put(38, -8){$\overline c_{2,0}$}
\end{overpic}}}  \hdots $ \\ \vspace*{5mm}
$\hdots \vcenter{\hbox{\begin{overpic}[scale=.075]{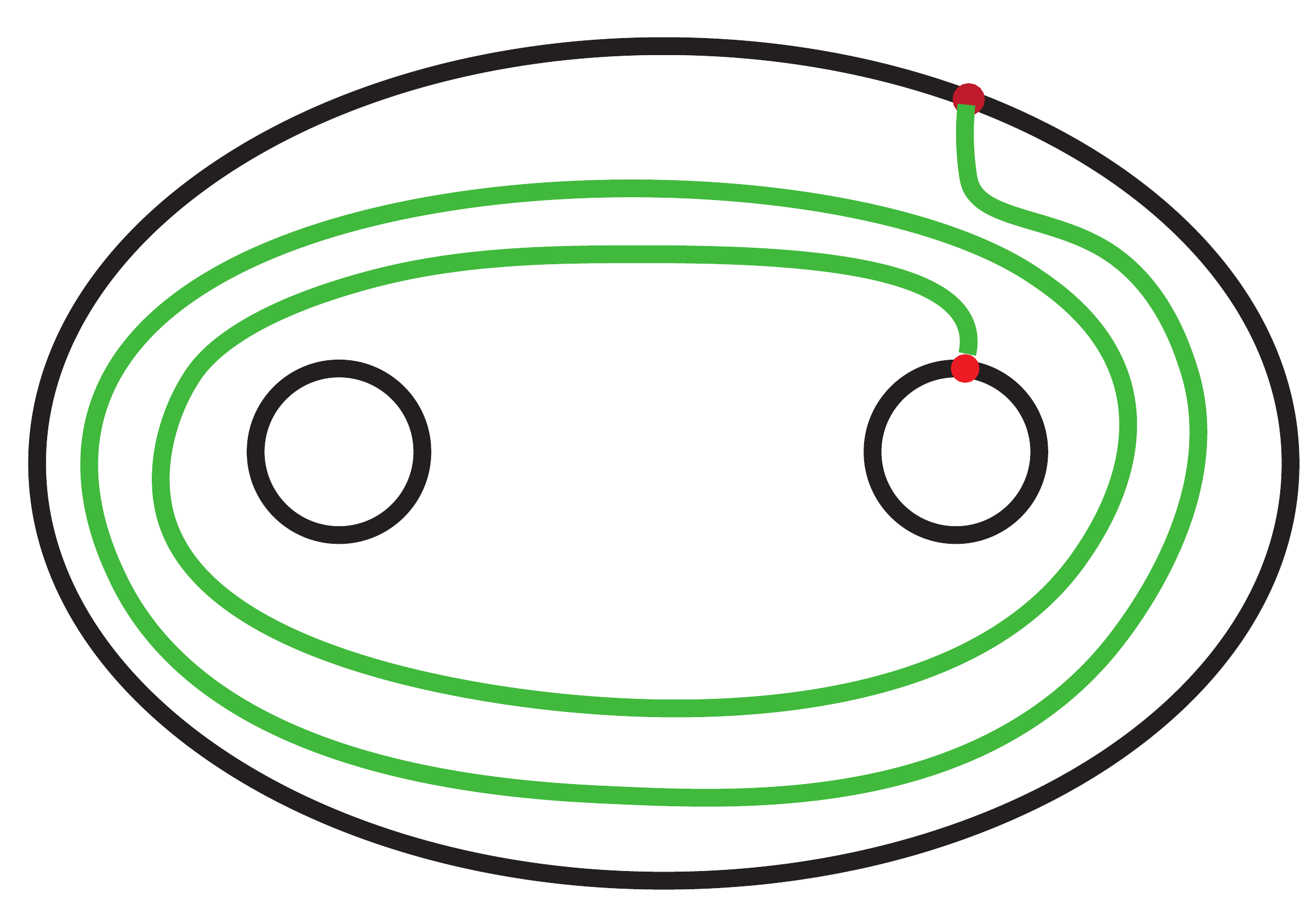}
\put(32, -8){$\overline c_{0,-2}$}
\end{overpic}}}  \hspace{2mm}
\vcenter{\hbox{\begin{overpic}[scale=.075]{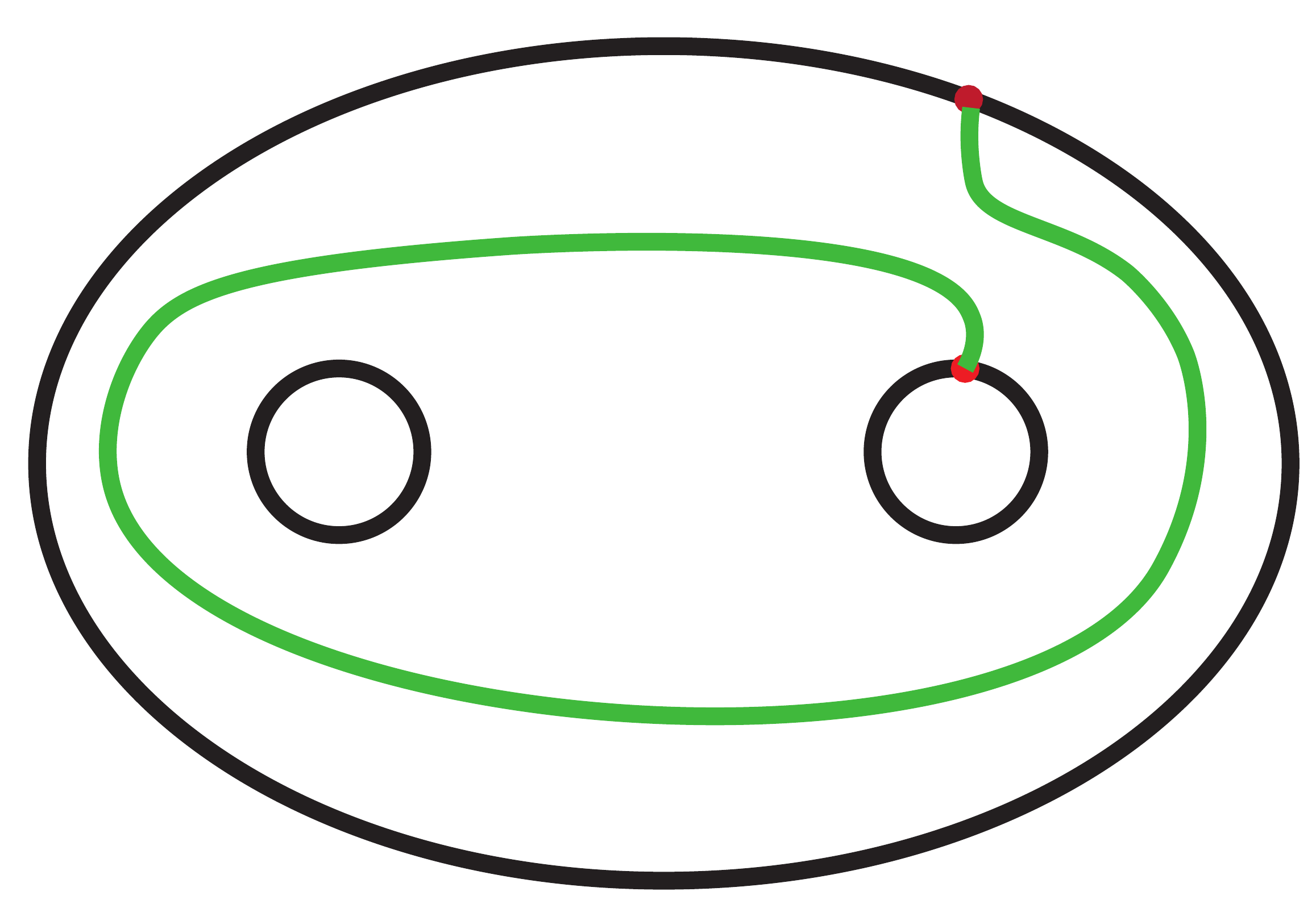}
\put(34, -8){$\overline c_{0,-1}$}
\end{overpic}}} \hspace{2mm}
\vcenter{\hbox{\begin{overpic}[scale=.075]{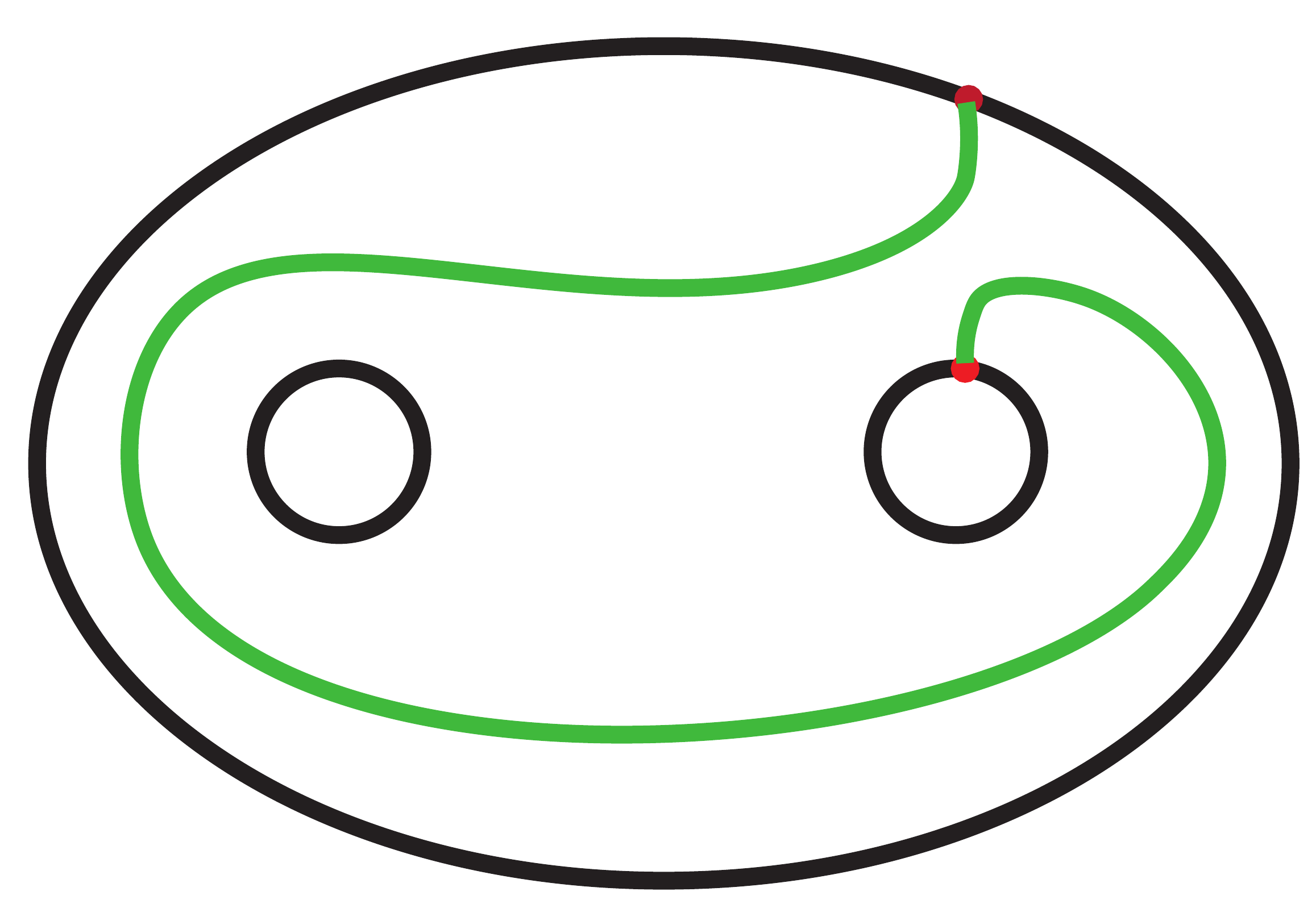}
\put(37, -8){$\overline c_{0,1}$}
\end{overpic}}} \hspace{2mm}
\vcenter{\hbox{\begin{overpic}[scale=.075]{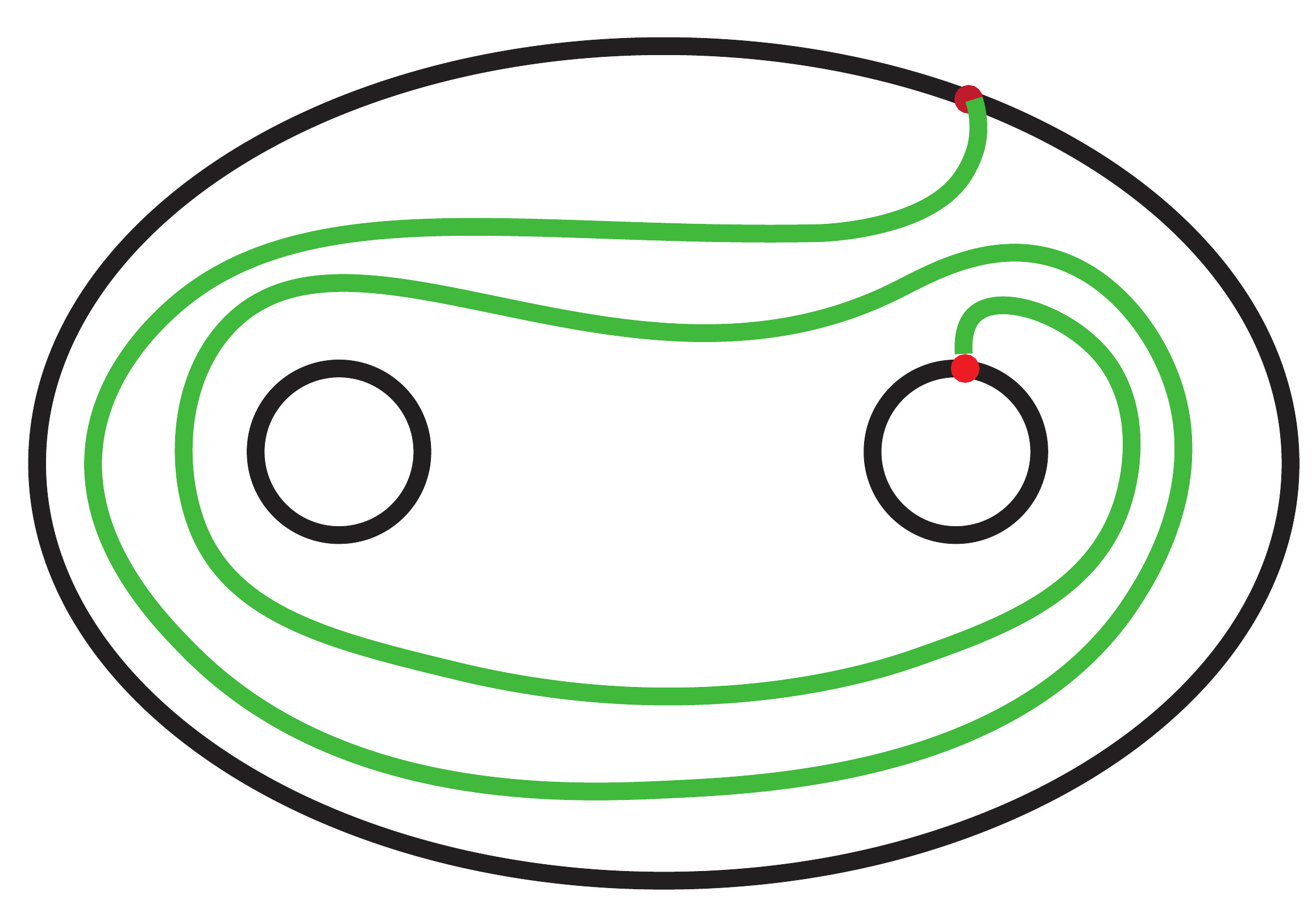}
\put(37, -8){$\overline c_{0,2}$} 
\end{overpic}}} \hdots $ \\
\vspace*{5mm}
$\hdots \vcenter{\hbox{\begin{overpic}[scale=.075]{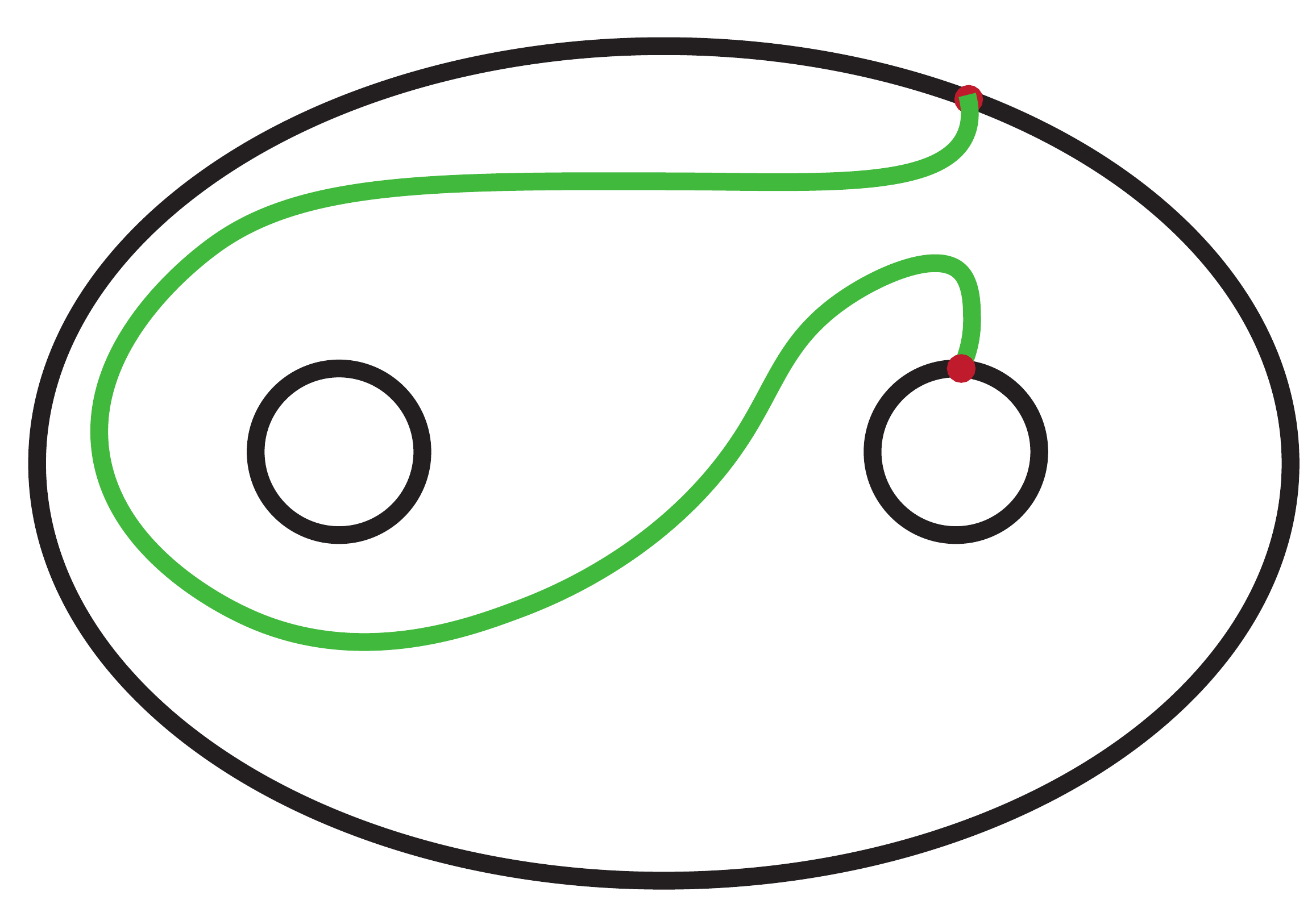}
\put(32, -8){$\overline c_{-1,1}$}
\end{overpic}}}  \hspace{2mm}
\vcenter{\hbox{\begin{overpic}[scale=.075]{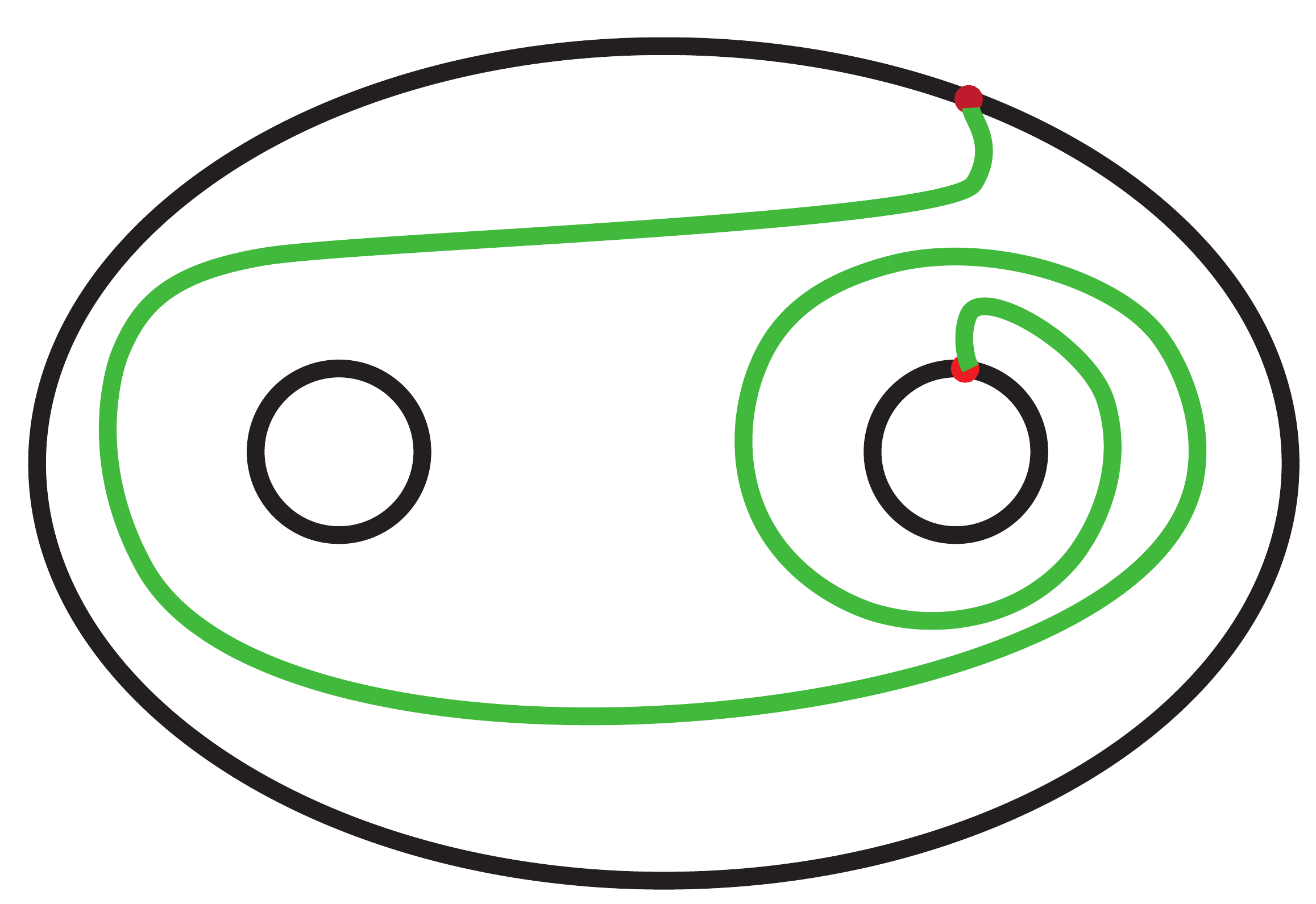}
\put(34, -8){$\overline c_{1,1}$}
\end{overpic}}} \hspace{2mm}
\vcenter{\hbox{\begin{overpic}[scale=.075]{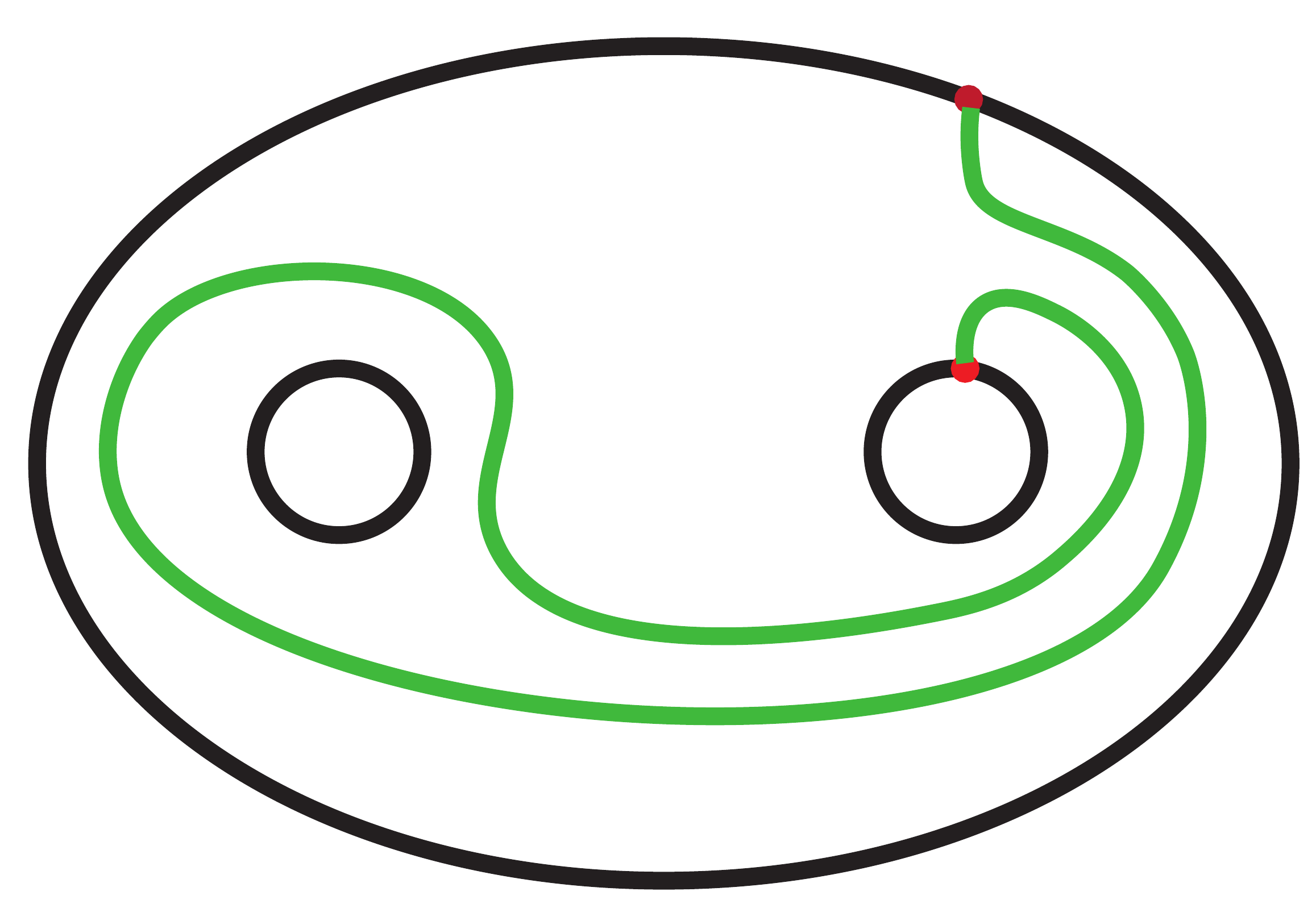}
\put(37, -8){$\overline c_{1,-1}$}
\end{overpic}}} \hdots $
\vspace*{5mm}
\caption{Relative curves in $(\Sigma_{0,3} \times I; u',v')$.}
\label{rkbsmbasisf03uvprime}
\end{figure}

 Since $S_{2,\infty}(H_{2},u',v')$ is generated by $\overline{c}_{q,n}a_{1}^m$, its submodule $\mathcal J_{2}$ is generated by $\overline{C}(q,n)a_{1}^{m} := \omega(\overline{c}_{q,n})a_{1}^{m}$. Analogous to the case of $\mathcal J_{1}$, we obtain the following two theorems.
 \begin{theorem}\label{thm:C(q,n)-formula}
For $q \in \mathbb{N} \cup \{0\}$ and $n \in \mathbb{Z}$,
      \begin{eqnarray*}
        \overline{C}(q,n)S_{m}(a_{1}) &=& (-A^{q+n+2}+A^{-q-n-2})S_{q}(a_{2})S_{n}(a_{3})S_{m}(a_{1})\\
                &+&(-A^{q+n}+A^{-q-n})S_{q-1}(a_{2})S_{n-1}(a_{3})S_{m+1}(a_{1})\\
                &+&(-A^{q+n}+A^{-q-n})S_{q-1}(a_{2})S_{n-1}(a_{3})S_{m-1}(a_{1})\\
                &+&(-A^{q+n-2}+A^{-q-n+2})S_{q-2}(a_{2})S_{n-2}(a_{3})S_{m}(a_{1})
    \end{eqnarray*}
     where $S_{n}(a_{3})$ is the Chebyshev polynomial of the second kind extended to negative indices satisfying $S_{n}(a_{3}) = -S_{-n-2}(a_3)$ for $n\leq -2$.
\end{theorem}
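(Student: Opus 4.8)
The plan is to deduce Theorem~\ref{thm:C(q,n)-formula} from Theorem~\ref{thm:C(m,n)-formula} by exploiting the symmetry of $\Sigma_{0,3}$ interchanging $a_1$ and $a_2$, rather than rerunning Cases~I--III. The pair of pants $\Sigma_{0,3}$ carries an \emph{orientation-preserving} self-homeomorphism $\rho$ that swaps the boundary components surrounded by $a_1$ and $a_2$ while fixing the one surrounded by $a_3$: realising $S^2$ as $\mathbb C\cup\{\infty\}$ with small open disks deleted at $0$, $1$, $\infty$, the holomorphic involution $z\mapsto 1-z$ descends to such a $\rho$. I would first arrange, composing $\rho$ if necessary with finitely many boundary Dehn twists $\tau_{a_2}^{\pm1},\tau_{a_3}^{\pm1}$ (which fix $\partial\Sigma_{0,3}$ pointwise), that $\rho$ carries $\beta$ to $\eta$, the oriented framed marked points $u,v$ of Figure~\ref{c00} to $u',v'$ of Figure~\ref{bar-c00}, the base arc $c_{0,0}$ to $\overline c_{0,0}$, and the two arcs $\beta_1,\beta_2$ of $\beta$ cut off by $u,v$ to the two arcs $\eta_1,\eta_2$ of $\eta$ cut off by $u',v'$ with matching labels. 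Setting $\Phi := \rho\times\mathrm{id}_I$ then produces an orientation-preserving homeomorphism of $H_2=\Sigma_{0,3}\times I$ carrying the whole data package defining $\mathcal J_1$ to that defining $\mathcal J_2$.

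Next I would record the consequences. Being orientation-preserving, $\Phi$ induces a $\mathbb Z[A^{\pm1}]$-algebra automorphism $\Phi_*$ of $\mathcal S_{2,\infty}(H_2)=\mathcal S^{\mathit{alg}}(\Sigma_{0,3})$ with $\Phi_*(a_1)=a_2$, $\Phi_*(a_2)=a_1$, $\Phi_*(a_3)=a_3$ --- crucially with \emph{no} substitution $A\mapsto A^{-1}$, since $\Phi$ preserves the interval factor and its orientation --- together with an isomorphism of relative skein modules $\mathcal S_{2,\infty}(H_2;u,v)\cong\mathcal S_{2,\infty}(H_2;u',v')$ compatible with the module structures. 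Since $\Phi$ conjugates the mapping classes $\tau_{a_1},\tau_{a_3}$ of $\Sigma_{0,3}\times\{\tfrac12\}$ to $\tau_{a_2},\tau_{a_3}$ and sends $c_{0,0}$ to $\overline c_{0,0}$, it sends $c_{k,m}=\tau_{a_1}^k\tau_{a_3}^m(c_{0,0})$ to $\overline c_{k,m}$ for all $k,m\in\mathbb Z$; and because the correspondence $\beta_i\leftrightarrow\eta_i$ makes $\Phi$ intertwine the two handle-slide maps, it follows that $\overline C(q,n)=\Phi_*\bigl(C(q,n)\bigr)$ for all $q,n\in\mathbb Z$. Using $\Phi_*\bigl(S_m(a_2)\bigr)=S_m(a_1)$ this yields $\overline C(q,n)\,S_m(a_1)=\Phi_*\bigl(C(q,n)\,S_m(a_2)\bigr)$ for $q,m\in\mathbb N\cup\{0\}$ and $n\in\mathbb Z$.

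To finish, I would apply $\Phi_*$ to the identity of Theorem~\ref{thm:C(m,n)-formula} read with first index $q$, twisting index $n$, and commuting-variable index $m$ --- legitimate since $q,m\geq0$ and $n\in\mathbb Z$, with the same convention $S_n(a_3)=-S_{-n-2}(a_3)$ for $n\leq-2$. Because $\Phi_*$ fixes each coefficient $-A^{\bullet}+A^{-\bullet}$ and fixes $a_3$ while swapping $a_1\leftrightarrow a_2$, the four terms on the right-hand side turn into $(-A^{q+n+2}+A^{-q-n-2})S_q(a_2)S_n(a_3)S_m(a_1)$, the two terms $(-A^{q+n}+A^{-q-n})S_{q-1}(a_2)S_{n-1}(a_3)S_{m\pm1}(a_1)$, and $(-A^{q+n-2}+A^{-q-n+2})S_{q-2}(a_2)S_{n-2}(a_3)S_m(a_1)$, which is exactly the claimed formula.

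The main obstacle is the bookkeeping concentrated in the first step: I must check that a single $\rho$ realises $\beta\leftrightarrow\eta$, $(u,v)\leftrightarrow(u',v')$, $c_{0,0}\leftrightarrow\overline c_{0,0}$, and $\beta_i\leftrightarrow\eta_i$ all at once and without ambiguity, since a swapped labelling of $\eta_1$ and $\eta_2$ (or the use of an orientation-reversing $\rho$) would introduce a global sign. If one prefers to bypass this figure-chasing --- and this is the route taken in the Appendix --- one can instead repeat Cases~I, II, and III word for word with $a_1$ replaced throughout by $a_2$, obtaining the exact analogues of Lemmas~\ref{lem:PP(m,n)}--\ref{lem:C(m,-n)-formula} and of Lemma~\ref{lem:negativeCmn} for $\overline P,\overline N,\overline C$, and then assembling them precisely as in the proof of Theorem~\ref{thm:C(m,n)-formula}; the two derivations are formally identical, since $a_1$ and $a_2$ are interchangeable generators of $\mathcal S^{\mathit{alg}}(\Sigma_{0,3})\cong\mathbb Z[A^{\pm1}][a_1,a_2,a_3]$ and play symmetric roles in the relative-curve combinatorics of $(\Sigma_{0,3};u',v')$.
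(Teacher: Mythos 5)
Your proposal is correct, and its primary route is a genuinely different (and more formal) derivation than what the paper records. The paper gives no separate argument for Theorem~\ref{thm:C(q,n)-formula}: it simply asserts that the computation is ``analogous to the case of $\mathcal J_1$,'' i.e.\ one is meant to rerun Cases I--III with the roles of $a_1$ and $a_2$ (and of $(u,v)$, $c_{k,m}$, $\beta$) exchanged --- which is exactly your fallback route; note, though, that the Appendix only writes out the $\mathcal J_1$ computation and does not actually repeat it for $\overline C$, contrary to your parenthetical attribution. Your main argument instead transports Theorem~\ref{thm:C(m,n)-formula} through an orientation-preserving homeomorphism $\Phi=\rho\times\mathrm{id}_I$ of $H_2=\Sigma_{0,3}\times I$ swapping the $a_1$- and $a_2$-boundary components, so that $\overline C(q,n)S_m(a_1)=\Phi_*\bigl(C(q,n)S_m(a_2)\bigr)$ and the formula drops out by applying the induced automorphism $a_1\leftrightarrow a_2$, $a_3\mapsto a_3$, $A\mapsto A$; this turns the ``analogy'' into a one-line corollary instead of a second multi-page computation. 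The price is precisely the bookkeeping you flag, and it is worth stressing two points there: the visually obvious left--right symmetry of Figure~\ref{pairofpants} is a reflection, hence orientation-reversing on $\Sigma_{0,3}$, so you rightly take an orientation-preserving swap such as $z\mapsto 1-z$ (alternatively one could compensate a reflection of $\Sigma_{0,3}$ by reflecting the $I$ factor); and when you compose with $\tau_{a_2}^{\pm1},\tau_{a_3}^{\pm1}$ to normalise the image of $c_{0,0}$, these twists also drag the attaching curve, so you must check the adjusted $\Phi$ still carries $\beta$ to $\eta$ (up to isotopy in $\partial H_2$) with $\beta_i\mapsto\eta_i$ and the twisting conventions of Figure~\ref{rkbsmbasisf03uvprime}. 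Any residual mismatch costs only a global sign or an index reflection $(q,n)\mapsto(-q,-n)$, which by Lemma~\ref{lem:negative-Cmn} is harmless for identifying the submodule $\mathcal J_2$ but must be excluded to obtain the literal signed formula; with that verification done, your argument is complete, and your fallback coincides with the paper's intended proof.
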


\begin{lemma}\label{lem:negative-Cmn}
For $q,n \in \mathbb{Z}$,
    $$\overline{C}(q,n) = -\overline{C}(-q,-n).$$
\end{lemma}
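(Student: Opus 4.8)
The plan is to mirror, almost verbatim, the argument already used for $C(m,n)$ in Case~III, transplanted to the bar-decorated setting with marked points $u'$ and $v'$. The geometric content is the same: $\overline{C}(q,n) = \omega(\overline{c}_{q,n})$ is, up to the kink-removing coefficients $-A^{\mp 3}$, a signed difference $P(q,n)-N(q,n)$ (resp.\ the appropriately scaled combination when kinks appear), where $P$ and $N$ are link diagrams on $\Sigma_{0,3}$ obtained from $\overline{c}_{q,n}\cup\eta_2$ and $\overline{c}_{q,n}\cup\eta_1$. First I would record that $\overline{c}_{-q,-n} = \tau_{a_2}^{-q}\tau_{a_3}^{-n}(\overline{c}_{0,0})$, so that reversing both twist parameters produces the curve obtained by reflecting $\overline{c}_{q,n}$ through the plane of the pair of pants; this is the exact analogue of the relation $c_{-m,-n} \leftrightarrow c_{m,n}$ used for $\mathcal J_1$.

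The key step, as in Case~III, is the observation that a diagram $P(-q,-n)$ for a link $L$ in $\Sigma_{0,3}\times[0,1]$, read as the projection of $L$ onto $\Sigma_{0,3}\times\{0\}$, becomes upon projecting onto $\Sigma_{0,3}\times\{1\}$ the mirror image of that diagram, and this mirror image is precisely $N(q,n)$; symmetrically, the projection of $N(-q,-n)$ from the top face is $P(q,n)$. (Concretely one flips the diagram over, as in Figure~\ref{fig:P(-m,-n)=N(m,n)}, tracking that over/under crossings swap and that the roles of $\eta_1,\eta_2$ interchange.) Combining this with the explicit coefficient bookkeeping
$$
\overline{C}(-q,-n) = -A^{-3}P(-q,-n)+A^{3}N(-q,-n) = -A^{-3}N(q,n)+A^{3}P(q,n) = -\overline{C}(q,n),
$$
gives the identity for $q,n\ge 1$. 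The cases where one of $q,n$ vanishes or has mixed sign are handled exactly as in the $C(m,n)$ discussion: one checks $\overline{C}(-q,n) = -\overline{C}(q,-n)$ by the same mirror-image argument applied to the single twist that changes sign, and $\overline{C}(0,0)=0=-\overline{C}(0,0)$ is immediate since $\overline{c}_{0,0}\cup\eta_1$ and $\overline{c}_{0,0}\cup\eta_2$ are isotopic. Assembling the sign relations over all quadrants yields $\overline{C}(q,n) = -\overline{C}(-q,-n)$ for all $q,n\in\mathbb Z$.

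I expect the only real obstacle to be purely bookkeeping: making sure the kink signs (hence the powers $\pm 3$ of $A$) and the labelling of $\eta_1$ versus $\eta_2$ are tracked consistently through the reflection, since the marked points $u',v'$ now sit on the boundary components $a_2$ and $a_3$ rather than on $a_1$ and $a_3$, so the local pictures near $\eta$ differ cosmetically from those near $\beta$. This is exactly the same subtlety that arose in Lemma~\ref{lem:negativeCmn}, and it is resolved the same way — by appealing to the symmetry of $\Sigma_{0,3}$ that exchanges the boundary components $a_1$ and $a_2$ (equivalently, the automorphism of $H_2$ swapping the two handles), which carries the $\beta$-picture to the $\eta$-picture and $c_{k,m}$ to $\overline{c}_{k,m}$ after relabelling. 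Under this symmetry Lemma~\ref{lem:negativeCmn} transports directly to Lemma~\ref{lem:negative-Cmn}, so in fact the cleanest write-up is to invoke that symmetry rather than repeat the diagrammatic computation; I would state this reduction explicitly and leave the verification of the compatibility of the symmetry with $\omega$ to the reader, as it is immediate from the definitions.
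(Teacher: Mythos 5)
Your proposal is correct and follows essentially the same route as the paper: the paper gives no separate argument for $\overline{C}(q,n) = -\overline{C}(-q,-n)$, deriving it ``analogous to the case of $\mathcal J_1$,'' i.e.\ by the same mirror-image/projection isotopy used in Case~III for Lemma~\ref{lem:negativeCmn}, which is exactly what you transplant (and your alternative of invoking the symmetry of $\Sigma_{0,3}$ exchanging the relevant boundary components is just a clean packaging of that same analogy). No gaps.
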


Combining Corollary \ref{j1mainresult} and Theorem \ref{thm:C(q,n)-formula} with Theorem \ref{kbsmheegaard} gives us the desired result for the structure of $\mathcal S_{2,\infty}((S^1\times S^2)\ \# \ (S^1 \times S^2))$. 

\begin{remark}

Note that we can also get $(S^1\times S^2)\ \# \ (S^1 \times S^2)$ from $H_1 \ \# \ H_1$ by adding a $2$-handle each to the curves $\beta$ and $\eta$. This is equivalent to starting from $H_2$ and adding three $2$-handles. The attaching curve of the $2$-handle attached in the middle can be made contractible after handle slidings. Therefore, the relations in \cite{blp} are contained in $\mathcal J_1 + \mathcal J_2$. 
    
\end{remark}

     \section{Another counterexample to Marché's conjecture}\label{props1s2s1s2}

     In \cite{basissurfacetimess1}, the following conjecture due to Marché was stated for the the skein module of closed oriented $3$-manifolds over the ring $\mathbb Z[A^{\pm 1}]$. 
   \begin{conjecture}\cite{basissurfacetimess1}\label{marcheconj}

Let $M$ be a closed oriented $3$-manifold. Then there exists an integer $d > 0$ and finitely generated $\mathbb Z[A^{\pm 1}]$-modules $N_k$ so that $$\mathcal S_{2,\infty} (M) \cong \mathbb Z[A^{\pm 1}]^d \oplus \bigoplus_{k \geq 1} N_k,$$ where, furthermore, the module $N_k$ is a $(A^k - A^{-k})$-torsion module, for each integer $k$.
         
     \end{conjecture}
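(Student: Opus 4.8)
The plan is to establish the decomposition for $M = (S^1\times S^2)\ \# \ (S^1\times S^2)$ directly from the explicit presentation obtained above, namely $\mathcal S_{2,\infty}(M) \cong \mathcal S_{2,\infty}(H_2)/(\mathcal J_1 + \mathcal J_2)$, where by Corollary \ref{j1mainresult} and Theorem \ref{thm:C(q,n)-formula} the submodules $\mathcal J_1$ and $\mathcal J_2$ are generated by the elements $C(m,n)S_q(a_2)$ and $\overline C(q,n)S_m(a_1)$ respectively. Throughout I would work in the Chebyshev basis $\{S_i(a_1)S_j(a_2)S_k(a_3)\}_{i,j,k\geq 0}$ of $\mathcal S_{2,\infty}(H_2)$, in which each relation generator is the four-term expression of Theorem \ref{thm:C(m,n)-formula}, whose coefficients are exactly of the form $-A^{\ell}+A^{-\ell} = -(A^{\ell}-A^{-\ell})$. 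The appearance of precisely the elements $A^{\ell}-A^{-\ell}$ as coefficients is the structural feature that the conjecture predicts, and it is the natural point of departure.

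First I would exploit the $\mathbb Z^3$-grading of $\mathcal S_{2,\infty}(H_2)$ by the exponent triple $(i,j,k)$ to organise the relations: the recursions of Theorems \ref{thm:C(m,n)-formula} and \ref{thm:C(q,n)-formula} confine each generator to a narrow band of gradings, yielding a filtration of the quotient with a tractable associated graded. On the associated graded I would isolate the basis monomials that no relation touches; these are the candidates for a free summand $\mathbb Z[A^{\pm 1}]^d$, and enumerating them fixes $d$. Each remaining monomial is annihilated, modulo lower filtration terms, by some $A^k-A^{-k}$, so grouping these by the least such $k$ produces candidate torsion modules $N_k$, each manifestly $(A^k-A^{-k})$-torsion.

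The decisive step, which I expect to be the main obstacle, is to upgrade this graded picture to an honest direct-sum decomposition over all of $\mathbb Z[A^{\pm 1}]$, and neither of the two ingredients is automatic, because $\mathbb Z[A^{\pm 1}]$ is a two-dimensional unique factorisation domain rather than a principal ideal domain, so the structure theorem for finitely generated modules is unavailable. Writing $\mathcal S_{\mathrm{tors}}$ for the torsion submodule, I would first need the torsion-free quotient $\mathcal S_{2,\infty}(M)/\mathcal S_{\mathrm{tors}}$ to be genuinely free of rank $d$, not merely torsion-free; over a non-principal ring a finitely generated torsion-free module need not be free, and this is exactly where the delicate interplay of $\mathcal J_1$ and $\mathcal J_2$ enters. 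If freeness holds then the quotient is projective, the defining sequence splits, and a free summand $\mathbb Z[A^{\pm 1}]^d$ peels off. Second, I would need the torsion submodule itself to split as $\bigoplus_{k\geq 1} N_k$ with the prescribed annihilators, rather than merely admitting a filtration with those subquotients; this requires lifting each graded torsion class to a genuine element of $\mathcal S_{\mathrm{tors}}$ annihilated precisely by $A^k-A^{-k}$ and compatible with every relation, which I would attempt by explicit lifts guided by the four-term recursions and then check that the cross-terms coupling distinct gradings can be absorbed. Establishing both the freeness of the torsion-free quotient and the splitting of the torsion is the heart of the problem; if these compatibility conditions can be met the conjecture follows for $M$, and the entire analysis turns on whether the combined handle-sliding submodule $\mathcal J_1+\mathcal J_2$ admits such consistent lifts.
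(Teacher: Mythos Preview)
The proposal has a fundamental misconception: the statement you are trying to establish is a \emph{conjecture} that the paper \emph{disproves}, not a theorem that it proves. Section~\ref{props1s2s1s2} is entitled ``Another counterexample to March\'e's conjecture,'' and Theorem~\ref{countermarches1s2s1s2} shows precisely that $\mathcal S_{2,\infty}((S^1\times S^2)\ \#\ (S^1\times S^2))$ does \emph{not} split into a direct sum of free and torsion submodules. Thus the manifold you chose as your test case is exactly the one the authors use to falsify the conjecture, so any attempt to verify the decomposition for this $M$ must fail.

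You correctly identified where the difficulty lies: lifting the graded picture to an honest direct-sum splitting over $\mathbb Z[A^{\pm 1}]$. What the paper shows is that this lift is genuinely impossible. The argument runs as follows. Suppose a splitting existed; then by Lemma~\ref{emptylink} the free rank is $d=1$, and projection onto the free summand yields a homomorphism $\iota:\mathcal S_{2,\infty}(M)\to\mathbb Z[A^{\pm 1}]$. Writing $\alpha_{n_1,n_3,n_2}=\iota(S_{n_1}(a_1)S_{n_3}(a_3)S_{n_2}(a_2))$, the relations $C(m,n)S_q(a_2)$ and $\overline C(q,n)S_m(a_1)$ force a chain of identities (Lemmas~\ref{00n}--\ref{nnn}) culminating in
\[
(-A^{2n+2}+A^{-2n-2})\alpha_{n,n,n}+(-A^{2n}+A^{-2n})\alpha_{n-1,n-1,n-1}=0,\qquad \alpha_{n,n,n}\neq 0\ \text{for all }n\geq 0.
\]
This forces $\operatorname{breadth}(\alpha_{n,n,n})$ to be a strictly decreasing sequence of non-negative integers, a contradiction. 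So the ``cross-terms coupling distinct gradings'' that you hoped to absorb are, in fact, the source of an unavoidable obstruction: the four-term relations link the diagonal classes $S_n(a_1)S_n(a_2)S_n(a_3)$ together in a way that is incompatible with any projection to a rank-one free module.
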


 Note that the skein module of $S^1 \times S^2$, which was computed in \cite{s1s2}, satisfies Conjecture \ref{marcheconj}. We now show that $\mathcal S_{2,\infty}((S^1 \times S^2) \ \# \ (S^1 \times S^2))$ does not split into free and torsion submodules. In particular, this shows that the skein module of $(S^1 \times S^2) \ \# \ (S^1 \times S^2)$ provides a counterexample to Conjecture \ref{marcheconj}.
 

\begin{lemma}\label{emptylink}

 The empty link $\varnothing \in \mathcal{S}_{2,\infty}((S^{1}\times S^{2}) \ \# \ (S^{1}\times S^{2}))$ is not a torsion element.

\end{lemma}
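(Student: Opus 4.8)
The plan is to produce a $\mathbb Z[A^{\pm1}]$-linear functional on $\mathcal S_{2,\infty}((S^1\times S^2)\ \#\ (S^1\times S^2))$ that is nonzero on $\varnothing$; if $\varnothing$ were $(A^k-A^{-k})$-torsion, then applying the functional to $(A^k-A^{-k})\varnothing = 0$ would force $(A^k-A^{-k})$ to annihilate a nonzero element of $\mathbb Z[A^{\pm1}]$, a contradiction since $\mathbb Z[A^{\pm1}]$ is an integral domain and $A^k - A^{-k}\neq 0$. So the whole task is to construct such a functional, i.e.\ a consistent way to assign an element of $\mathbb Z[A^{\pm1}]$ (or of some target domain into which $\mathbb Z[A^{\pm1}]$ embeds) to each framed link, killing the Kauffman and trivial-component relations \emph{and} the handle-sliding relations $\mathcal J_1 + \mathcal J_2$.

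The natural candidate is the composition $\mathcal S_{2,\infty}(H_2) \to \mathcal S_{2,\infty}(H_2)/(\mathcal J_1+\mathcal J_2) \cong \mathcal S_{2,\infty}((S^1\times S^2)\ \#\ (S^1\times S^2))$ read backwards: it suffices to exhibit a linear map $\Phi\colon \mathcal S_{2,\infty}(H_2)\to \mathbb Z[A^{\pm1}]$ that vanishes on $\mathcal J_1+\mathcal J_2$ and has $\Phi(\varnothing)\neq 0$, since then $\Phi$ descends to the quotient. Using the basis $\{S_i(a_1)S_j(a_2)S_k(a_3)\}_{i,j,k\geq 0}$ of $\mathcal S_{2,\infty}(H_2)$, I would define $\Phi$ by prescribing its values on these basis elements and then check it kills every generator $C(m,n)S_q(a_2)$ from Theorem~\ref{thm:C(m,n)-formula} and every $\overline C(q,n)S_m(a_1)$ from Theorem~\ref{thm:C(q,n)-formula}. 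Inspecting the formula in Theorem~\ref{thm:C(m,n)-formula}, the relation $C(m,n)S_q(a_2)=0$ couples $S_m(a_1)S_n(a_3)S_q(a_2)$, $S_{m-1}(a_1)S_{n-1}(a_3)S_{q\pm1}(a_2)$, and $S_{m-2}(a_1)S_{n-2}(a_3)S_q(a_2)$ with coefficients that are differences of powers of $A$; this is exactly the shape of a (twisted) Chebyshev/transfer recursion, so one expects $\Phi(S_i(a_1)S_j(a_2)S_k(a_3))$ to be forced (up to scalar) to something like a product of factors $\frac{A^{?}-A^{-?}}{A-A^{-1}}$ or a simple monomial, with the key point being that the ``$(i,j,k)=(0,0,0)$'' value can be normalized to $1\neq 0$.

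Concretely I would try the simplest guesses first: (i) $\Phi$ sending $S_i(a_1)S_j(a_2)S_k(a_3)\mapsto$ the value obtained by setting $a_1=a_2=a_3=-A^2-A^{-2}$ (the ``augmentation'' sending every $S_q$ to $(-1)^q(q+1)$ up to the right evaluation), or (ii) $\Phi$ sending the basis element to $\delta_{i,0}\delta_{j,0}\delta_{k,0}$, i.e.\ the coefficient-of-$\varnothing$ functional, or (iii) an evaluation at $a_1,a_3\mapsto$ a value making the $A$-power coefficients in $C(m,n)$ telescope. Guess (ii) is appealing because in $C(m,n)S_q(a_2)$ the term with the lowest combined index is $S_{m-2}(a_1)S_{n-2}(a_3)S_q(a_2)$, which contains $\varnothing$ only when $m,n\leq 2$ and $q=0$, so one only needs to check finitely many low-index relations contain $\varnothing$ with total coefficient zero — and by Lemma~\ref{lem:negativeCmn} and the explicit $C(0,0)$, $C(1,0)$, $C(2,0)$ etc.\ this is a short computation; symmetrically for $\overline C(q,n)$. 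The main obstacle is verifying that the chosen $\Phi$ genuinely annihilates \emph{all} of $\mathcal J_1+\mathcal J_2$ and not just a few generators: because the relations mix all three Chebyshev variables, a naive evaluation map may fail to respect the $a_2$-shift terms $S_{q\pm1}(a_2)$, so I anticipate needing either a clever substitution (sending $a_2$ to a value $t$ with $S_{q+1}(t)+S_{q-1}(t)=tS_q(t)$ exploited against the matching $A$-coefficients) or a two-step argument: first pass to $\mathcal S_{2,\infty}(H_2)/\mathcal J_1\cong\mathcal S_{2,\infty}((S^1\times S^2)\ \#\ H_1)$, use the already-understood structure of the skein module of $S^1\times S^2$ (where $\varnothing$ is known to be non-torsion, \cite{s1s2}) together with the solid-torus factor, and then quotient by the image of $\mathcal J_2$, showing the image of $\varnothing$ survives. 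I expect the cleanest route is in fact to realize $\Phi$ as the map induced by a ring homomorphism $\mathbb Z[A^{\pm1}][a_1,a_2,a_3]\to \mathbb Z[A^{\pm1}]$ compatible with both Heegaard handle structures, and the crux will be choosing the images of $a_1,a_2,a_3$ so that all four terms of the $C$- and $\overline C$-formulas cancel identically in $\mathbb Z[A^{\pm1}]$ while $\varnothing\mapsto 1$.
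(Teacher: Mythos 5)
Your overall strategy (produce a linear functional into a domain that is nonzero on $\varnothing$ and kills $\mathcal J_1+\mathcal J_2$) is sound in principle, but the proposal never produces such a functional, and the concrete candidates you offer all fail, so the entire content of the proof is missing. Guess (ii), the coefficient-of-$\varnothing$ functional, does not vanish on $\mathcal J_1$: by Theorem~\ref{thm:C(m,n)-formula}, the relator $C(1,1)S_1(a_2)$ equals $(-A^4+A^{-4})S_1(a_1)S_1(a_3)S_1(a_2)+(-A^2+A^{-2})S_0(a_1)S_0(a_3)S_2(a_2)+(-A^2+A^{-2})\varnothing$, so its $\varnothing$-coefficient is $-A^2+A^{-2}\neq 0$ (and $C(2,2)$ has the same defect). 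The ``cleanest route'' via a ring homomorphism is in fact impossible: the relators $C(1,0)=(-A^3+A^{-3})a_1$, $C(0,1)=(-A^3+A^{-3})a_3$ and $\overline{C}(1,0)=(-A^3+A^{-3})a_2$ force any $\mathbb Z[A^{\pm1}]$-algebra map to a domain to send $a_1,a_2,a_3\mapsto 0$ (this also rules out your guess (i)), and then $C(2,2)\mapsto -A^6-A^2+A^{-2}+A^{-6}\neq 0$, so no evaluation-type $\Phi$ annihilates all of $\mathcal J_1+\mathcal J_2$. Your two-step fallback (``the image of $\varnothing$ survives'' after quotienting by $\mathcal J_2$) simply restates the claim without a mechanism.

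The missing idea is a change of coefficients rather than an explicit functional over $\mathbb Z[A^{\pm1}]$: take $R$ to be a ring in which $A^k-1$ is invertible for all $k$ (e.g.\ the localization of $\mathbb Z[A^{\pm1}]$ at these elements, which is a domain containing $\mathbb Z[A^{\pm1}]$). By \cite{s1s2}, $\mathcal S_{2,\infty}(S^1\times S^2;R,A)\cong R$, and the connected-sum theorem of \cite{connsum}, valid over such $R$, gives $\mathcal S_{2,\infty}((S^1\times S^2)\,\#\,(S^1\times S^2);R,A)\cong R$, generated by the empty link. The natural map from the skein module over $\mathbb Z[A^{\pm1}]$ to the one over $R$ sends $\varnothing$ to a free generator, so any $p(A)\in\mathbb Z[A^{\pm1}]$ annihilating $\varnothing$ would satisfy $p(A)=0$ in the domain $R$, forcing $p=0$. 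This base-change map is exactly the ``functional into a larger domain'' you were seeking, but it is obtained from the known computations over $R$, not by guessing values on the Chebyshev basis.
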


\begin{proof}
Let $R$ be a ring with $(A^{k}-1)$ invertible for any $k$. Then from the main result of \cite{s1s2} we get that $\mathcal{S}_{2,\infty}(S^{1}\times S^{2}; R,A) \cong R$. Combining this with the main result of \cite{connsum} gives us that $\mathcal{S}_{2,\infty}((S^{1}\times S^{2}) \ \# \ (S^{1}\times S^{2}); R, A) \cong R$. Both these skein modules are all generated by empty links.
\end{proof}

\begin{theorem}\label{countermarches1s2s1s2}

 $\mathcal{S}_{2,\infty}((S^{1}\times S^{2}) \ \# \ (S^{1}\times S^{2}))$ does not split into the direct sum of free and torsion submodules. In particular, Conjecture \ref{marcheconj} does not hold.
 
\end{theorem}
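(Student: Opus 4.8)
The plan is to argue by contradiction: suppose $\mathcal{S}_{2,\infty}((S^1 \times S^2) \ \# \ (S^1 \times S^2)) \cong \mathbb{Z}[A^{\pm 1}]^d \oplus \bigoplus_{k \geq 1} N_k$ as in Conjecture \ref{marcheconj}. First, by the main result of \cite{connsum} applied over a ring $R$ in which $A^k-1$ is invertible for all $k$, the rationalised skein module has rank $1$, so $d = 1$, and one may arrange the free summand to be generated by the image of the empty link $\varnothing$ (using Lemma \ref{emptylink}, $\varnothing$ is not torsion). Then I would hunt for an element $X$ of the skein module which, using the handle sliding relations, can be shown to be simultaneously torsion (annihilated by some $A^k - A^{-k}$) and to have a nonzero free component. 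Since the free and torsion parts form a direct sum, a torsion element must have zero free component, yielding the contradiction.

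The concrete mechanism uses the explicit relations from Theorem \ref{thm:C(m,n)-formula} and Theorem \ref{thm:C(q,n)-formula}. From the relation $C(1,1)S_0(a_2) = 0$ one extracts an identity of the form $(-A^4 + A^{-4})a_1 a_2 a_3 = (A^2 - A^{-2})(S_2(a_2) - \text{correction})$ in the skein module, and from $\overline{C}(2,0)S_0(a_1) = 0$ one gets $(-A^4+A^{-4})S_2(a_2) = 0$, i.e.\ $S_2(a_2) = a_2^2 - 1$ is $(A^4 - A^{-4})$-torsion. Combining these, the element
\[
X = (A^2 + A^{-2})\bigl[(-A^4 + A^{-4})a_1 a_2 a_3 + (-A^2 + A^{-2})\varnothing\bigr]
\]
can be rewritten, via the first relation, as a multiple of $S_2(a_2)$ and hence is zero; but expanding $X$ the other way gives $X = (-A^4+A^{-4})\bigl[(A^2+A^{-2})a_1 a_2 a_3 + \varnothing\bigr]$, exhibiting $(A^2+A^{-2})a_1 a_2 a_3 + \varnothing$ as an $(A^4 - A^{-4})$-torsion element. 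Writing $\varnothing = \tau \cdot e + (\text{stuff in torsion summands})$ and $a_1 a_2 a_3$ in terms of the generators, the coefficient of $e$ in this torsion element must vanish, forcing $(A^2 + A^{-2})\alpha_0 + \tau = 0$ for the relevant scalars; but $\tau$ must be a unit (since $\varnothing$ generates the free part after localisation where it is still a generator), and $(A^2+A^{-2})\alpha_0$ is not a unit, a contradiction. One must be slightly careful to iterate this: the relation $C(n,n) + C(n-1,n+1) = 0$ propagates the torsion phenomenon through an infinite family, which rules out the torsion being absorbed into finitely many summands $N_k$ with $k$ bounded.

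The main obstacle I anticipate is bookkeeping the free component precisely: one has to pin down that the image of $\varnothing$ in the hypothetical decomposition really does generate the $\mathbb{Z}[A^{\pm 1}]$ free part (up to a unit), so that the coefficient $\tau$ of the chosen basis element $e$ of the free summand is a unit and cannot be cancelled by $(A^2 + A^{-2})\alpha_0$. This requires invoking the localisation result of \cite{connsum}/\cite{s1s2} carefully — over $R$ with $A^k - 1$ invertible the skein module is free of rank one generated by $\varnothing$ — and checking that localisation is compatible with the assumed decomposition, so that the free rank is $1$ and $\varnothing$ maps to a generator modulo torsion. A secondary subtlety is making sure the two relations used, coming from $\mathcal J_1$ and $\mathcal J_2$ respectively, are genuinely both available — but this is immediate from Theorem \ref{kbsmheegaard}, since $\mathcal{S}_{2,\infty}((S^1\times S^2)\ \#\ (S^1\times S^2)) \cong \mathcal{S}_{2,\infty}(H_2)/(\mathcal J_1 + \mathcal J_2)$. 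Everything else is the routine Chebyshev algebra already packaged in the lemmas of Section \ref{sectionkbsms1s2s1s2}.
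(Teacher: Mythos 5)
There is a genuine gap, and it sits exactly at the step your whole contradiction hinges on: the claim that the free summand can be arranged to be generated by $\varnothing$, i.e.\ that in $\varnothing = \tau e + (\text{torsion})$ the coefficient $\tau$ is a unit of $\mathbb{Z}[A^{\pm 1}]$. Lemma \ref{emptylink} and the localisation results of \cite{s1s2, connsum} do not give this. Localising at the multiplicative set generated by the $A^k-1$ kills the torsion and shows $\tau$ becomes invertible \emph{in the localised ring} $R$, but the units of $R$ include divisors of the $A^k-1$ themselves; in particular $A^2+A^{-2}$ is invertible in $R$. So the equation $(A^2+A^{-2})\alpha_0+\tau=0$ that you extract is perfectly consistent with everything you have established ($\tau$ an $R$-unit, $\alpha_0\neq 0$), and no contradiction follows. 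What your computation with $C(1,1)S_1(a_2)$ and $\overline{C}(2,0)S_0(a_1)$ (note: the first relator is the one with $S_1(a_2)$, not $S_0(a_2)$) actually proves is only the weaker statement that $\varnothing$ cannot itself serve as a generator of the free part, since its free component is divisible by $A^2+A^{-2}$; it does not rule out a decomposition in which the generator of the free summand is some other element. Your element $X$ yields precisely the single identity $(A^2+A^{-2})\iota(a_1a_2a_3)+\iota(\varnothing)=0$, which is the $n=1$ case of a recurrence, not a contradiction.

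The paper closes this gap differently: it never assumes $\varnothing$ generates the free part. It only uses the projection $\iota$ onto the free component (which kills all torsion because $\mathbb{Z}[A^{\pm1}]$ is a domain), sets $\alpha_{n_1,n_3,n_2}=\iota(S_{n_1}(a_1)S_{n_3}(a_3)S_{n_2}(a_2))$, and first proves the vanishing statements $\alpha_{0,0,n}=0$ ($n>0$), $\alpha_{1,1,n}=0$ ($n\geq 2$), and $\alpha_{m,m,n}=0$ ($n>m$) from the relators $\overline{C}(k,0)S_0(a_1)$ and $C(m,m)S_n(a_2)$ (Lemmas \ref{00n}--\ref{mmn}). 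These vanishings isolate, from $C(n,n)S_n(a_2)$, the two-term recurrence $(-A^{2n+2}+A^{-2n-2})\alpha_{n,n,n}+(-A^{2n}+A^{-2n})\alpha_{n-1,n-1,n-1}=0$ with $\alpha_{n,n,n}\neq 0$ for all $n$ (Lemma \ref{nnn}), and the contradiction is then a descent: the breadth (span of Laurent degrees) of $\alpha_{n,n,n}$ strictly decreases with $n$, giving an infinite strictly decreasing sequence of non-negative integers. Your remark that one must ``iterate'' via relations like $C(n,n)+C(n-1,n+1)=0$ points in the right direction, but it does not supply this mechanism, and its stated goal (excluding absorption into finitely many $N_k$ with bounded $k$) is not the statement to be proved, since the theorem concerns an arbitrary free-plus-torsion splitting. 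To repair your proposal you would need to drop the unit assumption on $\tau$, work with the projection $\iota$ for an arbitrary splitting, establish the vanishing lemmas, and then run the degree/breadth descent on the infinite family of relations.
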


\noindent Suppose we have such a decomposition for $\mathcal{S}_{2,\infty}((S^{1}\times S^{2}) \ \# \ (S^{1}\times S^{2}))$. Then by Lemma \ref{emptylink}, $d=1$. Thus, there exists an isomorphism $$\bar{\iota}:\mathcal{S}_{2,\infty}((S^{1}\times S^{2}) \ \# \ (S^{1}\times S^{2})) \longrightarrow \mathbb{Z}[A^{\pm1}]\oplus N,$$ where $N$ is the torsion submodule in the decomposition. Pre-composing this with the projection onto the first component of the direct sum, we obtain a module homomorphism $$\iota:\mathcal{S}_{2,\infty}((S^{1}\times S^{2}) \ \# \ (S^{1}\times S^{2})) \longrightarrow \mathbb{Z}[A^{\pm1}].$$ Denote $\iota(S_{n_{1}}(a_{1})S_{n_{3}}(a_{3})S_{n_{2}}(a_{2}))$ by $\alpha_{n_{1},n_{3},n_{2}}$. We will prove Theorem \ref{countermarches1s2s1s2} using the following sequence of lemmas.

\begin{lemma}\label{00n}

$\alpha_{0,0,0}\neq 0$ and $\alpha_{0,0,n}=0$, for all $n>0$.
    
\end{lemma}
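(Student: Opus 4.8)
The plan is to exploit the homomorphism $\iota$ to the free summand $\mathbb Z[A^{\pm 1}]$ and push the handle sliding relations through it. Since $\iota$ is a $\mathbb Z[A^{\pm 1}]$-module map, it must kill every torsion element, and in particular it must vanish on every relation in $\mathcal J_1 + \mathcal J_2$, because these relations already hold in $\mathcal S_{2,\infty}((S^1\times S^2)\ \#\ (S^1\times S^2))$; concretely, applying $\iota$ to the identities of Theorem \ref{thm:C(m,n)-formula} and Theorem \ref{thm:C(q,n)-formula} yields scalar equations among the $\alpha_{n_1,n_3,n_2}$. First I would record that $\iota$ applied to the relation $C(0,n)$ (that is, $m=0$, $q=0$ in Theorem \ref{thm:C(m,n)-formula}, which contributes only the first and last lines since $S_{-1}=0$ and $S_{-2}=-1$) gives a recurrence purely in $\alpha_{0,0,n}$. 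Specifically, with $S_{-2}(a_3)=-1$ one gets a relation of the shape $(-A^{n+2}+A^{-n-2})\alpha_{0,0,n} - (-A^{n-2}+A^{-n+2})\alpha_{0,0,n-2}=0$ for $n\geq 1$ (taking $m=0$, using $S_{-1}(a_1)=0$, $S_{-2}(a_1)=-1$), together with the base relation coming from $C(0,1)$.

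Next I would extract the base case. From $C(1,0)$ or equivalently from the $(m,n)=(1,0)$, $q=0$ instance of Theorem \ref{thm:C(m,n)-formula} (only the first and last lines survive since $S_{-1}=0$), one obtains $(-A^3+A^{-3})\alpha_{1,0,0} + (-A^{-1}+A)\alpha_{-1,0,0}=0$; but $S_{-1}(a_1)=0$ means the term $S_{-2}(a_1)S_{-2}(a_3)=(-1)(-1)=1$ appears with coefficient $(-A^{-1}+A)$ times $\alpha_{0,0,0}$. So really the $(1,0)$-relation reads $(-A^3+A^{-3})\alpha_{1,0,0} + (A-A^{-1})\alpha_{0,0,0}=0$, forcing $\alpha_{1,0,0}=\frac{A-A^{-1}}{A^3-A^{-3}}\alpha_{0,0,0}$; since $\frac{A-A^{-1}}{A^3-A^{-3}}=\frac{1}{A^2+1+A^{-2}}$ is not a Laurent polynomial, this already shows $\alpha_{0,0,0}$ must be divisible by $A^2+1+A^{-2}$ unless $\alpha_{0,0,0}=0$. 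To see $\alpha_{0,0,0}\neq 0$, I would use the other direction: the empty link $\varnothing = S_0(a_1)S_0(a_3)S_0(a_2)$ maps under $\bar\iota$ to a generator-type element because over a ring in which $A^k-1$ is invertible the whole skein module is generated by $\varnothing$ (Lemma \ref{emptylink}); more carefully, if $\alpha_{0,0,0}=\iota(\varnothing)=0$ then $\varnothing$ would lie in the torsion submodule $N$, contradicting Lemma \ref{emptylink}. Hence $\alpha_{0,0,0}\neq 0$.

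For the second assertion, $\alpha_{0,0,n}=0$ for $n>0$, I would run the recurrence from the previous paragraph. The $n=1$ relation $C(0,1)$ gives, after applying $\iota$ and using $S_{-1}(a_3)=0$, $S_{-1}(a_1)=0$, that $(-A^3+A^{-3})\alpha_{0,0,1}=0$ (the other terms vanish because they carry a factor $S_{-1}$), whence $\alpha_{0,0,1}=0$ since $-A^3+A^{-3}$ is not a zero divisor in $\mathbb Z[A^{\pm 1}]$. For $n\geq 2$ the relation $C(0,n)$ with $\iota$ applied reads $(-A^{n+2}+A^{-n-2})\alpha_{0,0,n} + (-A^{n-2}+A^{-n+2})\alpha_{0,0,n-2}=0$ (again the middle two lines drop out because $S_{-1}(a_1)=0$), and since the leading coefficient is a non-zero-divisor and $\alpha_{0,0,n-2}=0$ by induction (using $\alpha_{0,0,0}$ does \emph{not} vanish—wait, we need the base of the induction on the zero side), I must instead start the induction at $n=1$ and $n=2$: $\alpha_{0,0,1}=0$ from $C(0,1)$, and $\alpha_{0,0,2}=0$ from $C(0,2)$ which after $\iota$ reads $(-A^4+A^{-4})\alpha_{0,0,2} + (-A^0+A^0)\alpha_{0,0,0}=(-A^4+A^{-4})\alpha_{0,0,2}=0$ since the last coefficient $(-A^{n-2}+A^{-n+2})$ at $n=2$ is $0$. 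Then induction on parity propagates $\alpha_{0,0,n}=0$ for all $n>0$.

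The main obstacle I anticipate is bookkeeping the degenerate Chebyshev values ($S_{-1}=0$, $S_{-2}=-1$) correctly in Theorem \ref{thm:C(m,n)-formula} so that each specialised relation $C(0,n)$ is read off with the right coefficients — it is easy to drop or misattribute the $S_{-2}(a_1)S_{-2}(a_3)=1$ contribution. A secondary point requiring care is justifying that $-A^{k}+A^{-k}$ (equivalently $A^k-A^{-k}$) is a non-zero-divisor in $\mathbb Z[A^{\pm 1}]$, which is immediate since $\mathbb Z[A^{\pm 1}]$ is an integral domain, so dividing through is legitimate as an implication "coefficient times $\alpha = 0 \Rightarrow \alpha = 0$". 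Everything else is a finite linear-algebra computation driven by the explicit formulas already proved in the Appendix.
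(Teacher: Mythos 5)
There is a genuine gap: you are using the wrong family of relators, so your recurrence controls the wrong coefficients. In the paper's convention $\alpha_{n_1,n_3,n_2}=\iota(S_{n_1}(a_1)S_{n_3}(a_3)S_{n_2}(a_2))$, so the lemma is about $\iota(S_n(a_2))$. The relators $C(m,n)S_q(a_2)$ of Theorem \ref{thm:C(m,n)-formula} come from sliding over the $\beta$-handle and their Chebyshev indices sit on $a_1$ and $a_3$, with the $a_2$-index frozen at $q$. Specialising to $m=0$, $q=0$ does kill the two middle lines (since $S_{-1}(a_1)=0$), but what survives is $(-A^{n+2}+A^{-n-2})S_n(a_3)-(-A^{n-2}+A^{-n+2})S_{n-2}(a_3)$, so after applying $\iota$ you get a recurrence among the $\alpha_{0,n,0}$ (powers of $a_3$), not among the $\alpha_{0,0,n}$ (powers of $a_2$) as you claim. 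That recurrence proves the true but different statement $\iota(S_n(a_3))=0$, which is not what Lemmas \ref{11n}, \ref{mmn}, \ref{nnn} need downstream; indeed no specialisation of the $C(m,n)S_q(a_2)$ relations alone isolates a pure power of $a_2$ (the only $a_2$-shifting terms carry the factor $S_{m-1}(a_1)S_{n-1}(a_3)$, which forces $m=n=1$ and drags in $\alpha_{1,1,q}$). The fix is to use the $\eta$-handle relators $\overline{C}(n,0)S_0(a_1)$ of Theorem \ref{thm:C(q,n)-formula}, whose leading Chebyshev index is on $a_2$: for $n=1$ and $n=2$ the trailing term degenerates ($S_{-1}(a_2)=0$, respectively coefficient $-A^0+A^0=0$), giving $\alpha_{0,0,1}=\alpha_{0,0,2}=0$, and for general $n$ one gets $(-A^{n+2}+A^{-n-2})\alpha_{0,0,n}-(-A^{n-2}+A^{-n+2})\alpha_{0,0,n-2}=0$, after which your two-step induction goes through verbatim. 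This is exactly the paper's proof.

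Two smaller points. First, your $C(1,0)$ digression is miscomputed: the last term of the formula at $(m,n)=(1,0)$ is $S_{-1}(a_1)S_{-2}(a_3)=0$, not $S_{-2}(a_1)S_{-2}(a_3)=1$, so $C(1,0)$ simply yields $(-A^3+A^{-3})\alpha_{1,0,0}=0$, i.e.\ $\alpha_{1,0,0}=0$; there is no constraint tying $\alpha_{1,0,0}$ to $\alpha_{0,0,0}$ and no divisibility conclusion about $\alpha_{0,0,0}$. Fortunately this digression is not needed. Second, your actual argument for $\alpha_{0,0,0}\neq 0$ — if $\iota(\varnothing)=0$ then $\varnothing$ lies in the torsion summand, contradicting Lemma \ref{emptylink} — is correct and coincides with the paper's.
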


\begin{proof}

Lemma \ref{emptylink} implies that $\alpha_{0,0,0}\neq 0$.  We now prove $\alpha_{0,0,n}=0$, for all $n>0$, by induction on $n$. When $n=1$, from the relator $  \overline{C}(1,0)S_{0}(a_{1})$, we get $(-A^3+A^{-3})S_{1}(a_{2})=0$. This implies that $(-A^3+A^{-3})\alpha_{0,0,1}=0$ and thus, $\alpha_{0,0,1}=0$. When $n=2$, from the relator $  \overline{C}(2,0)S_{0}(a_{1})$, we get $(-A^4+A^{-4})\alpha_{0,0,2}=0$, which implies $\alpha_{0,0,2}=0$. Now suppose the result holds for $n<k$. We will show that the result holds for $n=k$. 
From the relator $\overline{C}(k,0)S_{0}(a_{1})$, we get $(-A^{k+2}+A^{-k-2})\alpha_{0,0,k}-(-A^{k-2}+A^{-k+2})\alpha_{0,0,k-2}=0$, which implies $\alpha_{0,0,k}=0$ by induction hypothesis.

\end{proof}

\begin{lemma}\label{11n}

$\alpha_{1,1,n}=0$, for all $n\geq 2$.
    
\end{lemma}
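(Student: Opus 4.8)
The plan is to run the same kind of induction-on-$n$ argument used for Lemma~\ref{00n}, but now starting from the relators that involve $S_1(a_1)$ and $S_1(a_3)$ so that the $\alpha_{1,1,n}$ appear. Concretely, I would first record the homomorphism identity obtained by applying $\iota$ to the relators $\overline{C}(q,n)S_m(a_1)$ of Theorem~\ref{thm:C(q,n)-formula} and, symmetrically, to the relators $C(m,n)S_q(a_2)$ of Theorem~\ref{thm:C(m,n)-formula}: since $\iota$ kills $\mathcal J_1 + \mathcal J_2$, each such relator yields a linear relation among the $\alpha_{n_1,n_3,n_2}$ with coefficients that are Laurent polynomials of the form $(-A^{\bullet}+A^{-\bullet})$. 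The goal is to isolate a relation whose only ``new'' unknown is $\alpha_{1,1,n}$, with all other terms already known (from Lemma~\ref{00n} or from earlier induction steps) to vanish.

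The key steps, in order: (1) set up a base case. Take the relator $C(1,1)S_q(a_2)$ for small $q$ (e.g.\ $q=0$ and $q=1$); applying $\iota$ and using Lemma~\ref{00n} to kill the $S_0(a_1)S_0(a_3)$-type terms, extract the vanishing of $\alpha_{1,1,0}$ and $\alpha_{1,1,1}$ — or if those need $S_2(a_2)$-relators as well, bring those in. Actually the cleanest base case is likely $n=2$ itself, which the statement asserts, so (1') I would look for the relator (some $C(m,n)S_q(a_2)$ or $\overline{C}(q,n)S_m(a_1)$) whose expansion contains $S_1(a_1)S_1(a_3)S_2(a_2)$ together only with terms $S_0(\cdot)S_0(\cdot)S_k(\cdot)$ and $S_{-1}=0$ terms; applying $\iota$ and Lemma~\ref{00n} then gives $(-A^{\bullet}+A^{-\bullet})\alpha_{1,1,2}=0$, hence $\alpha_{1,1,2}=0$ since $\mathbb Z[A^{\pm1}]$ is a domain and $-A^{\bullet}+A^{-\bullet}\neq 0$. (2) inductive step: assume $\alpha_{1,1,j}=0$ for $2\le j<k$ and pick the relator producing $S_1(a_1)S_1(a_3)S_k(a_3)$... — more precisely, use the Chebyshev three-term identity $S_k = a_3 S_{k-1}-S_{k-2}$ built into the relators of Theorem~\ref{thm:C(q,n)-formula} (the $n\mapsto n-1, n-2$ pattern) so that the relator $\overline{C}(q,n)S_1(a_1)$ with suitable $q,n$ gives $(-A^{\bullet}+A^{-\bullet})\alpha_{1,1,k} - (-A^{\bullet'}+A^{-\bullet'})\alpha_{1,1,k-2} + (\text{terms with }\alpha_{0,0,\ast}) = 0$, and conclude $\alpha_{1,1,k}=0$ by the induction hypothesis together with Lemma~\ref{00n}.

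The step I expect to be the main obstacle is the bookkeeping in (1')/(2): matching indices $(m,n,q)$ in Theorems~\ref{thm:C(m,n)-formula} and~\ref{thm:C(q,n)-formula} so that the only surviving monomial basis elements after applying $\iota$ are $S_1(a_1)S_1(a_3)S_\bullet(a_2)$ and things already shown to vanish. In particular I must be careful that the middle terms of the $C$-formulae, which carry $S_{m-1}(a_1)S_{n-1}(a_3)S_{q\pm1}(a_2)$ with a shift by one in every slot, do not reintroduce an uncontrolled $\alpha_{1,1,\ast}$ or $\alpha_{\ast}$ that is not yet known; this is why choosing relators with one of $m-1$ or $n-1$ equal to $0$ (so $m=1$ or $n=1$) is essential, and why the negative-index convention $S_{-1}=0$, $S_{-2}=-1$ is doing real work. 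Once the right relators are selected, each step is just ``a nonzero Laurent polynomial times the unknown equals zero in a domain,'' so the conclusion is immediate.
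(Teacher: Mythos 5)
Your core mechanism is exactly the paper's: apply $\iota$ to the relators $C(1,1)S_n(a_2)$, whose expansion (Theorem \ref{thm:C(m,n)-formula} with $m=n=1$) is
$(-A^{4}+A^{-4})S_1(a_1)S_1(a_3)S_n(a_2)+(-A^{2}+A^{-2})S_0(a_1)S_0(a_3)S_{n+1}(a_2)+(-A^{2}+A^{-2})S_0(a_1)S_0(a_3)S_{n-1}(a_2)$,
so that $(-A^{4}+A^{-4})\alpha_{1,1,n}+(-A^{2}+A^{-2})\alpha_{0,0,n+1}+(-A^{2}+A^{-2})\alpha_{0,0,n-1}=0$; for $n\ge 2$ Lemma \ref{00n} kills the last two terms and the domain property of $\mathbb{Z}[A^{\pm1}]$ gives $\alpha_{1,1,n}=0$. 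The only difference from the paper is that your induction scaffolding is unnecessary: this single relator already isolates $\alpha_{1,1,n}$ for every $n\ge2$, with no $\alpha_{1,1,k-2}$ term appearing, so there is no base case or inductive step to run. Also note that the alternative relator you float for the inductive step, $\overline{C}(q,n)S_1(a_1)$ with $n=1$, would not work as cleanly: its middle terms are $S_{q-1}(a_2)S_0(a_3)S_{2}(a_1)$ and $S_{q-1}(a_2)S_0(a_3)S_{0}(a_1)$, so it reintroduces the uncontrolled quantity $\alpha_{2,0,q-1}$ rather than only $\alpha_{0,0,\ast}$ terms — precisely the pitfall you yourself flag, and avoided by sticking with $C(1,1)S_n(a_2)$ as in the paper.
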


\begin{proof}

From the relator $C(1,1)S_{n}(a_{2})$, we get $$(-A^{4}+A^{-4})\alpha_{1,1,n}+(-A^{2}+A^{-2})\alpha_{0,0,n+1}+(-A^{2}+A^{-2})\alpha_{0,0,n-1}=0.$$
An application of Lemma \ref{00n} gives us $\alpha_{1,1,n}=0$, for $n\geq 2$.
    
\end{proof}

\begin{lemma}\label{mmn}

$\alpha_{m,m,n}=0$, for all $n>m\geq 0$.
    
\end{lemma}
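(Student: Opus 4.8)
The plan is to argue by strong induction on $m$, taking $m=0$ and $m=1$ as base cases, which are exactly Lemmas \ref{00n} and \ref{11n} (the latter gives $\alpha_{1,1,n}=0$ for $n\ge 2$, i.e. for $n>1$). The driving relation will be the handle-sliding relator $C(m,m)S_{n}(a_{2})$. By Corollary \ref{j1mainresult} this element lies in $\mathcal J_1$, hence maps to $0$ in $\mathcal S_{2,\infty}((S^1\times S^2)\ \#\ (S^1\times S^2))$; since $\iota$ is $\mathbb Z[A^{\pm1}]$-linear, applying $\iota$ to the explicit formula of Theorem \ref{thm:C(m,n)-formula} (with the $a_1$- and $a_3$-indices both equal to $m$ and the $a_2$-index equal to $n$) yields
$$0 = (-A^{2m+2}+A^{-2m-2})\alpha_{m,m,n} + (-A^{2m}+A^{-2m})\bigl(\alpha_{m-1,m-1,n+1}+\alpha_{m-1,m-1,n-1}\bigr) + (-A^{2m-2}+A^{-2m+2})\alpha_{m-2,m-2,n}.$$

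Next I would fix $m\ge 2$ and $n>m$ (so $n\ge m+1$) and check that each of the three $\alpha$'s other than $\alpha_{m,m,n}$ is covered by the inductive hypothesis: for $\alpha_{m-1,m-1,n+1}$ we have $n+1\ge m+2>m-1$; for $\alpha_{m-1,m-1,n-1}$ we have $n-1\ge m>m-1$; and for $\alpha_{m-2,m-2,n}$ we have $n>m>m-2$. Hence all three vanish by the induction hypothesis (for $m=2$ this uses Lemmas \ref{00n} and \ref{11n} directly). The displayed relation therefore collapses to $(-A^{2m+2}+A^{-2m-2})\alpha_{m,m,n}=0$.

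Finally, since $-A^{2m+2}+A^{-2m-2}$ is a nonzero element of the integral domain $\mathbb Z[A^{\pm1}]$, it can be cancelled, giving $\alpha_{m,m,n}=0$ and completing the induction. The proof is essentially bookkeeping; the only point that needs care is confirming that the index shifts $n\mapsto n\pm1$ and $m\mapsto m-1,m-2$ preserve the strict inequality ``third index $>$ first index'' so that the inductive hypothesis genuinely applies to all three terms, together with the (routine but essential) observation that the leading coefficient is a nonzero Laurent polynomial so that it may be cancelled. One should also note that the hypotheses of Theorem \ref{thm:C(m,n)-formula} are met here, since $m\ge 2\ge 0$ and the $a_2$-index $n>m\ge 2$ is positive.
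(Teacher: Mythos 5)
Your proof is correct and takes essentially the same route as the paper: both apply $\iota$ to the relator $C(m,m)S_{n}(a_{2})$ via the formula of Theorem \ref{thm:C(m,n)-formula}, use Lemmas \ref{00n} and \ref{11n} as base cases, and conclude by cancelling the nonzero coefficient $-A^{2m+2}+A^{-2m-2}$ in the integral domain $\mathbb{Z}[A^{\pm 1}]$. The only (immaterial) difference is the induction bookkeeping: you run strong induction on $m$ alone, proving the statement for all $n>m$ at once, whereas the paper inducts on $t=n+2m$; both schemes correctly absorb the index shifts $n\mapsto n\pm 1$, $m\mapsto m-1,m-2$.
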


\begin{proof}

Let $t=n+2m$. We will prove the result by induction on $t$.
When $t=1$, we need $\alpha_{0,0,1}=0$, which is true by Lemma \ref{00n}. Suppose the result holds for $t<k$. We show the result holds for $t=k$, that is, $n+2m=k>1$.

\begin{enumerate}
    \item When $m=0$, we need to show $\alpha_{0,0,k}=0$. Notice now $k>1$, so this is true by Lemma \ref{00n}.
    \item When $m=1$, we need to show $\alpha_{1,1,k-2}=0$ with $k-2>1$. This is true by Lemma \ref{11n}.
    \item When $m\geq 2$, by relator $C(m,m)S_{n}(a_{2})$, we have, 
    \begin{eqnarray*}
     & & (-A^{2m+2}+A^{-2m-2})\alpha_{m,m,n} +(-A^{2m}+A^{-2m})\alpha_{m-1,m-1,n+1}\\ &+&(-A^{2m}+A^{-2m})\alpha_{m-1,m-1,n-1}  
     +(-A^{2m-2}+A^{-2m+2})\alpha_{m-2,m-2,n}=0.
     \end{eqnarray*}
By induction hypothesis, $\alpha_{m-1,m-1,n+1}=\alpha_{m-1,m-1,n-1}=\alpha_{m-2,m-2,n}=0$, which implies $\alpha_{m,m,n}=0$ , when $2m+n=k$ and $m\geq 2$.
\end{enumerate}
Combining the three cases, we complete the proof.  
\end{proof}

\begin{lemma}\label{nnn}

$(-A^{2n+2}+A^{-2n-2})\alpha_{n,n,n}+(-A^{2n}+A^{-2n})\alpha_{n-1,n-1,n-1}=0$ for all $n\geq 1$, and $\alpha_{n,n,n}\neq 0$ for all $n\geq 0$.

\end{lemma}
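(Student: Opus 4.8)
The plan is to prove the two claims of Lemma~\ref{nnn} in turn, the first by exhibiting a single convenient relator from Theorem~\ref{thm:C(m,n)-formula} and the second by a descending-induction argument that plays that relation off against Lemma~\ref{mmn} together with the non-torsion-ness of the empty link (Lemma~\ref{emptylink}).

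\textbf{The identity.} First I would apply $\iota$ to the relator $C(n,n)S_n(a_2)$. By Theorem~\ref{thm:C(m,n)-formula} with $m=n$, $q=n$, this relator equals
\[
(-A^{2n+2}+A^{-2n-2})S_n(a_1)S_n(a_3)S_n(a_2)
+(-A^{2n}+A^{-2n})S_{n-1}(a_1)S_{n-1}(a_3)S_{n+1}(a_2)
\]
\[
+(-A^{2n}+A^{-2n})S_{n-1}(a_1)S_{n-1}(a_3)S_{n-1}(a_2)
+(-A^{2n-2}+A^{-2n+2})S_{n-2}(a_1)S_{n-2}(a_3)S_n(a_2).
\]
Since this expression is $0$ in $\mathcal S_{2,\infty}((S^1\times S^2)\,\#\,(S^1\times S^2))$ and $\iota$ is a homomorphism to $\mathbb Z[A^{\pm1}]$, applying $\iota$ and reading off the coefficients $\alpha_{\cdot,\cdot,\cdot}$ gives
\[
(-A^{2n+2}+A^{-2n-2})\alpha_{n,n,n}
+(-A^{2n}+A^{-2n})\alpha_{n-1,n-1,n+1}
+(-A^{2n}+A^{-2n})\alpha_{n-1,n-1,n-1}
+(-A^{2n-2}+A^{-2n+2})\alpha_{n-2,n-2,n}=0.
\]
Now for $n\geq 1$ we have $n+1>n-1$ and $n>n-2$, so Lemma~\ref{mmn} kills the terms $\alpha_{n-1,n-1,n+1}$ and $\alpha_{n-2,n-2,n}$, leaving exactly
\[
(-A^{2n+2}+A^{-2n-2})\alpha_{n,n,n}+(-A^{2n}+A^{-2n})\alpha_{n-1,n-1,n-1}=0,
\]
which is the first assertion.

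\textbf{Non-vanishing.} For the second assertion I would argue by induction, but in the \emph{reverse} direction forced by the recursion above: the identity expresses $\alpha_{n-1,n-1,n-1}$ in terms of $\alpha_{n,n,n}$, namely
\[
(-A^{2n}+A^{-2n})\,\alpha_{n-1,n-1,n-1}=-(-A^{2n+2}+A^{-2n-2})\,\alpha_{n,n,n}.
\]
Since $\mathbb Z[A^{\pm1}]$ is an integral domain and $-A^{2n}+A^{-2n}\neq 0$, this shows that if $\alpha_{n,n,n}\neq 0$ then $\alpha_{n-1,n-1,n-1}\neq 0$; equivalently, if $\alpha_{n-1,n-1,n-1}=0$ for some $n\geq 1$, then $\alpha_{m,m,m}=0$ for all $m\geq n-1$ (propagating upward). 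The base case $\alpha_{0,0,0}\neq 0$ is Lemma~\ref{00n} (itself a consequence of Lemma~\ref{emptylink}). Thus it suffices to rule out the possibility that the whole tail $(\alpha_{m,m,m})_{m\geq 0}$ is eventually, hence everywhere, consistent with some $\alpha_{n,n,n}=0$. The cleanest way is contrapositive plus a degree/divisibility estimate: iterating the recursion downward from an index $N$ with $\alpha_{N,N,N}=0$ would force $\alpha_{0,0,0}=0$, contradicting Lemma~\ref{00n}. Concretely, from $\alpha_{n,n,n}=0$ and $-A^{2n}+A^{-2n}\neq 0$ we get $\alpha_{n-1,n-1,n-1}=0$, and descending to $n=1$ yields $\alpha_{0,0,0}=0$ — contradiction. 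Hence $\alpha_{n,n,n}\neq 0$ for all $n\geq 0$.

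\textbf{Anticipated obstacle.} The identity itself is a bookkeeping exercise once Lemma~\ref{mmn} is in hand, so the only real subtlety is making sure the index bounds in Lemma~\ref{mmn} (which requires the third index \emph{strictly} larger than the first) genuinely apply to the two terms we need to discard; this is where one must be careful that $n\geq 1$ is the correct hypothesis and that the $n=1$ case does not secretly need $\alpha_{0,0,1}$ handled separately — but that is exactly Lemma~\ref{00n}. The non-vanishing half is then immediate from the domain property of $\mathbb Z[A^{\pm1}]$, so I do not expect a genuine obstruction; the work is entirely in having set up Lemmas~\ref{00n}--\ref{mmn} beforehand.
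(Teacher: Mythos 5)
Your proposal is correct and follows essentially the same route as the paper: the identity comes from applying $\iota$ to the relator $C(n,n)S_{n}(a_{2})$ of Theorem \ref{thm:C(m,n)-formula} and discarding the terms $\alpha_{n-1,n-1,n+1}$ and $\alpha_{n-2,n-2,n}$ via Lemma \ref{mmn}, exactly as in the paper. For the non-vanishing, the paper runs an upward induction from $\alpha_{0,0,0}\neq 0$ (Lemma \ref{00n}) using that $\mathbb Z[A^{\pm 1}]$ is a domain, while you phrase it as a downward descent from a hypothetical vanishing $\alpha_{N,N,N}=0$ to the contradiction $\alpha_{0,0,0}=0$; these are the same argument.
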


\begin{proof}

When $n\geq 1$, from the relator $C(n,n)S_{n}(a_{2})$, we get 
\begin{eqnarray*}
    & & (-A^{2n+2}+A^{-2n-2})\alpha_{n,n,n}+(-A^{2n}+A^{-2n})\alpha_{n-1,n-1,n+1} \\ &+&
    (-A^{2n}+A^{-2n})\alpha_{n-1,n-1,n-1}+(-A^{2n-2}+A^{-2n+2})\alpha_{n-2,n-2,n}=0.
\end{eqnarray*}

From Lemma \ref{mmn}, we get 
$(-A^{2n+2}+A^{-2n-2})\alpha_{n,n,n}+(-A^{2n}+A^{-2n})\alpha_{n-1,n-1,n-1}=0.$ \\ 

Next, we prove $\alpha_{n,n,n} \neq 0$, or all $n\geq 0$, by induction on $n$.
When $n=0$, $\alpha_{0,0,0}\neq 0$ by Lemma \ref{00n}. Suppose the result holds for $n<k$. We show the case for $n=k$. Since we have $$(-A^{2k+2}+A^{-2k-2})\alpha_{k,k,k}+(-A^{2k}+A^{-2k})\alpha_{k-1,k-1,k-1}=0,$$
the induction hypothesis gives us the desired result.

\end{proof}

We are now ready to prove Theorem \ref{countermarches1s2s1s2}.

\begin{proof}
From the equation
$(-A^{2n+2}+A^{-2n-2})\alpha_{n,n,n}+(-A^{2n}+A^{-2n})\alpha_{n-1,n-1,n-1}=0$ and Lemma \ref{nnn},
we get that $breadth(\alpha_{n,n,n})<breadth(\alpha_{n-1,n-1,n-1})$.\footnote{The breadth of a polynomial is defined as the difference between its highest and lowest degrees.} This implies that $$breadth(\alpha_{0,0,0})>breadth(\alpha_{1,1,1})>\cdots >breadth(\alpha_{n,n,n})>\cdots $$
However, since $breadth(\alpha_{n,n,n})\in \mathbb{N}\cup\{0\}$, we get a contradiction. Hence,  $\mathcal{S}_{2,\infty}((S^{1}\times S^{2}) \ \# \ (S^{1}\times S^{2}))$ does not split into direct sum of free and torsion submodules. 

\end{proof}

We conjecture the following.

\begin{conjecture}\hfill

     \item Let $M$ and $N$ be nontrivial closed oriented $3$-manifolds. Then $\mathcal S_{2,\infty}(M \ \# \ N)$ does not split into the sum of free and torsion modules. In particular, $\mathcal S_{2,\infty}(M \ \# \ N)$ does not split into the sum of free and $(A^k - A^{-k})$-torsion modules, for each $k\geq 1$.

\end{conjecture}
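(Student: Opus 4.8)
The plan is to attempt a reduction of the general case to the computation carried out in this paper, together with the known structure of skein modules of prime factors and the behaviour of skein modules under connected sums established in \cite{connsum}. First I would fix closed oriented nontrivial $3$-manifolds $M$ and $N$ and recall that, by \cite{connsum}, the natural map $\mathcal S_{2,\infty}(M) \otimes \mathcal S_{2,\infty}(N) \to \mathcal S_{2,\infty}(M \ \# \ N)$ is surjective, and becomes an isomorphism after tensoring with a ring $R$ in which $A^k-1$ is invertible for all $k$; in particular, over such $R$ the empty link generates a free rank-one summand and $\mathcal S_{2,\infty}(M\ \# \ N;R,A)\cong \mathcal S_{2,\infty}(M;R,A)\otimes_R \mathcal S_{2,\infty}(N;R,A)$. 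This already forces $d=1$ in any hypothetical splitting and shows, exactly as in Lemma~\ref{emptylink}, that $\varnothing$ is not a torsion element of $\mathcal S_{2,\infty}(M\ \# \ N)$. So, as in the proof of Theorem~\ref{countermarches1s2s1s2}, a splitting would yield a $\mathbb Z[A^{\pm 1}]$-module homomorphism $\iota:\mathcal S_{2,\infty}(M\ \# \ N)\to\mathbb Z[A^{\pm 1}]$ that is nonzero on $\varnothing$ and kills every torsion element.

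The heart of the argument would be to produce, inside $\mathcal S_{2,\infty}(M\ \# \ N)$, an infinite family of handle-sliding-type relations analogous to the relations $C(n,n)S_n(a_2)$ used above: one needs elements $z_n$ (playing the role of $S_n(a_1)S_n(a_3)S_n(a_2)$) and relations of the shape $(-A^{2n+2}+A^{-2n-2})z_n + (-A^{2n}+A^{-2n})z_{n-1} \equiv (\text{terms killed by }\iota)$, forcing $\mathrm{breadth}(\iota(z_n))<\mathrm{breadth}(\iota(z_{n-1}))$ and hence an impossible infinite descent, provided one also shows $\iota(z_n)\neq 0$ for all $n$. To get such relations I would use that both $M$ and $N$ are nontrivial: each contains either a non-separating $2$-sphere (an $S^1\times S^2$ summand, handled by this paper) or, being a nontrivial prime piece, has a Heegaard handlebody of genus $\geq 1$ whose skein module contributes a variable $a$ with $\langle a^i\rangle_{i\ge0}$ free, and the connected-sphere along which $M\ \# \ N$ is formed meets this picture in a curve that, after handle slides on \emph{both} sides, produces relations mixing the two factors. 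The combinatorial core would be a ``doubling'' of the computation in Section~\ref{sectionkbsms1s2s1s2}: one marked-point / relative-skein-module analysis on the $M$ side and one on the $N$ side, glued along the connect-sum sphere, yielding $C$-type formulas whose coefficients are again the binomials $-A^{j}+A^{-j}$. The nonvanishing $\iota(z_n)\neq 0$ would follow by an induction seeded at $\varnothing$, exactly as in Lemma~\ref{nnn}, using the base case $\iota(\varnothing)\neq 0$ and the descent relation to propagate nonvanishing upward.

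The main obstacle is that for general prime $M$ the skein module is not known, and worse, it need not be finitely generated or even ``tame'', so one cannot simply quote a presentation; the relations one needs must be produced intrinsically from the connect-sum structure rather than from a global understanding of $\mathcal S_{2,\infty}(M)$ and $\mathcal S_{2,\infty}(N)$. Concretely, the difficulty is to find, for \emph{every} nontrivial $M$ and $N$, a single ``universal'' relation family of the above descending-breadth type: for the $S^1\times S^2$ summands this paper supplies it, but for a general prime factor one has to argue that its Heegaard presentation always contributes at least one handle-slide relation whose leading and trailing coefficients are $-A^{2n+2}+A^{-2n-2}$ and $-A^{2n}+A^{-2n}$ with a provably nonzero $\iota$-image. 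I expect this to require a genuinely new input — perhaps a lower bound on the ``size'' of $\mathcal S_{2,\infty}(M)$ coming from the infinitude of the $SL(2,\mathbb C)$ character variety of a free-product fundamental group (cf.\ \cite{dks,gmsl2cfreegroup}) — and it is plausible that the cleanest route is first to prove the statement under the extra hypothesis that $M$ or $N$ contains a non-separating sphere or an incompressible torus, and only then to address the general case. Accordingly, as a realistic intermediate target I would first establish the conjecture for $M\ \# \ N$ with $M=S^1\times S^2$ and $N$ arbitrary nontrivial, where the relative-skein-module machinery of this paper transfers almost verbatim, and treat the fully general statement as the remaining open problem.
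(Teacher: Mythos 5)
The statement you are addressing is stated in the paper as a \emph{conjecture}: the paper offers no proof of it, and proves only the special case $M=N=S^1\times S^2$ (Theorem \ref{countermarches1s2s1s2}, via Lemmas \ref{emptylink}--\ref{nnn} and the breadth-descent argument). Your text is accordingly not a proof but a programme, and you say so yourself: the ``heart of the argument'' — producing, for \emph{arbitrary} nontrivial closed $M$ and $N$, an infinite family of relations of the shape $(-A^{2n+2}+A^{-2n-2})z_n+(-A^{2n}+A^{-2n})z_{n-1}\equiv(\text{terms killed by }\iota)$ together with $\iota(z_n)\neq 0$ — is exactly the open content of the conjecture, and no mechanism for constructing such relations from the connect-sum sphere alone is given. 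Since for a general prime factor neither $\mathcal S_{2,\infty}(M)$ nor even a usable presentation of its handle-slide submodule is known, the ``doubling of Section \ref{sectionkbsms1s2s1s2}'' you invoke has no concrete starting point; this is the missing idea, not a technical detail, so the proposal cannot be counted as a proof of the statement.

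There is also a concrete error in the preliminary reduction. From \cite{connsum} over a ring $R$ in which $A^k-1$ is invertible one gets $\mathcal S_{2,\infty}(M\ \#\ N;R,A)\cong\mathcal S_{2,\infty}(M;R,A)\otimes_R\mathcal S_{2,\infty}(N;R,A)$, but this is generated by the empty link only when both tensor factors are (as happens for $S^1\times S^2$, which is what Lemma \ref{emptylink} uses). For general nontrivial factors — already for a lens space summand — the right-hand side has $R$-rank greater than one, so your ``this already forces $d=1$'' does not follow, and the argument that $\varnothing$ is non-torsion needs a different justification (one must at least know $\varnothing\neq 0$ over $\mathbb Q(A)$ for the given $M\ \#\ N$). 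So even the part of your outline that is meant to transfer verbatim from the paper requires repair. Your closing suggestion — first proving the case $M=S^1\times S^2$ with $N$ arbitrary, where the relative-skein-module analysis of the $\beta$-handle slide can be carried out on one side — is a reasonable intermediate target, but as written it too is only a plan.
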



    



    

\section{Torsion elements in $\mathcal S_{2,\infty}((S^{1}\times S^{2}) \ \# \ (S^{1}\times S^{2}))$}\label{section: torsionelements}
In this section, we illustrate two families of torsion elements in $\mathcal S_{2,\infty}((S^{1}\times S^{2}) \ \# \ (S^{1}\times S^{2}))$. The following lemma is a key observation used in determining these two families of torsion elements.
\begin{lemma}\label{4.9}
    If $\sum\limits_{i=1}^{\infty}f_{i}(A)S_{n_{i,1}}(a_{1})S_{n_{i,2}}(a_{2})S_{n_{i,3}}(a_{3})=0$ in $\mathcal S_{2,\infty}((S^{1}\times S^{2}) \ \# \ (S^{1}\times S^{2}))$, with $f_{i}(A)\in  \mathbb Z[A^{\pm 1}]$ and $f_{i}(A)\neq 0$ for finitely many $i$, then $f_{i}(1)=0$ and $f_{i}(-1)=0$.
\end{lemma}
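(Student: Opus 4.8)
The plan is to exploit the fact, established earlier in the excerpt, that over a ring $R$ in which $A^k-1$ is invertible for all $k$ one has $\mathcal S_{2,\infty}((S^1\times S^2)\ \#\ (S^1\times S^2);R,A)\cong R$, generated by the empty link (Lemma \ref{emptylink} and the results of \cite{s1s2,connsum}). The two relevant specialisations are $A=1$ and $A=-1$, i.e. the rings $\mathbb Z[A^{\pm1}]/(A-1)\cong\mathbb Z$ and $\mathbb Z[A^{\pm1}]/(A+1)\cong\mathbb Z$; in each, $A^k-1$ maps to $0$, so those quotients are not directly covered by the invertibility hypothesis. Instead I would localise: work over $R=\mathbb Q$ with $A$ specialised to a value that is \emph{not} a root of unity, obtain $\mathcal S_{2,\infty}\cong \mathbb Q$, and then argue by a continuity/degree argument that the coefficient of the empty link, as a Laurent polynomial in $A$, must vanish at $A=\pm1$. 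More precisely, the approach is to track the \emph{coefficient of the empty link} in the standard basis expansion.

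First I would recall that $\mathcal S_{2,\infty}((S^1\times S^2)\ \#\ (S^1\times S^2))$ is the quotient $\mathcal S_{2,\infty}(H_2)/(\mathcal J_1+\mathcal J_2)$, and $\mathcal S_{2,\infty}(H_2)$ is free over $\mathbb Z[A^{\pm1}]$ on the Chebyshev basis $\{S_i(a_1)S_j(a_2)S_k(a_3)\}$. A relation $\sum_i f_i(A)S_{n_{i,1}}(a_1)S_{n_{i,2}}(a_2)S_{n_{i,3}}(a_3)=0$ in the skein module means the left-hand side lies in $\mathcal J_1+\mathcal J_2$, so it is a finite $\mathbb Z[A^{\pm1}]$-linear combination of the generators $C(m,n)S_q(a_2)$ and $\overline C(q,n)S_m(a_1)$ given by Theorems \ref{thm:C(m,n)-formula} and \ref{thm:C(q,n)-formula}. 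The key structural observation is that every such generator, when expanded in the Chebyshev basis, has \emph{all of its coefficients divisible by $(A^{2}-A^{-2})$} — indeed each coefficient appearing in those theorems is of the form $(-A^{a}+A^{-a})$ for a positive integer $a$, and $-A^a+A^{-a}=-A^{-a}(A^{2a}-1)$ is divisible by $A^2-1$ and by $A^2+1$, hence vanishes at $A=1$ and $A=-1$. Therefore $\mathcal J_1+\mathcal J_2$ reduces to $0$ modulo $(A-1)$ and modulo $(A+1)$ in the free module $\mathcal S_{2,\infty}(H_2)$.

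Given that, the proof is quick: specialise the relation $\sum_i f_i(A)S_{n_{i,1}}(a_1)S_{n_{i,2}}(a_2)S_{n_{i,3}}(a_3)=0$ by the ring homomorphism $\mathbb Z[A^{\pm1}]\to\mathbb Z$ sending $A\mapsto 1$. On the one hand the left side becomes $\sum_i f_i(1)S_{n_{i,1}}(a_1)S_{n_{i,2}}(a_2)S_{n_{i,3}}(a_3)$ in $\mathcal S_{2,\infty}(H_2)\otimes\mathbb Z=\mathcal S_{2,\infty}(H_2;\mathbb Z,1)$, which is free on the Chebyshev basis; on the other hand, the element equals (the image of) something in $\mathcal J_1+\mathcal J_2$, which by the divisibility observation maps to $0$. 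By freeness, every coefficient $f_i(1)$ must be $0$ for each triple $(n_{i,1},n_{i,2},n_{i,3})$ that actually occurs (combining equal triples first). The same argument with $A\mapsto-1$ gives $f_i(-1)=0$. I should be a little careful about one bookkeeping point: distinct indices $i$ in the statement may index the same basis triple, so I would first group the sum by basis element and note that it suffices to prove the grouped coefficients vanish at $A=\pm1$, which then forces $f_i(1)+f_j(1)+\cdots=0$ etc.; but since the statement only claims $f_i(1)=f_i(-1)=0$ when the $f_i$ are the coefficients in a \emph{reduced} expression, I can assume WLOG the triples are distinct, or simply prove the grouped version and remark it implies the stated one.

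The main obstacle is establishing cleanly that $\mathcal J_1+\mathcal J_2$ dies modulo $(A\mp1)$ — concretely, that \emph{every} generator of $\mathcal J_1+\mathcal J_2$, not just the ones written out, has all Chebyshev-coefficients divisible by $A^2-A^{-2}$ (equivalently by both $A-1$ and $A+1$). This follows by inspection of Theorems \ref{thm:C(m,n)-formula} and \ref{thm:C(q,n)-formula}: the four coefficients there are $(-A^{m+n+2}+A^{-m-n-2})$, $(-A^{m+n}+A^{-m-n})$ (twice), and $(-A^{m+n-2}+A^{-m-n+2})$, and each is of the shape $-A^{-t}(A^{2t}-1)$, which is divisible by $A^{2}-1$ and by $A^{2}+1$ whenever $t\ge 1$; the only borderline case is the middle coefficient with $m+n=0$, but in that range $S_{m-1}(a_1)S_{n-1}(a_3)=S_{-1}(a_1)S_{-1}(a_3)=0$ so that term is absent, and similarly one checks the $(m+n-2)$ and $(m+n+2)$ exponents never force $t=0$ on a surviving term. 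Hence the coefficient of every basis element in any generator of $\mathcal J_1+\mathcal J_2$ is divisible by $(A-1)(A+1)$ in $\mathbb Z[A^{\pm1}]$, which is exactly what the specialisation argument needs. Everything else is routine.
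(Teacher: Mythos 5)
Your argument is essentially the paper's own proof: a vanishing combination must lie in $\mathcal J_1+\mathcal J_2$, every generator $C(m,n)S_q(a_2)$ and $\overline{C}(q,n)S_m(a_1)$ expands in the Chebyshev basis of the free module $\mathcal S_{2,\infty}(H_2)$ with coefficients of the form $-A^{t}+A^{-t}$, and freeness then forces each $f_i(A)$ to be a combination of such terms, hence $f_i(\pm 1)=0$. One harmless slip: $-A^{t}+A^{-t}$ is divisible by $A^2-1$ but not in general by $A^2+1$ (so not by $A^2-A^{-2}$ when $t$ is odd), yet this does not matter since your specialisation argument only uses vanishing at $A=\pm 1$, i.e.\ divisibility by $(A-1)(A+1)$.
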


\begin{proof}
$$\sum\limits_{i=1}^{\infty}f_{i}(A)S_{n_{i,1}}(a_{1})S_{n_{i,2}}(a_{2})S_{n_{i,3}}(a_{3})=0$$ if and only if  $$\sum\limits_{i=1}^{\infty}f_{i}(A)S_{n_{i,1}}(a_{1})S_{n_{i,2}}(a_{2})S_{n_{i,3}}(a_{3})=\sum\limits_{m,n,q=1}^{\infty}g_{m,n,q}(A)C(m,n)S_{q}(a_{2})+\widetilde{g}_{m,n,q}(A)\overline{C}(q,n)S_{m}(a_{1}),$$
 where $g_{m,n,q}(A)$ and $\widetilde{g}_{m,n,q}(A)$ are in  $\mathbb Z[A^{\pm 1}]$ and are nonzero only for finitely many of them. Since the set $\{S_{m}(a_{1})S_{n}(a_{2})S_{q}(a_{3})\}_{m,n,q\geq 0}$ forms a basis of $\mathcal S_{2,\infty}(H_{2})$, we have $f_{i}(A)=\sum\limits_{i=1}^{N_{i}}r_{i}(A)(A^{k_{i}}-A^{-k_{i}})$ for some integer $N_{i}$ and $r_{i}(A)\in \mathbb Z[A^{\pm 1}]$. Thus, $f_{i}(1)=0$ and $f_{i}(-1)=0$.
\end{proof}

The first family of torsion elements come from the torsion elements in  $\mathcal S_{2,\infty}(S^1 \times S^2)$ that survive "in" $\mathcal S_{2,\infty}((S^{1}\times S^{2}) \ \# \ (S^{1}\times S^{2}))$.  Let us recall the torsion elements in $\mathcal S_{2,\infty}(S^1 \times S^2)$.

\begin{definition}

We consider the skein module of $S^{1}\times S^{2}$ as obtained from the skein module of $S^{1}\times D^{2}$ by attaching a two handle along a meridian curve $w$ and then a three handle.  Let $z$ represent the class of the longitude curve. We follow the notation in \cite{s1s2} that $e_{i}=S_{i}(z)$ and $e'_{1}=e_{1}$, $e'_{2}=e_{2}$, and $e'_{i}=e_{i}+e'_{i-2}$ for $i\geq 2$.
    
\end{definition}

\begin{remark}
The following direct sum decomposition is provided in \cite{s1s2}
$$\mathcal S_{2,\infty}(S^1 \times S^2)=\mathbb Z[A^{\pm 1}]\oplus \bigoplus_{i=1}^{\infty} \mathbb Z[A^{\pm 1}]/(1-A^{2i+4}),$$
$e'_{i}$ are the generators of the $\mathbb Z[A^{\pm 1}]/(1-A^{2i+4})$ direct summand.
    
\end{remark}

\begin{corollary}

Consider the embedding $j:(S^1 \times S^2)-D^{3} \rightarrow (S^{1}\times S^{2}) \ \# \ (S^{1}\times S^{2})$ that sends $z$ to $a_{1}$ and $w$ to $\beta$. Then $(1-A^{2i+4})j_{*}(e_{i}')=0$ and $j_{*}(e_{i}')\neq 0$ in $\mathcal S_{2,\infty}((S^{1}\times S^{2}) \ \# \ (S^{1}\times S^{2}))$ for all $i$.
    
\end{corollary}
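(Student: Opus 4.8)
The plan is to transfer the known structure of $\mathcal S_{2,\infty}(S^1\times S^2)$ through the embedding $j$ and then use Lemma~\ref{4.9} to detect that nothing collapses. First I would verify that $(1-A^{2i+4})j_*(e_i')=0$: this is immediate from functoriality of skein modules under embeddings, since $(1-A^{2i+4})e_i'=0$ already holds in $\mathcal S_{2,\infty}((S^1\times S^2)-D^3)\cong\mathcal S_{2,\infty}(S^1\times S^2)$ by Theorem~\ref{propkbsm2}(\ref{3hand}) (removing the $D^3$, equivalently adding a $3$-handle, is an isomorphism), and $j_*$ is a $\mathbb Z[A^{\pm1}]$-module homomorphism. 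So the torsion relation is automatic; the entire content is the non-vanishing statement $j_*(e_i')\neq 0$.

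For the non-vanishing, I would express $e_i'=S_i(z)+S_{i-2}(z)+S_{i-4}(z)+\cdots$ (the finite sum of Chebyshev polynomials of second kind with indices dropping by $2$, following the definition $e_i'=e_i+e_{i-2}'$), so that under $j$, which sends $z\mapsto a_1$, we get $j_*(e_i')=S_i(a_1)+S_{i-2}(a_1)+\cdots$ viewed inside $\mathcal S_{2,\infty}((S^1\times S^2)\ \#\ (S^1\times S^2))\cong \mathcal S_{2,\infty}(H_2)/(\mathcal J_1+\mathcal J_2)$. Concretely this is the image of $\sum_{k\geq 0}S_{i-2k}(a_1)\,S_0(a_2)\,S_0(a_3)$. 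Suppose for contradiction that this image is $0$. Then by the description of $\mathcal J_1+\mathcal J_2$ via Corollary~\ref{j1mainresult} and Theorem~\ref{thm:C(q,n)-formula}, together with the hypothesis-checking in Lemma~\ref{4.9}, any element of $\mathcal S_{2,\infty}(H_2)$ that vanishes in the quotient must have all its Chebyshev-basis coefficients lying in the ideal generated by the $(A^k-A^{-k})$; in particular each coefficient vanishes at $A=1$ and at $A=-1$. But the coefficients of $\sum_{k\geq 0}S_{i-2k}(a_1)$ in the basis $\{S_m(a_1)S_n(a_2)S_q(a_3)\}$ are all $1$ (constants), which do not vanish at $A=\pm1$. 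This is the desired contradiction, so $j_*(e_i')\neq 0$.

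An alternative, perhaps cleaner, route for the non-vanishing is to use the ring $R$ with $A^k-1$ invertible for all $k$: over such $R$, $\mathcal S_{2,\infty}(S^1\times S^2;R,A)\cong R$ and $\mathcal S_{2,\infty}((S^1\times S^2)\ \#\ (S^1\times S^2);R,A)\cong R$ (as in the proof of Lemma~\ref{emptylink}, via \cite{connsum}), and one would track the image of $e_i'$ under the change of coefficients; however, $e_i'$ becomes a torsion element killed by the unit $(1-A^{2i+4})$, hence maps to $0$ over $R$, so this approach does \emph{not} directly detect non-vanishing. Thus the argument really must stay over $\mathbb Z[A^{\pm1}]$ and exploit Lemma~\ref{4.9}.

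\textbf{Main obstacle.} The hard part is making precise the claim that an element of $\mathcal S_{2,\infty}(H_2)$ mapping to $0$ in the quotient must have every Chebyshev-basis coefficient in the ideal $(\{A^k-A^{-k}\}_k)$ — i.e.\ justifying the last displayed deduction in the proof of Lemma~\ref{4.9} that $f_i(A)=\sum r_i(A)(A^{k_i}-A^{-k_i})$. This requires reading off, from the explicit formulas in Theorem~\ref{thm:C(m,n)-formula} and Theorem~\ref{thm:C(q,n)-formula}, that \emph{every} generator of $\mathcal J_1+\mathcal J_2$ has all coefficients of the form $(\pm A^{a}\mp A^{-a})\cdot(\text{polynomial in }A)$, hence vanishing at $A=\pm 1$; since that property is preserved under $\mathbb Z[A^{\pm1}]$-linear combinations, the same holds for any element of $\mathcal J_1+\mathcal J_2$. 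Once this is granted (it is exactly Lemma~\ref{4.9}), applying it to $\sum_{k\geq 0}S_{i-2k}(a_1)-0$ with the constant coefficient $f=1$ yields $1=0$ at $A=1$, a contradiction, completing the proof.
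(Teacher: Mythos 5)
Your proposal is correct and follows essentially the same route as the paper: the torsion relation is immediate from functoriality of $j_*$, and the non-vanishing follows by writing $j_*(e_i')$ as a $\mathbb Z[A^{\pm1}]$-combination of the basis elements $S_j(a_1)$ with leading coefficient $1$ and invoking Lemma \ref{4.9}, whose conclusion fails for the constant coefficient $1$ at $A=\pm1$. Your observation that the real content is hidden in Lemma \ref{4.9} (and that the change-of-coefficients route cannot detect non-vanishing) is accurate, but the paper's proof is exactly the short argument you give.
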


\begin{proof}
$(1-A^{2i+4})j_{*}(e_{i}')=0$ is immediately true since $(1-A^{2i+4})e_{i}'=0$ in $\mathcal S_{2,\infty}(S^1 \times S^2)$. Under this embedding, $e_{i}$ is sent to $S_{i}(a_{1})S_{0}(a_{3})S_{0}(a_{2})=S_{i}(a_{1})$, then $j_{*}(e_{i}')$ is a linear combination of $S_{j}(a_{1})$, $j\leq i$, and the term $S_{i}(a_{1})$ is of coefficient $1$. By Lemma \ref{4.9}, $j_{*}(e_{i}')$ is not zero in the skein module of $(S^{1}\times S^{2}) \ \# \ (S^{1}\times S^{2})$.
\end{proof}

In the following theorem, we provide the second family of elements with $(1-A^{2})$-torsion. 

\begin{theorem}
The set of elements of the form
\begin{eqnarray*}
\tau_{m,n,q}&=&\Sigma_{i=0}^{m+n+1}(A^{-m-n-1+2i})S_{m}(a_{1})S_{n}(a_{3})S_{q}(a_{2})\\
&+&\Sigma_{i=0}^{m+n-1}(A^{-m-n+1+2i})S_{m-1}(a_{1})S_{n-1}(a_{3})S_{q+1}(a_{2})\\
&+&\Sigma_{i=0}^{m+n-1}(A^{-m-n+1+2i})S_{m-1}(a_{1})S_{n-1}(a_{3})S_{q-1}(a_{2})\\
&+&\Sigma_{i=0}^{m+n-3}(A^{-m-n+3+2i})S_{m-2}(a_{1})S_{n-2}(a_{3})S_{q}(a_{2})
\end{eqnarray*}
is a family of $(1-A^{2})$-torsion elements, for all $m,q \in \mathbb{N} \cup \{0\}$ and $n \in \mathbb{Z}$.
\end{theorem}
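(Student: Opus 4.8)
The plan is to show that multiplying $\tau_{m,n,q}$ by $(1-A^2)$ gives zero in $\mathcal{S}_{2,\infty}((S^1\times S^2)\ \#\ (S^1\times S^2))$ by recognising the result as a $\mathbb{Z}[A^{\pm1}]$-linear combination of the relators $C(m,n)S_q(a_2)$ from Theorem \ref{thm:C(m,n)-formula}. The key observation is that each geometric series appearing in $\tau_{m,n,q}$ telescopes when multiplied by $(1-A^2)$: for a finite geometric sum one has $(1-A^2)\sum_{i=0}^{N}A^{-N-1+2i}(\cdot) = (A^{-N-1}-A^{N+1})(\cdot)$, which is exactly the coefficient $(-A^{j}+A^{-j})$-type expression (up to sign) that appears in the formula for $C(m,n)S_q(a_2)$. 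Concretely, applying $(1-A^2)$ to the four lines of $\tau_{m,n,q}$ produces, respectively, $(A^{-m-n-1}-A^{m+n+1})$ times line one's basis monomial, $(A^{-m-n+1}-A^{m+n-1})$ times line two's, the same for line three, and $(A^{-m-n+3}-A^{m+n-3})$ times line four's.

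First I would carefully record the identity $(1-A^2)\sum_{i=0}^{N}A^{-N-1+2i} = A^{-N-1}-A^{N+1}$ and note that this matches, up to an overall factor of $A^{\pm 1}$, the coefficient pattern $-A^{k+2}+A^{-k-2}$ etc. in Theorem \ref{thm:C(m,n)-formula}. Then I would compute $(1-A^2)\tau_{m,n,q}$ termwise. One has to be slightly careful that when $N<0$ (e.g.\ $m+n-3<0$ for small $m,n$) the corresponding sum is empty and the term vanishes; this is consistent because in those degenerate cases the matching Chebyshev factor $S_{m-2}(a_1)S_{n-2}(a_3)$ is also zero or absorbed via the $S_{-1}=0$, $S_{-2}=-1$ conventions, so the bookkeeping stays coherent. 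After this computation, $(1-A^2)\tau_{m,n,q}$ equals $A^{-m-n-1}$ (say) times
\begin{eqnarray*}
&&(-A^{m+n+2}+A^{-m-n-2})S_{m}(a_{1})S_{n}(a_{3})S_{q}(a_{2})\\
&+&(-A^{m+n}+A^{-m-n})S_{m-1}(a_{1})S_{n-1}(a_{3})S_{q+1}(a_{2})\\
&+&(-A^{m+n}+A^{-m-n})S_{m-1}(a_{1})S_{n-1}(a_{3})S_{q-1}(a_{2})\\
&+&(-A^{m+n-2}+A^{-m-n+2})S_{m-2}(a_{1})S_{n-2}(a_{3})S_{q}(a_{2}),
\end{eqnarray*}
which is precisely $A^{-m-n-1}\,C(m,n)S_q(a_2)$ and hence zero in the skein module by Corollary \ref{j1mainresult} and Theorem \ref{kbsmheegaard}. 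I would also verify $\tau_{m,n,q}\neq 0$ using Lemma \ref{4.9}: the leading basis monomial $S_m(a_1)S_n(a_3)S_q(a_2)$ occurs with coefficient $\sum_{i=0}^{m+n+1}A^{-m-n-1+2i}$, which at $A=1$ equals $m+n+2\neq 0$, so $\tau_{m,n,q}$ cannot be a combination of relators and is therefore nonzero.

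The main obstacle I anticipate is purely the index bookkeeping rather than any conceptual difficulty: one must handle the negative values of $n$ (using the extension $S_n(a_3)=-S_{-n-2}(a_3)$ for $n\le -2$ stated in Theorem \ref{thm:C(m,n)-formula}) and the small/degenerate cases $m=0,1$ and $m+n\le 2$ where several of the geometric sums are empty, checking that the telescoping identity still yields exactly $C(m,n)S_q(a_2)$ up to a unit. A secondary point requiring a line of care is confirming that the overall power-of-$A$ prefactor is a unit in $\mathbb{Z}[A^{\pm1}]$ (it is, being $\pm A^{\text{integer}}$), so that $(1-A^2)\tau_{m,n,q}=0$ genuinely follows from $C(m,n)S_q(a_2)=0$ and not merely $A^{j}C(m,n)S_q(a_2)=0$. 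Once these are dispatched, the theorem follows immediately.
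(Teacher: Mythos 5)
Your proposal follows essentially the same route as the paper: the paper's proof consists of the one-line identity $(-A+A^{-1})\tau_{m,n,q}=C(m,n)S_{q}(a_{2})=0$ together with Lemma \ref{4.9} evaluated at $A=\pm1$, and your telescoping argument is exactly this identity rewritten with the unit $-A$, while your non-vanishing check (coefficient $m+n+2$ at $A=1$ for the leading monomial) is the same as the paper's.

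Two points need repair, though. First, your telescoped coefficients are miscomputed. The sums in $\tau_{m,n,q}$ run over exponents $-N,-N+2,\dots,N$ with $N=m+n+1$, $m+n-1$, $m+n-3$ (your displayed identity is for a sum with exponents $-N-1,\dots,N-1$, which is not what appears in $\tau_{m,n,q}$), so $(1-A^{2})$ applied to the first line yields the coefficient $A^{-m-n-1}-A^{m+n+3}=A\,(-A^{m+n+2}+A^{-m-n-2})$, and similarly $A^{-m-n+1}-A^{m+n+1}$ and $A^{-m-n+3}-A^{m+n-1}$ for the other lines; hence $(1-A^{2})\tau_{m,n,q}=A\,C(m,n)S_{q}(a_{2})$, with the uniform unit factor $A$, independent of $m,n$. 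The coefficients you wrote, e.g.\ $(A^{-m-n-1}-A^{m+n+1})$, are not unit multiples of the corresponding coefficients in Theorem \ref{thm:C(m,n)-formula}, so the identification with $C(m,n)S_{q}(a_{2})$ would not literally go through as stated; the fix is immediate but should be made. Second, the paper's proof also verifies, again via Lemma \ref{4.9} (the coefficients of $(1\pm A)\tau_{m,n,q}$ at $A=\pm1$ are $2(m+n+2)$, $2(m+n)$, $2(m+n)$, $2(m+n-2)$), that $\tau_{m,n,q}$ is annihilated by neither $(1-A)$ nor $(1+A)$; if ``$(1-A^{2})$-torsion element'' is read as saying that $1-A^{2}$, and not one of its proper factors, is the annihilating polynomial, your proposal omits this step and should include it.
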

\begin{proof}

When $A=1$, the coefficients of the basis elements are $m+n+2$, $m+n$, $m+n$ and $m+n-2$, respectively. So by Lemma \ref{4.9}, we know $\tau_{m,n,q}$ is not zero in $\mathcal S_{2,\infty}((S^{1}\times S^{2}) \ \# \ (S^{1}\times S^{2}))$. On the other hand, we notice that $(-A+A^{-1})\tau_{m,n,q}=C(m,n)S_{q}(a_{2})=0$ in $\mathcal S_{2,\infty}((S^{1}\times S^{2}) \ \# \ (S^{1}\times S^{2}))$, which implies $\tau_{m,n,q}$ is annihilated by $(1-A^{2})$. Furthermore, for $(1+A)\tau_{m,n,q}$, the coefficients of the basis elements are $2(m+n+2)$, $2(m+n)$, $2(m+n)$ and $2(m+n-2)$, respectively, when $A=1$. For $(1-A)\tau_{m,n,q}$, the coefficients of the basis elements are $2(m+n+2)$, $2(m+n)$, $2(m+n)$ and $2(m+n-2)$, respectively, when $A=-1$. By Lemma \ref{4.9}, $\tau_{m,n,q}$ is neither annihilated by $(1+A)$ nor by $(1-A)$. Therefore, we have $\tau_{m,n,q}$ are $(1-A^{2})$-torsion elements.
    
\end{proof}

\section{Future Directions}\label{fds1s2s1s2}

Keeping our motivation of constructing traces on skein modules in mind, as a next step it would be beneficial to compute that of  $\#_k(S^1 \times S^2)$ over the ring $\mathbb Z[A^{\pm 1}]$. To understand the skein module of connected sums of arbitrary $3$-manifolds, knowing the skein module of $H_n \ \# \ H_m$ would also be beneficial. We note that a result to this end was published in \cite{connsum} but later proved to be false in \cite{counterhandle}. Characterising the complete set of handle sliding relations is the hardest problem for this manifold. Furthermore, computing the skein module of $\mathbb RP^3 \ \# \ L(p,q), (p,q)\neq (2,1)$ would be another interesting project because this is one of the few examples of connected sums of $3$-manifolds for which it is unknown whether the skein module has torsion or not. See Theorem 4.2 in \cite{fundamentals}. The result for the case $(p, q) = (2k, 1)$ is now known due to the work of Belletti and Detcherry \cite{bellettidetcherry} .

\section{Appendix}\label{appendixs1s2s1s2}
In this section we will provide the details for the proof of Theorem~\ref{thm:C(m,n)-formula}.
\subsection{Calculation of formulas for $C(m,n)$ for $m,n \geq 0$}
In this section we will prove Lemmas~\ref{lem:PP(m,n)}, \ref{lem:(m,n)-formula}, and \ref{lem:C(m,n)-formula}. As described in Figure~\ref{fig:C(m,n)}, we obtain $C(m,n)= -A^{3}P(m,n)+A^{-3}N(m,n)$. First, in the following lemma we describe the recurrence relations for $P(m,n)$. 
\begin{customlemma}{A}
The sequence $P(m,n)$ for $m,n \in \mathbb{N} \cup \{0\}$ satisfies the following relation:
\begin{eqnarray*}
P(0,0)&=&-A^{-3}(-A^{2}-A^{-2}),~ P(1,0) = a_{1},\\ P(0,1)&=&a_{3},~P(1,1)= Aa_{1}a_{3}+A^{-1}a_{2},\\
P(m,0) &=& AP(m-1,0)a_{1}-A^{2}P(m-2,0), m\geq 2,\\
P(m,1) &=& AP(m,0)a_{3}+P(m-1,0)a_{2}+A^{-2}P(m-2,1), m\geq 2,\\
P(m,n) &=& AP(m,n-1)a_{3}-A^{2}P(m,n-2), m \geq 0, n\geq 2.
\end{eqnarray*}
\end{customlemma}

\begin{proof}
The initial conditions for $P(1,0)$, $P(0,1)$, and $P(1,1)$ are determined from their diagrams. 
From a direct calculation on the diagram of $P(m,n)$ in $\Sigma_{0,3}$ we obtain the relations. See Figures
~\ref{fig:cal-P(m,0)}, \ref{fig:cal-P(m,n)}, and \ref{fig:cal-P(m,1)}.
    
\end{proof}
 Our strategy is to find another sequence $PP(m,n)$ with Chebyshev recurrence relations in the variables $a_{1}$ and $a_{3}$ so that the sequence $P(m,n)$ can be presented by a combination of $PP(m,n)$. We first define a sequence $\{Q(m,n)\}_{m,n \in \mathbb{N}\cup\{0\}}$ as follows: 
\begin{eqnarray*}
Q(0,0)&=&-A^{-5}, ~Q(1,0) = 0,\\ Q(0,1)&=&0,~Q(1,1)=-A^{-1}a_{2},\\
Q(m,0) &=& AQ(m-1,0)a_{1}-A^{2}Q(m-2,0), m\geq 2,\\
Q(m,1) &=& AQ(m,0)a_{3}+Q(m-1,0)a_{2}+A^{-2}Q(m-2,1), m\geq 2,\\
Q(m,n) &=& AQ(m,n-1)a_{3}-A^{2}Q(m,n-2), m \geq 0, n\geq 2,
\end{eqnarray*}
and
define $\{PP(m,n)\}_{m,n \in \mathbb{N}\cup\{0\}}$ by
$$PP(m,n) = A^{-m-n+1}(P(m,n)+Q(m,n)).$$

\begin{customlemma}{3.6.A}
The sequence $\{PP(m,n)\}_{m,n \in \mathbb{N}\cup\{0\}}$ satisfies
\begin{eqnarray*}
PP(0,0)&=&1, ~PP(1,0) = a_{1},\\ 
 PP(0,1)&=&a_{3},~  PP(1,1)=a_{1}a_{3},\\
 PP(m,0) &=& PP(m-1,0)a_{1}-PP(m-2,0), m\geq 2,\\
PP(m,1) &=& PP(m,0)a_{3}+A^{-2}PP(m-1,0)a_{2}+A^{-4}PP(m-2,1), m\geq 2,\\
 PP(m,n) &=& PP(m,n-1)a_{3}-PP(m,n-2), m\geq 0, n\geq 2.
\end{eqnarray*}
\end{customlemma}
\begin{proof}
We can prove the statement by the following direct calculations:

\begin{minipage}{0.45\textwidth}
    \begin{eqnarray*}
PP(0,0)&=&A(P(0,0)+Q(0,0))\\
                &=&A(A^{-1}+A^{-5}-A^{-5})\\&=&1.\\PP(1,0)&=&P(1,0)+Q(1,0)\\
                &=&a_{1}.
    \end{eqnarray*}
    \end{minipage}
    \begin{minipage}{0.45\textwidth}
    \begin{eqnarray*}
        PP(0,1)&=&P(0,1)+Q(0,1)\\
                &=&a_{3}.\\
        PP(1,1)&=&A^{-1}(P(1,1)+Q(1,1))\\
                &=&A^{-1}(Aa_{1}a_{3}+A^{-1}a_{2}-A^{-1}a_{2})\\&=&a_{1}a_{3}.
    \end{eqnarray*}
    \end{minipage}
    
    \begin{eqnarray*}
        PP(m,0)&=&A^{-m+1}(P(m,0)+Q(m,0))\\
                &=&A^{-m+1}(AP(m-1,0)a_{1}-A^{2}P(m-2,0)+AQ(m-1,0)a_{1}-A^{2}Q(m-2,0))\\
                &=&A^{-m+1}(A(P(m-1,0)+Q(m-1,0))a_{1}-A^{2}(P(m-2,0)+Q(m-2,0)))\\
                &=&A^{-m+1+1}(P(m-1,0)+Q(m-1,0))a_{1}-A^{-m+2+1}(P(m-2,0)+Q(m-2,0))\\
                &=&PP(m-1,0)a_{1}-PP(m-2,0).\\
        PP(m,1)&=&A^{-m}(P(m,1)+Q(m,1))\\
                &=&A^{-m}(AP(m,0)a_{3}+P(m-1,0)a_{2}+A^{-2}P(m-2,1)\\&&+AQ(m,0)a_{3}+Q(m-1,0)a_{2}+A^{-2}Q(m-2,1))\\
                &=&A^{-m+1}(P(m,0)+Q(m,0))a_{3}+A^{-m}(P(m-1,0)+Q(m-1,0))a_{2}\\&&+A^{-m-2}(P(m-2,1)+Q(m-2,1))\\
                &=&PP(m,0)a_{3}+A^{-2}A^{-m+2}(P(m-1,0)+Q(m-1,0))a_{2}\\&&+A^{-4}A^{-m+2}(P(m-2,1)+Q(m-2,1))\\
                &=& PP(m,0)a_{3}+A^{-2}PP(m-1,0)a_{2}+A^{-4}PP(m-2,1).\\
        PP(m,n)&=&A^{-m-n+1}(P(m,n)+Q(m,n))\\
                &=&A^{-m-n+1}(AP(m,n-1)a_{3}-A^{2}P(m,n-2)+AQ(m,n-1)a_{3}-A^{2}Q(m,n-2))\\
                &=&A^{-m-n+1}(A(P(m-1,0)+Q(m-1,0))a_{3}-A^{2}(P(m-2,0)+Q(m-2,0)))\\
                &=&A^{-m+1-n+1}(P(m-1,0)+Q(m-1,0))a_{3}-A^{-m+2-n+1}(P(m-2,0)+Q(m-2,0))\\
                &=&PP(m,n-1)a_{3}-PP(m,n-2).
    \end{eqnarray*}
\end{proof}
\begin{customlemma}{3.6.B} The sequence $Q(m,n)$ satisfies $Q(m,n) = A^{m+n-5}PP(m-2,n)$, for $m\geq 2, n\geq 0$. Hence, it follows that 
$$P(m,n) = A^{m+n-1}PP(m,n)-A^{m+n-5}PP(m-2,n),$$
for $m \geq 2,n \geq 0$.
\end{customlemma}

\begin{proof}
We will use mathematical induction on $m,n$. For $n=0$
    \begin{eqnarray*}
        Q(2,0)&=&A^{-3}=A^{-3}PP(0,0),\\
        Q(3,0)&=&AQ(2,0)a_{1}-A^{-2}Q(1,0)=A^{-2}a_{1} = A^{3+0-5}PP(1,0).\\
    \end{eqnarray*}
    Let us assume that the statement holds for $2\leq k<m$ and $n=0$.  Then we obtain 
    \begin{eqnarray*}
        Q(m,0)&=&AQ(m-1,0)a_{1}-A^{2}Q(m-2,0)\\
        &=& A\cdot A^{m+n-6}PP(m-1,0)a_{1}- A^{2}\cdot A^{m+n-7}PP(m-2,0)\\
        &=& A^{m+n-5}(PP(m-1,0)a_{1}-PP(m-2,0))\\
        &=& A^{m+n-5}PP(m,0).
    \end{eqnarray*}
For $n=1$
    \begin{eqnarray*}
        Q(2,1)&=&AQ(2,0)a_{3}+Q(1,0)a_{2}+Q(0,1)\\
        &=&A\cdot A^{-3}a_{3} = A^{2+1-5}PP(0,1),
    \end{eqnarray*}
    \begin{eqnarray*}
Q(3,1)&=&AQ(3,0)a_{3}+Q(2,0)a_{2}+A^{-2}Q(1,1)\\
        &=&A\cdot A^{-1}a_{1}a_{3} +A^{-3}a_{2}+A^{-2}(-A^{-1}a_{2}) \\
        &=& A^{-1}a_{1}a_{3} = A^{3+1-5}PP(1,1).
    \end{eqnarray*}
Let us assume that the statement holds for $2\leq k<m$ and $n=1$. Then it follows that
    \begin{eqnarray*}
        Q(m,1)&=&AQ(m,0)a_{3}+Q(m-1,0)a_{2}+A^{-2}Q(m-2,1)\\
        &=&A\cdot A^{m-5}PP(m-2,0)a_{3} +A^{m-6}PP(m-3,0)a_{2}+A^{-2} \cdot A^{m-2+1-5}PP(m-4,1)\\
        &=& A^{m-4}(PP(m-2,0)a_{3} +A^{-2}PP(m-3,0)a_{2}+A^{-4}PP(m-4,1))\\
        &=&A^{m+1-5}PP(m-2,1).
    \end{eqnarray*}
Now let us suppose that the statement holds for $\{(m,l)~|~m\geq 2, 0\leq l<n\}$. Finally we obtain
    \begin{eqnarray*}
       Q(m,n) &=& AQ(m,n-1)a_{3}-A^{2}Q(m,n-2)\\
       &=&A\cdot A^{m+n-1-5}PP(m-2,n-1)a_{3}-A^{2}\cdot A^{m+n-2-5}PP(m-2,n-2)\\
       &=&A^{m+n-5}(PP(m-2,n-1)a_{3}-PP(m-2,n-2))\\
       &=&A^{m+n-5}PP(m-2,n),
    \end{eqnarray*}
    and by mathematical induction the statement $Q(m,n)=A^{m+n-5}PP(m-2,n)$ is proved.
In addition, by the definition of $PP(m,n)$ we obtain
$$P(m,n) = A^{m+n-1}PP(m,n)-A^{m+n-5}PP(m-2,n).$$

\end{proof}


Since $PP(m,n)$ and $NN(m,n)$ satisfy the recurrence relation for Chebyshev polynomials, we obtain Lemma \ref{lem:(m,n)-formula}.

\begin{customlemma}{3.8}
For $m,n \in \mathbb{N} \cup \{0\}$
\begin{eqnarray*}
    PP(m,n) = PP(m,1)S_{n-1}(a_{3})-PP(m,0)S_{n-2}(a_{3}).
\end{eqnarray*}
    Analogously, 
\begin{eqnarray*}
    NN(m,n) = NN(m,1)S_{n-1}(a_{3})-NN(m,0)S_{n-2}(a_{3}).
\end{eqnarray*}
\end{customlemma}

\begin{proof} 
We will prove the lemma by mathematical induction on $n$. Since $S_{-2}(a_{3})=-1$, $S_{-1}(a_{3})=0$, and $S_{0}(a_{3})=1$, the lemma holds for $n=0$ and $1$: 
\begin{eqnarray*}
    PP(m,1)S_{-1}(a_{3})-PP(m,0)S_{-2}(a_{3}) &=& PP(m,0),\\
   PP(m,1)S_{0}(a_{3})-PP(m,0)S_{-1}(a_{3}) &=& PP(m,1).
\end{eqnarray*}
Let us assume that the result holds for $n=k$. Then
\begin{eqnarray*}
    PP(m,k+1) &=& PP(m,k)a_{3}-PP(m,k-1) \\
    &\stackrel{\mathit{Induction}}{=}& (PP(m,1)S_{k-1}(a_{3})-PP(m,0)S_{k-2}(a_{3}))a_{3}\\
    &&-(PP(m,1)S_{k-2}(a_{3})-PP(m,0)S_{k-3}(a_{3})) \\
    &=& (PP(m,1)S_{k-1}(a_{3})a_{3}-PP(m,1)S_{k-2}(a_{3}))\\
    &&-(PP(m,0)S_{k-2}(a_{3})a_{3}-PP(m,0)S_{k-3}(a_{3})) \\
    &=& PP(m,1)(S_{k-1}(a_{3})a_{3}-S_{k-2}(a_{3}))-PP(m,0)(S_{k-2}(a_{3})a_{3}-S_{k-3}(a_{3})) \\
    &=& PP(m,1)S_{k}(a_{3})-PP(m,0)S_{k-1}(a_{3}).
\end{eqnarray*}
\end{proof}
From Lemmas~\ref{lem:PP(m,n)} and \ref{lem:NN(m,n)} we obtain the following equality
\begin{equation}\label{eqn:equality-C(m,n)}
  C(m,n) = -A^{m+n+2}PP(m,n)+A^{m+n-2}PP(m-2,n)+A^{-m-n-2}NN(m,n)-A^{-m-n+2}NN(m-2,n).  
\end{equation}

\begin{customlemma}{3.9}
For all $m,n \in \mathbb{N}\cup \{0\}$,
    \begin{eqnarray*}
        C(m,n) &=& (-A^{m+n+2}+A^{-m-n-2})S_{m}(a_{1})S_{n}(a_{3})\\
                &&+(-A^{m+n}+A^{-m-n})S_{m-1}(a_{1})S_{n-1}(a_{3})a_{2}\\
                &&+(-A^{m+n-2}+A^{-m-n+2})S_{m-2}(a_{1})S_{n-2}(a_{3}).
    \end{eqnarray*}
\end{customlemma}
\begin{proof}
From the equality (\ref{eqn:equality-C(m,n)}) the following equalities are obtained
    \begin{eqnarray*}
        C(m,n) &=& -A^{m+n+2}PP(m,n)+A^{-m-n-2}NN(m,n)\\&&+A^{m+n-2}PP(m-2,n)-A^{-m-n+2}NN(m-2,n)\\
        &\stackrel{\mathit{Lemma~\ref{lem:(m,n)-formula}}}{=}&-A^{m+n+2}(S_{n-1}(a_{3})PP(m,1)-S_{n-2}(a_{3})PP(m,0))\\
        &&+A^{-m-n-2}(S_{n-1}(a_{3})NN(m,1)-S_{n-2}(a_{3})NN(m,0))\\
        &&+A^{m+n-2}(S_{n-1}(a_{3})PP(m-2,1)-S_{n-2}(a_{3})PP(m-2,0))\\
        &&-A^{-m-n+2}(S_{n-1}(a_{3})NN(m-2,1)-S_{n-2}(a_{3})NN(m-2,0))\\
        &=&-A^{m+n+2}S_{n-1}(a_{3})(PP(m,1)-A^{-4}PP(m-2,1))\\
        &&+A^{-m-n-2}S_{n-1}(a_{3})(NN(m,1)-A^{4}NN(m-2,1))\\
        &&-(-A^{m+n+2}+A^{-m-n-2})S_{n-2}(a_{3})S_{m}(a_{1})\\
        &&+(-A^{m+n-2}+A^{-m-n+2})S_{n-2}(a_{3})S_{m-2}(a_{1})\\
        &=&-A^{m+n+2}S_{n-1}(a_{3})(PP(m,0)a_{3}+A^{-2}PP(m-1,0)a_{2})\\
        &&+A^{-m-n-2}S_{n-1}(a_{3})(NN(m,0)a_{3}+A^{2}NN(m-1,0)a_{2})\\
        &&-(-A^{m+n+2}+A^{-m-n-2})S_{n-2}(a_{3})S_{m}(a_{1})\\
        &&+(-A^{m+n-2}+A^{-m-n+2})S_{n-2}(a_{3})S_{m-2}(a_{1})\\
        &=&(-A^{m+n+2}+A^{-m-n-2})S_{n-1}(a_{3})S_{m}(a_{1})a_{3}\\
        &&(-A^{m+n}+A^{-m-n})S_{n-1}(a_{3})S_{m-1}(a_{1})a_{2}\\
        &&-(-A^{m+n+2}+A^{-m-n-2})S_{n-2}(a_{3})S_{m}(a_{1})\\
        &&+(-A^{m+n-2}+A^{-m-n+2})S_{n-2}(a_{3})S_{m-2}(a_{1})\\
        &\substack{\text{Chebyshev}\\{=}\\ \text{relation}}&(-A^{m+n+2}+A^{-m-n-2})S_{m}(a_{1})S_{n}(a_{3})+\\
                &&+(-A^{m+n}+A^{-m-n})S_{m-1}(a_{1})S_{n-1}(a_{3})a_{2}+\\
                &&+(-A^{m+n-2}+A^{-m-n+2})S_{m-2}(a_{1})S_{n-2}(a_{3}).
    \end{eqnarray*}
     The fourth equality above is obtained from the following equalities:
    \begin{eqnarray*}
        &&PP(m,1) = PP(m,0)a_{3}+A^{-2}PP(m-1,0)a_{2}+A^{-4}PP(m-2,1)\\
        &\Leftrightarrow&PP(m,1)-A^{-4}PP(m-2,1) = PP(m,0)a_{3}+A^{-2}PP(m-1,0)a_{2}
    \end{eqnarray*}
    and
    \begin{eqnarray*}
    &&NN(m,1) = NN(m,0)a_{3}+A^{2}NN(m-1,0)a_{2}+A^{4}NN(m-2,1)\\
    &\Leftrightarrow& NN(m,1)-+A^{4}NN(m-2,1) = NN(m,0)a_{3}+A^{2}NN(m-1,0)a_{2}.
    \end{eqnarray*}
    Since $PP(m,0)=NN(m,0) = S_{m}(a_{1})$, the third and fifth equalities hold.
\end{proof}

\subsection{Calculation of formulas for $C(m,-n)$ for $m,n \geq 1$} 
In this section we will prove Lemmas~\ref{lem:PP(m,-n)}, \ref{lem:(m,-n)-formula}, and \ref{lem:C(m,-n)-formula}. As described in Figure~\ref{fig:C(m,-n)}, we obtain $C(m,-n) = P(m,-n) - N(m,-n)$. We first describe the recurrence relations for $P(m,-n)$. 

\begin{customlemma}{B}
For $m,n\geq 1$, $P(m,-n)$ satisfies the following:
\begin{eqnarray*}
    P(1,-1)&=&a_{2}, \\
    P(m,-1) &=& AP(m-1,0)a_{2} + A^{-1}P(m-1,1)=AP(m,1)-A^{2}P(m,0)a_{3}, m\geq 2,\\
    P(m,-2) &=& AP(m,0)+A^{-1}P(m,-1)a_{3},\\
    P(m,-n)&=&A^{-1}P(m,-n+1)a_{3}-A^{-2}P(m,-n+2),n\geq 3.
\end{eqnarray*}
\end{customlemma}

\begin{proof}
The initial condition $P(1,-1)$ is determined from the diagram of $P(1,-1)$. From a direct calculation on the diagram of $P(m,-n)$ on $\Sigma_{0,3}$, we obtain the relations. See Figures
~\ref{fig:cal-P(m,-1)}, \ref{fig:cal-P(m,-2)}, and \ref{fig:cal-P(m,-n)}.

\end{proof}
Our strategy is the same as that in the previous subsection. Let us define $\{Q(m,-n)\}_{m,n \in \mathbb{N}}$ by 
\begin{eqnarray*}
 Q(1,-1)&=&-a_{2}, \\
 Q(m,-1) &=& AQ(m-1,0)a_{2} + A^{-1}Q(m-1,1)=AQ(m,1)-A^{2}Q(m,0)a_{3}, m\geq 2,\\
    Q(m,-2) &=& AQ(m,0)+A^{-1}Q(m,-1)a_{3},\\
    Q(m,-n)&=&A^{-1}Q(m,-n+1)a_{3}-A^{-2}Q(m,-n+2),n\geq 3,
\end{eqnarray*}
and
define $\{PP(m,-n)\}_{m,n\geq 1}$ by
$$PP(m,n) = A^{-m+n+1}(P(m,n)+Q(m,n)).$$

\begin{customlemma}{3.10.A}
$\{PP(m,-n)\}_{m,n\geq 1}$ satisfies
\begin{eqnarray*}
PP(1,-1)&=&0,\\
PP(m,-1) &=& A^{3}PP(m,1)-A^{3}PP(m,0)a_{3}, m\geq 2,\\
PP(m,-2) &=& PP(m,-1)a_{3} +A^{3}PP(m,0),\\ PP(m,-n)&=& PP(m,-n+1)a_{3}-PP(m,-n+2), n\geq 3.
\end{eqnarray*}
\end{customlemma}

\begin{proof}
We prove the statement from the following direct calculations:
    \begin{eqnarray*}
PP(1,-1)&=& A^{-1+1+1}(P(1,-1)+Q(1,-1)) = A(a_{2}-a_{2})=0.\\
PP(m,-1) &=& A^{-m+2}(P(m,-1)+Q(m,-1))\\
&=&A^{-m+2}(AP(m,1)-A^{2}P(m,0)a_{3} + AQ(m,1)-A^{2}Q(m,0)a_{3})\\
&=&A^{-m+3}(P(m,1)+Q(m,1))+A^{-m+4}(P(m,0)+Q(m,0))a_{3}\\
&=&A^{3}\cdot A^{-m}(P(m,1)+Q(m,1))+A^{3}\cdot A^{-m+1}(P(m,0)+Q(m,0))a_{3}\\
&=&A^{3}PP(m,1)+A^{3}PP(m,0).
\end{eqnarray*}
\begin{eqnarray*}
PP(m,-2) &=& A^{-m+3}(P(m,-2)+Q(m,-2))\\ 
&=&A^{-m+3}(AP(m,1)+A^{-1}P(m,-1)a_{3} +AQ(m,1)+A^{-1}Q(m,-1)a_{3})\\
&=& A^{-m+4}(P(m,1)+Q(m,1))+A^{-m+2}(P(m,-1)+Q(m,-1))a_{3} \\
&=&A^{3}\cdot A^{-m+1}(P(m,1)+Q(m,1))+PP(m,-1)a_{3}\\
&=&A^{3}PP(m,1)+PP(m,-1)a_{3}.
\end{eqnarray*}
\begin{eqnarray*}
    PP(m,-n)&=& A^{-m+n+1}(P(m,-n)+Q(m,-n))\\
    &=&A^{-m+n+1}(A^{-1}P(m,-n+1)a_{3}-A^{-2}P(m,-n+2)\\ &&+ A^{-1}Q(m,-n+1)a_{3}-A^{-2}Q(m,-n+2))\\
    &=& A^{-m+n}(P(m,-n+1)+Q(m,-n+1))a_{3}+A^{-m+n-1}(P(m,-n+2)+Q(m,-n+2))\\
    &=& PP(m,-n+1)a_{3}-PP(m,-n+2).
\end{eqnarray*}
\end{proof}

\begin{customlemma}{3.10.B}
  The sequence $Q(m,-n)$ satisfies $Q(m,-n) = A^{m-n-5}PP(m-2,-n)$, for $m\geq 2$. It follows that
    $$P(m,-n)=A^{m-n-1}PP(m,-n)-A^{m-n-5}PP(m-2,-n),$$
    for $m,n\geq 1$.
\end{customlemma}

\begin{proof}
When $n=1$,
    \begin{eqnarray*}
        Q(2,-1)&=&AQ(2,1)-A^{2}Q(2,0)a_{3}\\
        &=&A^{-1}PP(0,1)-A^{-1}PP(0,0)a_{3}=0=A^{2+1-2}PP(0,-1).\\
        Q(m,-1)&=&AQ(m,1)A^{2}Q(m,0)a_{3}\\
        &=&A\cdot A^{m-4}PP(m-2,1)-A^{2}\cdot A^{m-5}PP(m-2,0)a_{3}\\
        &=&A^{m-3}(PP(m-2,1)-PP(m-2,0)a_{3})\\
        &=&A^{m-6}(A^{3}PP(m-2,1)-A^{3}PP(m-2,0)a_{3}).\\
        &=&A^{m-6}PP(m,-1).
        \end{eqnarray*}
        \begin{eqnarray*}
        Q(m,-2)&=&AQ(m,0)+A^{-1}Q(m,-1)a_{3}\\
        &=&A\cdot A^{m-5}PP(m-2,0)+A^{-1}\cdot A^{m-6}PP(m-2,-1)a_{3}\\
        &=&A^{m-4}PP(m-2,0)+A^{m-7}PP(m-2,-1)a_{3}\\
        &=&A^{m-7}(A^{3}PP(m-2,0)+PP(m-2,-1)a_{3})=A^{m-7}PP(m,-2).\\
        Q(m,-n)&=&A^{-1}Q(m,-n+1)a_{3}-A^{-2}Q(m,-n+2)\\
        &=&A^{-1}\cdot A^{m-n+1-5}PP(m-2,-n+1)a_{3}-A^{-2}A^{m-n+2-5}PP(m-2,-n+2)\\
        &=&A^{m-n-5}(PP(m-2,-n+1)a_{3}-PP(m-2,-n+2))\\
        &=&A^{m-n-5}PP(m-2,-n).
    \end{eqnarray*}
    By definition of $PP(m,-n)$ we obtain
    $$P(m,-n)=A^{m-n-1}PP(m,-n)-A^{m-n-5}PP(m-2,-n),$$
    for $m,n\geq 1$.
\end{proof}
Since $PP(m,-n)$ and $NN(m,-n)$ satisfy the recurrence relation for Chebyshev polynomials, we obtain Lemma~\ref{lem:(m,-n)-formula}.



\begin{customlemma}{3.12}
For $m,n \geq 1$
\begin{eqnarray*}
    PP(m,-n) &=& S_{n-2}(a_{3})PP(m,-2) - S_{n-3}(a_{3})PP(m,-1)\\
    &=& A^{3}PP(m,1)S_{n-1}(a_{3})-A^{3}PP(m,0)S_{n}(a_{3}).
\end{eqnarray*}
    Analogously, 
\begin{eqnarray*}
    NN(m,n) &=& S_{n-2}(a_{3})NN(m,-2) - S_{n-3}(a_{3})NN(m,-1)\\
    &=& A^{-3}NN(m,1)S_{n-1}(a_{3})-A^{-3}NN(m,0)S_{n}(a_{3}).
\end{eqnarray*}
\end{customlemma}

\begin{proof}
    The proof of the first equality is analogous to the proof of Lemma~\ref{lem:(m,n)-formula}. The second equality can be obtained as follows:
    \begin{eqnarray*}
        PP(m,-n) &=& S_{n-2}(a_{3})PP(m,-2) - S_{n-3}(a_{3})PP(m,-1)\\
        &=& S_{n-2}(a_{3})(A^{3}PP(m,0)+PP(m,-1)a_{3})-S_{n-3}(a_{3})PP(m,-1)\\
        &=&A^{3}PP(m,0)S_{n-2}(a_{3})+PP(m,-1)S_{n-1}(a_{3})\\
        &=& A^{3}PP(m,0)S_{n-2}(a_{3})+ (A^{3}PP(m,1)-A^{3}PP(m,0))S_{n-1}(a_{3})a_{3}\\
        &=& A^{3}PP(m,0)S_{n-2}(a_{3})+ A^{3}PP(m,1)S_{n-1}(a_{3})-A^{3}PP(m,0)S_{n-1}(a_{3})a_{3}\\
        &=&A^{3}PP(m,1)S_{n-1}(a_{3})-A^{3}PP(m,0)S_{n}(a_{3}).
    \end{eqnarray*}
\end{proof}
From Lemmas~\ref{lem:PP(m,-n)} and \ref{lem:NN(m,-n)} we obtain the following equality
\begin{eqnarray}\label{eqn:C(m,-n)}
    C(m,-n) &=& A^{m-n-1}PP(m,-n)-A^{m-n-5}PP(m-2,-n) \nonumber\\ &&-A^{-m+n+1}NN(m,-n)+A^{-m+n+5}NN(m-2,-n).
\end{eqnarray}
\begin{customthm}{3.14}
    \begin{eqnarray*}
        C(m,-n) &=& -(-A^{m-n+2}+A^{-m+n-2})S_{m}(a_{1})S_{n-2}(a_{3})\\
                &&-(-A^{m-n}+A^{-m+n})S_{m-1}(a_{1})S_{n-1}(a_{3})a_{2}\\
                &&-(-A^{m-n-2}+A^{-m+n+2})S_{m-2}(a_{1})S_{n}(a_{3}),
    \end{eqnarray*}
    for all $m,n\geq 1$.
\end{customthm}

\begin{proof}
From equality (\ref{eqn:C(m,-n)}), we obtain the following equalities.
\begin{eqnarray*}
    C(m,-n) &=& A^{m-n-1}PP(m,-n)-A^{m-n-5}PP(m-2,-n)\\
    &&-A^{-m+n+1}NN(m,-n)+A^{-m+n+5}NN(m-2,-n)\\
    &\stackrel{\mathit{Lemma~\ref{lem:(m,-n)-formula}}}{=}&A^{m-n-1}(A^{3}PP(m,1)S_{n-1}(a_{3})-A^{3}PP(m,0)S_{n}(a_{3}))\\&&-A^{m-n-5}(A^{3}PP(m-2,1)S_{n-1}(a_{3})-A^{3}PP(m-2,0)S_{n}(a_{3}))\\&&-A^{-m+n+1}(A^{-3}NN(m,1)S_{n-1}(a_{3})-A^{-3}NN(m,0)S_{n}(a_{3}))\\&&+A^{-m+n+5}(A^{-3}NN(m-2,1)S_{n-1}(a_{3})-A^{-3}NN(m,0)S_{n}(a_{3}))\\
    &=&A^{m-n+2}(PP(m,1)-A^{-4}PP(m-2,1))S_{n-1}(a_{3})\\
    &&-A^{-m+n-2}(NN(m,1)-A^{4}NN(m-2,1))S_{n-1}(a_{3})\\
    &&+(-A^{m-n+2}+A^{-m+n-2})PP(m,0)S_{n}(a_{3})\\
    &&-(-A^{m-n-2}+A^{-m+n+2})PP(m-2,0)S_{n}(a_{3})\\
    &=&A^{m-n+2}(PP(m,0)a_{3}+A^{-2}PP(m-1,0)a_{2})S_{n-1}(a_{3})\\
    &&-A^{-m+n-2}(NN(m,0)a_{3}+A^{2}NN(m-1,0))S_{n-1}(a_{3})\\
    &&+(-A^{m-n+2}+A^{-m+n-2})PP(m,0)S_{n}(a_{3})\\
    &&-(-A^{m-n-2}+A^{-m+n+2})PP(m-2,0)S_{n}(a_{3})\\
    &=&-(-A^{m-n+2}+A^{m+n-2})PP(m,0)S_{n-1}(a_{3})a_{3}\\
    &&-(-A^{m-n}+A^{-m+n})PP(m-1,0)S_{n-1}(a_{3})a_{2}\\
     &&+(-A^{m-n+2}+A^{-m+n-2})PP(m,0)S_{n}(a_{3})\\
    &&-(-A^{m-n-2}+A^{-m+n+2})PP(m-2,0)S_{n}(a_{3})\\
    &=&-(-A^{m-n+2}+A^{-m+n-2})PP(m,0)S_{n-2}(a_{3})\\
    &&-(-A^{m-n}+A^{-m+n})PP(m-1,0)S_{n-1}(a_{3})a_{2}\\
    &&-(-A^{m-n-2}+A^{-m+n+2})PP(m-2,0)S_{n}(a_{3}),\\
    &=& -(-A^{m-n+2}+A^{-m+n-2})S_{m}(a_{1})S_{n-2}(a_{3})\\
    &&-(-A^{m-n}+A^{-m+n})S_{m-1}(a_{1})S_{n-1}(a_{3})a_{2}\\
    &&-(-A^{m-n-2}+A^{-m+n+2})S_{m-2}(a_{1})S_{n}(a_{3}).
\end{eqnarray*}
The fourth equality is obtained from the following equalites:
    \begin{eqnarray*}
        &&PP(m,1) = PP(m,0)a_{3}+A^{-2}PP(m-1,0)a_{2}+A^{-4}PP(m-2,1)\\
        &\Leftrightarrow&PP(m,1)-A^{-4}PP(m-2,1) = PP(m,0)a_{3}+A^{-2}PP(m-1,0)a_{2}
    \end{eqnarray*}
    and
    \begin{eqnarray*}
    &&NN(m,1) = NN(m,0)a_{3}+A^{2}NN(m-1,0)a_{2}+A^{4}NN(m-2,1)\\
    &\Leftrightarrow& NN(m,1)-+A^{4}NN(m-2,1) = NN(m,0)a_{3}+A^{2}NN(m-1,0)a_{2}.
    \end{eqnarray*}
    Since $PP(m,0) = S_{m}(a_{1})$, the last equality follows.
\end{proof}

\newpage
    \begin{figure}[H]
    \centering
    \begin{eqnarray*}
    P(m,0) & = &
\vcenter{\hbox{\begin{overpic}[scale=.1]{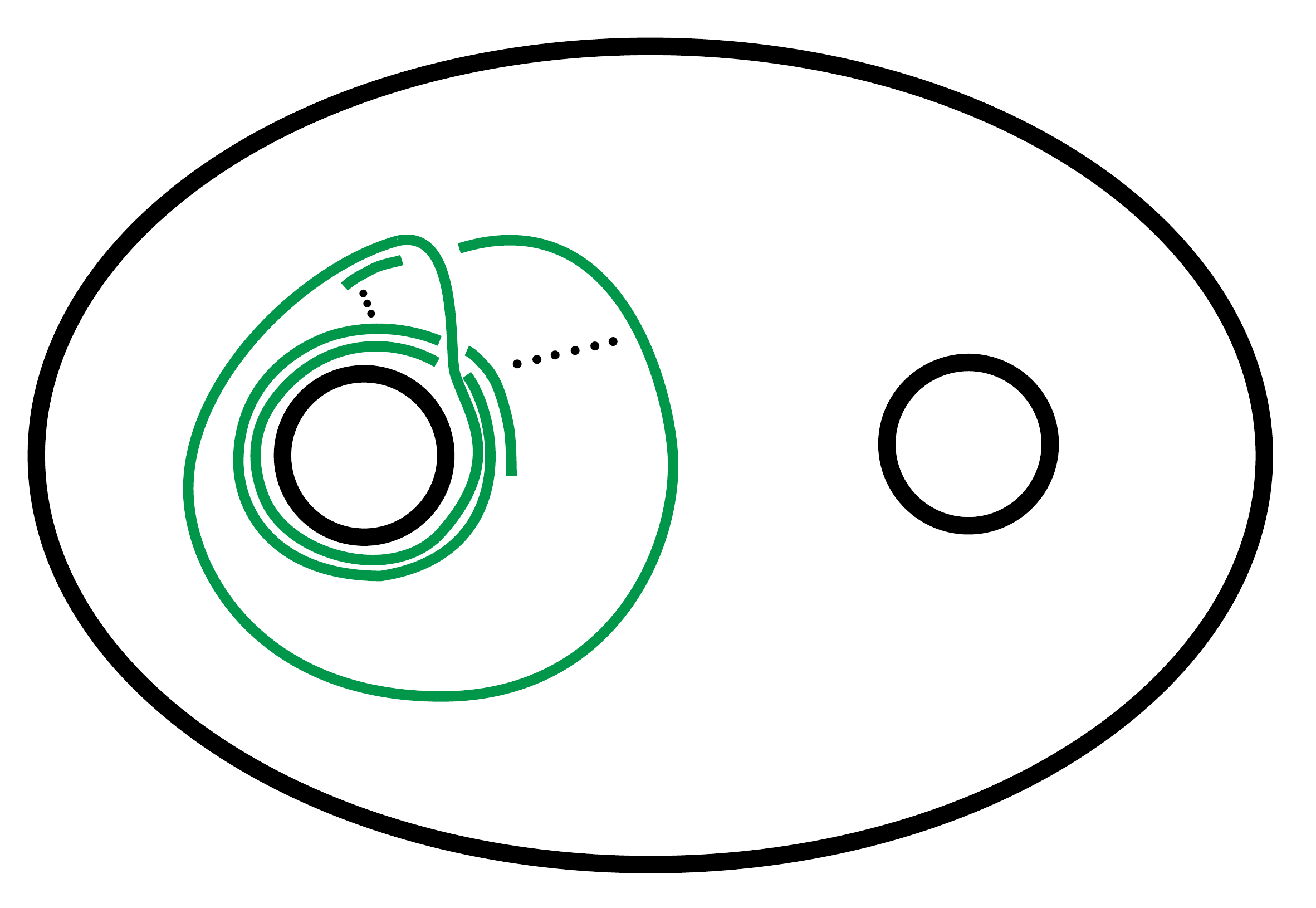} 
\put(48,42){\tiny{${m}$}}
\end{overpic}}} 
 = A \vcenter{\hbox{\begin{overpic}[scale=.1]{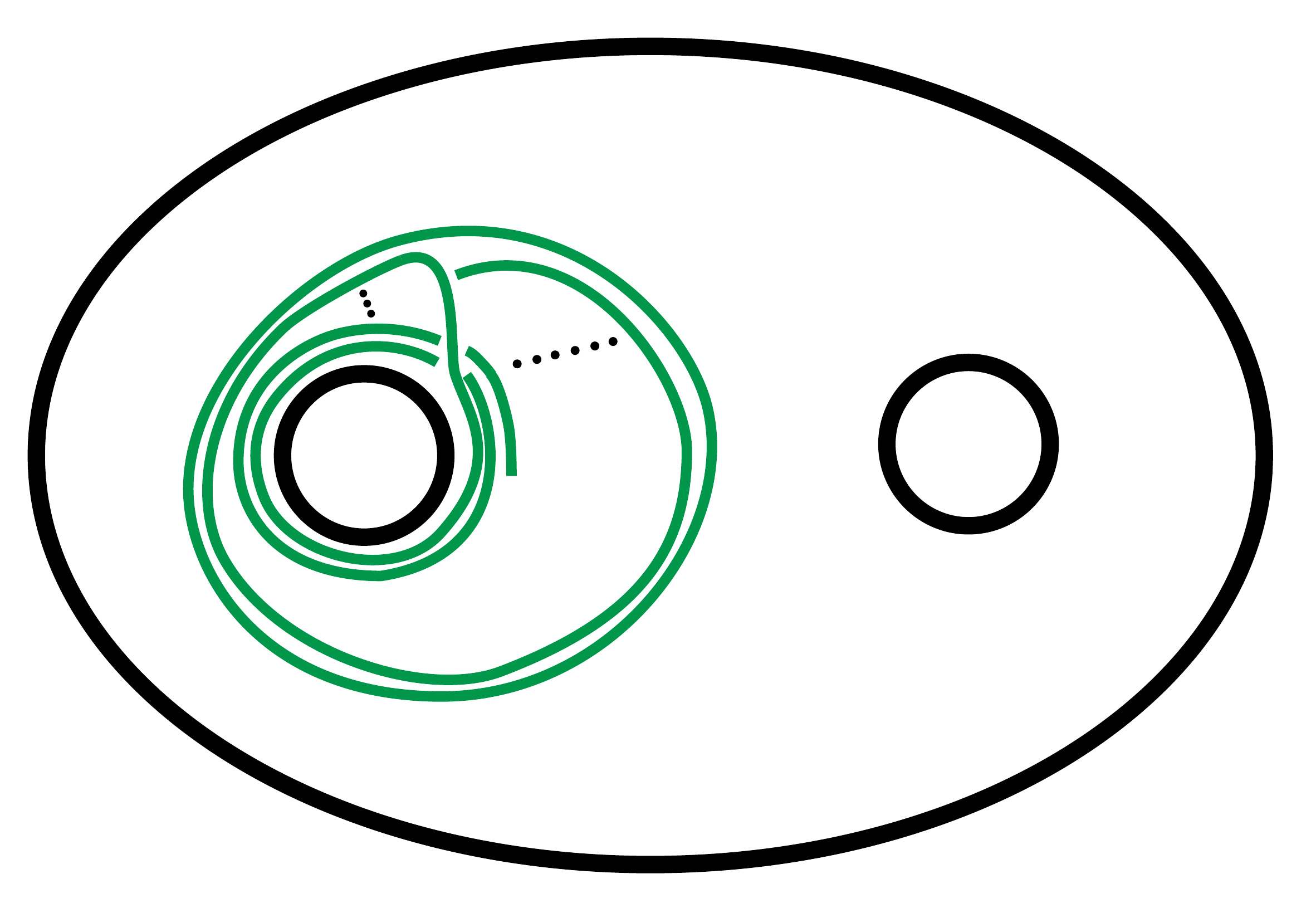}
\end{overpic}}} + A^{-1}
\vcenter{\hbox{\begin{overpic}[scale=.1]{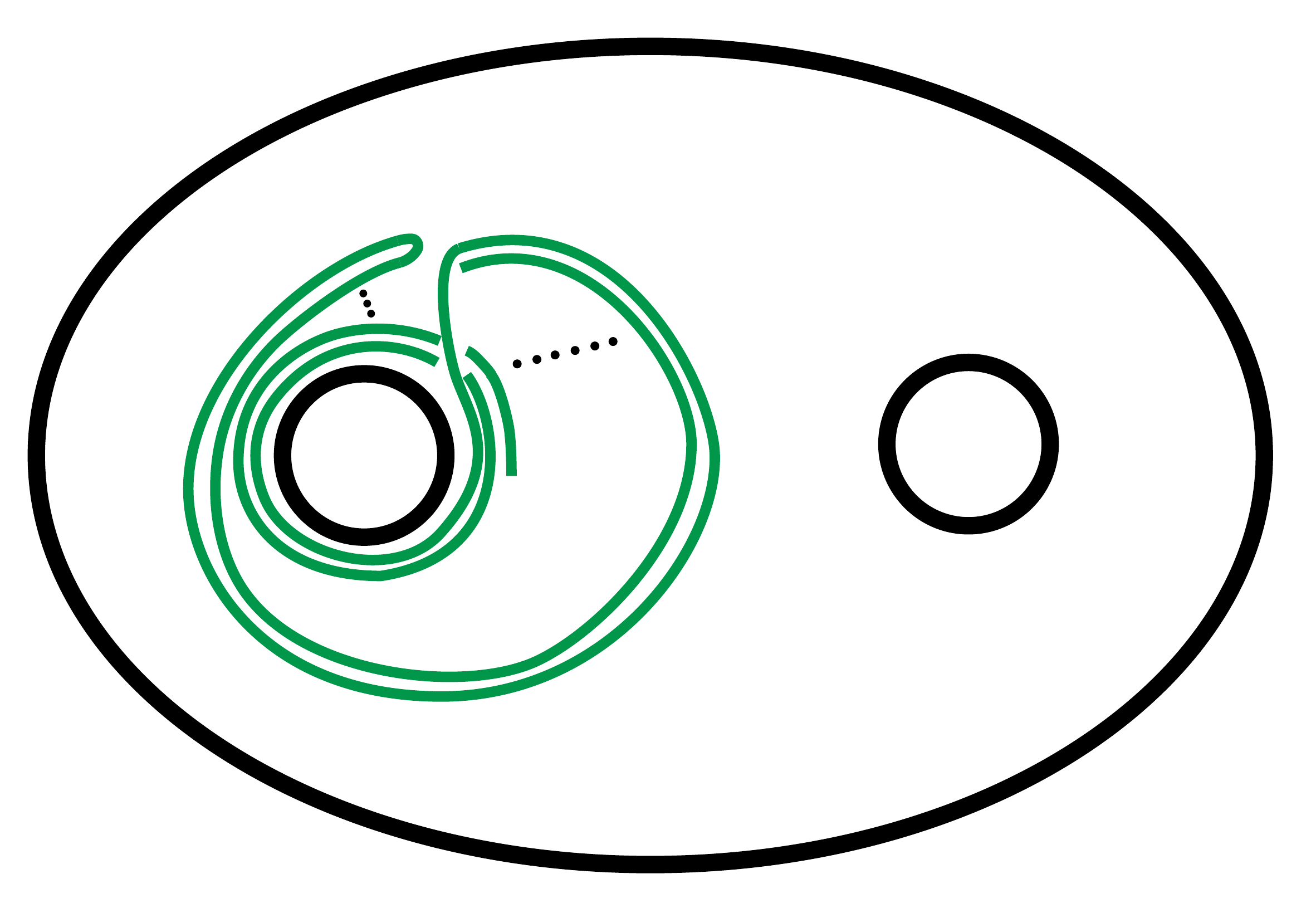}
\end{overpic}}}\\ 
& = & A \vcenter{\hbox{\begin{overpic}[scale=.1]{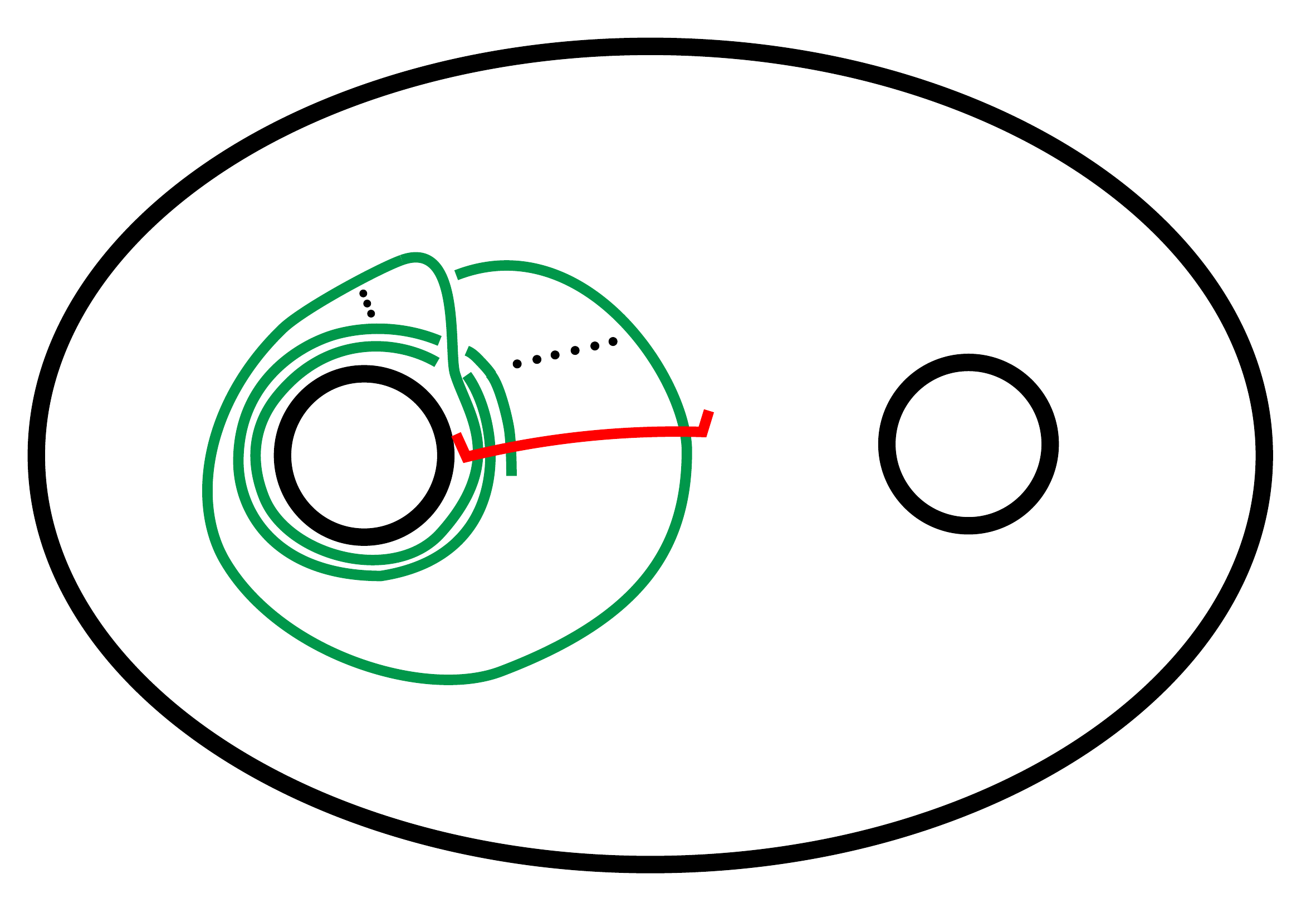}
\put(42,35){\tiny{${m-1}$}}
\end{overpic}}} + A^{-1}
\vcenter{\hbox{\begin{overpic}[scale=.1]{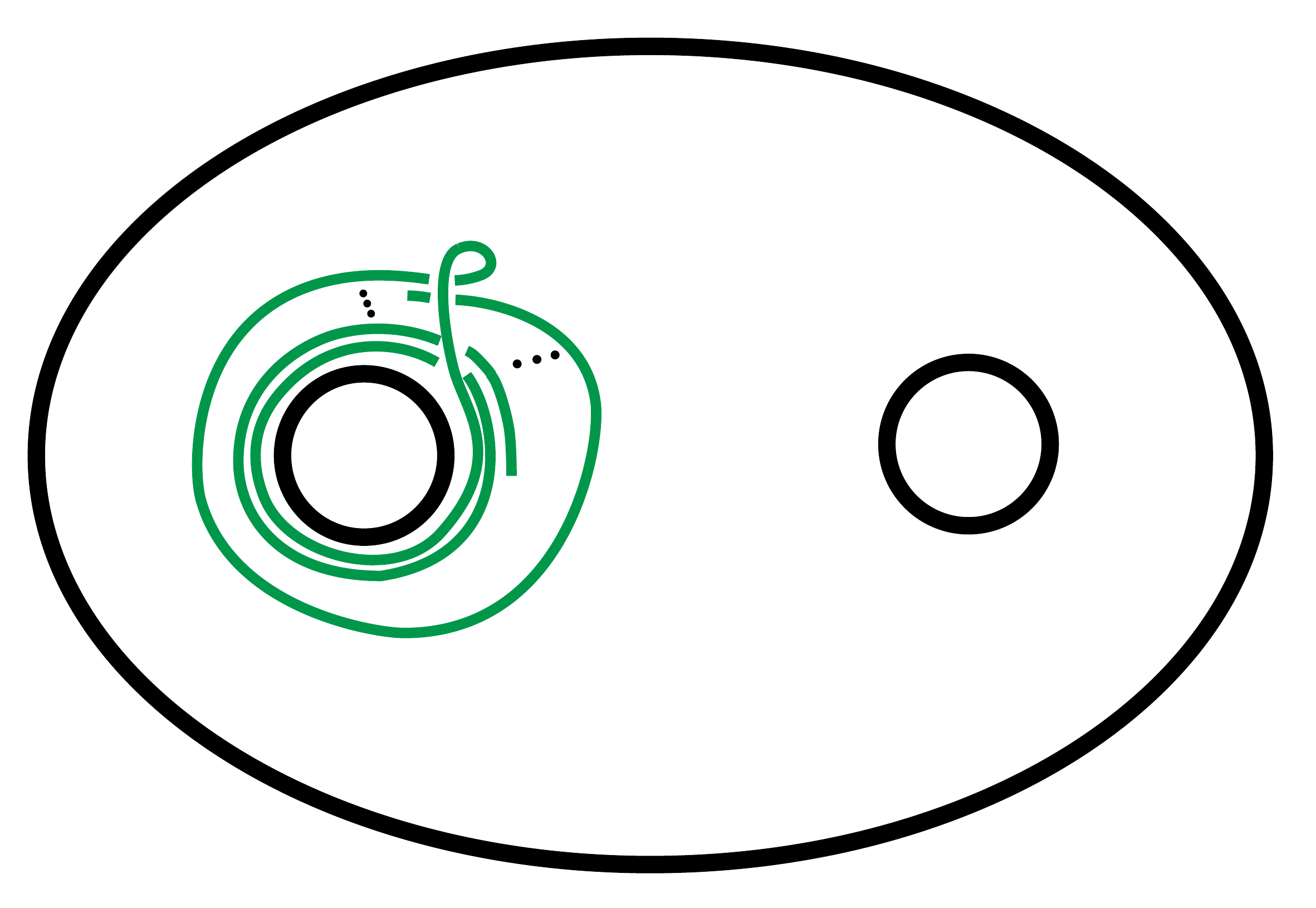}
\end{overpic}}}\\ 
& = & A \vcenter{\hbox{\begin{overpic}[scale=.1]{Pm,0-A-2.pdf}
\put(42,35){\tiny{${m-1}$}}
\end{overpic}}} - A^{2}
\vcenter{\hbox{\begin{overpic}[scale=.1]{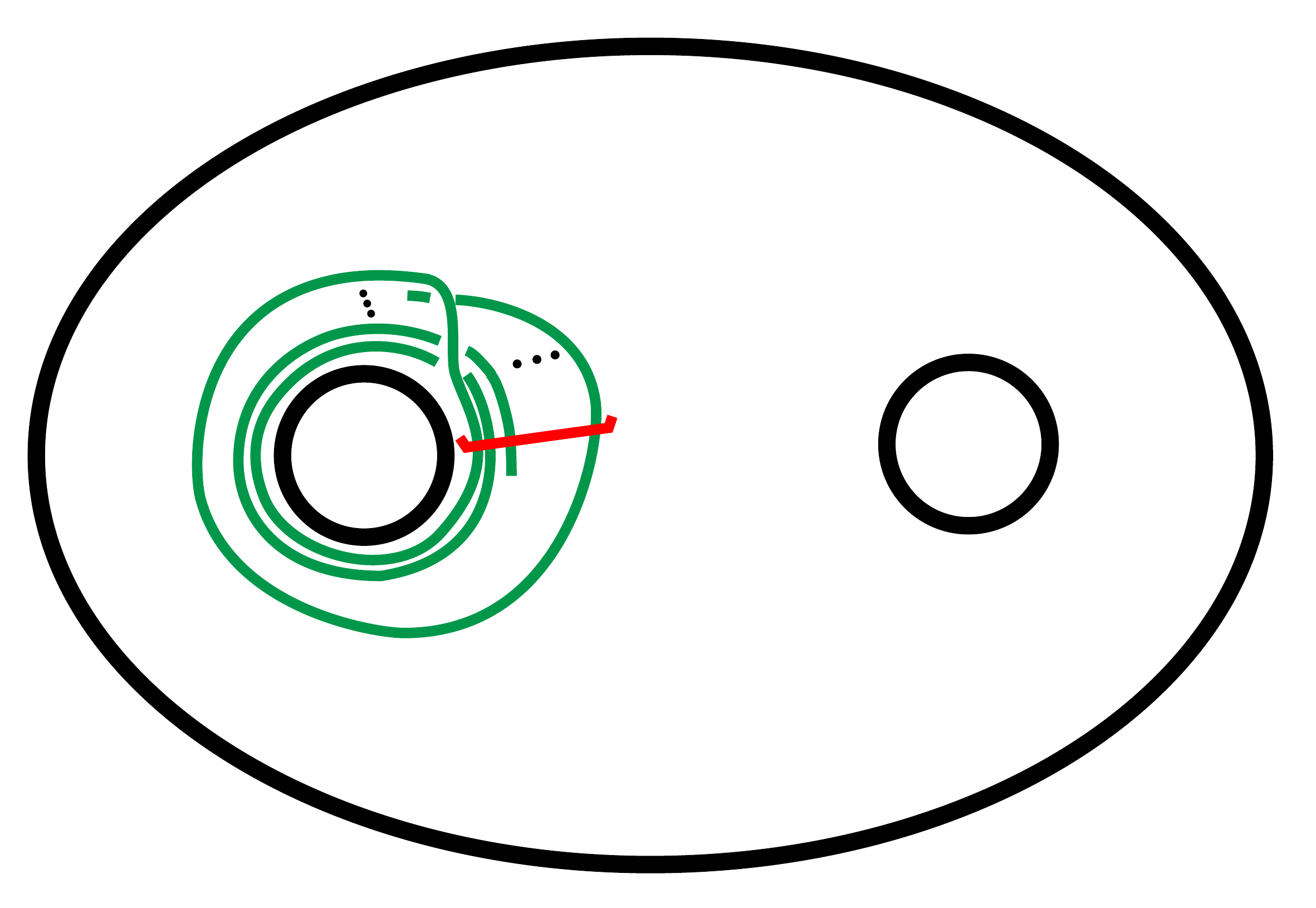}
\put(42,35){\tiny{${m-2}$}}
\end{overpic}}}\\
&=& AP(m-1,0)-A^{2}P(m-2,0).
\end{eqnarray*}
    \caption{$P(m,0)=AP(m-1,0)-A^{2}P(m-2,0)$ for $m\geq 2$. \\ \ \\ \ \\}
    \label{fig:cal-P(m,0)}
\end{figure}
\begin{figure}[H]
    \centering
    \begin{eqnarray*}
    P(m,n) & = &
\vcenter{\hbox{\begin{overpic}[scale=.1]{Pm,n.pdf} 
\end{overpic}}} 
 =  A \vcenter{\hbox{\begin{overpic}[scale=.1]{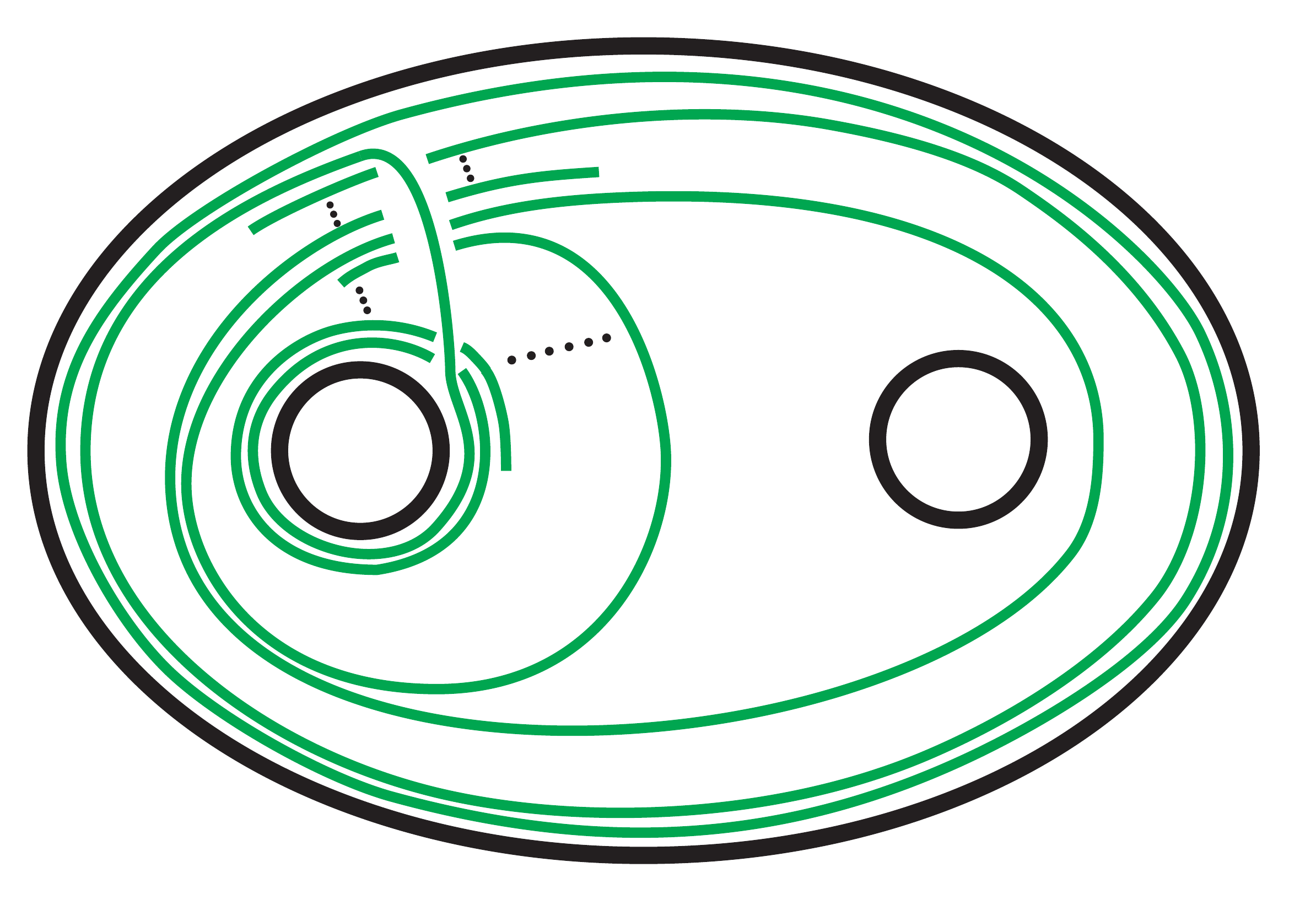}
\end{overpic}}} + A^{-1}
\vcenter{\hbox{\begin{overpic}[scale=.1]{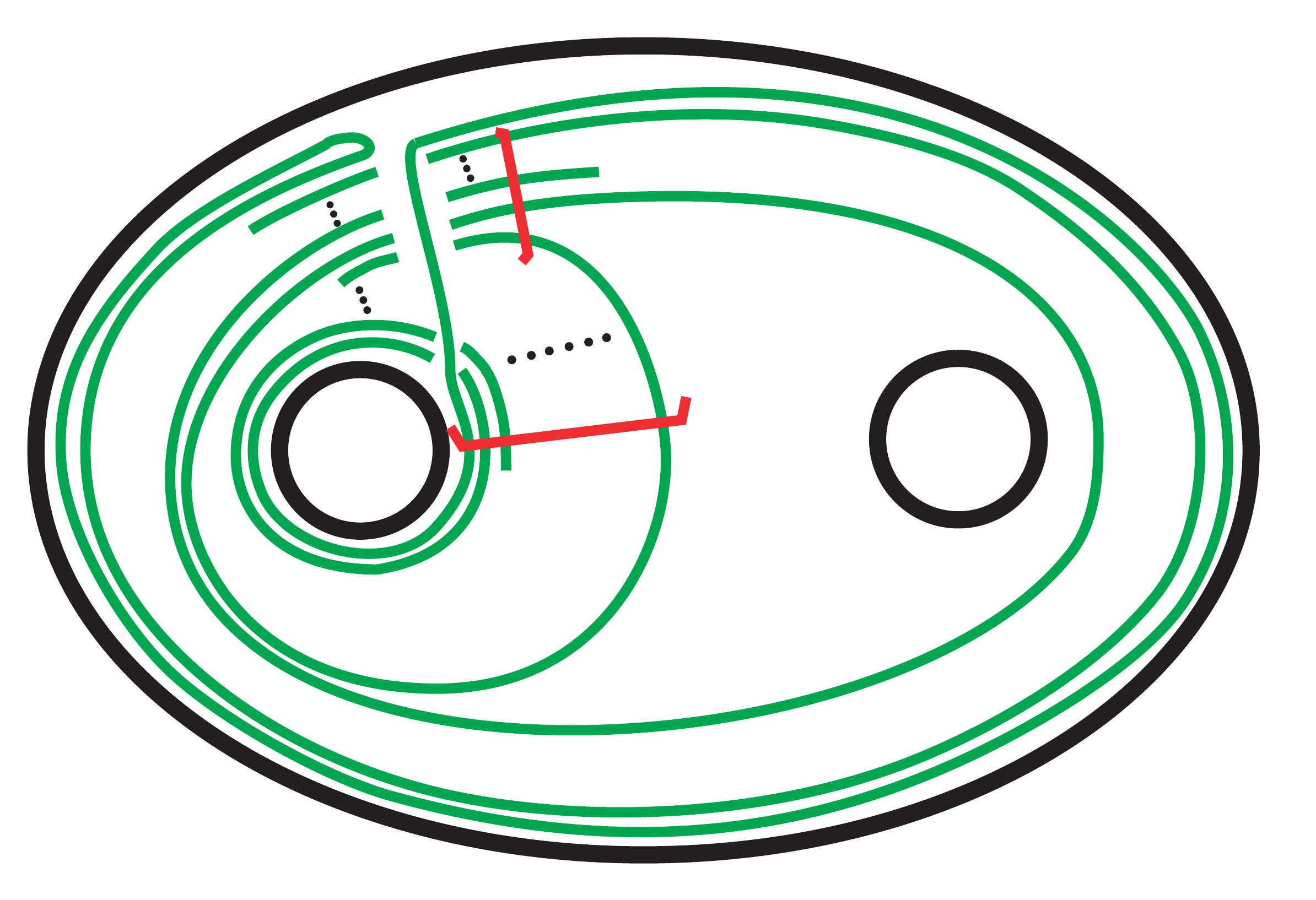}
\put(42,35){\tiny{${m}$}}
\put(50,63){\tiny{${n-2}$}}
\end{overpic}}}\\ 
& = & A \vcenter{\hbox{\begin{overpic}[scale=.1]{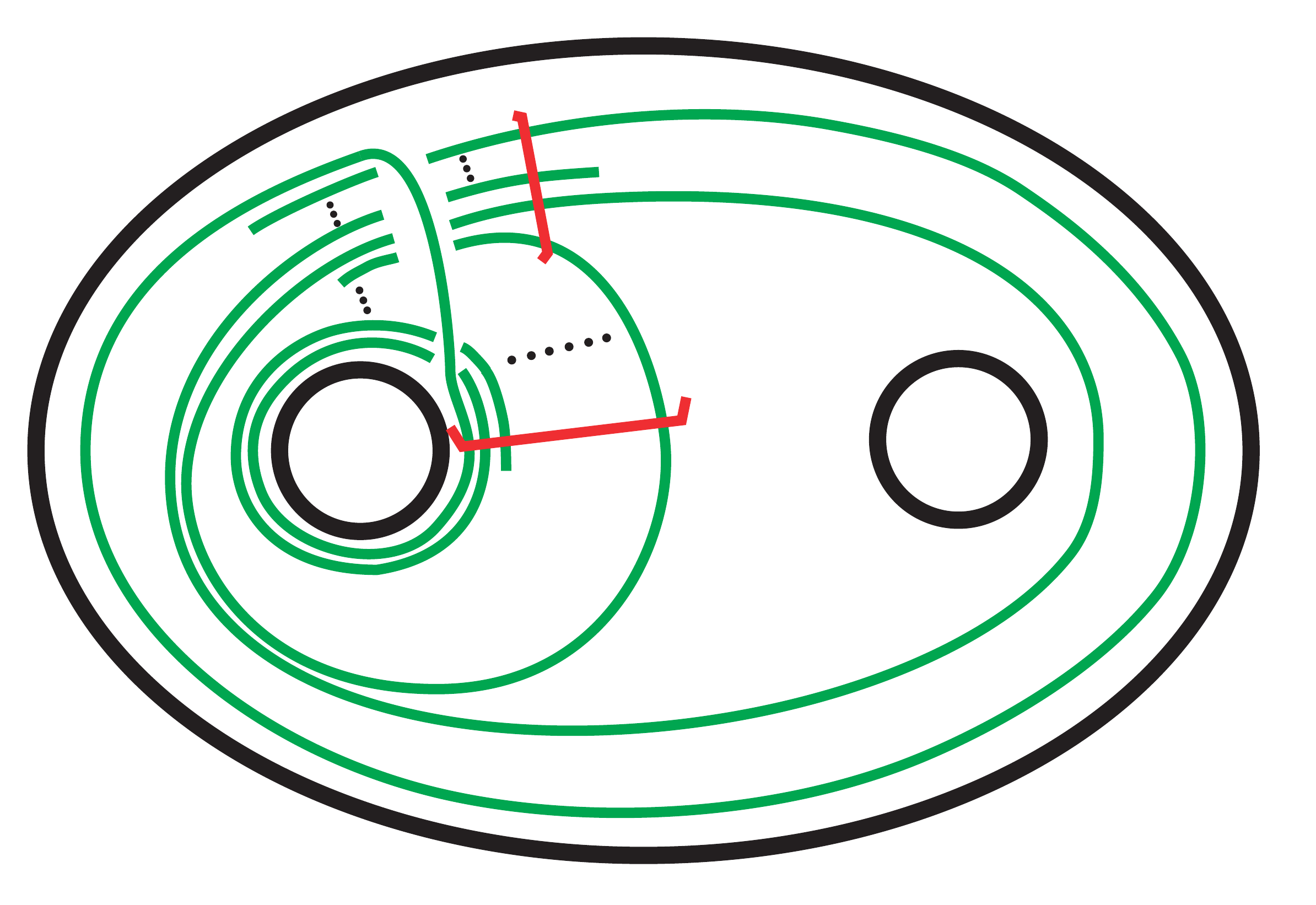}
\put(42,35){\tiny{${m}$}}
\put(50,63){\tiny{${n-1}$}}
\end{overpic}}}a_{3} + A^{-1}
\vcenter{\hbox{\begin{overpic}[scale=.1]{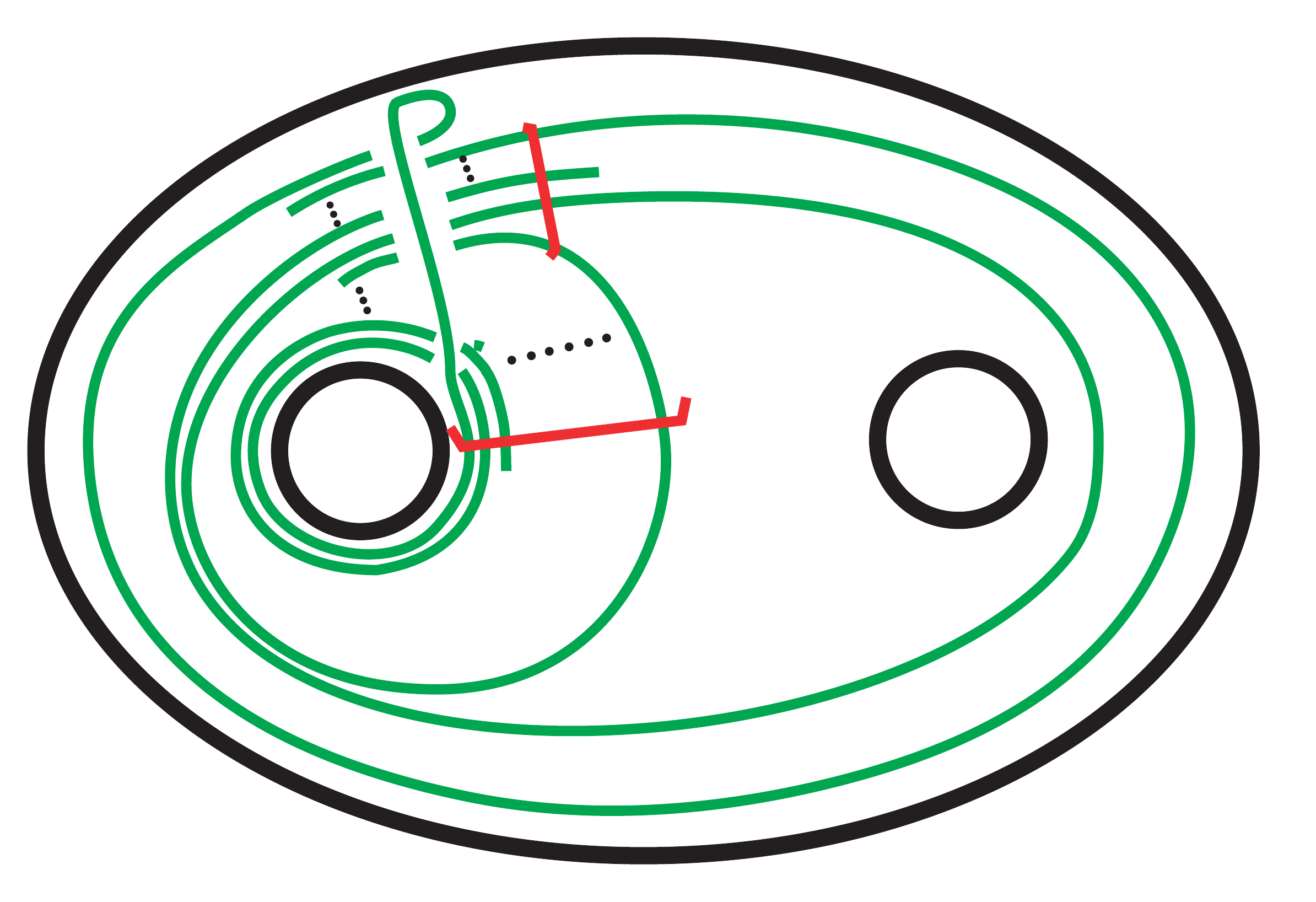}
\put(42,35){\tiny{${m}$}}
\put(50,63){\tiny{${n-2}$}}
\end{overpic}}}\\ 
& = & A \vcenter{\hbox{\begin{overpic}[scale=.1]{Pm,n-A-2.pdf}
\put(42,35){\tiny{${m}$}}
\put(50,63){\tiny{${n-1}$}}
\end{overpic}}}a_{3} - A^{2}
\vcenter{\hbox{\begin{overpic}[scale=.1]{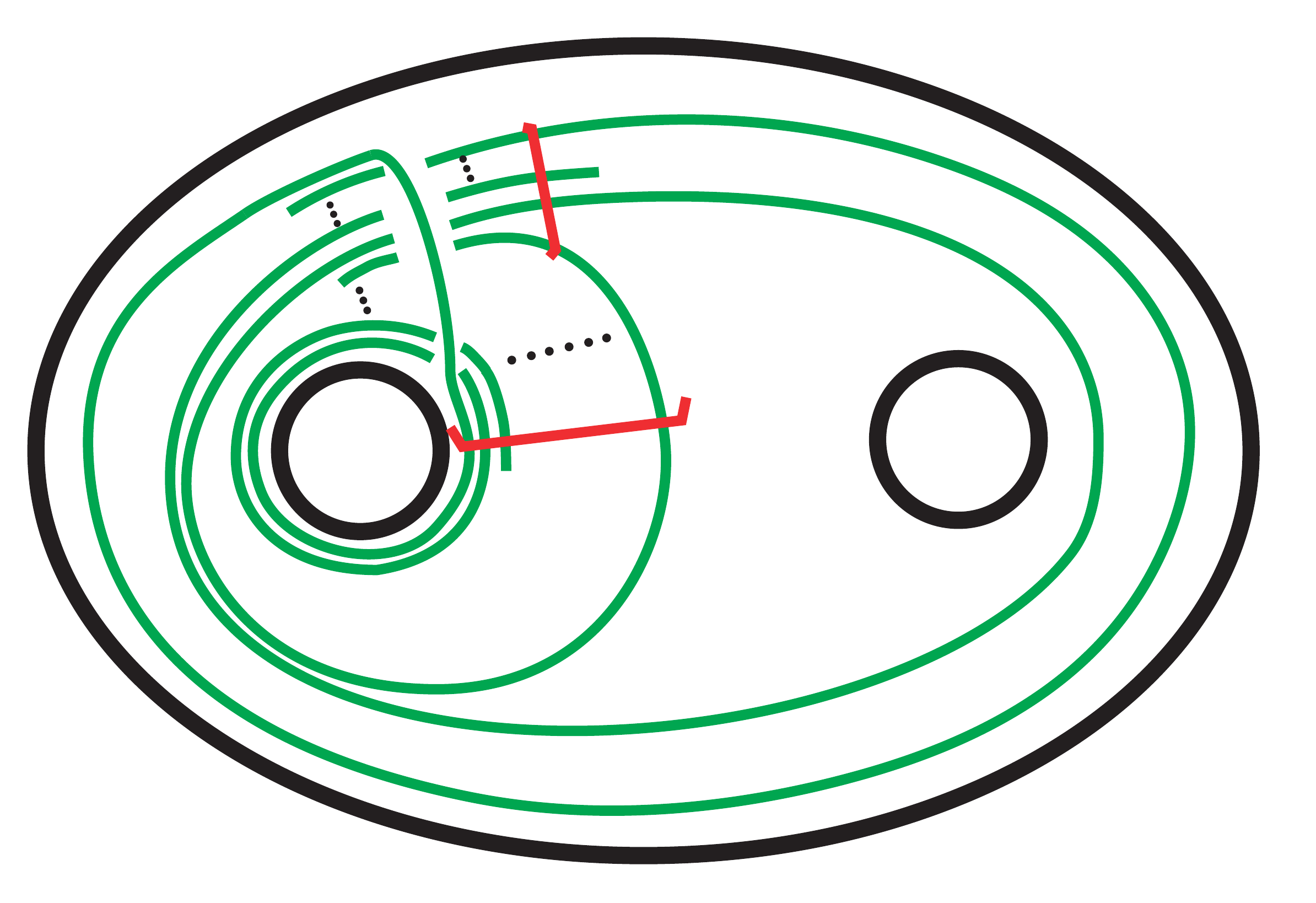}
\put(42,35){\tiny{${m}$}}
\put(50,63){\tiny{${n-2}$}}
\end{overpic}}}\\
&=& AP(m,n-1)a_{3}-A^{2}P(m,n-2).
\end{eqnarray*}
\caption{$P(m,n)=AP(m,n-1)a_{3}-A^{2}P(m,n-2)$ for $n\geq 2$.}\label{fig:cal-P(m,n)}
\end{figure}

\begin{center}
\begin{figure}[H]
\vspace{2.8cm}
    \centering
    \begin{eqnarray*}
    P(m,1) & = &
\vcenter{\hbox{\begin{overpic}[scale=.1]{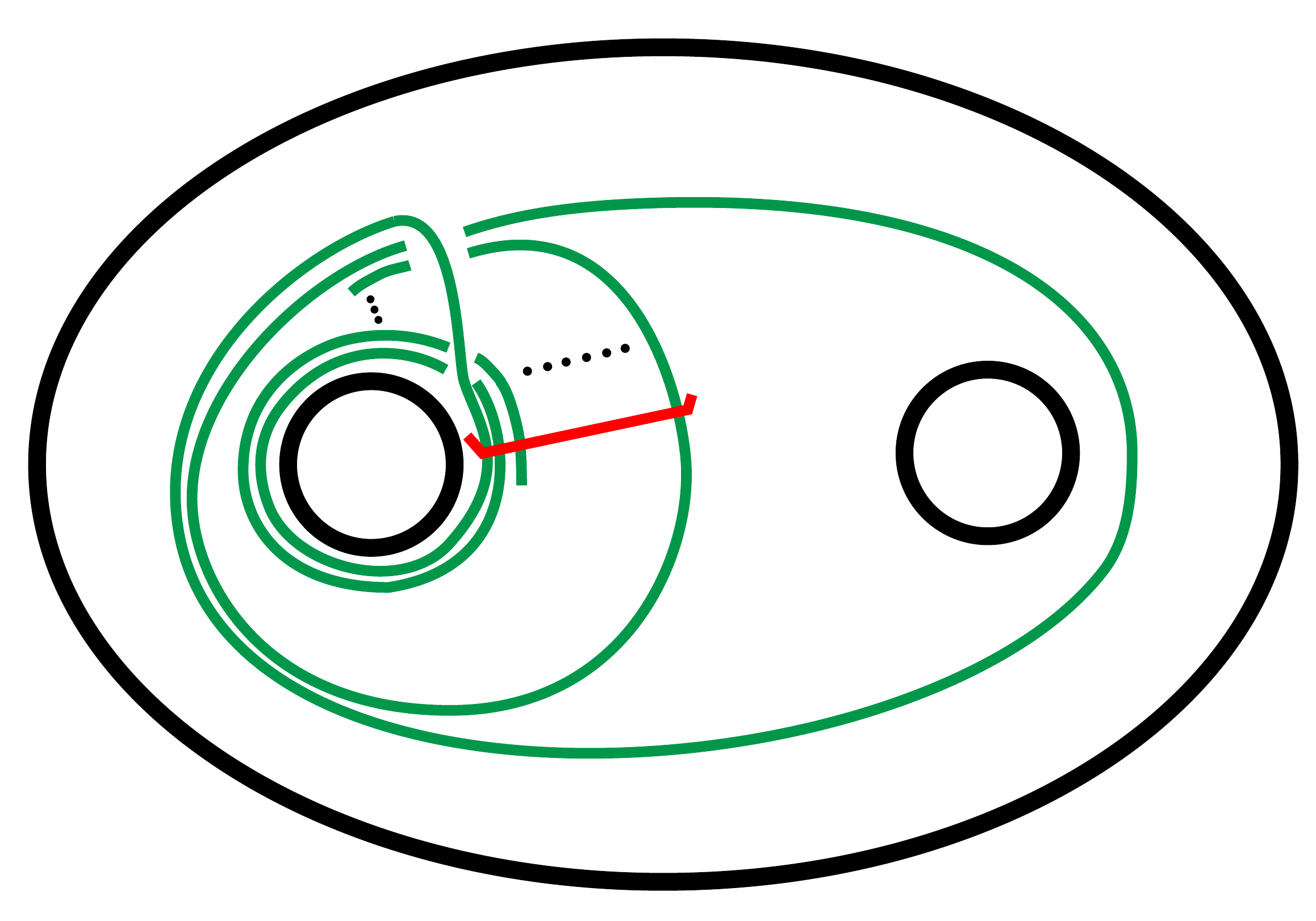} 
\put(48,42){\tiny{${m}$}}
\end{overpic}}} \\
 &=& A \vcenter{\hbox{\begin{overpic}[scale=.1]{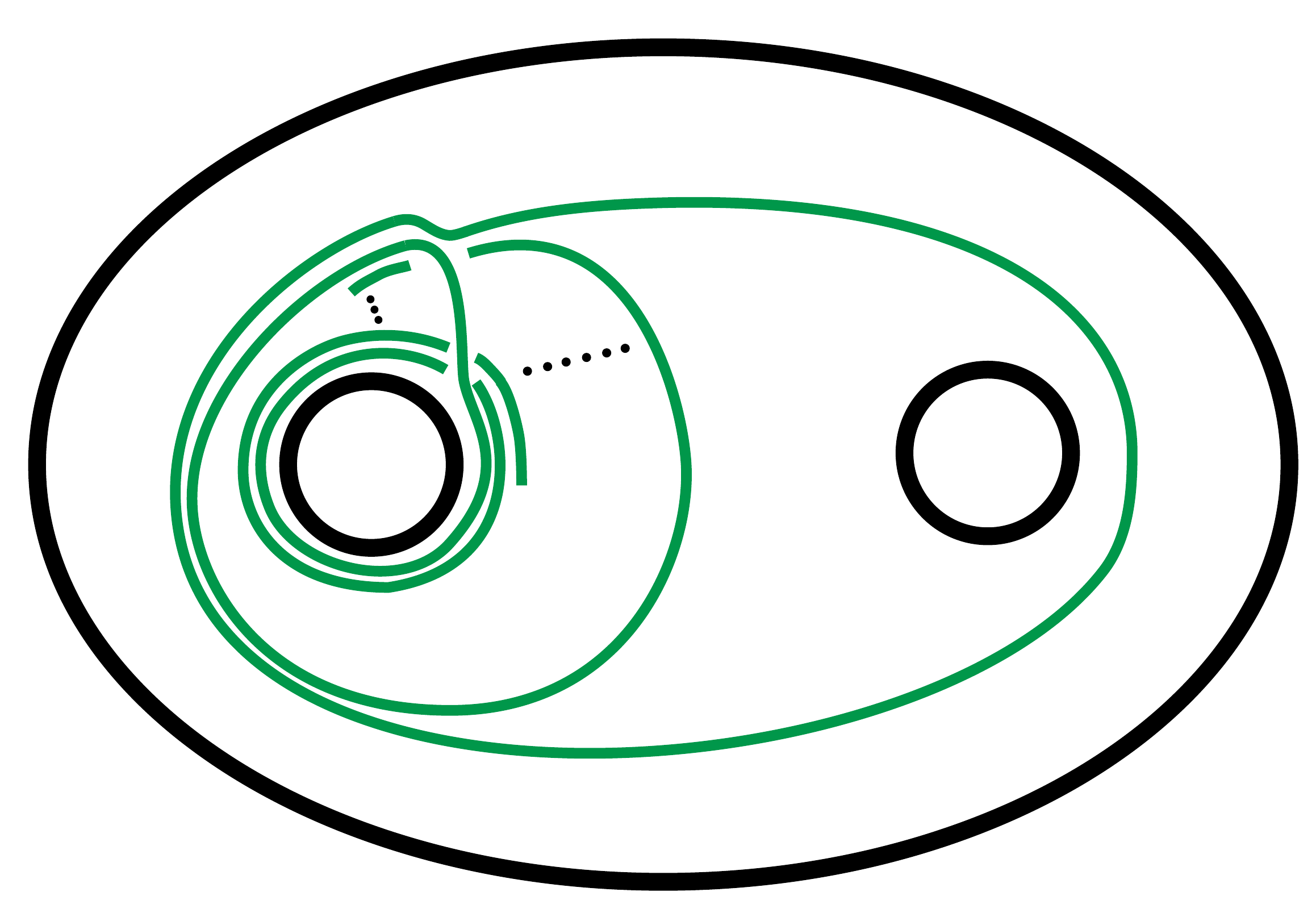}
\end{overpic}}} + A^{-1}
\vcenter{\hbox{\begin{overpic}[scale=.1]{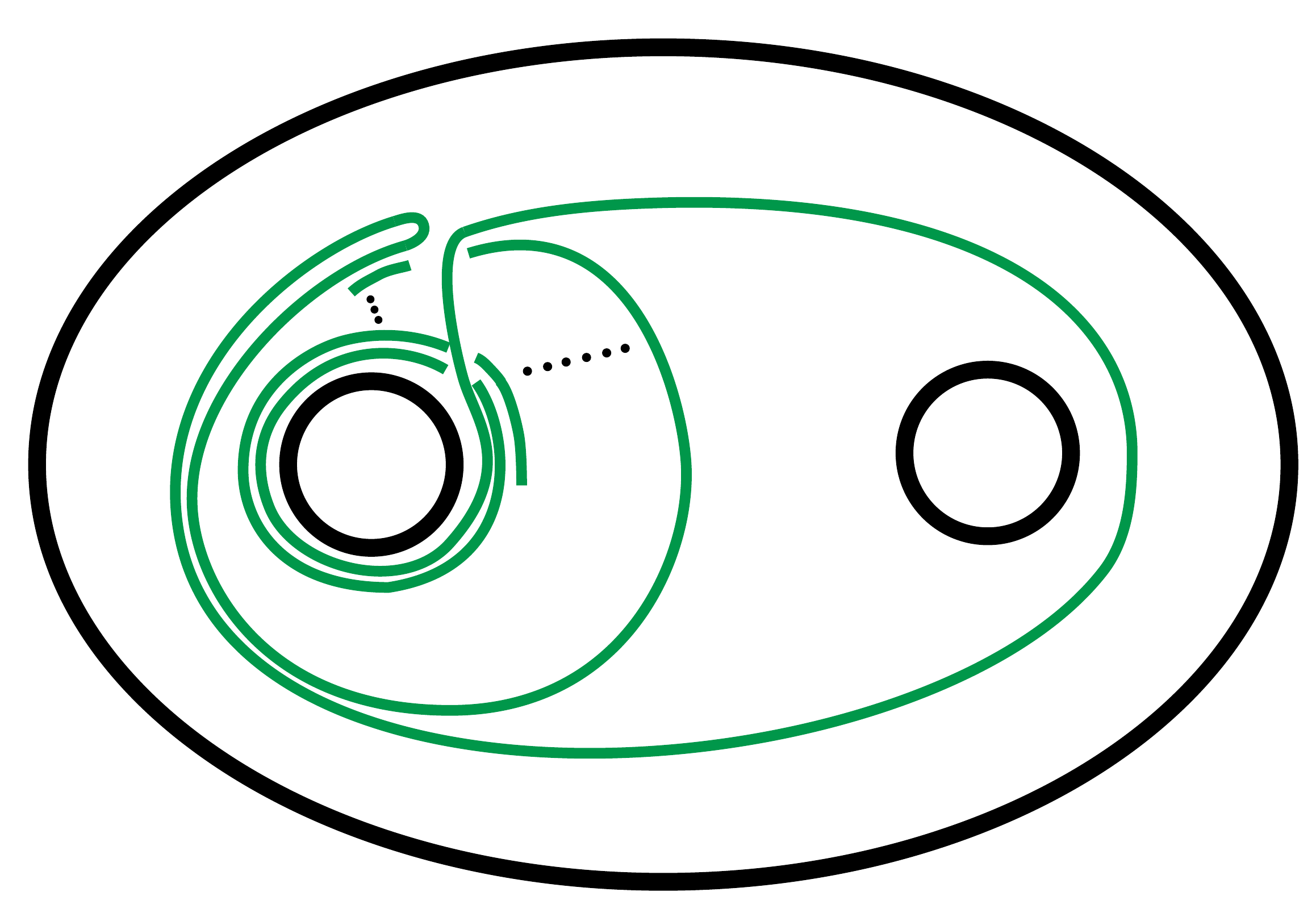}
\end{overpic}}}\\ 
& = & A \vcenter{\hbox{\begin{overpic}[scale=.1]{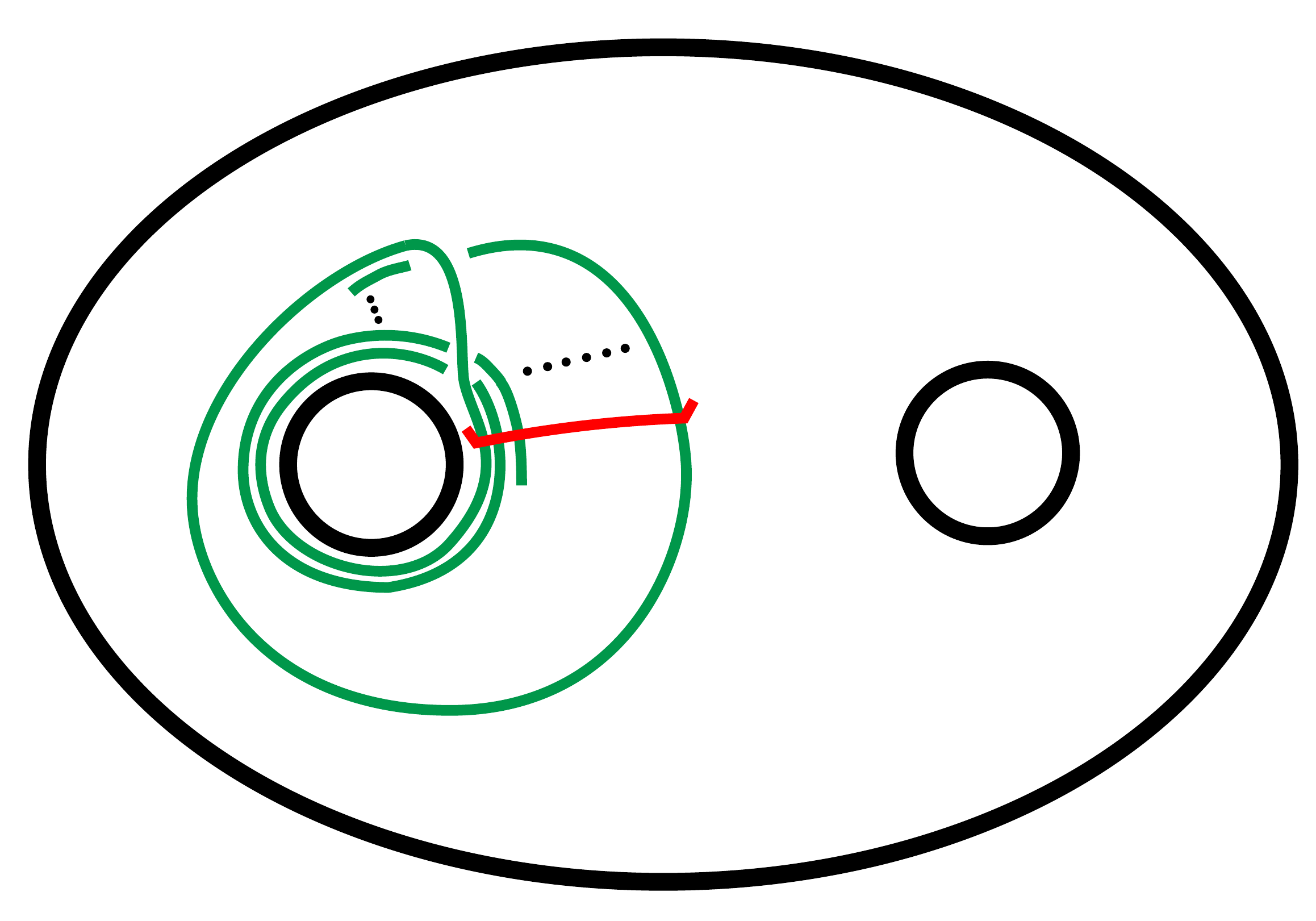}
\put(44,35){\tiny{${m}$}}
\end{overpic}}}a_{3} + A^{-1}
\vcenter{\hbox{\begin{overpic}[scale=.1]{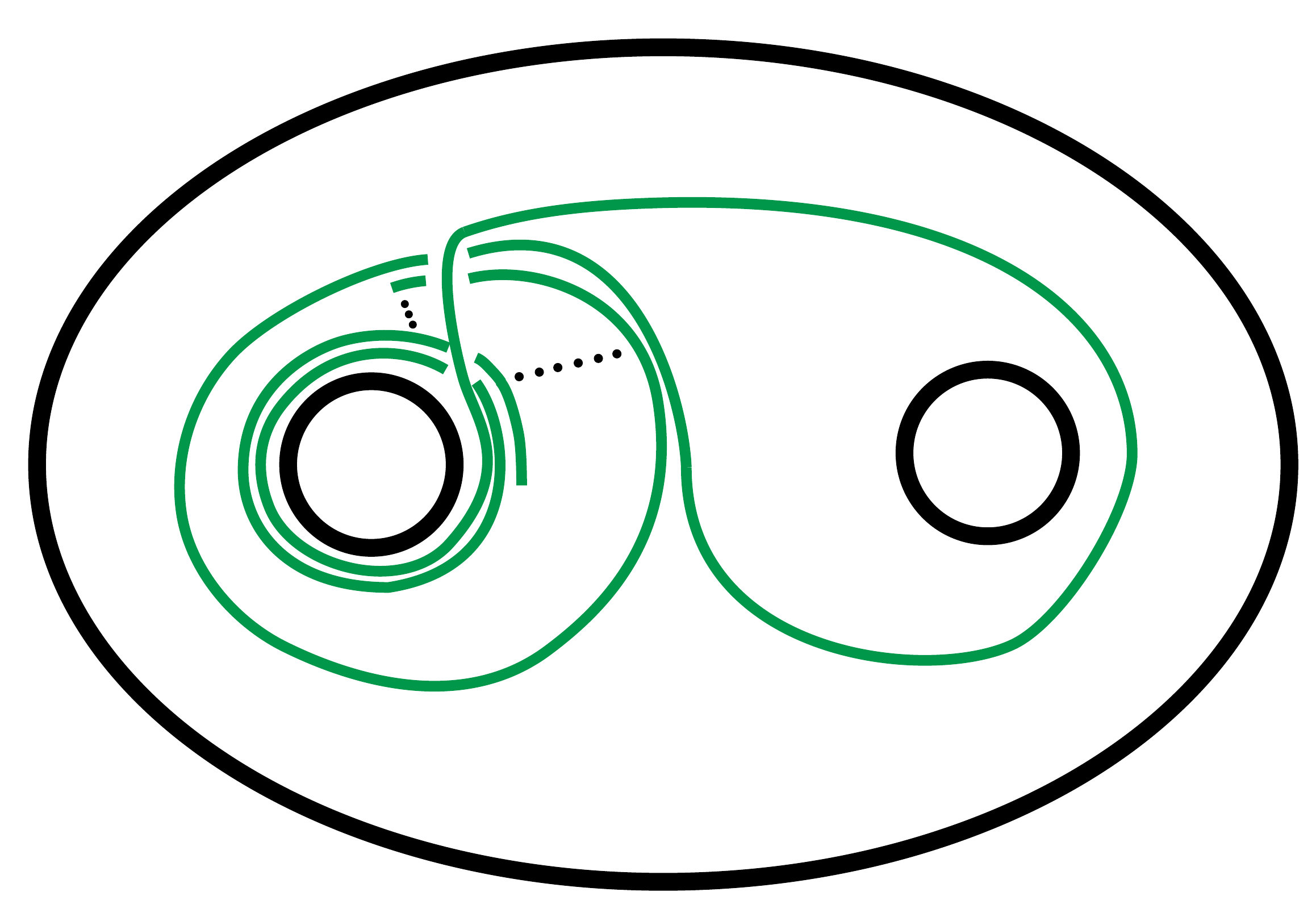}
\end{overpic}}}\\ 
& = & A\vcenter{\hbox{\begin{overpic}[scale=.1]{Pm,1-A-2.pdf}
\put(44,35){\tiny{${m}$}}
\end{overpic}}}a_{3} + 
\vcenter{\hbox{\begin{overpic}[scale=.1]{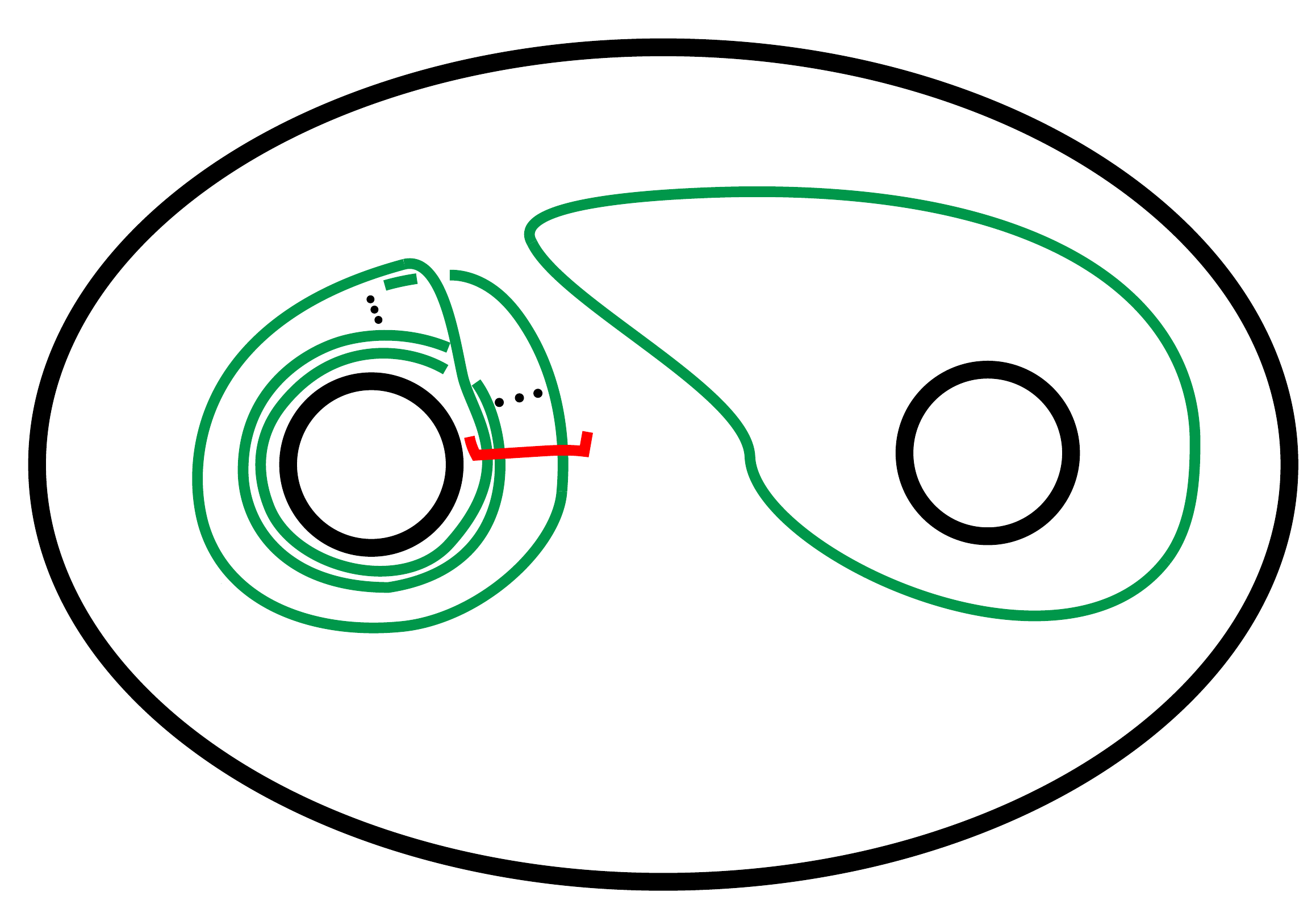}
\put(42,35){\tiny{${m-1}$}}
\end{overpic}}}+A^{-2}\vcenter{\hbox{\begin{overpic}[scale=.1]{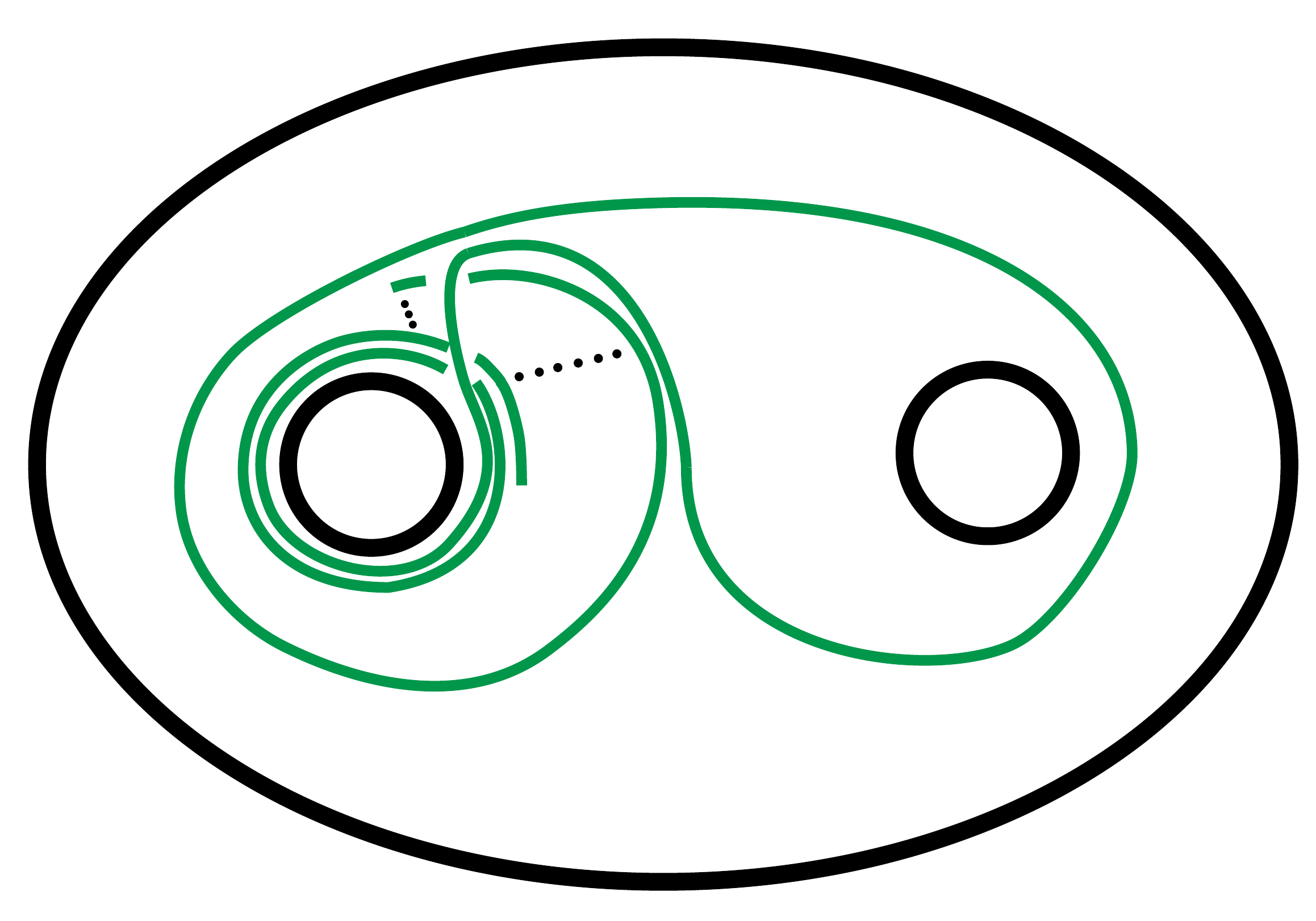}
\end{overpic}}}\\
& = & A\vcenter{\hbox{\begin{overpic}[scale=.1]{Pm,1-A-2.pdf}
\put(44,35){\tiny{${m}$}}
\end{overpic}}}a_{3} +
\vcenter{\hbox{\begin{overpic}[scale=.1]{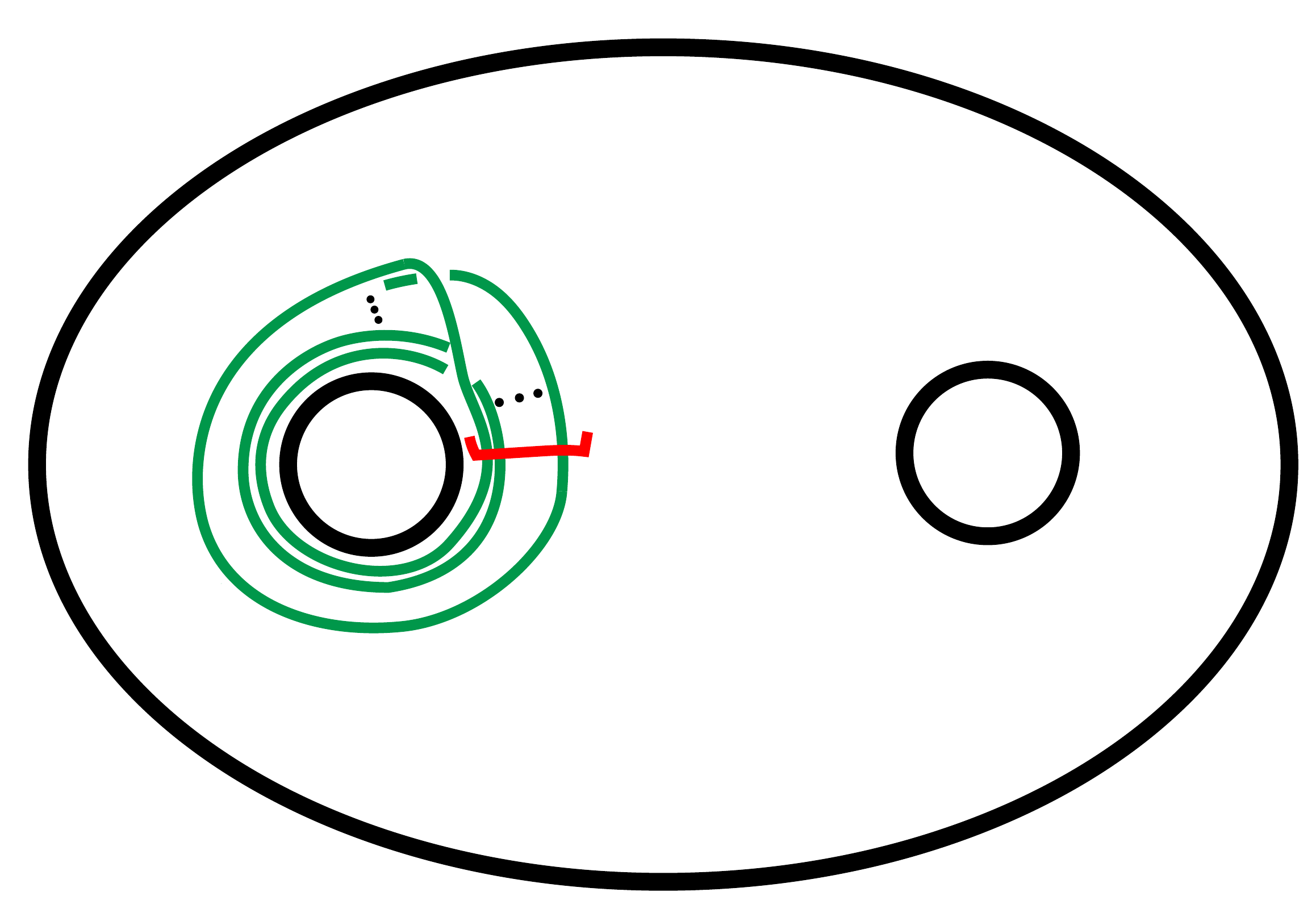}
\put(42,35){\tiny{${m-1}$}}
\end{overpic}}}a_{2}+A^{-2}\vcenter{\hbox{\begin{overpic}[scale=.1]{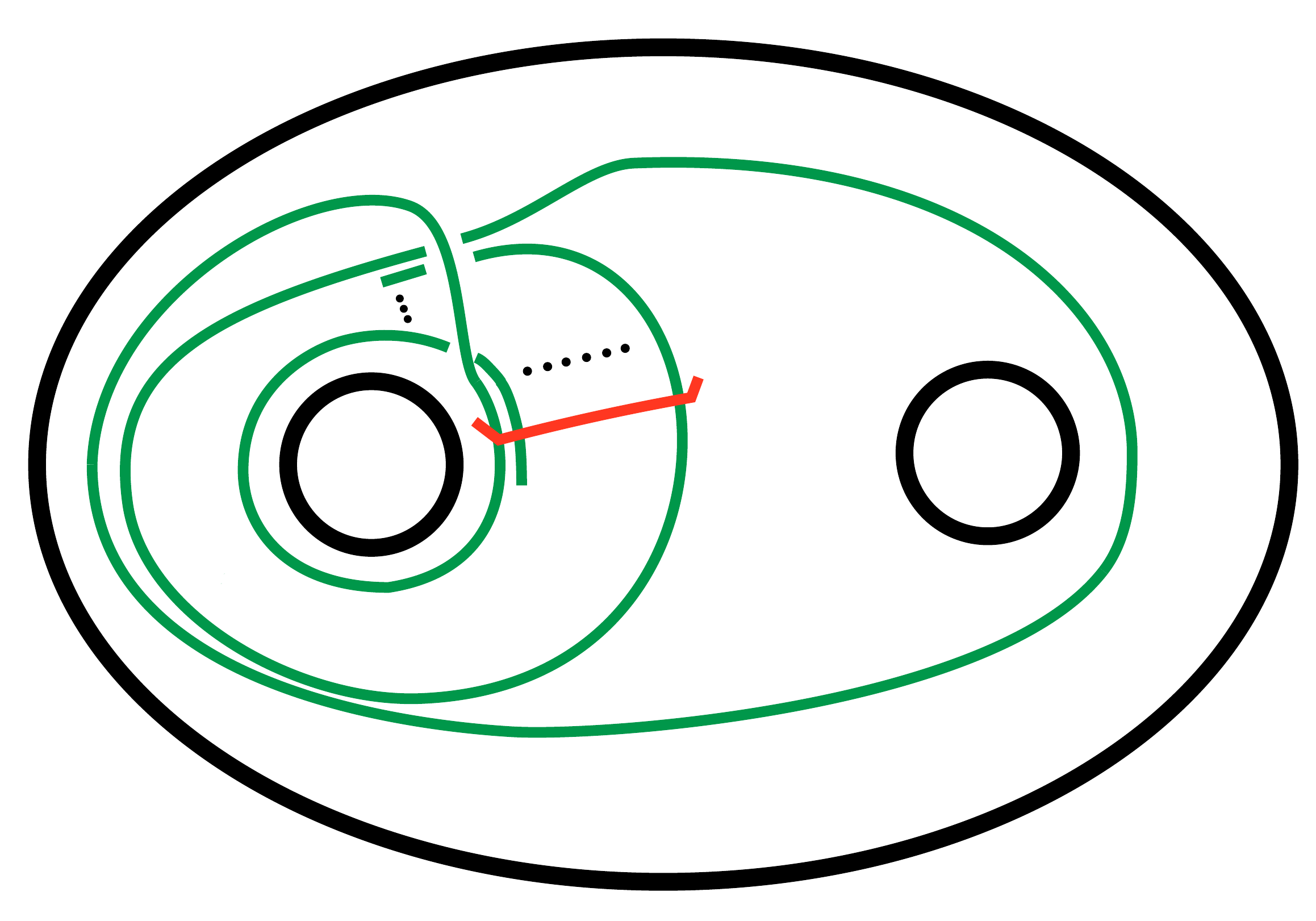}
\put(42,35){\tiny{${m-2}$}}
\end{overpic}}}\\
&=&AP(m,0)a_{3}+P(m-1)a_{2}+A^{-2}P(m-2,1).
\end{eqnarray*}
    \caption{$P(m,1)=AP(m,0)+P(m-1,0)a_{2}+A^{-2}P(m-2,1)$ for $m\geq 2$.}
    \label{fig:cal-P(m,1)}
\end{figure}
\end{center}

\begin{figure}[H]
    \centering
    \begin{eqnarray*}
    P(m,-1) & = &
\vcenter{\hbox{\begin{overpic}[scale=.1]{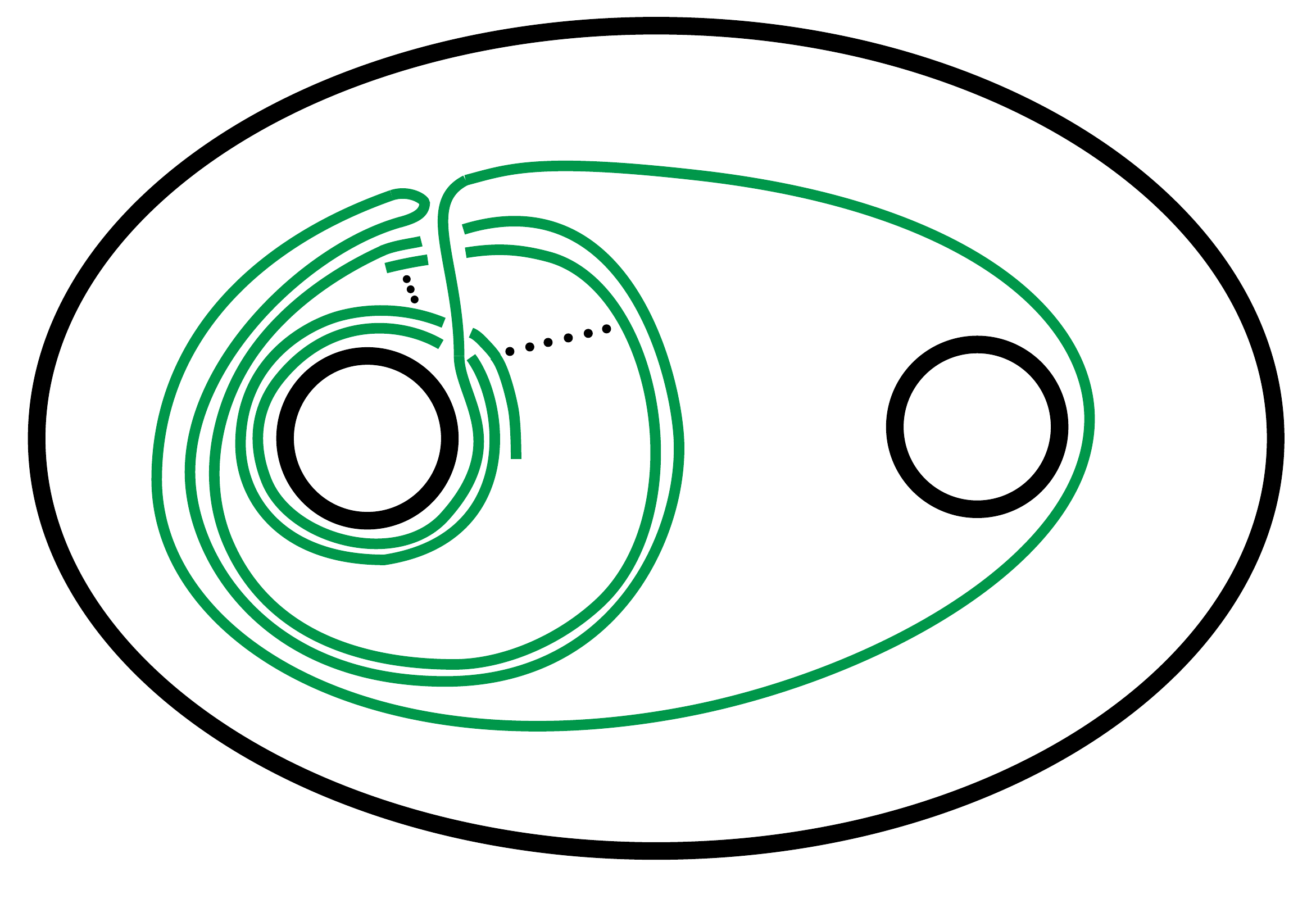} 
\end{overpic}}}=\vcenter{\hbox{\begin{overpic}[scale=.1]{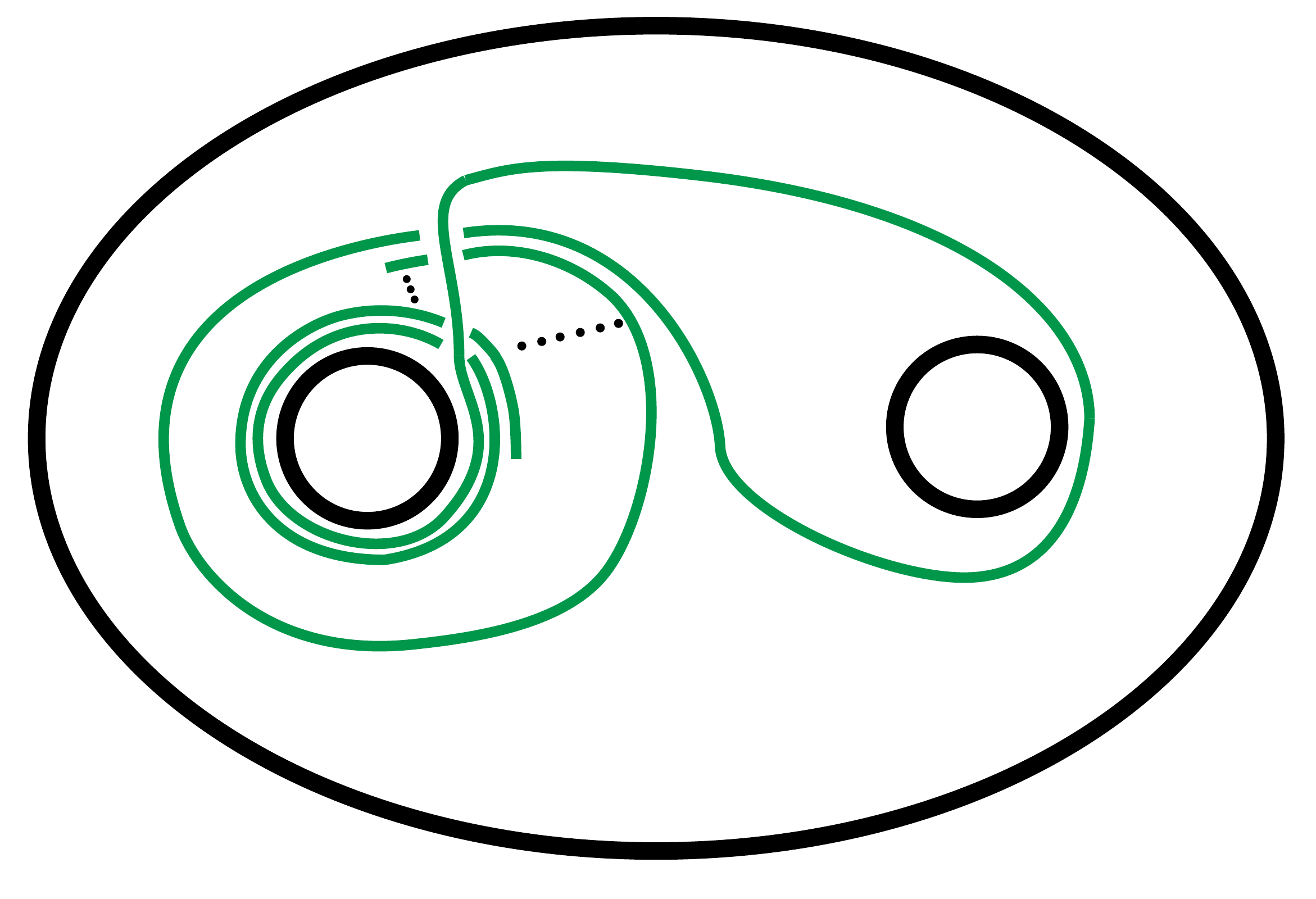} 
\end{overpic}}}\\ 
& = & A \vcenter{\hbox{\begin{overpic}[scale=.1]{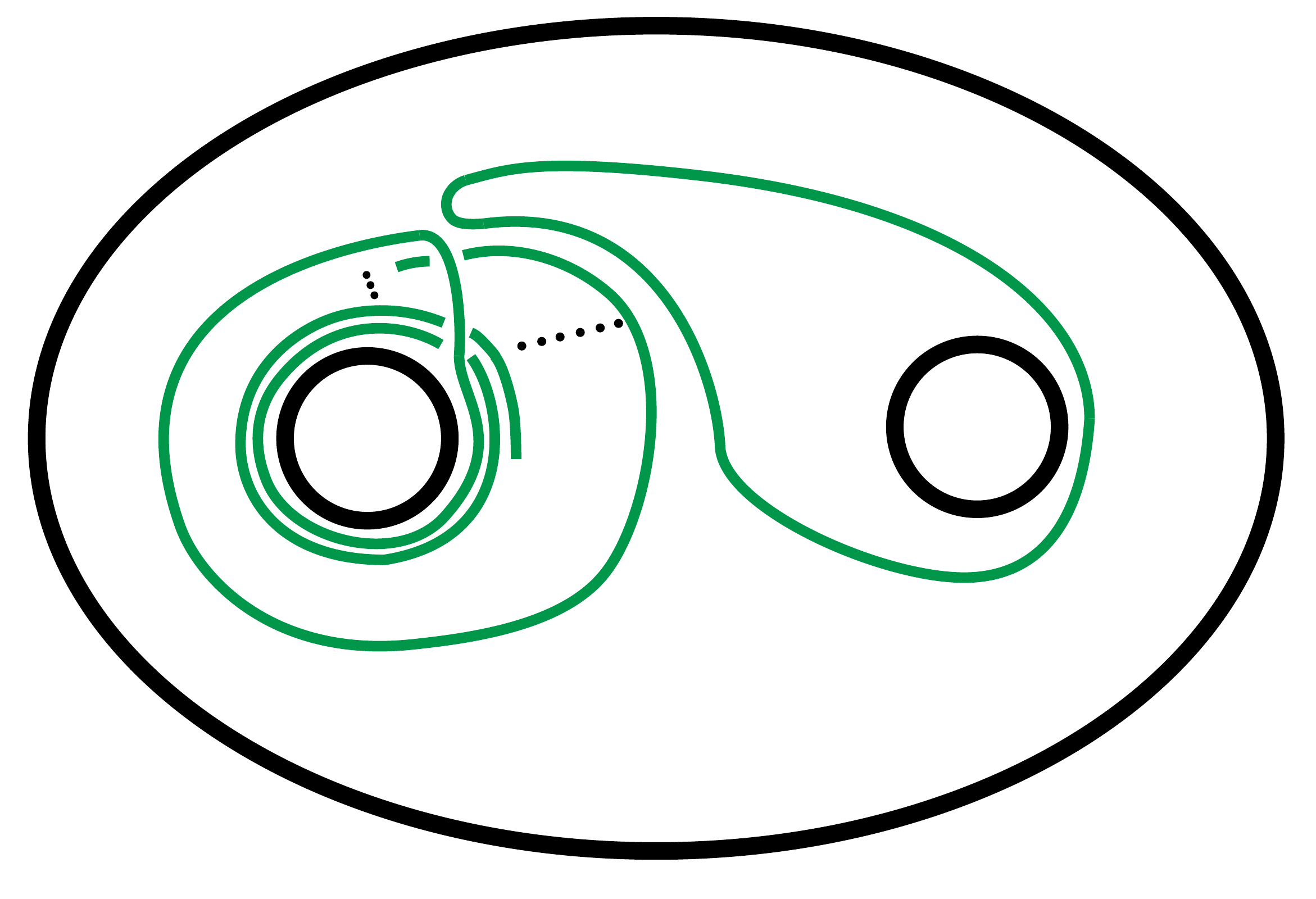}
\end{overpic}}} + A^{-1}
\vcenter{\hbox{\begin{overpic}[scale=.1]{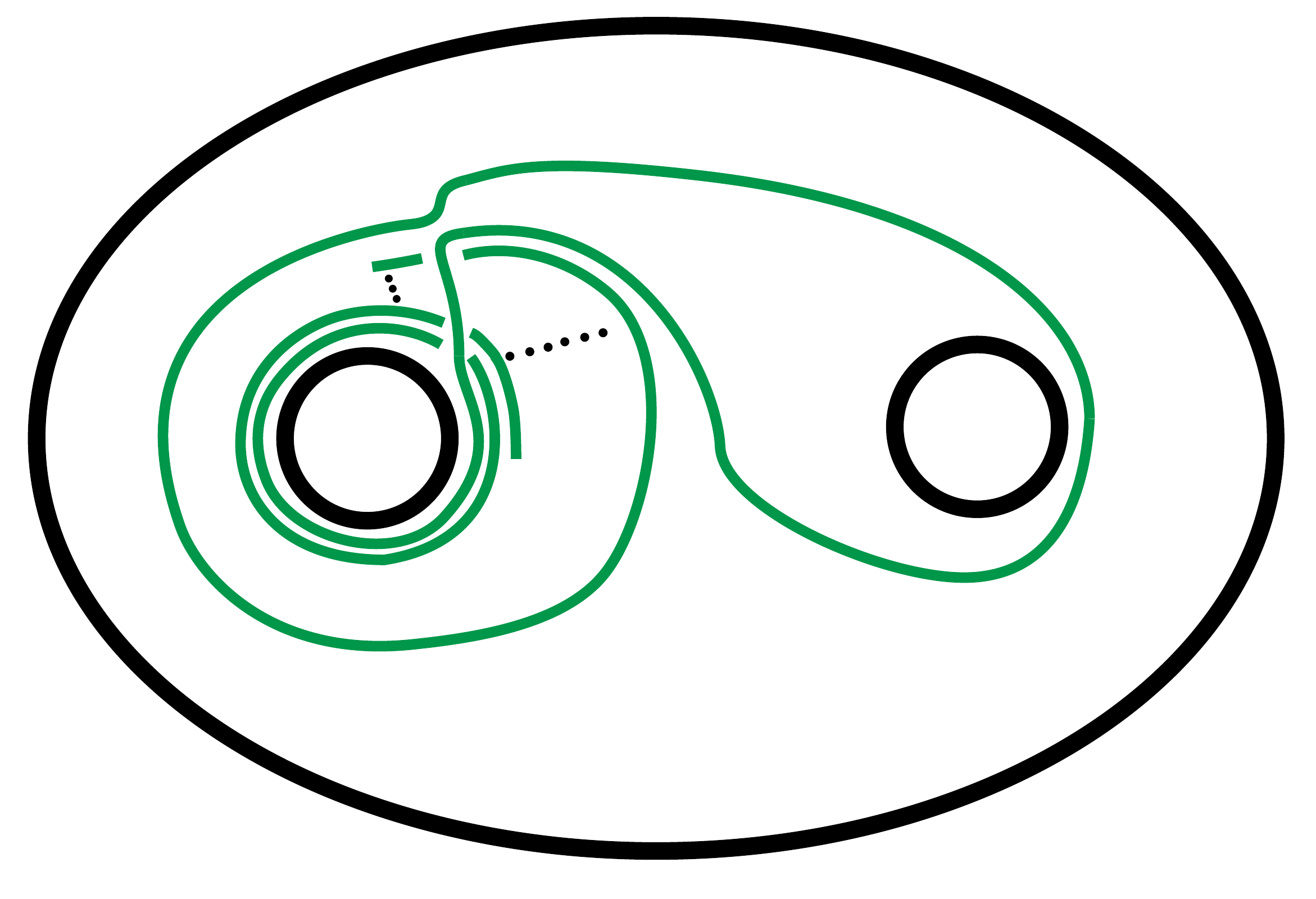}
\end{overpic}}}\\ 
& = & A \vcenter{\hbox{\begin{overpic}[scale=.1]{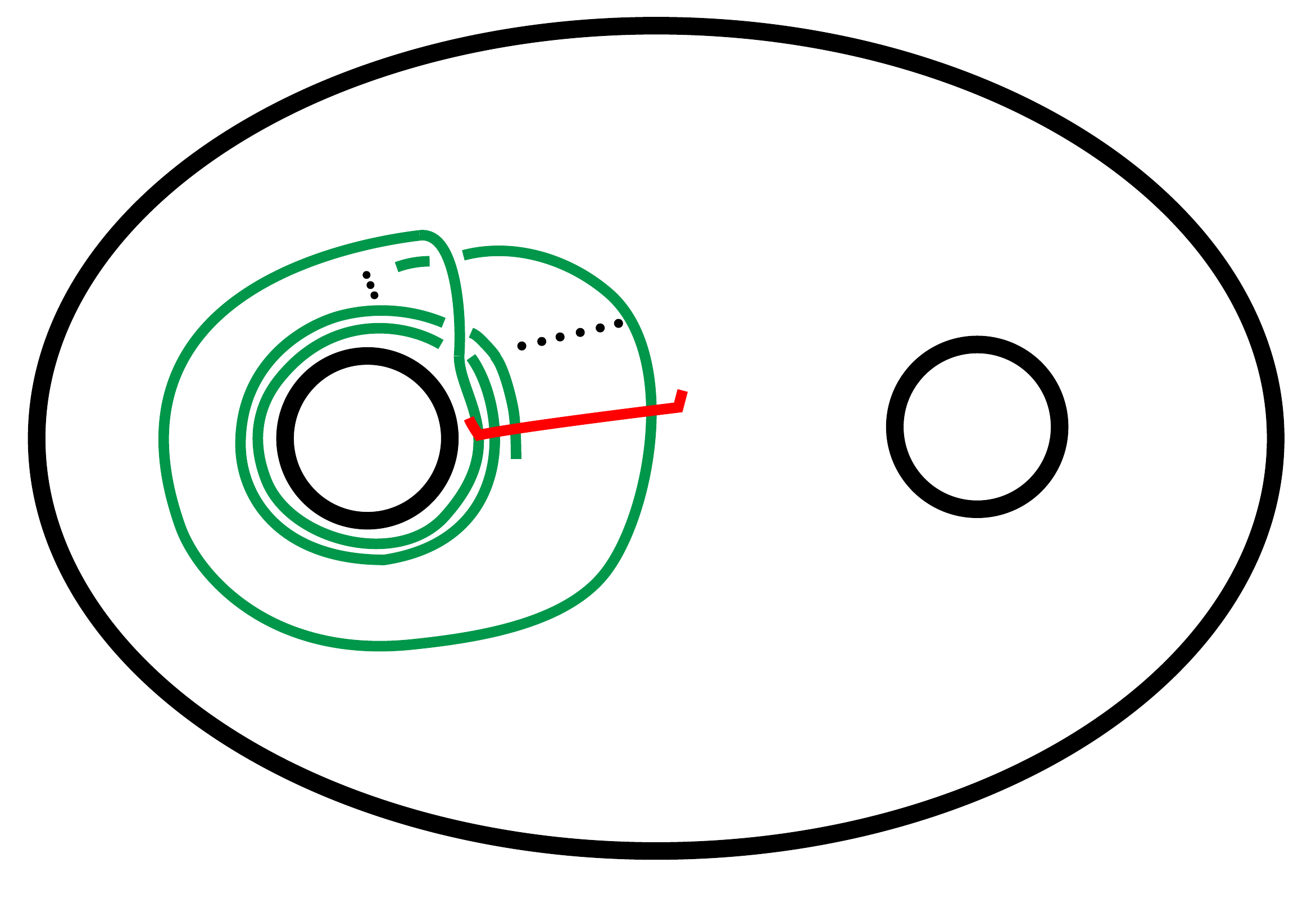}
\put(44,35){\tiny{${m}$}}
\end{overpic}}}a_{2} + A^{-1}
\vcenter{\hbox{\begin{overpic}[scale=.1]{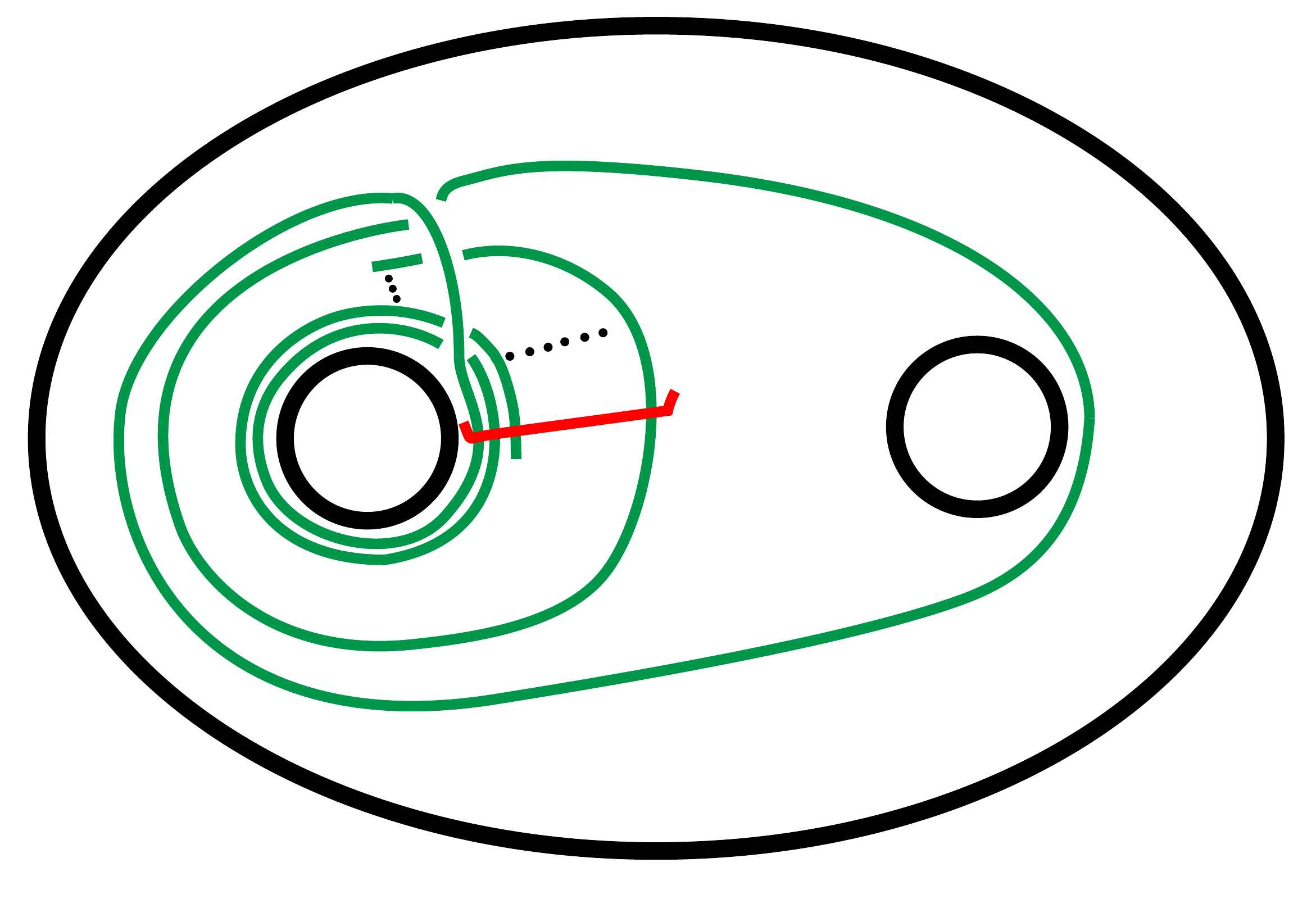}
\put(44,35){\tiny{${m}$}}
\end{overpic}}}\\
&=& AP(m,0)a_{2}+A^{-1}P(m,1).
\end{eqnarray*}
\caption{$AP(m,0)a_{2}+A^{-1}P(m,1)$ for $m>0$.}\label{fig:cal-P(m,-1)}
\end{figure}

\begin{figure}[H]
    \centering
    \begin{eqnarray*}
    P(m,-2) & = &
\vcenter{\hbox{\begin{overpic}[scale=.1]{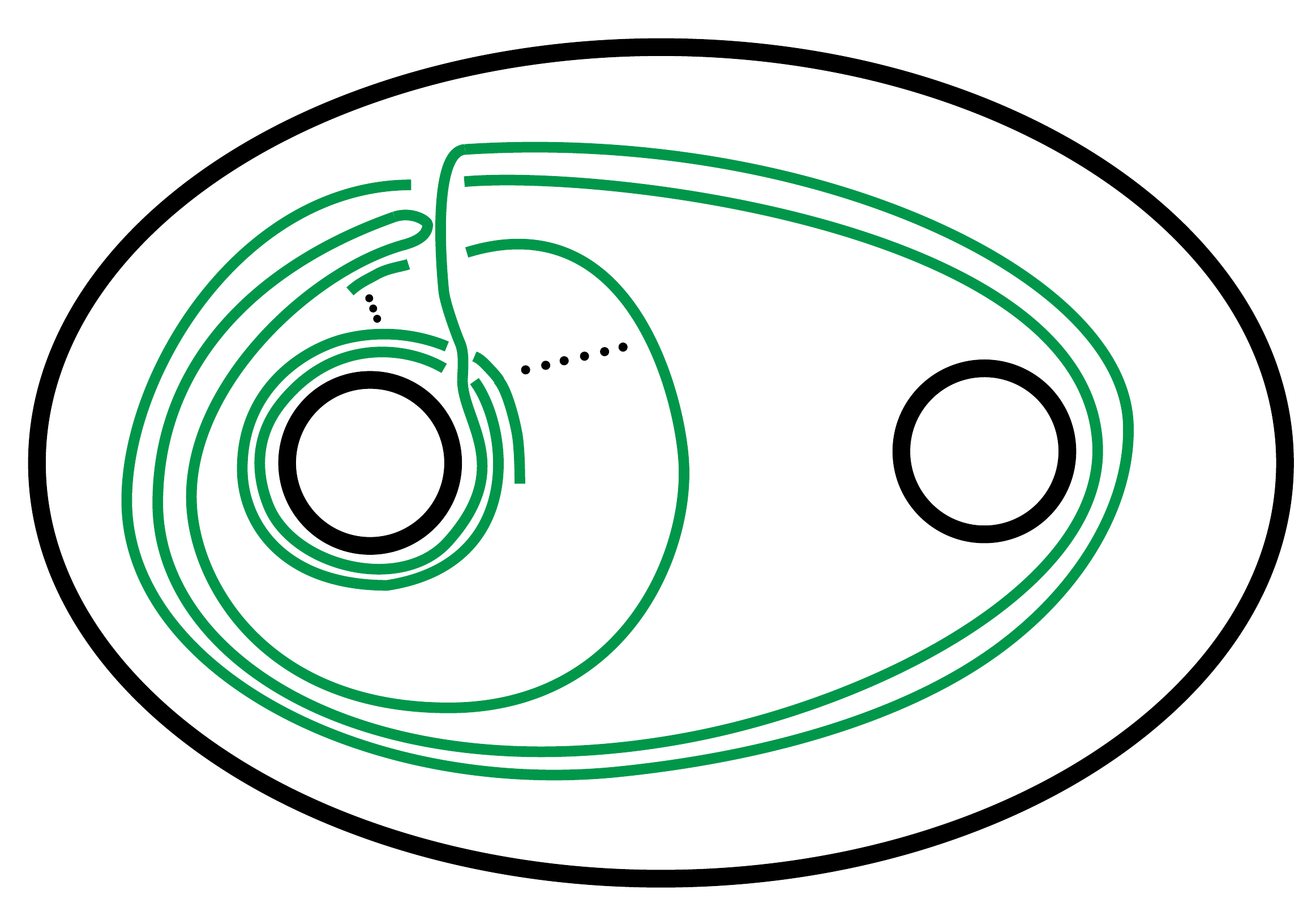} 
\end{overpic}}}=\vcenter{\hbox{\begin{overpic}[scale=.1]{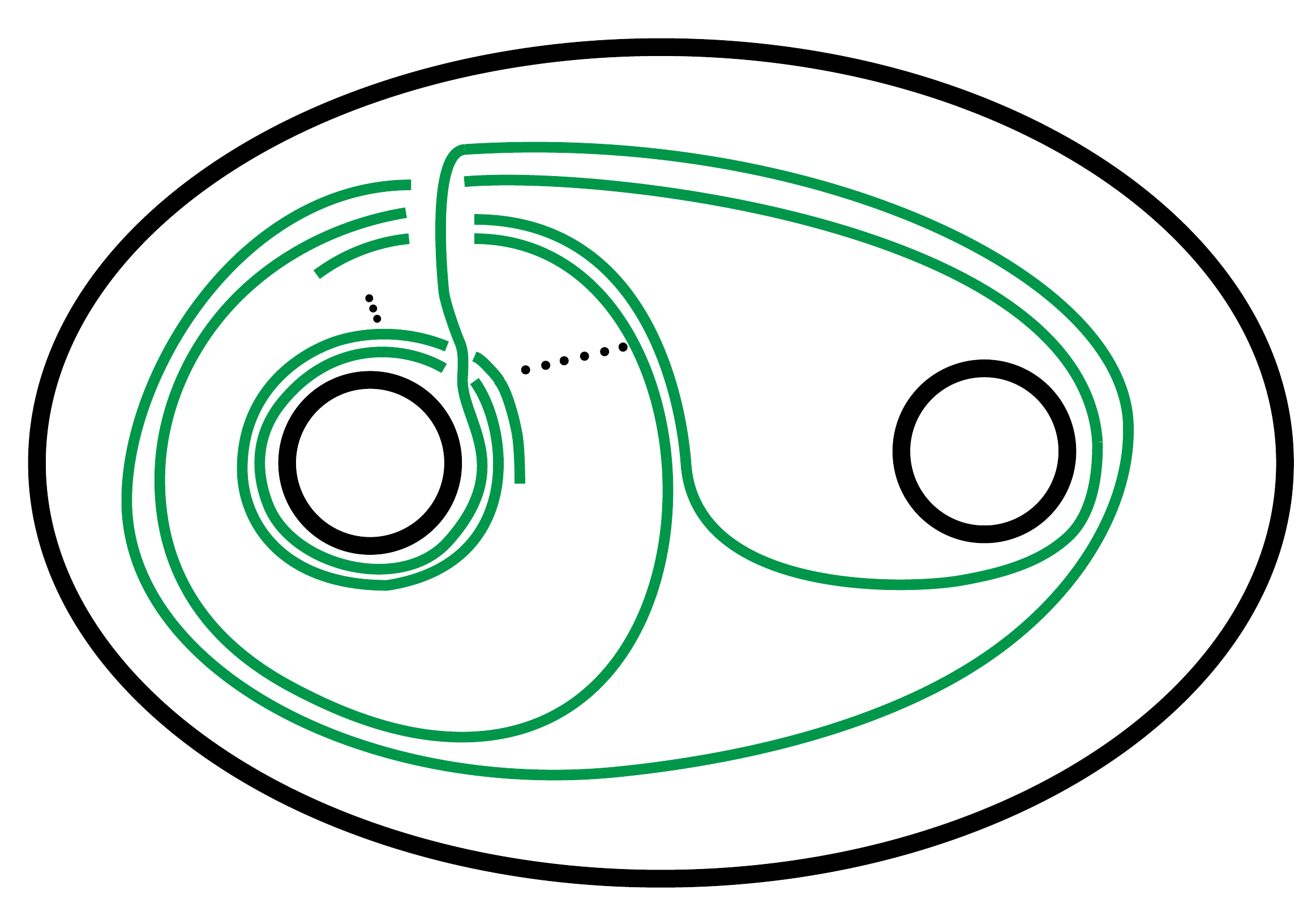} 
\end{overpic}}}\\ 
& = & A \vcenter{\hbox{\begin{overpic}[scale=.1]{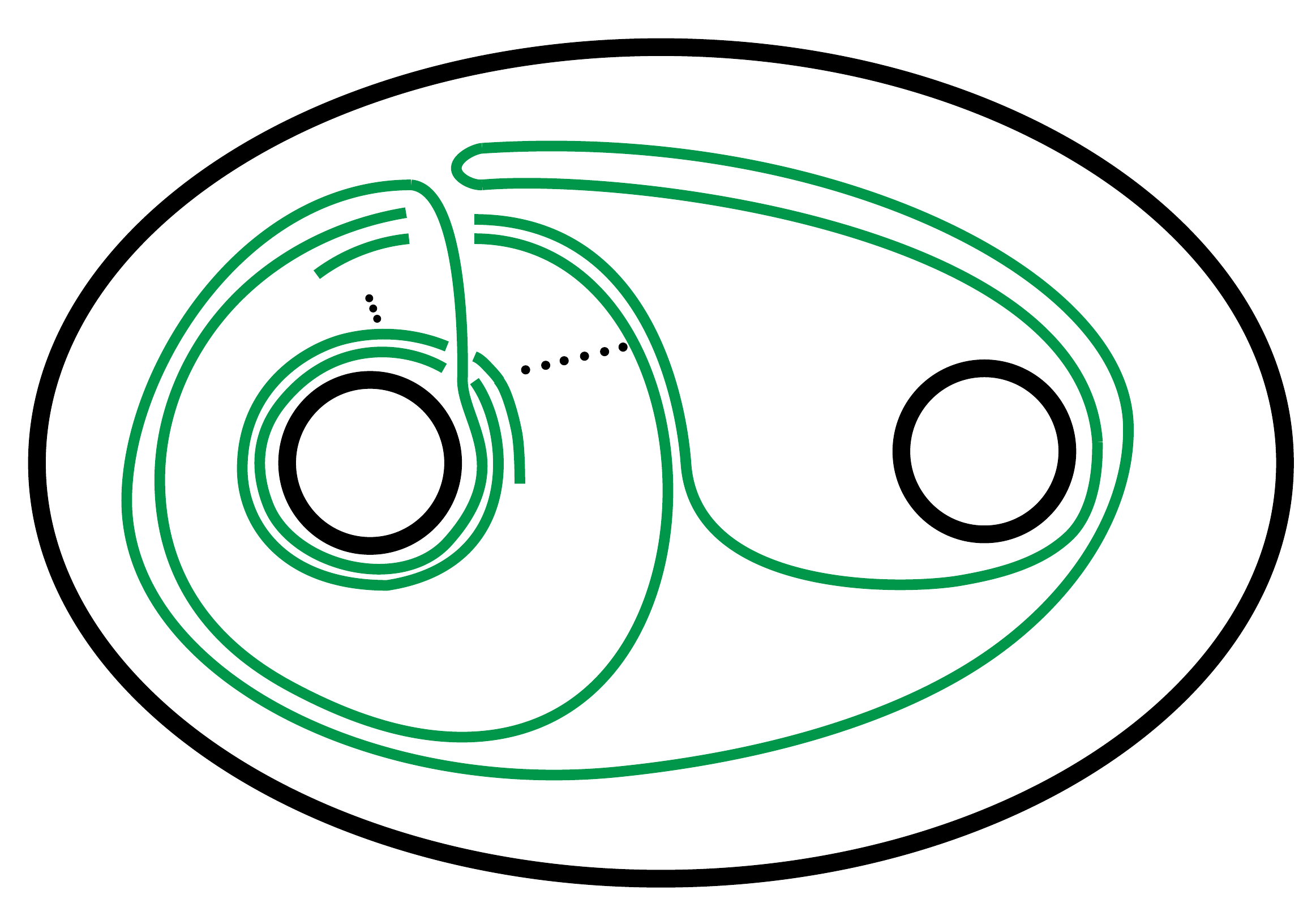}
\end{overpic}}} + A^{-1}
\vcenter{\hbox{\begin{overpic}[scale=.1]{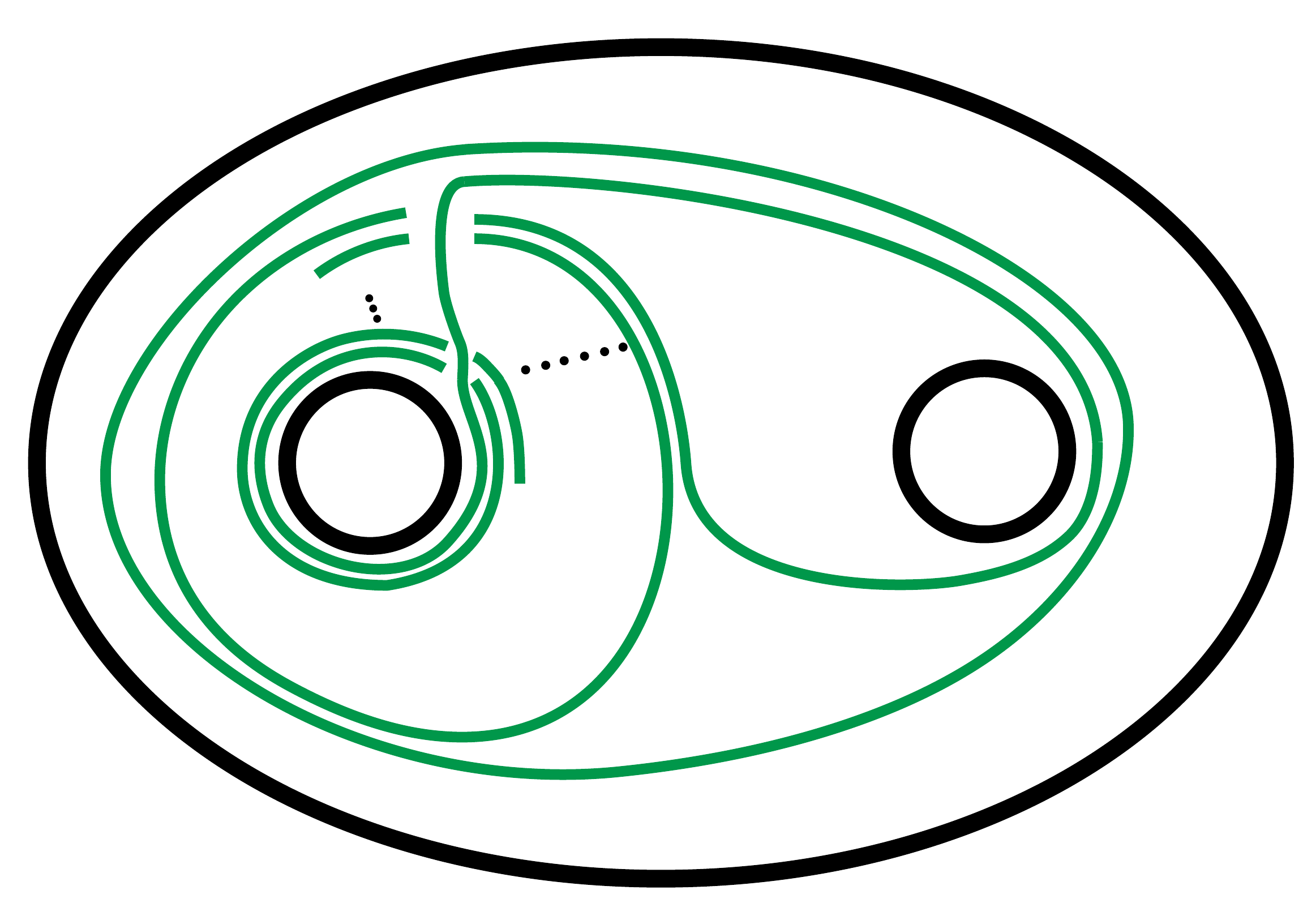}
\end{overpic}}}\\ 
& = & A \vcenter{\hbox{\begin{overpic}[scale=.1]{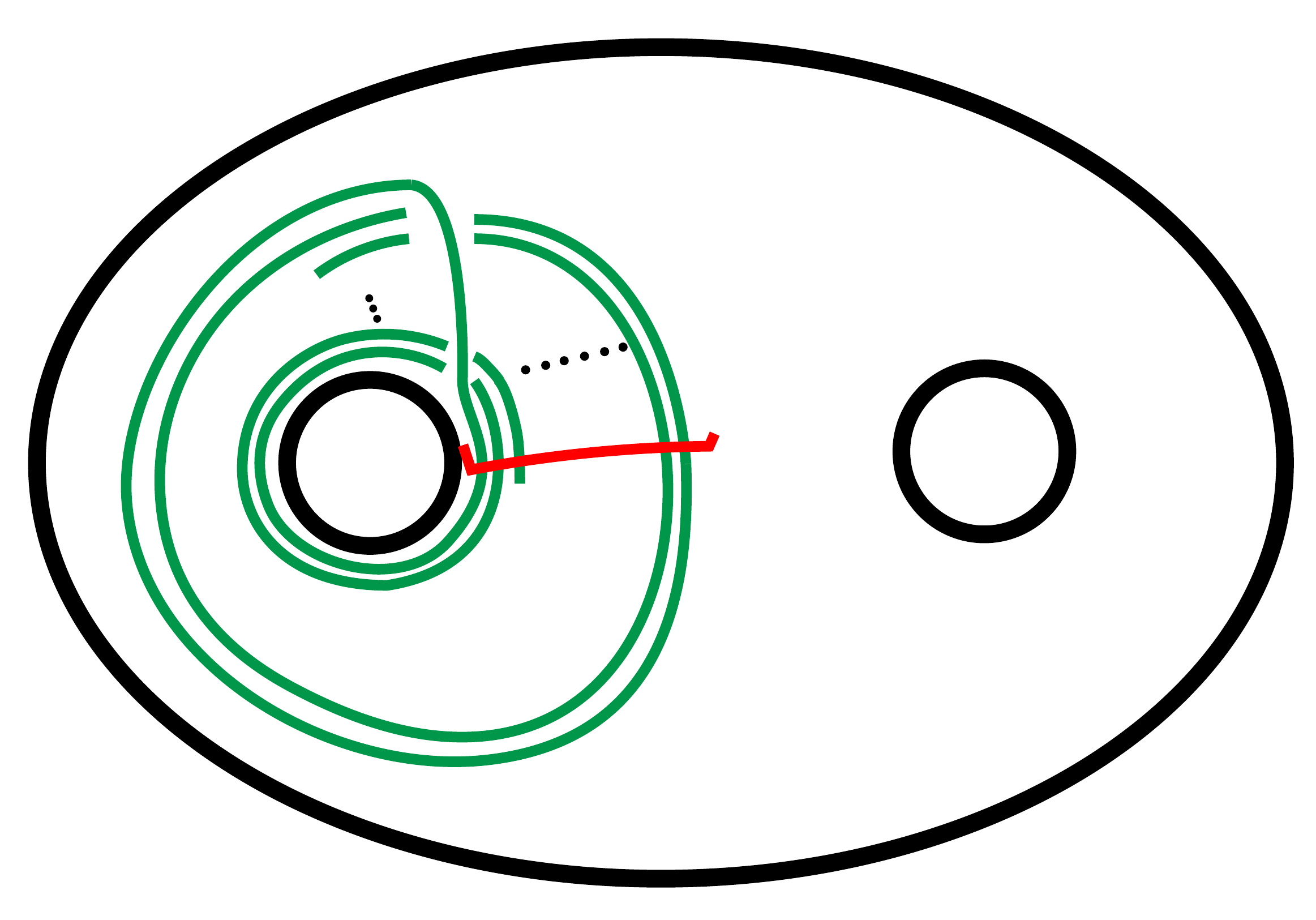}
\put(44,35){\tiny{${m}$}}
\end{overpic}}}a_{2} + A^{-1}
\vcenter{\hbox{\begin{overpic}[scale=.1]{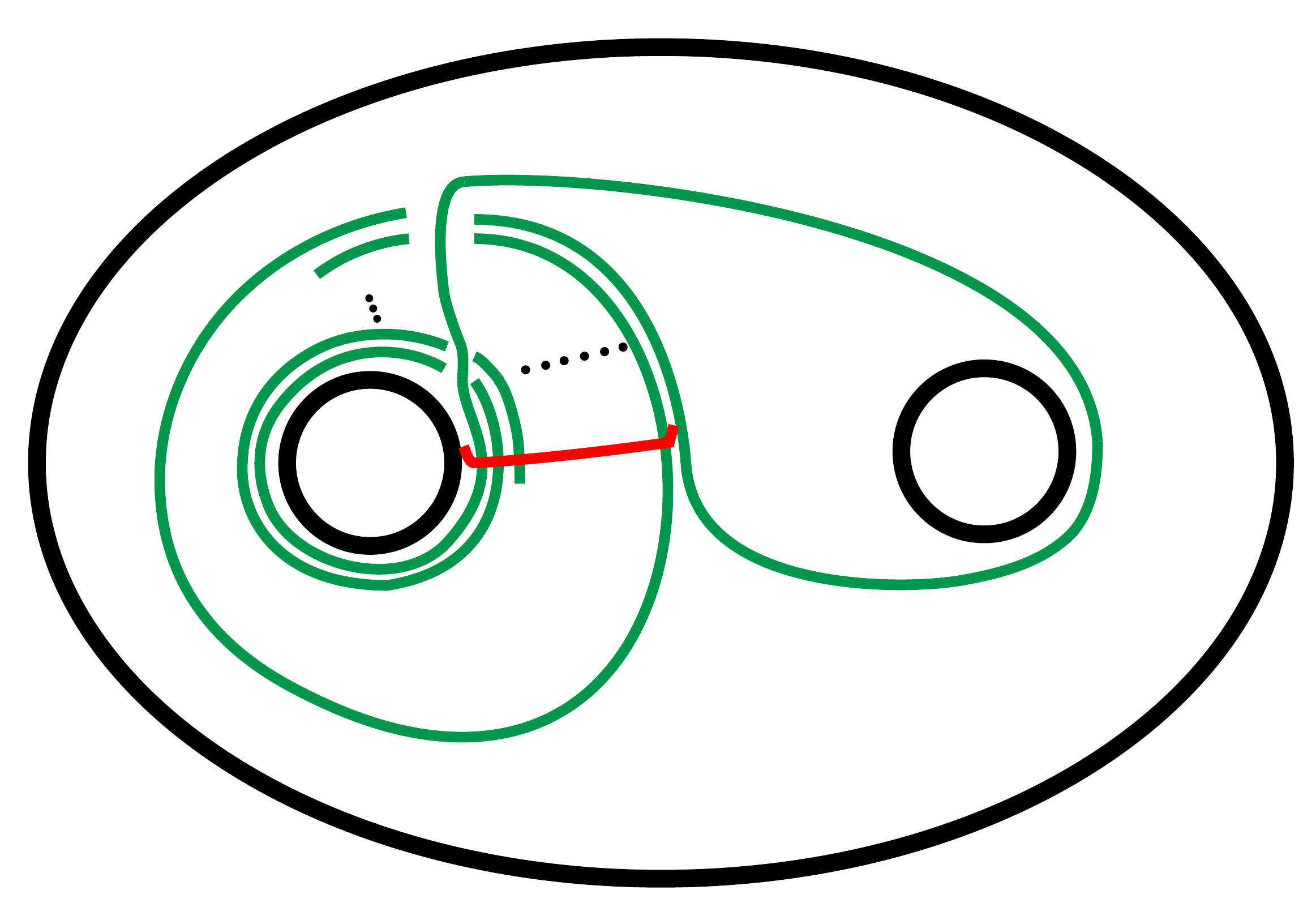}
\put(44,35){\tiny{${m}$}}
\end{overpic}}}\\
&=& AP(m,0)+A^{-1}P(m,-1)a_{3}.
\end{eqnarray*}
\caption{$P(m,-2)=AP(m,0)+A^{-1}P(m,-1)a_{3}$ for $m>0$. \\ \ \\ \ \\}\label{fig:cal-P(m,-2)}
\end{figure}

\begin{figure}[H]
    \centering
    \begin{eqnarray*}
    P(m,-n) & = &
\vcenter{\hbox{\begin{overpic}[scale=.1]{Pm,-n.pdf} 
\end{overpic}}}
 =  A \vcenter{\hbox{\begin{overpic}[scale=.1]{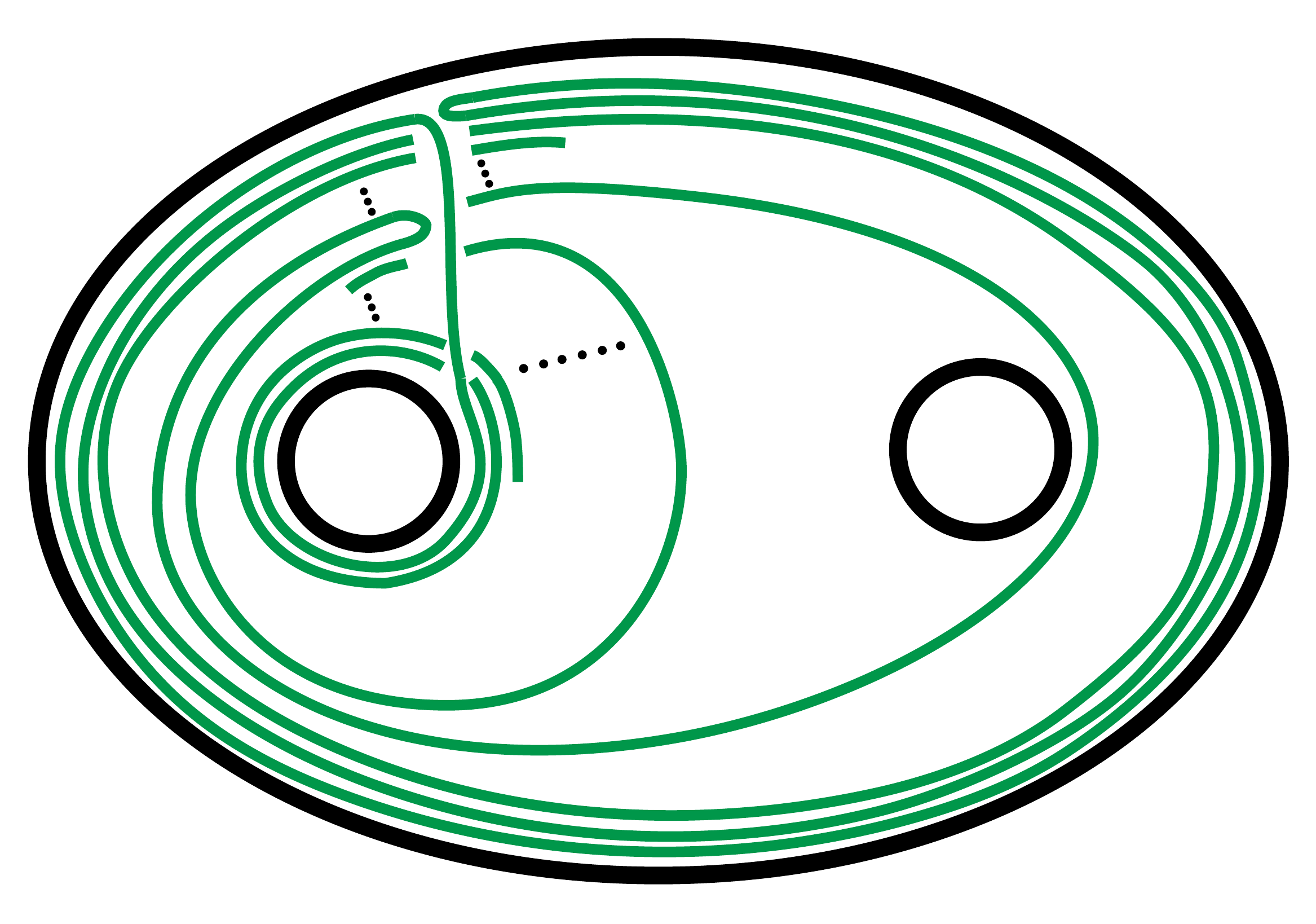}
\end{overpic}}} + A^{-1}
\vcenter{\hbox{\begin{overpic}[scale=.1]{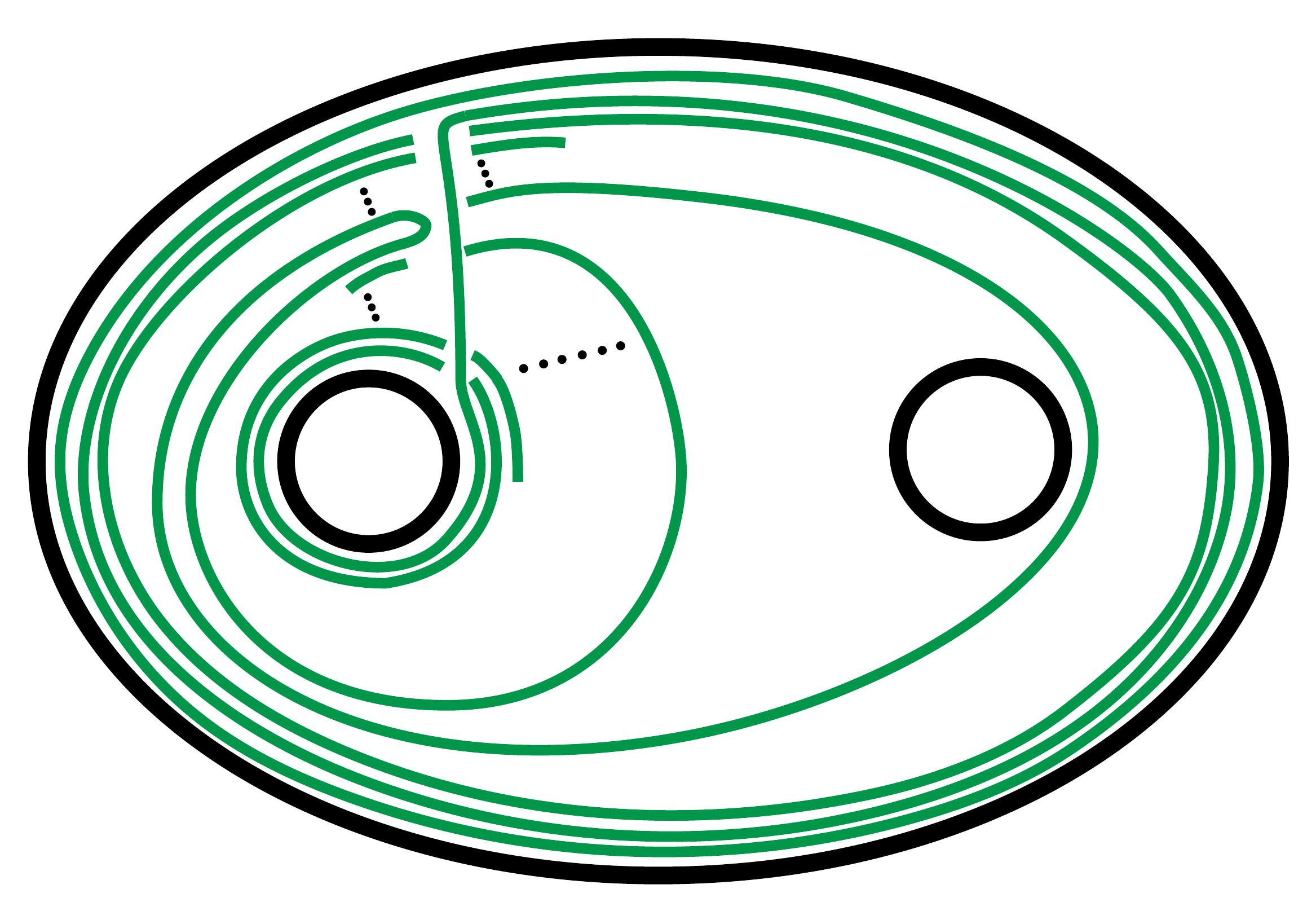}
\end{overpic}}}\\ 
& = & A\vcenter{\hbox{\begin{overpic}[scale=.1]{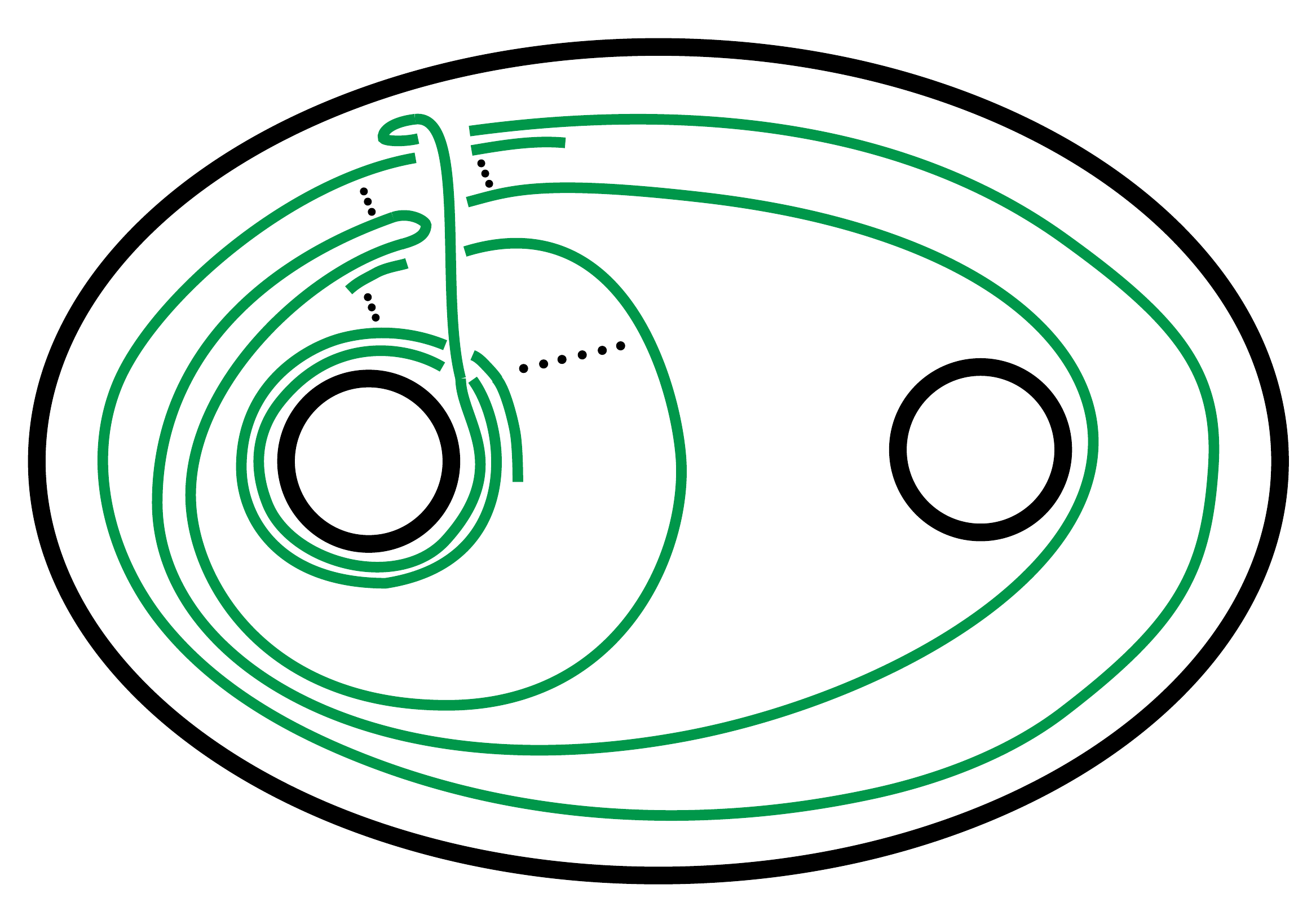}
\end{overpic}}}a_{3} + A^{-1}
\vcenter{\hbox{\begin{overpic}[scale=.1]{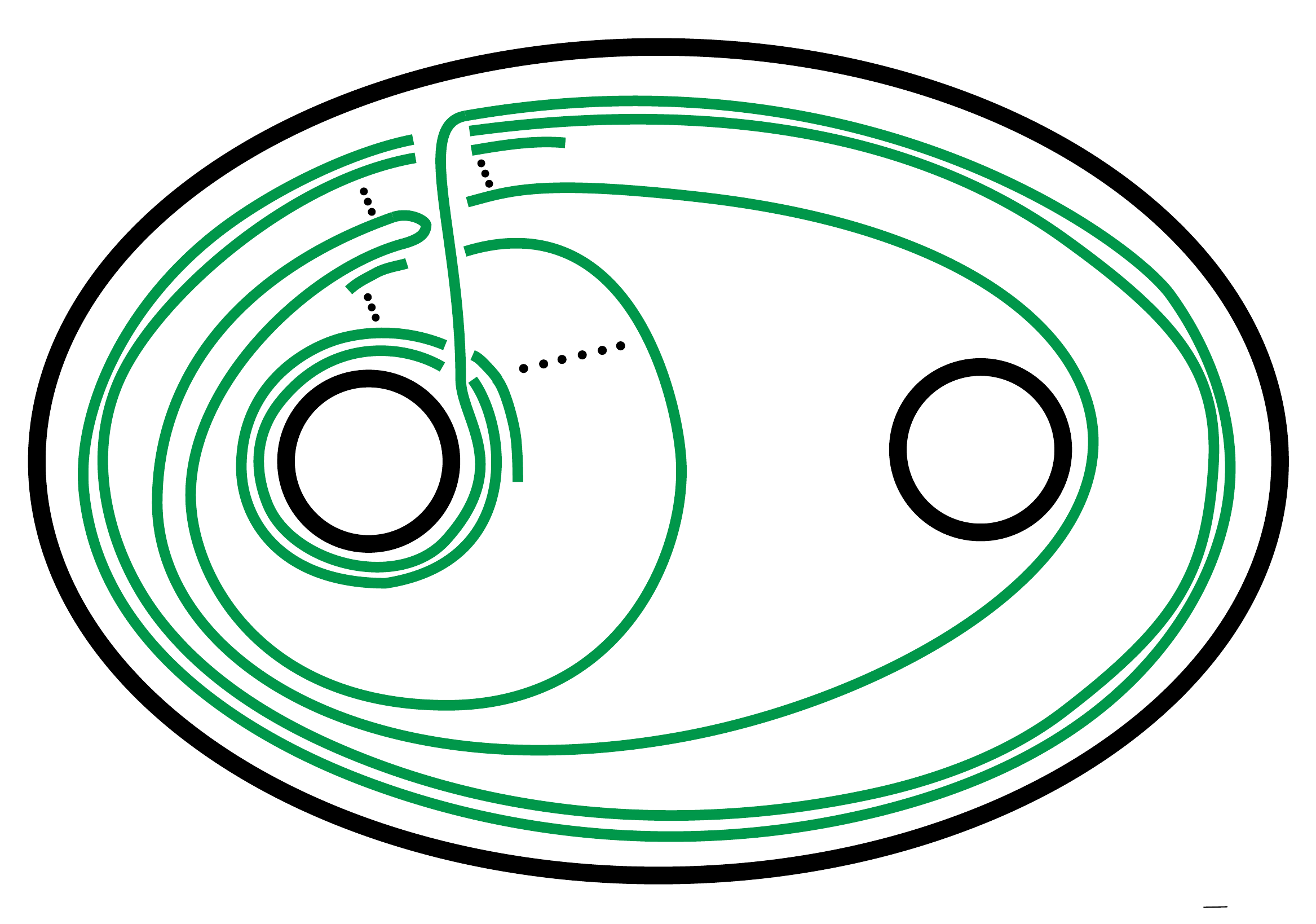}
\end{overpic}}}a_{3}\\ 
& = & -A^{-2} \vcenter{\hbox{\begin{overpic}[scale=.1]{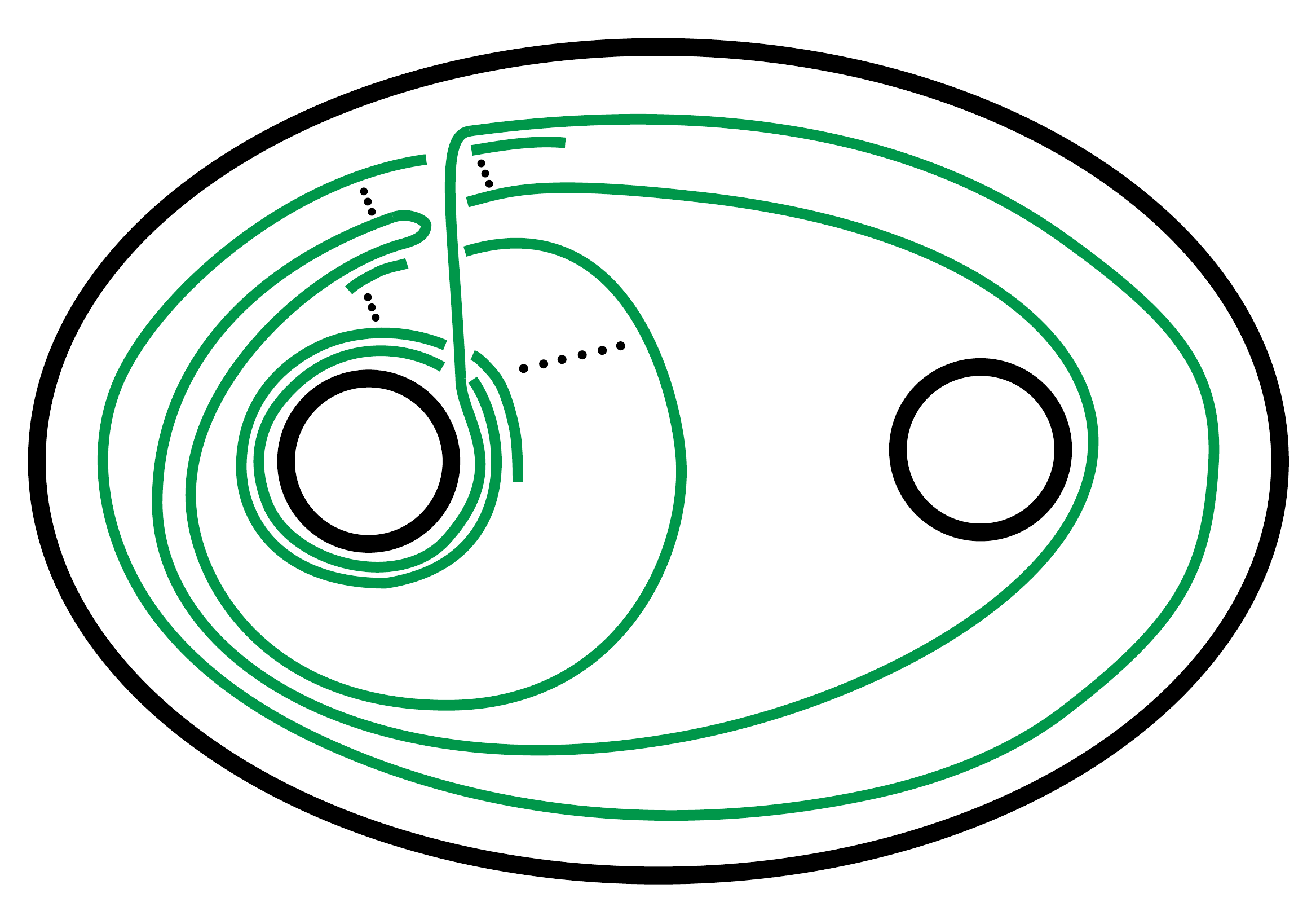}
\end{overpic}}}a_{3} +A^{-1}
\vcenter{\hbox{\begin{overpic}[scale=.1]{Pm,-n-B-2.pdf}
\end{overpic}}}\\
&=& -A^{-2}P(m,-n+2)+A^{-1}P(m,-n+1)a_{3}.
\end{eqnarray*}
\caption{$P(m,-n)=-A^{-2}P(m,-n+2)+A^{-1}P(m,-n+1)a_{3}$ for $m>0, n\geq 3$.}\label{fig:cal-P(m,-n)}
\end{figure}


\begin{thebibliography}{9999}

    \bibitem[Bak]{rheasolomarche}
R. P. Bakshi, A counterexample to the generalisation of Witten's conjecture, {\it Contemp. Math.} (to appear). e-print: \href{https://arxiv.org/abs/2205.01653}{arXiv:2205.01653} [math.GT].
    

\bibitem[BLP]{blp}
R. P. Bakshi, T. T. Q. Lê, J. H. Przytycki, The Kauffman bracket skein module of the connected sum of two solid tori {\it (draft available on request)}.

\bibitem[BP]{counterhandle}
R. P. Bakshi, J. H. Przytycki, Kauffman bracket skein module of the connected sum of handlebodies: A counterexample, {\it Manuscripta Math.} 167 (2022), no. 3-4, 809–820. \href{https://arxiv.org/abs/2005.07750}{arXiv:2005.07750} [math.GT].

\bibitem[BD]{bellettidetcherry}
G. Belletti, R. Detcherry, On torsion in the Kauffman bracket skein module of $3$-manifolds. e-print: \href{https://arxiv.org/abs/2406.17454}{arXiv:2406.17454} [math.GT].


\bibitem[Bul]{sl2cbullock}
D. Bullock,
Rings of $SL_2(\mathbb{C})$-characters and the Kauffman bracket skein module, {\it Comment. Math. Helv.} 72 (1997), no. 4, 521–542. 

\bibitem[BFK]{bfkyangmills}
D. Bullock, C. Frohman, J. Kania-Bartoszynska, Joanna,
The Yang-Mills measure in the Kauffman bracket skein module. {\it Comment. Math. Helv.} 78 (2003), no. 1, 1–17. \href{https://arxiv.org/abs/math/0005218}{arXiv:math/0005218} [math.GT].

\bibitem[BuLo]{knotext}
D. Bullock, W. F. Lo Faro,
The Kauffman bracket skein module of a twist knot exterior. 
{\it Algebr. Geom. Topol.} 5 (2005), 107–118. \href{https://arxiv.org/abs/math/0402102}{arXiv:math/0402102} [math.QA].

\bibitem[BuPr]{smquant}
D. Bullock, J. H. Przytycki, Multiplicative structure of Kauffman bracket skein module quantizations. {\it Proc. Amer. Math. Soc.} 128 (2000), no. 3, 923–931. \href{https://arxiv.org/abs/math/9902117}{arXiv:math/9902117} [math.QA]. 

\bibitem[DW]{basissurfacetimess1}
R. Detcherry, M. Wolff, A basis for the Kauffman skein module of the product of a surface and a circle. 
{\it Algebr. Geom. Topol.} 21 (2021), no. 6, 2959–2993. \href{https://arxiv.org/abs/2001.05421}{arXiv:2001.05421}[math.GT].

\bibitem[DKS]{dks}
R. Detcherry, E. Kalfagianni, A. S. Sikora, Kauffman bracket skein modules of small $3$-manifolds. e-print: \href{https://arxiv.org/abs/2305.16188}{arXiv:2305.16188} [math.GT]. 

\bibitem[GM]{gmsl2cfreegroup}
F. J. González-Acuña, J. M. Montesinos-Amilibia, On the character variety of group representations in $SL(2,C)$ and $PSL(2,C)$. {\it Math. Z.} 214 (1993), no. 4, 627–652.

\bibitem[HP1]{kbsmlens}
J. Hoste, J. H. Przytycki,  The $(2,\infty)$-skein module of lens spaces; a generalization of the Jones polynomial. {\it J. Knot Theory Ramifications} 2 (1993), no. 3, 321–333.

\bibitem[HP2]{s1s2}
J. Hoste, J. H. Przytycki, The Kauffman bracket skein module of $S^1 \times S^2$. {\it Math. Z.} 220 (1995), no. 1, 65–73.

\bibitem[GJS]{wittenresolved}
S. Gunningham, D. Jordan, P. Safronov, The finiteness conjecture for skein modules. {\it Invent. Math.} 232 (2023), no. 1, 301–363. \href{https://arxiv.org/abs/1908.05233}{arXiv:1908.05233} [math.QA].

\bibitem[Kau]{sm&j}
L. H. Kauffman, State models and the Jones polynomial. {\it Topology}, 26 (1987), no. 3, 395-407.


\bibitem[Mro]{rp3rp3}
M. Mroczkowski, Kauffman bracket skein module of the connected sum of two projective spaces. 
{\it J. Knot Theory Ramifications} 20 (2011), no. 5, 651–675. \href{https://arxiv.org/abs/1008.1007} {arXiv:1008.1007} [math.GT].


\bibitem[Prz1]{smof3}
J. H. Przytycki, Skein modules of 3-manifolds. {\it Bull. Polish Acad. Sci. Math.} 39 (1991), no. 1-2, 91–100. \href{https://arxiv.org/abs/math/0611797}{arXiv:math/0611797}[math.GT].


\bibitem[Prz3]{fundamentals}  
J. H. Przytycki, Fundamentals of Kauffman bracket skein modules. {\it Kobe Math. J.}, 16(1), 1999, 45-66. \href{https://arxiv.org/abs/math/9809113}{arXiv:math/9809113} [math.GT]. 

\bibitem[Prz4]{connsum}
J. H. Przytycki,
Kauffman bracket skein module of a connected sum of $3$-manifolds.  
{\it Manuscripta Math.} 101 (2000), no. 2, 199–207. \href{https://arxiv.org/abs/math/9911120}{arXiv:math/9911120} [math.GT].

\bibitem[PS]{saofsurfaces}
J. H. Przytycki, A. S. Sikora,
Skein algebras of surfaces. {\it Trans. Amer. Math. Soc.} 371 (2019), no. 2, 1309–1332. \href{https://arxiv.org/abs/1602.07402}{arXiv:1602.07402}[math.QA].


\bibitem[Tur]{turaevsolidtorus}
V. G. Turaev,
The Conway and Kauffman modules of a solid torus.
{\it Zap. Nauchn. Sem. Leningrad. Otdel. Mat. Inst. Steklov.} (LOMI) 167 (1988), Issled. Topol. 6, 79–89, 190; translation in {\it J. Soviet Math.} 52 (1990), no. 1, 2799–2805. 


\end{thebibliography}
\end{document}